\DeclareMathOperator{\id}{id}
\DeclareMathOperator{\Aut}{Aut}
\DeclareMathOperator{\End}{End}
\newcommand{\oo}{\mathcal{O}}
\newcommand{\HH}{\mathbb{H}}
\newcolumntype{C}[1]{>{\centering\let\newline\\\arraybackslash\hspace{0pt}}p{#1}}
\newcounter{gex}
\newcommand{\bbo}{\mathbb{O}}
\newcommand{\ie}{\emph{ie} }
\newcommand{\eg}{\emph{eg} }
\newcommand{\cf}{\emph{cf} }
\newcommand{\etal}{\emph{et al} }
\newcommand{\calc}{\mathcal{C}}
\newcommand{\calu}{\mathcal{U}}
\newcommand{\calv}{\mathcal{V}}
\DeclareMathAlphabet{\df}{U}{eus}{m}{n}
\DeclareMathAlphabet{\matheur}{U}{eur}{m}{n}
\def\l@part{\@tocline{-1}{6pt plus2pt}{0pt}{}{\bfseries}}
\newcommand{\setindent}[1]{
  \addtocontents{toc}{
  \protect\def\protect\l@section{\protect\@tocline{1}{0pt}{#1}{}{}}}}
\newcommand{\setsubindent}[1]{
  \addtocontents{toc}{
  \protect\def\protect\l@subsection{\protect\@tocline{1}{0pt}{#1}{}{}}}}
\newcounter{mtheorem}
\newtheorem*{theorem*}{Main Theorem}
\numberwithin{equation}{section}
\newtheorem{theorem}[equation]{Theorem}
\newtheorem{lemma}[equation]{Lemma}
\newtheorem{prop}[equation]{Proposition}
\newtheorem{corollary}[equation]{Corollary}
\theoremstyle{definition}
\newtheorem{definition}[equation]{Definition}
\newtheorem{example}[equation]{Example}
\theoremstyle{remark}
\newtheorem{remark}[equation]{Remark}
\newtheorem*{remark*}{Remark}
\newtheorem{conjecture}[equation]{Conjecture}
\newtheorem*{conjecture*}{Conjecture}
\newcommand{\se}{Sasaki--Einstein\xspace}
\newcommand{\nk}{nearly K\"ahler\xspace}
\newcommand{\nh}{nearly hypo\xspace}
\newcommand{\etase}{\eta^{\textup{se}}}
\newcommand{\omegase}{\omega^{\textup{se}}}
\newcommand{\Jse}{J^{\textup{se}}}
\newcommand{\gse}{g^{\textup{se}}}
\newcommand{\gsc}{g^{\textup{sc}}}
\newcommand{\abs}[1]{\lvert#1\rvert}
\newcommand{\norm}[1]{\left\lvert#1\right\rvert}
\newcommand{\unitary}[1]{\textup{U$(#1)$}}
\newcommand{\sunitary}[1]{\textup{SU$(#1)$}}
\newcommand{\sorth}[1]{\textup{SO$(#1)$}}
\newcommand{\lorentz}[1]{\textup{SO$_0(1,#1)$}}
\newcommand{\orchro}[1]{\textup{O$^+( 1,#1 )$}}
\newcommand{\Sp}[1]{\textup{Sp$(#1)$}}
\newcommand{\orth}[1]{\textup{O$(#1)$}}
\newcommand{\spin}[1]{\textup{Spin$(#1)$}}
\newcommand{\vol}{\operatorname{vol}}
\newcommand{\C}{\mathbb{C}}
\newcommand{\R}{\mathbb{R}}
\newcommand{\Z}{\mathbb{Z}}
\newcommand{\N}{\mathbb{N}}
\newcommand{\PP}{\mathbb{P}}
\newcommand{\CP}{CP}
\newcommand{\BCP}{\mathbb{CP}}
\newcommand{\Sph}{\mathbb{S}}
\newcommand{\ra}{\rightarrow}
\newcommand{\diag}{\operatorname{diag}}
\newcommand{\Imag}{\operatorname{Im}}
\newcommand{\Real}{\operatorname{Re}}
\newcommand{\trace}{\operatorname{Tr}}
\newcommand{\gtwo}{\ensuremath{\textup{G}_2}\xspace}
\def\co{\colon\thinspace}
\newcommand{\Lie}[1]{\mathfrak{#1}}
\newcommand{\lspan}{\operatorname{\text{Span}}}
\newcommand{\Scal}{\operatorname{\text{Scal}}}
\newcommand{\bohm}{\mathcal{B}}
\newcommand{\Ndiag}{\ensuremath{\textup{N}_{1,1}}}
\newcommand{\alphaW}{\alpha_{\text{\tiny W}}}
\newcommand{\betaW}{\beta_{\text{\tiny W}}}
\newcommand{\alphaH}{\alpha_{\text{\tiny H}}}
\newcommand{\betaH}{\beta_{\text{\tiny H}}}
\title{New \gtwo--holonomy cones and exotic nearly K\"ahler structures on $S^6$ and $S^3 \times S^3$}
\author[L.~Foscolo]{Lorenzo~Foscolo}
\author[M.~Haskins]{Mark~Haskins}
\setlist{leftmargin=*}
\begin{document}

\begin{abstract}
There is a rich theory of so-called (strict) nearly K\"ahler manifolds, almost-Hermitian manifolds generalising the famous
almost complex structure on the $6$--sphere induced by octonionic multiplication.
Nearly K\"ahler $6$--manifolds
play a distinguished role both in the general structure theory and also because of their connection with
singular spaces with holonomy group the compact exceptional Lie group $G_2$:
the metric cone over a Riemannian $6$--manifold $M$ has holonomy contained in $G_2$
if and only if $M$ is a nearly K\"ahler $6$--manifold.

A central problem in the field has been the absence of any complete inhomogeneous examples.
We prove the existence of the first complete inhomogeneous nearly K\"ahler $6$--manifolds
by proving the existence of at least one cohomogeneity one nearly K\"ahler structure
on the $6$--sphere and on the product of a pair of $3$--spheres.
We conjecture that these are the only simply connected (inhomogeneous) cohomogeneity one nearly K\"ahler structures
in six dimensions.

\end{abstract}
\maketitle

\setcounter{secnumdepth}{1}

\section{Introduction}

At least since the early 1950s (see Steenrod's 1951 book \cite[41.22]{Steenrod:Book}) it has been well known that viewing $\Sph^6$ as the unit sphere in $\Imag{\bbo}$ 
endows it with a natural nonintegrable almost complex structure $J$ defined via octonionic multiplication. Since $J$ is compatible with the round metric $g_{\textrm{rd}}$, the triple $(g_{\textrm{rd}},J,\omega)$, where $\omega (\cdot, \cdot)=g_{\textrm{rd}}(J\cdot,\cdot)$, defines an 
\mbox{almost-Hermitian} structure on $\Sph^6$.  Its torsion has very special properties: in particular, $d\omega$ is the real part of a complex volume form $\Omega$. Appropriately normalised, the pair $(\omega, \Omega)$ defines an \mbox{\sunitary{3}--structure} on $\Sph^6$ which by construction is invariant under the exceptional compact Lie group $\gtwo \simeq \Aut (\bbo)$.

Octonionic multiplication also defines a \gtwo--invariant $3$--form $\varphi$ on 
$\Imag{\bbo}$ by 
$$\varphi (u,v,w)=uv \cdot w.$$ We call this the standard \gtwo--structure on $\R^7$. Regarding $\R^7$ as the Riemannian cone over $(\Sph^6,g_{\textrm{rd}})$, $\varphi$ and its Hodge dual are given in terms of $(\omega,\Omega)$:
\begin{equation}\label{eq:G2:cone}
\varphi = r^2 dr \wedge \omega + r^3\Real{\Omega}, \qquad \ast\varphi = -r^3 dr\wedge \Imag{\Omega} + \tfrac{1}{2}r^4\omega^2.
\end{equation}
Conversely, viewing $\Sph^6$ as the level set $r=1$ in $\R^7$, the \sunitary{3}--structure $(\omega,\Omega)$ is recovered from $\varphi$ and $\ast\varphi$ by restriction and contraction by the scaling vector field $\frac{\partial}{\partial r}$.

More generally, consider a $7$--dimensional Riemannian cone $C=C(M)$ over a smooth compact manifold $(M^6,g)$. Suppose that the holonomy of the cone is contained in \gtwo. Then there exists a pair of closed (in fact parallel) differential forms $\varphi$ and $\ast\varphi$, pointwise equivalent to the model forms on $\R^7$ and homogeneous with respect to scalings on $C$. Just as above, viewing $M$ as the level set $r=1$ in $C$, the restriction and contraction by $\frac{\partial}{\partial r}$ of $\varphi$ and $\ast\varphi$ define an \mbox{\sunitary{3}--structure} $(\omega,\Omega)$ on $M$ satisfying \eqref{eq:G2:cone}. In particular, the closedness of $\varphi$ and $\ast\varphi$ is equivalent to
\begin{equation}\label{eq:NK}
\begin{cases}
d\omega = 3\Real{\Omega},\\
d\Imag{\Omega}=-2\,\omega^2.
\end{cases}
\end{equation}

\begin{definition}\label{def:NK}
A \emph{\nk} $6$--manifold is a manifold $M^6$ endowed with an \mbox{\sunitary{3}--structure} $(\omega, \Omega)$ satisfying \eqref{eq:NK}. We call \eqref{eq:NK} the \emph{\nk structure equations}. 
\end{definition}

There are other possible equivalent definitions of a \nk $6$--manifold. By relating the holonomy reduction of the cone $C(M)$ to the existence of a parallel spinor instead of a pair of distinguished parallel forms, \nk $6$--manifolds can be characterised as those $6$--manifolds admitting a real Killing spinor \cite{Grunewald}. Alternatively, one could give a definition in terms of Gray--Hervella torsion classes of almost Hermitian manifolds \cite{Gray:Hervella}. Our definition corresponds then to what are usually called \emph{strict} \nk $6$--manifolds, to distinguish them from K\"ahler manifolds which, having vanishing torsion, belong to every torsion class.

\begin{remark*}
The latter point of view allows one to define \nk manifolds in every even dimension. See \cite[\S 14.3.2]{BG:Sasaki} for some references and an early history of the subject, 
including early lesser known contributions from the Japanese school before the terminology nearly K\"ahler had been adopted. 
Even in this more general context, \nk $6$--manifolds play a distinguished role, see \cite[Theorem 1.1]{Nagy:Product}.
\end{remark*}

Since every manifold with holonomy contained in \gtwo is Ricci-flat, \nk $6$--manifolds are necessarily Einstein with scalar curvature $30$ 
(in fact, Gray proved this fact before the connection with \gtwo holonomy had been noticed \cite[Theorem 5.2]{Gray:Math:Annalen}).
In particular, a complete \nk \mbox{$6$--manifold} is compact with finite fundamental group and its universal cover is also a complete \nk manifold. In the rest of the paper we will therefore restrict to the case of simply connected \nk $6$--manifolds.

Besides the \gtwo--invariant \nk structure on the $6$--sphere $\Sph^6 = \gtwo/\sunitary{3}$, there are only three known examples of complete simply connected \nk $6$--manifolds, all of which are homogeneous: $S^3 \times S^3 = \sunitary{2}^3/\triangle\sunitary{2}$, $\CP ^3 = \textup{Sp}(2)/\unitary{1}\times\textup{Sp}(1)$ and the flag manifold $F_3=\sunitary{3}/{T}^2$. They were first constructed in 1968 by Gray and Wolf \cite{Gray:Wolf} in their work on $3$--symmetric spaces (which also yielded many
homogeneous \nk manifolds in higher dimensions). Recently Cort\'es--V\'asquez \cite{Cortes:Vasquez} have constructed and partially classified locally homogeneous nearly K\"ahler $6$--manifolds by considering finite quotients of these homogeneous nearly K\"ahler structures.

Although as early as 1958 Calabi \cite{Calabi:TAMS} exploited the fact that any oriented 
hypersurface of $\R^7$ admits an $\sunitary{3}$--structure thanks to the inclusion $\sunitary{3} \subset \gtwo$, 
the connection between \nk geometry in $6$ dimensions and \gtwo holonomy that we have emphasised seems to have been noticed only in the 1980s;  the homogeneous \nk $6$--manifolds described above then played an important role in the early development of metrics with holonomy  \gtwo. The first explicit example of a local metric with full holonomy \gtwo given by Bryant in \cite[Section 5]{Bryant:special:holonomy} is the \gtwo--cone over the flag manifold $F_3$; this appears to be the first appearance of the \nk equations \eqref{eq:NK}.
Furthermore, the first examples of complete \gtwo--metrics constructed by Bryant--Salamon \cite{Bryant:Salamon} are asymptotically conical manifolds modelled at infinity on the cones over \nk $S^3\times S^3$, $\CP^3$ and $F_3$. In each case, Bryant--Salamon construct a $1$--parameter family of (cohomogeneity one) complete metrics with holonomy \gtwo on the total space of a vector bundle over $S^3$, $S^4$ and $\CP^2$, respectively. The parameter measures the size of the zero section and as it goes to zero the manifold develops an isolated singularity and converges in the Gromov--Hausdorff sense to the corresponding \gtwo--cone.

The behaviour of these solutions exemplifies the reason for the interest in \nk $6$--manifolds from the point of view of \gtwo geometry: Riemannian cones over $6$--dimensional \nk manifolds and complete metrics with holonomy \gtwo asymptotic to them provide local models for the simplest type of singularities and for singularity formation in families of smooth \gtwo--manifolds.

The obvious natural question now is whether there are any other \nk $6$--manifolds. Thanks to the recent work of Butruille \cite{Butruille}, no such manifold can be homogeneous. The local existence of \nk structures in dimension $6$ was studied by Reyes Carrion \cite[\S 4.5]{Reyes:Carrion:thesis} and later by Bryant \cite{Bryant:almost:complex:6D}, in both cases 
using Cartan--K\"ahler theory; the conclusion is that \nk structures have the same local generality as Calabi--Yau structures and, in particular, there are many local inhomogeneous \nk structures.  The outstanding question is therefore to find new complete examples. 

An apparently promising source of \nk manifolds of any dimension is that of twistor spaces of quaternionic K\"ahler manifolds with positive scalar curvature \cite{Salamon:twistor}. In $6$ dimensions the connection with \nk geometry was first noticed by Eells--Salamon \cite{Eells:Salamon:Twistor} (later generalised to higher dimensions in \cite{NK:twistor:spaces}). Unfortunately, while this construction provides many incomplete examples, \eg starting from quaternionic K\"ahler orbifolds, so far the only complete examples it has yielded are homogeneous. By a result of Hitchin \cite{Hitchin:Kahler:Twistor} the only quaternionic K\"ahler (equivalently, self-dual Einstein) $4$--manifolds with positive scalar curvature are $S^4$ and $CP^2$ endowed with their standard metrics, in which case the \nk structures on the twistor spaces coincide with the homogeneous ones on $\CP^3$ and $F_3$, respectively. In higher dimensions the only known quaternionic K\"ahler manifolds with positive scalar curvature are Wolf's quaternionic symmetric spaces \cite{Wolf:1965}; an influential conjecture of LeBrun--Salamon \cite{LeBrun:Salamon}, known to hold up to dimension $8$, asserts the non-existence of inhomogeneous examples. 

On the other hand, the scarcity of \nk $6$--manifolds, or equivalently of \mbox{\gtwo--holonomy} cones, is surprising when compared to geometries related to other special holonomy groups: there are infinitely many Calabi--Yau cones \cite{Sparks:survey} and infinitely many hyperk\"ahler and 
$\textup{Spin}(7)$--cones \cite[\S 13.7, \S 14.3]{BG:Sasaki}.
Indeed, in this paper we show that complete \nk $6$--manifolds need not be homogeneous.

\begin{theorem*}
There exists an inhomogeneous \nk structure on $S^6$ and $S^3 \times S^3$.
\end{theorem*}

There are two very natural approaches to attempt to construct \nk $6$--manifolds. On the one hand, from the perspective of symmetries, the next step beyond the homogeneous setting would be to consider cohomogeneity one \nk $6$--manifolds, \ie those that admit an isometric action by a compact Lie group whose generic orbit is of codimension one. A completely different starting point would be to exploit the existence of a large number of $6$--dimensional \emph{singular} \nk spaces and to attempt to produce smooth \nk manifolds from these by singular perturbation methods. 
As we will explain, both points of view will play an important role in the proof of the Main Theorem.
 
Apart from orbifolds, the simplest singular Einstein spaces with positive scalar curvature are those obtained by the \emph{sine-cone} construction (also called spherical suspension): given a smooth compact Einstein manifold $(N,g_N)$ with positive scalar curvature appropriately normalised, the sine-cone $SC(N)$ over $N$ is the product $[0,\pi] \times N$ endowed with the metric $g^{\textup{sc}}=dr^2 + \sin^2{r}\, g_N$. This has two isolated singularities at $r=0$ and $\pi$ modelled on the cone over $N$. The sine-cone construction has a very simple geometric interpretation in terms of cones: the cone over $SC(N)$ 
is Ricci-flat and is isometric to the product of the Ricci-flat metric cone $C(N)$ over $N$ and the real line. Specialising to seven dimensions, the further requirement that the holonomy of the cone over $SC(N)$ be contained in \gtwo forces $C(N)$ to be a $6$--dimensional Calabi--Yau cone. The induced geometric structure on the cross-section $N$, analogous to the \nk condition for a \gtwo--cone, is called a \se structure. Hence the existence of infinitely many \se $5$--manifolds, see for example \cite{Sparks:survey}, yields infinitely many $6$--dimensional \nk sine-cones.

Not every \nk sine-cone is a good starting point for constructing smooth \nk $6$--manifolds via the desingularisation method. From the viewpoint of singular perturbation methods it is natural to consider only Calabi--Yau cones that admit asymptotically conical Calabi--Yau desingularisations. Many such examples are now known \cite{Conlon:Hein:Duke}. The simplest such cone is the conifold, which is well known to admit two Calabi--Yau  desingularisations: the Candelas--de la Ossa structure on the small resolution \cite{Candelas:delaOssa} and the structure on the smoothing due to Candelas--de la Ossa and Stenzel \cite{Candelas:delaOssa,Stenzel}. The Calabi--Yau structures on the conifold itself and on both of its desingularisations are cohomogeneity one. The group acting is $\sunitary{2}\times\sunitary{2}$ and the generic (principal) orbit is diffeomorphic to $\sunitary{2}\times\sunitary{2}/\triangle\unitary{1}\simeq S^2\times S^3$ in all three cases. The singularity of the conifold is replaced by a round totally geodesic holomorphic $S^2$ in the small resolution and by a round totally geodesic special Lagrangian $S^3$ in the smoothing. In both cases these totally geodesic spheres are the unique lower-dimensional (singular) orbits. 

The cross-section of the conifold is $S^2\times S^3$ endowed with its homogeneous \se structure. The sine-cone over it is a cohomogeneity one \nk space with two isolated singularities modelled on the conifold. One could try to construct an approximate solution to \eqref{eq:NK} on manifolds obtained by gluing rescaled copies of either desingularisation of the conifold into neighbourhoods of each singular point. Since both the singular background and the ``bubbles'' we glue in are of cohomogeneity one, this raises the question of whether complete cohomogeneity one \nk structures exist on such manifolds.

Podest\`a and Spiro initiated the study of cohomogeneity one \nk $6$--manifolds. In \cite{Podesta:Spiro:I} they classified the possible group actions, principal and singular orbits and diffeomorphism-types of complete simply connected cohomogeneity one \nk $6$--manifolds. In fact, the only case of possible interest is exactly the case described above: the principal orbit type is $\sunitary{2}\times\sunitary{2}/\triangle\unitary{1}$; there are two singular orbits, which are spheres of either $2$ or $3$ dimensions; the compact $6$--manifold is obtained by identifying neighbourhoods of the two singular orbits along their boundary $S^2 \times S^3$; these neighbourhoods are $\sunitary{2}\times\sunitary{2}$--equivariantly diffeomorphic to the small resolution or to the smoothing of the conifold; the four resulting manifolds are $S^6$,  $S^3\times S^3$, $\CP^3$ and $S^2 \times S^4$. In a second paper \cite{Podesta:Spiro:II} they studied this case in more detail, but were unable to establish the existence of new complete \nk structures.   

The \nk structures on $S^6$ and $S^3\times S^3$ that we construct in the Main Theorem are of cohomogeneity one and in fact we conjecture these are the unique (inhomogeneous) complete cohomogeneity one \nk $6$--manifolds. 
In particular, we conjecture that $S^2 \times S^4$ carries no cohomogeneity one \nk structure and $\CP^3$ only its homogeneous one.  

In the rest of the Introduction we give the plan of the paper, which serves at the same time as a detailed outline of the proof of the Main Theorem. 
The techniques we use in the paper are cohomogeneity one methods. 
Unlike the construction of, say, cohomogeneity one Sasaki--Einstein metrics \cite{gauntlett:SE,Conti:SE} we do not find explicit closed-form 
expressions for our new nearly K\"ahler structures  and in this sense the theorem 
is an abstract existence result (see however the final section of the paper). 
Instead the paper is more in the spirit of B\"ohm \cite{Bohm:Spheres,Bohm:Complete}.
The desingularisation intuition, however, provides crucial geometric insight when applying the cohomogeneity one methods, 
in particular in the consideration of certain geometrically-motivated singular limits and rescalings.

\subsection*{Plan of the paper}

In the same way that an oriented hypersurface of a $7$--manifold with a \gtwo--structure admits an induced \sunitary{3}--structure, any oriented hypersurface of a $6$--manifold endowed with an \sunitary{3}--structure $(\omega,\Omega)$ comes naturally equipped with an \sunitary{2}--structure. When $(\omega,\Omega)$ satisfies differential equations such as \eqref{eq:NK}, so does the induced \sunitary{2}--structure. The \sunitary{2}--structures induced on oriented hypersurfaces of Calabi--Yau and \nk $6$--manifolds are called \emph{hypo} and \emph{nearly hypo} structures, respectively. In the spirit of Hitchin \cite{Hitchin:stable:forms}, away from the singular orbits we regard a cohomogeneity one \nk (Calabi--Yau) $6$--manifold as a curve in the space of invariant \nh (hypo) structures on the principal orbit. This curve must satisfy a system of first order ODEs.

In Section \ref{sec:nearly:hypo} we parametrise the space of \nh structures on $S^2\times S^3$ invariant under $\sunitary{2} \times \sunitary{2}$, showing that it is a smooth connected $4$--manifold. In Section \ref{sec:Coho1:NK} we derive the ODE system \eqref{eq:nk:odes:t} whose solutions are cohomogeneity one \nk structures. We note the existence of continuous and discrete symmetries of this system; the latter will play an important role in the proof of the Main Theorem. Up to the action of these symmetries, we find a $2$--parameter family of $\sunitary{2}\times \sunitary{2}$--invariant cohomogeneity one \nk structures on the product of some interval with $S^2 \times S^3$. This gives an alternative more geometric derivation of results contained in \cite{Podesta:Spiro:II}. We have been unable to find a closed form for the general solution of these ODEs, but there are four explicit solutions which correspond to the homogeneous \nk structures on $S^6$, $\CP^3$ and $S^3 \times S^3$ and to the sine-cone over the invariant \se structure on $S^2 \times S^3$. The latter two play a role in the proof of the Main Theorem.

The generic solution in this $2$--parameter family does not extend to a complete \nk structure on a closed $6$--manifold. In Section \ref{sec:closing} we understand necessary conditions for such an extension to be possible. Based on the desingularisation philosophy, close to the sine-cone we might expect to find two $1$--parameter families of local cohomogeneity one smooth \nk structures modelled on the two different Calabi--Yau desingularisations of the conifold. We prove that this is indeed the case; the proof consists of two steps. In Section \ref{sec:closing}, studying singular initial value problems for the ODE system \eqref{eq:nk:odes:t}, we prove the existence of two \mbox{$1$--parameter} families of solutions $\{ \Psi_a \}_{a>0}$ and $\{ \Psi_b\}_{b>0}$ that extend smoothly over a singular orbit $S^2$ or $S^3$, respectively. In both cases the parameter $a$ or $b$ measures the size of the singular orbit, but, unlike the Calabi--Yau case, the parameter does not arise from an overall rescaling and instead represents a nontrivial deformation.
Any cohomogeneity one \nk structure that extends to a complete manifold must belong to (at least) one of these families. In the first part of Section \ref{sec:limits} we then confirm the expectation that these families are \nk deformations of the Calabi--Yau desingularisations of the conifold. More precisely, in the limit where the size of the singular orbit tends to zero, suitably rescaled, $\Psi_a$ and $\Psi_b$ converge to the Calabi--Yau structures on the small resolution and the smoothing, respectively.

Since any complete cohomogeneity one \nk manifold has two singular orbits, it would necessarily contain a (unique) principal orbit of maximal volume. This gives a further necessary condition for a member of $\{ \Psi_a \}_{a>0}$ or $\{ \Psi_b\}_{b>0}$ to admit a smooth \nk completion. The space of invariant \nh structures on $S^2 \times S^3$ that could potentially arise as such a maximal volume orbit is a smooth submanifold of codimension $1$ in the space of all invariant \nh structures. The main result of Section \ref{sec:Orbital:volume} is that in fact every member of both families $\{ \Psi_a \}_{a>0}, \{ \Psi_b\}_{b>0}$ admits a unique maximal volume orbit. The proof of this uses a continuity argument relying on a compactness result for the space of maximal volume orbits with a given upper bound on volume. The discrete symmetries of the ODE system \eqref{eq:nk:odes:t} play an important role in the matching construction described below. To this end we also determine the fixed locus of these symmetries in the space of maximal volume orbits.

The existence of maximal volume orbits now becomes a tool to detect which solutions in the families $\{ \Psi_a \}_{a>0}, \{ \Psi_b\}_{b>0}$ extend to a complete cohomogeneity one \nk structure. Topologically the resulting closed $6$--manifold is described as the union of neighbourhooods of the two singular orbits identified along their boundary. Hence we consider pairs of solutions in the two $1$--parameter families and try to match them across a principal orbit using the discrete symmetries of the ODE system \eqref{eq:nk:odes:t}. The maximal volume orbit provides a geometrically preferred slice to carry out this ``gluing''. The necessary matching conditions are stated in the Doubling and Matching Lemmas \ref{lem:Doubling} and \ref{lem:Matching} at the end of Section \ref{sec:Orbital:volume}. These are best formulated in terms of two continuous curves $\alpha$ and $\beta$ parametrising the maximal volume orbits of $\{ \Psi_a \}_{a>0}$ and $\{ \Psi_b\}_{b>0}$, respectively. Complete cohomogeneity one \nk manifolds are in one-to-one correspondence with intersection points of the two curves, self-intersection points of either curve and points on the curves lying in the fixed locus of the discrete symmetries in the space of invariant maximal volume orbits. The diffeomorphism type of the corresponding closed cohomogenity one \nk $6$--manifold is determined by the pair of singular orbits and the discrete symmetry used to identify the pair of solutions to \eqref{eq:nk:odes:t} across their maximal volume orbit.

The proof of the Main Theorem is now reduced to the problem of describing the behaviour of the curves $\alpha$ and $\beta$. The explicit solutions to \eqref{eq:nk:odes:t} corresponding to the homogeneous \nk structure on $S^6$, $\CP^3$ and $S^3 \times S^3$ already yield distinguished points on the curves. In Section \ref{sec:limits} we describe the limit of $\alpha$ and $\beta$ for small values of the parameter: as expected from the desingularisation philosophy, $\Psi_a$ and $\Psi_b$ converge to the sine-cone as $a$ and $b$ tend to zero. The proof of this fact makes use of a functional $\bohm$ introduced by B\"ohm in \cite{Bohm:Complete} as a Lyapunov function for the ODE system describing cohomogeneity one Einstein metrics. Since the space of invariant metrics in our setting does not reduce to relative rescalings along distinct subspaces (there are isomorphic components 
in the isotropy representation of the principal orbit and therefore ``non-diagonal'' terms in the metric), it is not clear that $\bohm$ is a Lyapunov function for the system \eqref{eq:nk:odes:t}. However, we establish that the functional $\bohm$ restricted to the space of invariant maximal volume orbits has an absolute minimum at the homogeneous \se structure on $S^2 \times S^3$. This is enough to establish the convergence of $\Psi_a$ and $\Psi_b$ to the sine-cone as $a$ and $b$ tend to zero using the convergence of the bubbles to the Calabi--Yau desingularisations we have already proved.

Using a comparison argument for solutions of Sturm--Liouville equations, in Section \ref{sec:S3xS3} we compare $\Psi_a$ and $\Psi_b$ for small $a$ and $b$ with a solution of the linearisation of \eqref{eq:NK} on the sine-cone. This comparison argument yields enough information on the curve $\beta$ to prove the existence of the complete \nk structure on $S^3 \times S^3$ given in the Main Theorem. The fact that this is inhomogeneous follows from a curvature computation.

The existence of at least two complete cohomogeneity one \nk structures on $S^3 \times S^3$ has the following consequence: an arc of the curve $\beta$ together with its image under the discrete symmetries form the boundary of a bounded region in the space of invariant maximal volume orbits containing the homogeneous \se structure on $S^2 \times S^3$. By the convergence of $\Psi_a$ to the sine-cone as $a \ra 0$, the curve $\alpha$ starts inside this region. In Section \ref{sec:S6} we prove that $\alpha$ is unbounded as the parameter $a \ra \infty$. The proof is based on a less geometric ad hoc rescaling argument suggested by the actual shape of the Taylor series of $\Psi_a$. We conclude that up to discrete symmetries the curves $\alpha$ and $\beta$ must intersect in at least two points; one of these corresponds to the homogeneous \nk structure on $S^6$. The second intersection point yields a complete cohomogeneity one \nk structure on $S^6$ which is shown to be inhomogeneous by a curvature computation.

The proof of the Main Theorem is now complete. In fact we conjecture that the theorem yields all (inhomogeneous) complete cohomogeneity one \nk structures. In Section \ref{sec:numerics} we provide some numerical evidence for this conjecture and some further information about the new complete cohomogeneity one solutions that we have 
obtained as part of a systematic numerical study. 

\medskip
The authors would like to thank Bobby Acharya, Robert Bryant, Simon Donaldson, Hans-Joachim Hein, David Morris, Johannes Nordstr\"om and Simon Salamon 
for stimulating discussions related to this paper. They would also like to thank Ron Stern for raising the possibility that $S^2 \times S^4$ could arise 
as a cohomogeneity one $\sunitary{2} \times \sunitary{2}$--invariant \nk \mbox{$6$--manifold}. 
MH would like to thank EPSRC for their continuing support of his research under Leadership Fellowship EP/G007241/1 and his Developing Leaders Grant EP/L001527/1. 
MH would also like to thank the Simons Center for Geometry and Physics at Stony Brook for hosting him several times, particularly for his long visit during Summer 2014.

\section{Cohomogeneity one \sunitary{3}--structures and homogeneous \sunitary{2}--structures}\label{sec:nearly:hypo}
As explained in the Introduction we adopt Hitchin's approach in our study of cohomogeneity one \nk $6$--manifolds, \ie 
we concentrate on the geometry induced on (invariant) hypersurfaces. 
In the context of $6$--manifolds with an \sunitary{3}--structure this approach has been pursued by Conti and Salamon \cite{Conti:Salamon} 
and by Fernandez \etal \cite{Fernandez:nearly:hypo} in the Calabi--Yau and \nk settings, respectively.  
This section thus starts by recalling the basic definitions of \sunitary{2}--structures concentrating on those that arise as hypersurfaces in 
Calabi--Yau and \nk $6$--manifolds: \emph{hypo} and \emph{nearly hypo} structures respectively. 
For applications to the construction of complete compact cohomogeneity one \nk $6$--manifolds 
it follows from the results of Podest\`a and Spiro \cite{Podesta:Spiro:I} (recalled in more detail at the beginning of Section \ref{sec:Coho1:NK}) that 
the only interesting case is that of \nh structures on $S^2 \times S^3$ invariant under the action of 
$\sunitary{2} \times \sunitary{2}$ with isotropy group $\Delta \unitary{1}$. 
The main results of the section are therefore those about $\sunitary{2} \times \sunitary{2}$--invariant \nh 
structures on $S^2 \times S^3$, especially Proposition \ref{prop:Invariant:nearly:hypo:structures}, and related results about invariant hypo structures
(Proposition \ref{prop:invariant:hypo:structures} and Theorem \ref{thm:AC:CY}).

\subsection{Hypersurfaces of $6$--manifolds with an \sunitary{3}--structure}

We recall from Conti--Salamon \cite[\S 1]{Conti:Salamon} some basic facts about the geometry of orientable hypersurfaces of a $6$--manifold endowed with an 
\sunitary{3}--structure.

Let $M^{6}$ be a $6$--manifold endowed with an \sunitary{3}--structure $(\omega, \Omega)$. An orientable hypersurface $N^{5} \hookrightarrow M$ is naturally endowed with an \sunitary{2}--structure, \ie an $\sunitary{2}$--reduction of the frame bundle of $N$. This is equivalent to the existence of a quadruple $(\eta, \omega _{1}, \omega _{1}, \omega _{3})$ where $\eta$ is a nowhere-vanishing $1$--form and $\omega_i$ are $2$--forms on $N$ satisfying 
\begin{enumerate}[label=(\roman*)]
\item \label{sd:triple} $\omega _{i} \wedge \omega _{j} = \delta _{ij}v$, where $v$ is a fixed $4$--form such that $\eta \wedge v \neq 0$;
\item \label{su2:orientation} $X \lrcorner\, \omega _{1}=Y\lrcorner\, \omega _{2} \Rightarrow \omega _{3}(X,Y) \geq 0$.
\end{enumerate}
The quadruple $(\eta,\omega_1,\omega_2,\omega_3)$ is given in terms of the \sunitary{3}--structure $(\omega,\Omega)$ and the unit normal $\nu$ to $N$  via
\begin{equation}
\label{eq:su2:hyper}
\eta = -\nu \lrcorner\, \omega, \qquad \omega _{1} = \omega|_{N}, \qquad \omega _{2}+i\omega _{3} = -i\nu \lrcorner \,\Omega.
\end{equation}
Conversely, given an \sunitary{2}--structure on $N$ we can define an \sunitary{3}--structure $(\omega,\Omega)$ on $N \times \R$ by
\[
\omega = \omega_1 + \eta \wedge dt, \qquad \Omega = (\omega_2 + i \omega_3) \wedge (\eta + i dt),
\]
where $t$ is a coordinate on $\R$.

Since $\sunitary{2} < \sorth{4} = \sunitary{2}^{+}\cdot \sunitary{2}^{-} < \sorth{5}$ there is a unique metric $g$ and orientation  on $N$ compatible with any $\sunitary{2}$--structure. At each point $x \in N$ the nowhere-zero $1$--form $\eta$ defines a splitting $T_{x}N \simeq \R \oplus \ker{\eta _{x}}$. The metric $g$ and orientation on $N$ determine a metric and orientation on the 
$4$--plane field $H:=\ker{\eta}$  and hence the space of ``horizontal'' $2$--forms $\Lambda ^{2}H^{\ast}$ splits as a direct sum of self-dual and anti-self-dual horizontal forms: 
$\Lambda ^{2}H^{\ast}=\Lambda^+ \oplus \Lambda^-$.
A triple $(\omega _{1},\omega _{2},\omega _{3})$ satisfying \ref{sd:triple} determines a trivialisation of $\Lambda ^{+}$ and therefore a reduction of the structure group from $\sorth{4}=\sunitary{2}^{+}\cdot \sunitary{2}^{-}$ to $\sunitary{2}^{-}$. In fact, we can always assume that $(\omega _{1},\omega _{2},\omega _{3})$ is an oriented basis of $\Lambda ^{+}$ with respect to the natural orientation induced from the orientation of $\ker{\eta}$; this gives condition \ref{su2:orientation} above.

\begin{remark}
\label{rmk:discrete:sym:su2}
For future reference we make the elementary observations that if $(\eta,\omega_1,\omega_2,\omega_3)$ is an \sunitary{2}--structure on $N$ then so are the quadruples
\begin{subequations}
\label{eq:involutions}
\begin{gather}
\label{eq:tau1}
\tau_1(\eta,\omega_1,\omega_2,\omega_3) := (-\eta,\omega_1,-\omega_2,-\omega_3),\\
\intertext{and}
\label{eq:tau2}
\tau_2(\eta,\omega_1,\omega_2,\omega_3):= (-\eta,-\omega_1,\omega_2,-\omega_3).
\end{gather}
\end{subequations}
The involutions $\tau_1$ and $\tau_2$ have the following interpretations at the level of \sunitary{3}--structures. 
First note that if $(\omega,\Omega)$ is an \sunitary{3}--structure then so is $(-\omega,-\overline{\Omega})$. 
Given an oriented hypersurface $(N,\nu) $ of the \sunitary{3}--manifold $(M,\omega,\Omega)$ 
besides the \sunitary{2}--structure $(\eta,\omega_1,\omega_2,\omega_3)$ defined by \eqref{eq:su2:hyper} 
we can consider two  alternative $\sunitary{2}$--structures on $N$:
one in which we endow $N$ with the opposite orientation $-\nu$ and $M$ with its original \sunitary{3}--structure
and the other in which we 
endow $N$ with its original orientation $\nu$ and $M$ with the \sunitary{3}--structure $(-\omega,-\overline{\Omega})$. Notice however that both symmetries change the orientation of the hypersurface $N \subset M$. Indeed, in the latter case changing $\omega$ into $-\omega$ changes the orientation on $M$ and therefore, since $\nu$ is kept fixed, the one on $N$. Using \eqref{eq:su2:hyper} we see that these two \sunitary{2}--structures differ from the original one by the action of the involutions $\tau_1$ and $\tau_2$ respectively. 
All three \sunitary{2}--structures clearly induce the same Riemannian metric on $N$.
\end{remark}

\begin{remark}
\label{rmk:su2:spinors}
Conti--Salamon also explain how to understand \sunitary{2}--structures on $5$--manifolds in terms of spin geometry. Underlying this is the low dimensional isomorphism $\spin{5} \cong \Sp{2}$ and the fact 
that the spinor representation of \spin{5} is isomorphic to the fundamental representation of \Sp{2} on $\HH^2$. Hence the isotropy subgroup of a nonzero spinor $\psi$ in five dimensions
 is isomorphic to $\Sp{1} \cong \sunitary{2}$.
It follows that an \sunitary{2}--structure on a $5$--manifold $N$ is equivalent to the choice of a spin structure on $N$ and a unit spinor. 
Further aspects of the geometry of \sunitary{2}--structures, including their intrinsic torsion also have formulations in terms of spin geometry. 
In this paper we find it most convenient to phrase everything in terms of differential forms rather than spinors.
However sometimes for a compact notation it will be convenient to refer to an \sunitary{2}--structure on a $5$--manifold $N$ by $\psi$,  
the corresponding \sunitary{3}--structure on $N \times \R$ by $\Psi$ and the restriction of $\Psi$ to $N \times \{t\}$ by $\psi_t$.
\end{remark}

\subsubsection{Conical \sunitary{3}--structures} 
We will be interested in conical, asymptotically conical and conically singular \sunitary{3}--structures, mainly those of Calabi--Yau or \nk type.
Consideration of conical Calabi--Yau structures leads us to the first important special class of \sunitary{2}--structures: \emph{Sasaki--Einstein} structures.

The (smooth) metric cone $C(N)$ over a smooth Riemannian manifold $(N,g_N)$ is the noncompact manifold $\R^+ \times N$ endowed with the incomplete Riemannian metric 
$g_C = dr^2 + r^2 g_N$, where $r>0$ denotes the (radial) coordinate on $\R^+$.
Smooth metric cones provide the local models for the simplest isolated singularities of Riemannian metrics. Any smooth metric cone $C(N)$  admits a $1$--parameter family of dilations preserving the cone metric $g_C$. If $N$ possesses  additional geometric structure, \eg a $G$--structure, we can make the additional demand that the $1$--parameter family of dilations of the metric cone $C(N)$ act on the extra geometric structure in the obvious way.

Motivated by this,  given an \sunitary{2}--structure $(\eta, \omega_i)$ on a $5$--manifold $N$ we define a conical $\sunitary{3}$--structure on $C(N)$ via
\begin{equation}
\label{eq:conical:su2}
\omega_C = r \eta \wedge dr + r^2 \omega_1, \quad \Omega_C = r^2 (\omega_2+i \omega_3) \wedge (r\eta + i dr).
\end{equation}
The metric induced by the conical \sunitary{3}--structure $(\omega_C,\Omega_C)$ is the cone metric $g_C$ associated with the Riemannian metric $g_N$ 
induced by the \sunitary{2}--structure on $N$.

\begin{definition}
\label{def:SE}
An \sunitary{2}--structure $(\eta, \omega_1, \omega_2, \omega_3)$ on a $5$--manifold $N$ is called \emph{Sasaki--Einstein} if it satisfies
\begin{equation}\label{eqn:Sasaki:Einstein}
d\eta = -2 \omega_1, \qquad d\omega_2 = 3\eta \wedge \omega_3, \qquad d\omega_3 = -3 \eta \wedge \omega_2.
\end{equation}
Equation \eqref{eqn:Sasaki:Einstein} is equivalent to requiring that the conical \sunitary{3}--structure defined by \eqref{eq:conical:su2} 
be Calabi--Yau, \ie $d\omega_C=d \Omega_C=0$.
\end{definition}

\begin{remark*}
The involution $\tau_2$ defined in \eqref{eq:tau2} preserves the \se equations while the involution $\tau_1$ defined in \eqref{eq:tau1} reverses the signs in all three equations in  \eqref{eqn:Sasaki:Einstein}. Furthermore, the complex $2$--form $\omega_2+i\omega_3$ can be multiplied by any complex number of unit norm while preserving \eqref{eqn:Sasaki:Einstein}. In other words, $(\omega_C,e^{i\theta}\Omega_C)$ for $e^{i\theta} \in \Sph^1$ define different conical Calabi-Yau structures on $C(N)$ inducing the same cone metric.
\end{remark*}

\subsection{$\sunitary{2} \times \sunitary{2}$--invariant \sunitary{2}--structures on $S^2 \times S^3$}

Consider now a (not necessarily complete or compact) $6$--manifold $M$ with an \sunitary{3}--structure preserved by a cohomogeneity one isometric action of a compact Lie group $G$. For reasons explained at the beginning of Section \ref{sec:Coho1:NK}, we will assume $G=\sunitary{2} \times \sunitary{2}$ and that a dense open set $M^\ast\subset M$ is diffeomorphic to the product of an interval with the $5$--dimensional homogeneous space $\Ndiag = \sunitary{2}\times\sunitary{2}/\triangle\unitary{1} \simeq S^2\times S^3$. In the spirit of Hitchin's work \cite{Hitchin:stable:forms}, we will regard the \sunitary{3}--structure on $M^\ast$ as a $1$--parameter family of \sunitary{2}--structures on $\Ndiag$ invariant under the action of $\sunitary{2}\times\sunitary{2}$.

Fix a basis $H, E, V$ of $\Lie{su}(2)$ with Lie brackets
\[
[H,E]=V, \qquad [H,V]=-E, \qquad [E,V]=\tfrac{1}{2}H.
\]
Let $U^+=(H,H)$ be the generator of the Lie algebra of $\Delta \unitary{1}$. The vectors
\begin{equation}\label{eqn:Invariant:vector:fields}
U^-=(H,-H), \qquad E_{1}=(E,0), \qquad E_{2}=(0,E), \qquad V_{1}=(V,0), \qquad V_{2}=(0,V),
\end{equation}
on $\Lie{su}(2) \oplus \Lie{su}(2)$ define left-invariant vector fields on $\Ndiag$. Let $u^-, e_{1}, e_{2}, v_{1}, v_{2}$ be the corresponding co-frame.

There is a distinguished $\sunitary{2} \times \sunitary{2}$--invariant \sunitary{2}--structure on $\Ndiag$: the Sasaki--Einstein structure on $S^2 \times S^3$ that gives rise to the conical Calabi--Yau metric on the \emph{conifold} $\{ z_{1}^{2}+z_{2}^{2}+z_{3}^{2}+z_{4}^{2} =0 \}$. In terms of the above basis the standard Sasaki--Einstein structure on $\Ndiag$ is
\begin{equation}\label{eq:invt:Sasaki:Einstein}
\etase = \frac{2}{3}u^-, \quad \omegase_1= \frac{1}{12}(e_{1} \wedge v_{1}-e_{2}\wedge v_{2}), \quad \omegase_2 = \frac{1}{12}(e_{1} \wedge v_{2} + e_{2} \wedge v_{1}), \quad \omegase_3 = \frac{1}{12}(e_{1} \wedge e_{2} + v_{1} \wedge v_{2}).
\end{equation}
Observe that \eqref{eqn:Sasaki:Einstein} are not scale-invariant, so the numerical factors here are forced on us once we fix a basis of $\Lie{su}(2)$. 

There is in fact a circle of invariant Sasaki--Einstein structures inducing the same metric due to the freedom of changing the phase of the complex $2$--form $\omegase_2+i\omegase_3$. These are all equivalent because the flow of the Reeb vector field $U^-$ preserves $\etase$ and $\omegase_1$ and acts as a rotation in the $(\omegase_2,\omegase_3)$--plane.

\enlargethispage{2\baselineskip}
\begin{lemma}\label{lem:invariant:forms}
$\etase$ is the unique (up to scale) invariant $1$--form on $\Ndiag$. In particular, the distribution $\ker{\eta}$ is independent of the choice of invariant \sunitary{2}--structure on $\Ndiag$.

Fix the volume form $v=\omegase_1\wedge\omegase_1$ on $\ker{\etase}$. The space of invariant self-dual $2$--forms on $\ker{\etase}$ is $3$--dimensional, spanned by $\omegase_1, \omegase_2, \omegase_3$, and there exists a unique invariant anti-self-dual $2$--form on $\ker{\etase}$ up to scale,
\[
\omegase_0=\frac{1}{12}(e_{1}\wedge v_{1} + e_{2} \wedge v_{2}).
\]
Furthermore, $\omegase_0$ is closed.
\proof
Write $\lspan (U^-,E_{1},E_{2},V_{1},V_{2})$ as $\R U^- \oplus \Lie{n}_{1} \oplus \Lie{n}_{2}$, where $\Lie{n}_{i}=\lspan (E_{i},V_{i})$. Then $\Lie{n}_{i} \simeq \Lie{n}$ as $\Delta \unitary{1}$--representations, where $\Lie{n}$ is the complex $1$--dimensional representation of \unitary{1} with weight $1$. Therefore as \unitary{1}--representations $\Lambda ^{2}\Lie{n}_{i}^{\ast} \simeq \R (e_{i} \wedge v_{i})$ and $\Lie{n}_{1} \otimes \Lie{n}_{2} \simeq \Lambda ^{2}\Lie{n}^{\ast} \oplus \R \text{id} \oplus \text{Sym}_{0}(\Lie{n})$, where $\text{Sym}_{0}(\Lie{n})$ is the complex $1$--dimensional representation of weight $2$.

$\Ndiag$ is a circle bundle over $S^2 \times S^2$. The anti-self-dual $2$--form $\omegase_0$ is a multiple of the pull-back of the K\"ahler--Einstein metric on $\BCP^1\times\BCP^1$ and is therefore closed.
\endproof
\end{lemma}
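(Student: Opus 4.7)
The natural approach is purely representation-theoretic. Since $\Ndiag\cong(\sunitary{2}\times\sunitary{2})/\Delta\unitary{1}$, the $\sunitary{2}\times\sunitary{2}$--invariant $k$--forms on $\Ndiag$ correspond to $\Delta\unitary{1}$--invariant elements of $\Lambda^k\mathfrak{m}^\ast$, where $\mathfrak{m}$ is any $\Delta\unitary{1}$--stable complement to $\R U^+$ in $\Lie{su}(2)\oplus\Lie{su}(2)$. The first step is to decompose $\mathfrak{m}$ under $\Delta\unitary{1}$. Because $[U^+,U^-]=0$, the line $\R U^-$ is a trivial summand; the bracket relations $[U^+,E_i]=V_i$ and $[U^+,V_i]=-E_i$ show that each $\Lie{n}_i=\lspan(E_i,V_i)$ is isomorphic to the standard weight--$1$ complex representation $\Lie{n}$ of $\unitary{1}$. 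Hence $\mathfrak{m}=\R U^- \oplus \Lie{n}_1 \oplus \Lie{n}_2$.

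From this decomposition the invariant $1$--forms live only in the trivial summand, so $(\mathfrak{m}^\ast)^{\Delta\unitary{1}}=\R u^-$; this proves the uniqueness of $\etase$ up to scale and, as an immediate consequence, the independence of $\ker\eta$ from the choice of invariant \sunitary{2}--structure. For the $2$--forms on $\ker\etase=\Lie{n}_1\oplus\Lie{n}_2$, I would use the decomposition $\Lambda^2(\Lie{n}_1\oplus\Lie{n}_2)^\ast = \Lambda^2\Lie{n}_1^\ast \oplus \Lambda^2\Lie{n}_2^\ast \oplus (\Lie{n}_1^\ast\otimes\Lie{n}_2^\ast)$. Each $\Lambda^2\Lie{n}_i^\ast$ is one--dimensional trivial, contributing the invariant $e_i\wedge v_i$. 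The tensor product $\Lie{n}_1\otimes\Lie{n}_2$, after complexification, is a direct sum of a weight--$2$ and a weight--$0$ summand, so its invariant real subspace is two--dimensional; a short direct computation identifies a basis as $e_1\wedge e_2+v_1\wedge v_2$ and $e_1\wedge v_2 - v_1\wedge e_2$. In total, the space of invariant $2$--forms on $\ker\etase$ is four--dimensional.

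To distinguish self-dual from anti-self-dual invariants I would verify directly that $\omegase_1,\omegase_2,\omegase_3$ satisfy $\omegase_i\wedge\omegase_j=\delta_{ij}v$, a short wedge-product computation in the $(e_i,v_i)$ basis. This exhibits them as a basis of the invariant self-dual $2$--forms; the one--dimensional orthogonal complement inside the four--dimensional invariant space is then anti-self-dual, and is readily identified with the span of $\omegase_0$. For the closedness of $\omegase_0$, the key observation is that, in addition to being $\Delta\unitary{1}$--invariant, it is also invariant under $U^-$: although $U^-$ acts oppositely on $\Lie{n}_1$ and $\Lie{n}_2$, it preserves each of $e_1\wedge v_1$ and $e_2\wedge v_2$. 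Hence $\omegase_0$ is invariant under the full $\unitary{1}\times\unitary{1}$ and descends along the circle bundle $\Ndiag\to S^2\times S^2$; the descended form must coincide, up to scale, with the sum of the area forms on the two $\BCP^1$--factors, which is manifestly closed. The only step that needs real care is the sign and orientation bookkeeping required to confirm that $\omegase_0$ is anti-self-dual for the orientation induced by the \sunitary{2}--structure, but this is routine and I anticipate no genuine obstacle.
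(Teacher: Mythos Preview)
Your proof is correct and follows essentially the same approach as the paper: decompose $\mathfrak{m}$ as $\R U^-\oplus\Lie n_1\oplus\Lie n_2$ under $\Delta\unitary{1}$, read off the invariant $1$-- and $2$--forms from the weight decomposition, and obtain the closedness of $\omegase_0$ by recognising it as the pull-back of the K\"ahler form on $S^2\times S^2$ along the circle bundle $\Ndiag\to S^2\times S^2$. The paper omits the intermediate verifications you spell out (self-duality via $\omegase_i\wedge\omegase_j=\delta_{ij}v$ and $U^-$--invariance of $\omegase_0$), but the argument is the same.
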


Endow the $4$--dimensional vector space of invariant $2$--forms on $\ker{\etase}$ with the inner product $\langle \omega, \omega' \rangle v = \omega \wedge \omega '$ of signature $(1,3)$.

\begin{prop}\label{prop:invariant:SU(2):structures}
The space of invariant $\sunitary{2}$--structures on $\Ndiag$ inducing the same orientation as the standard Sasaki--Einstein structure can be identified with $\R ^{+} \times \R ^{+} \times \lorentz{3}$.
\proof
If $(\eta, \omega_i)$ is an invariant \sunitary{2}--structure on $\Ndiag$ then 
\begin{equation}
\label{eq:eta:invt}
\eta = \lambda \etase
\end{equation}
for some $\lambda \neq 0$. The choice of orientation implies that $\lambda >0$. The $2$--forms $\omega_1, \omega_2, \omega_3$ yield an oriented orthogonal basis of the space-like $3$--dimensional subspace $\lspan{(\omega _{1},\omega _{2},\omega _{3})}$ of $\Lambda ^{2}(\ker{\etase})^{\ast}$. Moreover $|\omega_i|$ is independent of $i$. Therefore up to scale the triple $(\omega_1, \omega_2, \omega_3)$ represents a point in the Lorentzian Stiefel manifold $V_{0}(3;\R ^{1,3}) \simeq \lorentz{3}$. Indeed, we can always complete $(\omega _{1},\omega _{2},\omega _{3})$ to a basis of $\R ^{1,3}$ with the unique invariant $2$--form $\omega _{0}$ satisfying $\langle \omega _{0}, \omega _{i} \rangle =0$, $|\omega _{0}|^{2}=-|\omega _{i}|^{2}$ and $\langle \omega _{0}, \omegase_0 \rangle >0$. Then 
\begin{equation}
\label{eq:omega:invt}
(\omega _{0}, \omega_1, \omega_2, \omega _{3})= \mu A\, (\omegase _{0}, \omegase_1, \omegase_2, \omegase _3)
\end{equation}
for some $\mu >0$ and $A \in \lorentz{3}$. Here $A\in \lorentz{3}$ is interpreted as an endomorphism of the space of invariant $2$--forms on $\ker{\etase}$ with respect to the basis $\{\omegase _{0}, \omegase_1, \omegase_2, \omegase _3\}$.
\endproof
\end{prop}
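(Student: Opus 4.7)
The plan is to use Lemma \ref{lem:invariant:forms} to decompose the data of an invariant \sunitary{2}--structure $(\eta, \omega_1, \omega_2, \omega_3)$ on $\Ndiag$ into three independent pieces: a positive scalar $\lambda$ fixing the $1$--form, a positive scalar $\mu$ fixing the common length of the $\omega_i$, and a Lorentz rotation $A \in \lorentz{3}$ relating the triple to the reference Sasaki--Einstein one.

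The $1$--form piece is immediate: since the space of invariant $1$--forms on $\Ndiag$ is $1$--dimensional and spanned by $\etase$, any invariant $\eta$ must be of the form $\lambda \etase$ with $\lambda \neq 0$, and the requirement that the induced orientation agree with the reference one forces $\lambda > 0$. In particular the horizontal distribution $\ker \eta = \ker \etase$ and the volume form $v = \omegase_1 \wedge \omegase_1$ on it are the same for every invariant structure under consideration.

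For the $2$--form piece, Lemma \ref{lem:invariant:forms} provides a $4$--dimensional space of invariant $2$--forms on $\ker \etase$ with basis $(\omegase_0, \omegase_1, \omegase_2, \omegase_3)$, and the bilinear form $\omega \wedge \omega' = \langle \omega, \omega' \rangle v$ restricts to it as a Lorentzian inner product of signature $(1,3)$ in which $\omegase_0$ is the time-like direction and the $\omegase_i$ are space-like of unit square-norm. Condition (i) in the definition of an \sunitary{2}--structure exactly says that $(\omega_1, \omega_2, \omega_3)$ is an orthogonal triple of space-like vectors with a common positive square-norm $\mu^2$; adjoining the unique form $\omega_0$ in the orthogonal complement of the space-like $3$--plane with $|\omega_0|^2 = -\mu^2$ and $\langle \omega_0, \omegase_0 \rangle > 0$ produces a Lorentzian orthogonal basis whose $\mu^{-1}$--rescaling is the image of $(\omegase_0, \omegase_1, \omegase_2, \omegase_3)$ under a unique element $A$ of $\orth{1,3}$.

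The main subtlety, and the only nontrivial step, is to confirm that $A$ in fact lies in the identity component $\lorentz{3}$ of $\orth{1,3}$ rather than in any of the three other components. The time-orientation is pinned down by the positivity convention $\langle \omega_0, \omegase_0 \rangle > 0$, and the space-orientation is pinned down by condition (ii) together with the fixed orientation of $\ker \etase$. These two constraints together rule out the three components of $\orth{1,3}$ that reverse one or both orientations, leaving precisely $\lorentz{3}$. Assembling the three pieces then exhibits the desired bijection between invariant \sunitary{2}--structures inducing the reference orientation and $\R^+ \times \R^+ \times \lorentz{3}$.
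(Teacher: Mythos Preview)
Your proof is correct and follows essentially the same approach as the paper: use Lemma \ref{lem:invariant:forms} to write $\eta=\lambda\etase$ with $\lambda>0$, identify the invariant horizontal $2$--forms with $\R^{1,3}$, read off the common scale $\mu$ from condition (i), complete the triple to a Lorentzian frame via a suitably normalised $\omega_0$, and conclude that the change-of-frame matrix lies in $\lorentz{3}$. Your treatment is in fact slightly more explicit than the paper's in separating the orthochronous constraint (coming from the sign convention on $\omega_0$) from the determinant constraint (coming from condition (ii) and the fixed orientation on $\ker\etase$), whereas the paper packages both into the single assertion that the normalised triple represents a point of the Lorentzian Stiefel manifold $V_0(3;\R^{1,3})\simeq\lorentz{3}$.
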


\begin{remark}
\label{rmk:invt:su2:para}
Given any $(\lambda, \mu, A) \in \R ^{+} \times \R ^{+} \times \lorentz{3}$ 
we will denote the invariant \sunitary{2}--structure $(\eta, \omega_1, \omega_2, \omega_3)$ satisfying \eqref{eq:eta:invt} and \eqref{eq:omega:invt} by $\psi_{\lambda,\mu,A}$.
The choice of notation $\psi$ here is motivated by the spinor reformulation of $\sunitary{2}$--structures alluded to in Remark \ref{rmk:su2:spinors} and the desire for 
a compact notation. From now on we make the standing assumption that our invariant \sunitary{2}-structures satisfy $\lambda>0$, \ie $\psi_{\lambda,\mu,A}$ induces the same orientation as the Sasaki--Einstein structure.
\end{remark}

\begin{remark}
\label{rmk:involution:invt}
The involutions $\tau_1$ and $\tau_2$ defined in \eqref{eq:involutions} act on the set of invariant \sunitary{2}--structures. Adopting the notation of the previous remark $\tau_1$ and $\tau_2$ act as follows:
\[
\tau_1^* \psi_{\lambda, \mu, A} = \psi_{-\lambda, \mu, A T_1}, \quad \tau_2^* \psi_{\lambda, \mu, A} = \psi_{-\lambda, \mu, A T_2}, 
\]
where $T_1, T_2 \in \lorentz{3}$ are defined by $T_1:=\diag{(1,-1,1,-1)}$ and $T_2:=\diag{(1,1,-1,-1)}$. Observe that neither $\tau_1$ nor $\tau_2$ preserves the normalisation $\lambda>0$, but their composition does.
\end{remark}

The left-invariant vector field $U^-$ generates the group of inner automorphisms of $\sunitary{2} \times \sunitary{2}$ that fix $\Delta \unitary{1}$. $U^-$ induces a circle action on the space of invariant \sunitary{2}--structures given by a rotation in the $(\omegase_2, \omegase_3)$--plane. There are also discrete symmetries, \cf Proposition \ref{prop:symmetries} below.

Any invariant \sunitary{2}--structure on $\Ndiag$ determines uniquely an invariant metric. The next result describes  this map explicitly in terms of the parametrisation of invariant \sunitary{2}--structures given in Proposition \ref{prop:invariant:SU(2):structures}.

\enlargethispage{\baselineskip}

\begin{prop}[\mbox{\cf \cite[Lemma 4.1]{Alekseevsky:Dotti:Ferraris}}]\label{prop:Invariant:metrics}\hfill{}
\begin{enumerate}[label=(\roman*)]
\item
The set of invariant metrics on $\Ndiag$ is parametrised by $\R^+ \times \R^+ \times S^+$ where
\[S^+:= \{ w \in \R^{1,3} :  \norm{w}^2=-1, \, w_0>0\}
\]
is the upper hyperboloid in $\R^{1,3}$. We denote the corresponding invariant metric $g_{\lambda,\mu,w}$.
\item
The invariant metrics corresponding to $g_{\lambda, \mu, w}$ and $g_{\lambda, \mu, w'}$, where 
\[w'=(w_{0},w_{1},\cos{\theta}\, w_{2}-\sin{\theta}\, w_{3}, \sin{\theta}\, w_{2}+\cos{\theta}\, w_{3}),\] are isometric.
\item
The map from the invariant \sunitary{2}--structure $\psi_{\lambda,\mu,A}$ to its invariant metric $g_{\lambda,\mu,w}$ 
is given by 
\[
(\lambda, \mu, A) \mapsto (\lambda,\mu,pr_1(A)),
\]
where $pr_1(A) \in S^+$ is the projection of the matrix $A \in \lorentz{3}$ onto its first column.
In particular the set of invariant \sunitary{2}--structures is a principal \sorth{3}--bundle over the space of invariant metrics.
\end{enumerate}
\end{prop}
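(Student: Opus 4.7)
The plan is to prove (i), (ii), (iii) together by analysing how an invariant \sunitary{2}--structure $\psi_{\lambda,\mu,A}$ determines its invariant metric $g$ and identifying precisely which pieces of the data $(\lambda,\mu,A)$ the metric retains. For (iii), by Lemma \ref{lem:invariant:forms} the hyperplane $\ker\eta = \ker\etase$ is independent of the \sunitary{2}--structure, so any invariant metric splits pointwise as a metric on $\ker\etase$ plus a transverse piece. Since the \sunitary{2}--reduction $\sunitary{2}<\sorth{5}$ forces $|\eta|_g = 1$ and $\eta=\lambda\etase$, the transverse piece (equivalently, the norm of the invariant vector field $U^-$ orthogonal to $\ker\etase$) depends only on $\lambda$. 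The metric on $\ker\etase$ is characterised by requiring $(\omega_1,\omega_2,\omega_3)$ to be an oriented orthonormal basis of self-dual $2$--forms, and by the standard identification between oriented conformal structures and splittings $\Lambda^2=\Lambda^+\oplus\Lambda^-$ in dimension four, this amounts to specifying the $3$--dimensional subspace $\Lambda^+\subset\Lambda^2(\ker\etase)^*$ that they span together with the volume form $v_A := \omega_1\wedge\omega_1$.

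By \eqref{eq:omega:invt}, $\omega_0$ is $\mu$ times the vector in $\R^{1,3}$ whose coordinates in the basis $(\omegase_0,\omegase_1,\omegase_2,\omegase_3)$ are given by the first column of $A$; since $A\in\lorentz{3}$, this vector lies in $S^+$ and is by definition $pr_1(A)$. Hence the Lorentzian--orthogonal complement $\Lambda^+ = (\R\omega_0)^\perp$ depends only on $pr_1(A)$. Meanwhile $v_A = \mu^2\,\omegase_1\wedge\omegase_1$ because $A$ preserves the Lorentzian inner product, so $v_A$ depends only on $\mu$. Combining these observations gives (iii).

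For (i), the resulting map $(\lambda,\mu,w)\mapsto g_{\lambda,\mu,w}$ into the space of invariant metrics is a bijection. Injectivity follows by inverting the construction: $\lambda$ is recovered from $|\eta|=1$ and $\eta\propto\etase$, $\mu^2$ from the Riemannian volume form on $\ker\etase$ compared with the fixed reference $\omegase_1\wedge\omegase_1$, and $w$ as the unique element of $S^+$ Lorentzian--orthogonal to $\Lambda^+_g$ (the relevant orthogonal line is timelike, hence meets $S^+$ in a single point). Surjectivity comes from a dimension count: a straightforward $\Delta\unitary{1}$--representation-theoretic analysis shows the space of invariant metrics is $5$--dimensional (one parameter from $g(U^-,U^-)$, two from the diagonal invariants $g(E_i,E_i)=g(V_i,V_i)$ for $i=1,2$, and two from the cross invariants $g(E_1,E_2)=g(V_1,V_2)$ and $g(E_1,V_2)=-g(V_1,E_2)$), matching $\dim(\R^+\times\R^+\times S^+)=5$; combined with the transitivity of $\lorentz{3}$ on $S^+$, this yields the bijection.

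For (ii), the $S^1$ generated by $U^-$ acts on invariant \sunitary{2}--structures by rotation in the $(\omega_2,\omega_3)$--plane, as noted before the Proposition; under the correspondence of (iii) this becomes rotation in the $(w_2,w_3)$--plane of $S^+$. Since $U^-$ is a Killing field for every invariant metric on $\Ndiag$, the time--$\theta$ flow of $U^-$ realises the required isometry between $g_{\lambda,\mu,w}$ and $g_{\lambda,\mu,w'}$. The main technical point is the bookkeeping in (iii), and specifically recognising that the pair $(\Lambda^+,v_A)$ encodes the full metric on $\ker\etase$; once this standard four-dimensional fact is in place, (i) and (ii) fall out formally from Proposition \ref{prop:invariant:SU(2):structures} and the transitivity of $\lorentz{3}$ on $S^+$.
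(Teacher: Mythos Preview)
Your approach to (iii) differs from the paper's but is valid. The paper works with the canonical almost complex structure $\Jse_0=[U^+,\,\cdot\,]$ on $\ker\etase$: any invariant transverse metric $g^T$ is automatically Hermitian with respect to $\Jse_0$ (because $\unitary{1}$--invariant endomorphisms commute with $\Jse_0$), and since $\Jse_0$ induces the opposite orientation its K\"ahler form $\omega_0(X,Y)=g^T(\Jse_0 X,Y)$ is anti-self-dual and invariant. This gives a direct construction of $\omega_0$ from $g^T$, hence an explicit inverse to the map in (iii), and surjectivity of the map from \sunitary{2}--structures to invariant metrics falls out for free. Your route via the standard correspondence between oriented conformal $4$--metrics and the splitting $\Lambda^2=\Lambda^+\oplus\Lambda^-$ is cleaner conceptually but leaves surjectivity in (i) slightly underjustified: the dimension count together with injectivity is not quite enough, and you should add the observation that for an \emph{arbitrary} invariant metric $g^T$ the intersection of $\Lambda^+_{g^T}$ with the $4$--dimensional space of invariant $2$--forms is necessarily $3$--dimensional and spacelike (the wedge form on the invariant $4$--plane has signature $(1,3)$, and self-dual forms have positive square), so the recipe for recovering $w$ applies to every invariant metric.

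There is, however, a genuine error in your argument for (ii). You assert that $U^-$ is a Killing field for every invariant metric, but this is false: the paper notes explicitly (Remark~\ref{rmk:invt:SE}) that the Reeb flow acts nontrivially on generic invariant metrics and is Killing only for the \se metric. Indeed, were $U^-$ Killing for $g_{\lambda,\mu,w}$, the flow would fix the metric, so $g_{\lambda,\mu,w}=g_{\lambda,\mu,w'}$, and your own injectivity in (i) would force $w=w'$, i.e.\ $(w_2,w_3)=(0,0)$. The correct argument is simpler: the Reeb flow $\phi_\theta$ is a diffeomorphism of $\Ndiag$ commuting with the left $\sunitary{2}\times\sunitary{2}$--action (it acts on the $\omegase_i$ by a rotation in the $(\omegase_2,\omegase_3)$--plane, which under your parametrisation is left-multiplication $A\mapsto R_\theta A$ and hence rotates the first column). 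Thus $\phi_\theta^* g_{\lambda,\mu,w}=g_{\lambda,\mu,w'}$ is another invariant metric, and $\phi_\theta$ itself is the required isometry between them. Note also that the paper says rotation in the $(\omegase_2,\omegase_3)$--plane, not the $(\omega_2,\omega_3)$--plane; the latter would be right-multiplication on $A$, which fixes $pr_1(A)$ and is part of the $\sorth{3}$--fibre in (iii) rather than the action in (ii).
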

\begin{proof}
Denote by $\Delta\Lie{u}(1) ^\perp$ the orthogonal complement of  $\Delta\Lie{u}(1)$ in $\Lie{su}_{2} \oplus \Lie{su}_{2}$. In the proof of Lemma \ref{lem:invariant:forms} we have already observed that 
\[
\Delta\Lie{u}(1)^\perp = \lspan{(U^-,E_{1},V_{1},E_{2},V_{2})} \simeq \R \oplus \Lie{n}_{1} \oplus \Lie{n}_{2}
\]
where $\Lie{n}_{i}$ is isomorphic to the standard representation $\Lie{n}$ of \unitary{1}. 
An invariant metric $g$ on $\Ndiag$ can be written as 
\[
g=\lambda ^{2} \etase \otimes \etase + g^{T},
\] where the transverse metric $g^T$ can be thought of as a \unitary{1}--invariant inner product on $\Lie{n}_{1} \oplus \Lie{n}_{2}$. When $g$ is induced by an invariant \sunitary{2}--structure $\psi_{\lambda,\mu,A}$ then $g(U^-,U^-)=\eta(U^-)^2$, which motivates the notation. Furthermore, in this case we also define transverse almost complex structures $J_{i}$ such that $g^{T}(u,v) = \omega_i(u,J_i v)$.

The main observation is that the \unitary{1}--invariance of $g^T$ forces additional structure on the transverse geometry. Indeed, $\Jse_{0}=[U^+,\,\cdot\,]$ defines a complex structure on $\Lie{n}_{1} \oplus \Lie{n}_{2}$ with $\Jse_{0}E_{i}=V_{i}$. Since $U^+$ generates the action of $\Delta \unitary{1}$, \unitary{1}--invariant endomorphisms of $\Lie{n}_{1} \oplus \Lie{n}_{2}$ are precisely those commuting with $\Jse_0$. In particular, $\Jse_0J_i = J_i\Jse_0$ and $g^{T}$ is an almost-Hermitian metric with respect to $\Jse_0$. Therefore we can define the associated Hermitian form $\omega_0 (X,Y) = g^{T}(\Jse_0 X,Y)$. Observe also that $\Jse_0$ induces the opposite orientation with respect to the volume form $v=\omegase_{1}\wedge\omegase_{1}$ and therefore $\omega_0$ is an anti-self-dual form. It follows that the map from invariant \sunitary{2}--structures to invariant metrics is surjective and is a principal $\sorth{3}$--bundle as claimed. The explicit expression for $\omega_0$ in terms of the parametrisation $(\lambda,\mu,A)$ is 
\[
\omega _{0} =\mu A\omegase_0= \mu \left( w_{0}\,\omegase_0 + w_{1}\,\omegase_1 + w_{2}\,\omegase_2 + w_{3}\,\omegase_3 \right),
\]
where $w=(w_i)$ is the first column of the matrix $A \in \lorentz{3}$.

The circle action in (ii) is the induced action of the flow of the Reeb vector field $U^-$ on invariant metrics.
\end{proof}

We are interested in \sunitary{2}--structures induced on a hypersurface of a Calabi--Yau or nearly K\"ahler $6$--manifold; these have been dubbed \emph{hypo} and \emph{nearly hypo} structures, respectively.  We will study the $\sunitary{2} \times \sunitary{2}$--invariant ones on $\Ndiag$. As we will see in a moment, hypo and nearly hypo structures can be defined by a set of constraints on the exterior differentials of the forms defining the \sunitary{2}--structure. By \eqref{eqn:Sasaki:Einstein} and Lemma \ref{lem:invariant:forms}, for the invariant \sunitary{2}--structure $\psi_{\lambda, \mu, A}$  we have
\begin{equation}\label{eqn:Torsion:invariant:structures}
d\eta = -2\lambda \omegase_1, \qquad d\omega _{i} = -2\tfrac{\mu}{\lambda} \eta \wedge TA\omegase_i, \qquad d(\eta \wedge \omega _{i}) = -2\lambda \mu \langle A\omegase_i, \omegase_i \rangle v,
\end{equation}
where
$T \in \text{End}(\R ^{1,3})$ is
\begin{equation}\label{eqn:Torsion:standard:SE}
T= \left( \begin{array}{cccc}
0 & 0 & 0 & 0\\
0 & 0 & 0 & 0\\
0 & 0 & 0 & -3\\
0 & 0 & 3 & 0\\
\end{array} \right).
\end{equation}

\begin{remark}\label{rmk:invt:SE}
As an immediate consequence we see that the invariant Sasaki--Einstein structures on $\Ndiag$ are those $\psi_{\lambda,\mu,A}$ with $\lambda=\mu=1$ and $A \in \sorth{2} \subset \lorentz{1} \times \sorth{2} \subset \lorentz{3}$. Indeed, the equations \eqref{eqn:Sasaki:Einstein} are not scale invariant and the only geometric degree of freedom is to rotate the form $\omega _{2} + i \omega _{3}$ by a phase $e^{i\theta}$. This freedom of rotation in the plane spanned by $\omegase_2$ and $\omegase_3$, as already noted, is nothing but the action of the flow of the Reeb vector field on invariant $2$--forms; in general the flow of the Reeb vector field on invariant metrics on $\Ndiag$ is nontrivial but in the case of the Sasaki--Einstein metric $\gse$  the Reeb vector field is an additional Killing field. In particular $\gse$ is invariant under the larger group $\unitary{1} \times \sunitary{2} \times \sunitary{2}$.
\end{remark}

\subsection{Hypo structures}
In this section, following Conti--Salamon \cite[Definition 1.5]{Conti:Salamon}, we consider the class of $\sunitary{2}$--structures that arise on oriented 
hypersurfaces in Calabi--Yau $3$--folds. To this end consider a $1$--parameter family of \sunitary{2}--structures $\big( \eta , \omega _{i}\big)(t)$ such that 
\begin{equation}\label{eqn:SU(2):to:SU(3):structures}
\omega = \eta \wedge dt + \omega _{1}, \qquad \Omega = (\omega _{2}+i\omega _{3}) \wedge (\eta + idt),
\end{equation}
is a Calabi--Yau structure on $N \times I$ for some interval $I \subset \R$, \ie an \sunitary{3}--structure such that $\omega$ and $\Omega$ are both closed. The condition $d\omega =0=d\Omega$ is equivalent to
\begin{subequations}
\begin{equation}\label{eqn:Hypo}
d\omega _{1} =0, \qquad d( \eta \wedge \omega _{2} )=0, \qquad d( \eta \wedge \omega _{3} )=0,
\end{equation}
together with the evolution equations
\begin{equation}\label{eqn:Hypo:evolution}
\partial _{t}\omega _{1}=-d\eta, \qquad \partial _{t}(\eta \wedge \omega _{2}) = -d\omega _{3}, \qquad \partial _{t}(\eta \wedge \omega _{3}) = d\omega _{2}.
\end{equation}
\end{subequations}
\begin{definition}
\label{def:hypo}
An \sunitary{2}--structure $(\eta, \omega_1, \omega_2, \omega_3)$ on a $5$--manifold $N$ satisfying \eqref{eqn:Hypo} is called a \emph{hypo} structure. We call equations \eqref{eqn:Hypo:evolution} the \emph{hypo evolution equations}. 
\end{definition}
\begin{remark*}
The involutions $\tau_1$ and $\tau_2$ defined in Remark \ref{rmk:discrete:sym:su2} both preserve the hypo equations. 
Moreover, $(\eta,\omega_1,\omega_2,\omega_3)(t)$ solves the hypo evolution equations 
if and only if $\tau_2(\eta,\omega_1,\omega_2,\omega_3)(t)$ does (and similarly for 
$\tau_1(\eta,\omega_1,\omega_2,\omega_3)(-t)$). Furthermore, \eqref{eqn:Hypo} and \eqref{eqn:Hypo:evolution} are both invariant under changing the phase of the complex $2$--form $\omega_2+i\omega_3$.
\end{remark*}

We now describe the space of \emph{invariant} hypo structures on $\Ndiag$. Recall again the notation $\psi_{\lambda,\mu,A}$ where $(\lambda,\mu,A) \in \R^+ \times \R^+ \times \lorentz{3}$ adopted in Remark \ref{rmk:invt:su2:para} to describe invariant \sunitary{2}--structures on $\Ndiag$. 

\begin{prop}
\label{prop:invariant:hypo:structures}
For any $(f,g) \in \R^+ \times \R^+$ the invariant hypo structures on $\Ndiag$ are invariant under the rescalings
\[
\eta \mapsto f \eta, \quad \omega_i \mapsto g\, \omega_i \quad\text{\ for \ } i=1,2,3.
\]
In particular by rescaling it suffices to describe the invariant hypo structures with $\lambda=\mu=1$.

The set of invariant hypo structures on $\Ndiag$ with $\lambda=\mu=1$ is the union of two smooth manifolds:
\begin{enumerate}
\item
A two-dimensional manifold diffeomorphic to $\textup{S}\left( \orchro{1} \times \orth{2} \right) \subset \lorentz{3}$.
\item 
A three-dimensional manifold diffeomorphic to $\orchro{2}$ embedded in $\lorentz{3}$ as the subgroup that fixes the line spanned by $(0,1,0,0)$.
\end{enumerate}
In each case there are two connected components, which are interchanged by $\tau_1 \circ \tau_2$. The two manifolds intersect in two circles, the one parametrising the $1$--parameter family of invariant Sasaki--Einstein structures on $\Ndiag$ described in Remark \ref{rmk:invt:SE} and its image under $\tau_1 \circ \tau_2$. 

\end{prop}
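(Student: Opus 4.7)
The plan is to translate the three hypo equations \eqref{eqn:Hypo} into algebraic conditions on the matrix $A \in \lorentz{3}$ after reducing to $\lambda=\mu=1$, and then show the resulting locus decomposes into the two advertised submanifolds. The rescaling invariance is immediate: each of $d\omega_1 = 0$, $d(\eta\wedge\omega_2) = 0$, $d(\eta\wedge\omega_3) = 0$ is homogeneous in the quadruple $(\eta, \omega_i)$, so multiplying $\eta$ by $f$ and each $\omega_i$ by $g$ preserves them, which reduces the problem to $\lambda = \mu = 1$.

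Applying \eqref{eqn:Torsion:invariant:structures} with $\lambda=\mu=1$, the condition $d\omega_1 = 0$ becomes $TA\omegase_1 = 0$; since $\ker T = \lspan(\omegase_0, \omegase_1)$ this is a pair of linear vanishing conditions on entries of $A$. The conditions $d(\eta\wedge\omega_i) = 0$ for $i=2,3$ become $\langle A\omegase_i, \omegase_i\rangle = 0$, each the vanishing of a single entry of $A$. Altogether the hypo locus is cut out in $\lorentz{3}$ by four specific matrix entries of $A$ vanishing.

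The crucial structural step is to combine these four constraints with the Minkowski orthonormality of the columns of $A$. Evaluating the pairings $\langle A\omegase_1, A\omegase_j\rangle = 0$ for $j=2,3$ and substituting the hypo conditions yields products of matrix entries that must vanish, producing the dichotomy: either (i) $A$ preserves the orthogonal splitting $\R^{1,3} = \lspan(\omegase_0, \omegase_1) \oplus \lspan(\omegase_2, \omegase_3)$, or (ii) $A$ fixes the line $\R\omegase_1$ (up to sign). In Case (i) the remaining orthogonality relations force $A\omegase_0 \in \lspan(\omegase_0, \omegase_1)$ as well, so $A$ is block-diagonal, with blocks in $O^+(1,1)$ (using time-orientation preservation) and $O(2)$; imposing $\det A = 1$ yields the subgroup $S(O^+(1,1)\times O(2))$, of dimension $2$ with two connected components. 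In Case (ii), $A$ acts on $\omegase_1^\perp$ of signature $(1,2)$, and combined with time-orientation the restriction lies in $O^+(1,2)$, of dimension $3$ with two connected components.

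Finally, the two cases intersect precisely when both the block-diagonal and fixed-line conditions hold: the $\lspan(\omegase_0, \omegase_1)$-block reduces to $\diag(1, \pm 1)$ while the $\lspan(\omegase_2, \omegase_3)$-block ranges over $O(2)$, and the constraint $\det A = 1$ selects two circles distinguished by the sign of $A_{11}$. By Remark \ref{rmk:invt:SE}, the $A_{11} = +1$ circle parametrises the $1$-parameter family of invariant Sasaki--Einstein structures; by Remark \ref{rmk:involution:invt}, the $A_{11} = -1$ circle is its image under $\tau_1 \circ \tau_2$ (which acts as $A \mapsto A\,T_1 T_2$ with $T_1 T_2 = \diag(1, -1, -1, 1)$). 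The same action exchanges the two connected components in each of the two submanifolds by a direct sign check on the determinants of the blocks. The main obstacle is the algebraic case split: the hypo locus is a priori a potentially singular subvariety of $\lorentz{3}$, and decomposing it into two smooth Lie subgroups of different dimensions relies essentially on exploiting the column orthogonality of $A$ to recognise the correct dichotomy.
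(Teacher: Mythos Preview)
Your approach is exactly the paper's: reduce to $\lambda=\mu=1$, read off from \eqref{eqn:Torsion:invariant:structures} that hypo forces four entries of $A$ to vanish, then use the Lorentz orthonormality of the columns to split into the block-diagonal case and the $\omegase_1$-fixing case. Your write-up is actually more explicit than the paper's about \emph{why} the dichotomy occurs.

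There is one slip, however, and it matters for the step you flag as crucial. The condition $d(\eta\wedge\omega_i)=0$ translates to $\langle A\omegase_i,\omegase_1\rangle=0$, not $\langle A\omegase_i,\omegase_i\rangle=0$: one has $d(\eta\wedge\omega_i)=d\eta\wedge\omega_i=-2\lambda\mu\,\omegase_1\wedge A\omegase_i$, so it is the $\omegase_1$-component of the $i$-th column that must vanish. (The displayed formula \eqref{eqn:Torsion:invariant:structures} in the paper has a typo here; the paper's own proof uses the correct condition, as one sees from the second row of the matrix \eqref{eqn:invt:hypo:A}.) With the correct zeros $x_2=x_3=y_1=z_1=0$, your orthogonality computation $\langle A\omegase_1,A\omegase_2\rangle=-x_0y_0=0$ and $\langle A\omegase_1,A\omegase_3\rangle=-x_0z_0=0$ does reduce to products, giving exactly the dichotomy $y_0=z_0=0$ versus $x_0=0$ that the paper states. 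With the entries you wrote ($y_2=z_3=0$) this reduction to products fails, so the argument as literally written does not go through; once you fix the pairing, everything else you say is correct and agrees with the paper.
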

\begin{proof}
Let $\psi_{\lambda,\mu,A}$ be an invariant hypo structure on $\Ndiag$. 
Observe first that the hypo equations \eqref{eqn:Hypo} are invariant under $\eta \mapsto f \eta$ and $\omega _{i} \mapsto g\, \omega _{i}$ for any $f>0, g>0$. 
Therefore without loss of generality we now assume that $\lambda=\mu=1$. Writing $\omega _{i}=A\,\omegase_i$ and using \eqref{eqn:Torsion:invariant:structures} we see that \eqref{eqn:Hypo} is satisfied if and only if
$A$ is of the form
\begin{equation}
\label{eqn:invt:hypo:A}
A= \left( \begin{array}{cccc}
w_{0} & x_{0} & y_{0} & z_{0}\\
w_{1} & x_{1} & 0     &  0 \\
w_{2} & 0     & y_{2} & z_{2}\\
w_{3} & 0     & y_{3} & z_{3}\\
\end{array} \right).
\end{equation}
Imposing the condition that $A \in \lorentz{3}$ leads us to distinguish two cases: $y_0=z_0=0$ or otherwise. 
In the former case, up to the action of $\tau_1\circ\tau_2$, \eqref{eqn:invt:hypo:A} together with $A  \in \lorentz{3}$ forces
\begin{equation}
\label{eqn:invt:hypo:A1}
A= \left( \begin{array}{cccc}
\cosh{s}  & \sinh{s} & 0  & 0\\
\sinh{s} & \cosh{s} & 0  &  0 \\
0 & 0 & \cos{\theta} & -\sin{\theta}\\
0 & 0 & \sin{\theta} & \cos{\theta}\\
\end{array} \right),
\end{equation}
while in the latter we must have
\begin{equation}
\label{eqn:invt:hypo:A2}
A= \left( \begin{array}{cccc}
w_{0} & 0 & y_{0} & z_{0}\\
0     & 1 & 0     &  0 \\
w_{2} & 0 & y_{2} & z_{2}\\
w_{3} & 0 & y_{3} & z_{3}\\
\end{array} \right) \in \lorentz{2}.
\end{equation}
The final statement about the intersection of the two components follows immediately. 
\end{proof}

We are now going to solve the hypo evolution equations \eqref{eqn:Hypo:evolution} in the two components (i) and (ii) of invariant hypo structures on $\Ndiag$ given in the previous Proposition. The invariance assumption 
reduces these evolution equations to ODEs on the space of invariant hypo structures that we will solve explicitly. The limiting case corresponding to the intersection of the manifolds of Proposition \ref{prop:invariant:hypo:structures} is of course the conifold, the Calabi--Yau cone over the \se structure \eqref{eq:invt:Sasaki:Einstein}. Up to scale there exist two $\sunitary{2} \times \sunitary{2}$--invariant complete  Calabi--Yau metrics 
asymptotic to the conifold: one on the small resolution of the conifold \cite{Candelas:delaOssa}, the total space of the vector bundle $\oo (-1) \oplus \oo (-1)$ over $\PP^1$, and one on the smoothing of the conifold \cite{Candelas:delaOssa,Stenzel}, diffeomorphic to $T^\ast S^3$. These have been recently proven to be the unique complete asymptotically conical Calabi--Yau metrics with tangent cone at infinity the conifold  \cite{Conlon:Hein:III}. 

\begin{theorem}[Candelas--de la Ossa \cite{Candelas:delaOssa}; Stenzel \cite{Stenzel} only for part (ii)]
\label{thm:AC:CY}
$ $

\begin{enumerate}
\item Up to scale there exists a unique smooth invariant Calabi--Yau structure on the total space of $\oo (-1) \oplus \oo (-1)$ over $\PP^1$.
\item
Up to scale there exists a unique smooth invariant Calabi--Yau structure on $T^\ast S^3$.
\end{enumerate}
Both Calabi--Yau structures are complete and asymptotic to the conifold in the sense made precise below.
\end{theorem}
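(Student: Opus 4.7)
The plan is to apply the hypo formalism from the preceding subsection. Any $\sunitary{2}\times\sunitary{2}$-invariant Calabi--Yau structure on an open dense subset $I \times \Ndiag$ of the manifold in question is equivalent to a $1$-parameter family of invariant hypo structures $\psi(t) = \psi_{\lambda(t),\mu(t),A(t)}$ satisfying the hypo evolution equations \eqref{eqn:Hypo:evolution}. By continuity the curve $t \mapsto \psi(t)$ must lie entirely in one of the two connected components of invariant hypo structures identified in Proposition \ref{prop:invariant:hypo:structures}. The $2$-dimensional component (matrices of the form \eqref{eqn:invt:hypo:A1}) and the $3$-dimensional component (matrices of the form \eqref{eqn:invt:hypo:A2}) correspond, in some order, to the small resolution and to the smoothing; the assignment is dictated by the pattern of degeneration of the invariant forms at the singular orbit.

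Using the torsion formulae \eqref{eqn:Torsion:invariant:structures}, the hypo evolution equations reduce on each component to an explicit ODE system for $\lambda(t), \mu(t)$ together with the parameters $(s,\theta)$ in case (i) or the remaining $\orchro{2}$ parameters in case (ii). The rescaling freedom $\eta \mapsto f\eta$, $\omega_i \mapsto g\omega_i$ noted in Proposition \ref{prop:invariant:hypo:structures}, combined with an accompanying reparametrisation of $t$, further reduces the effective number of parameters in each system. In both cases the resulting system can be integrated in closed form, recovering the classical Candelas--de la Ossa and Stenzel expressions.

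To single out the smooth, complete solutions, one imposes boundary conditions at a finite time $t_0$ where the principal orbit $\Ndiag$ collapses smoothly to the singular orbit (a $\PP^1$ in one case, an $S^3$ in the other). The invariant forms must degenerate there in the specific pattern forced by the cohomogeneity one structure of the disk bundle over the singular orbit. This closing condition fixes the initial data in the reduced parameter space uniquely, giving uniqueness up to the overall rescaling of the Calabi--Yau metric. Asymptotic conicity as $t \to \infty$ is then read off directly from the closed-form solution, which after the obvious cone rescaling converges to the invariant Sasaki--Einstein structure \eqref{eq:invt:Sasaki:Einstein} on $\Ndiag$, with polynomial decay rate.

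The most delicate step is verifying that the apparent singularity of the parametrisation $\psi_{\lambda,\mu,A}$ at the collapsing orbit is removable, \ie that the induced \sunitary{3}-structure forms $\omega, \Omega$ extend smoothly across the singular $\PP^1$ or $S^3$ even though some of the hypo forms must degenerate there. This is a standard cohomogeneity one smoothness analysis, comparing the Taylor expansions of $(\lambda(t),\mu(t),A(t))$ near $t_0$ with slice coordinates transverse to the singular orbit; these expansions also pin down the precise initial data compatible with a smooth closing and so underpin the uniqueness statement.
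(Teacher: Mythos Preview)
Your outline is correct and matches the paper's approach: reduce the hypo evolution on each family of Proposition~\ref{prop:invariant:hypo:structures} to an explicit ODE system, integrate in closed form, impose the smooth-closing conditions at the singular orbit (the paper uses Lemmas~\ref{lem:Smooth:extension:S2} and~\ref{lem:Smooth:extension:S3} for this), and read off asymptotic conicity from the explicit solution. One small correction: the two families of invariant hypo structures are not disjoint connected components---they intersect along the \se circle---so confinement of the flow to one family is not by continuity alone but follows from the ODEs themselves (e.g.\ $\dot u_0=0$ in family~(i), $\partial_t(\lambda v_0)=0$ in family~(ii)).
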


We give a detailed outline of the proof of the theorem using the language of invariant hypo structures, partly as a warm-up for the more complicated analysis in the \nk case, and because these asymptotically conical Calabi--Yau manifolds will play a role in that analysis as limiting objects.

\proof[Proof of Theorem \ref{thm:AC:CY}(i)]
We begin with the complete invariant Calabi--Yau structures that arise form the invariant hypo structures 
described in Proposition \ref{prop:invariant:hypo:structures}(i), \ie where $y_0=z_0=0$.

By acting with the flow of the Reeb vector field, \ie choosing $\theta=-\tfrac{\pi}{2}$ in equation \eqref{eqn:invt:hypo:A1}, 
we can always assume that a hypo structure with $y_{0}=0=z_{0}$ satisfies
\begin{equation}\label{eq:hypo:Stenzel}
\eta = \lambda \etase, \qquad \omega_1 = u_{0}\, \omegase_0 + u_{1}\,  \omegase_1, \qquad \omega_2 = -\mu\, \omegase_3, \qquad \omega_3 = \mu\,\omegase_2,
\end{equation}
with $$-u_{0}^{2}+u_{1}^2=\mu ^{2}.$$ Then the flow equations \eqref{eqn:Hypo:evolution} are equivalent to 
\begin{equation}\label{eqn:Small:resolution:odes}
\dot{u}_{0}=0, \qquad \dot{u}_{1}=2\lambda, \qquad \partial _{t}(\lambda \mu) = 3\mu.
\end{equation}
The solution with $u_0=0$ is the conifold. When $u_0 \neq 0$, by scaling and the discrete symmetry $\tau_4$ of Proposition \ref{prop:symmetries} that exchanges the two factors of $\sunitary{2} \times \sunitary{2}$, we can assume without loss of generality that $u_0=1$. It is convenient to introduce a new variable $r$ such that $u_1=r^2$. Here up to the action of $\tau_1\circ\tau_2$ we can always assume $u_1 \geq 0$. Hence $\mu^2=-u_{0}^{2}+u_{1}^2 = r^4-1$ and $\lambda = r\dot{r}$. 
In terms of the new variable $r$ from \eqref{eqn:Small:resolution:odes} we obtain 
\[
\frac{d}{dr} (\lambda \mu)^2 = 2 \lambda \mu \frac{d}{dr}(\lambda \mu) = \frac{6\lambda\mu^2}{\dot{r}} = 6r\mu^2 = 6r^5-6r.
\]
In Section \ref{sec:closing} we will discuss necessary conditions for a cohomogeneity one \sunitary{3}--structure to extend smoothly over a singular orbit. In view of Lemma \ref{lem:Smooth:extension:S2}, we require that, with respect to the variable $t$, $u_1(0)=1$ and $\lambda (0)=0$. Thus $r \geq 1$ and 
\begin{equation}\label{eqn:Small:resolution}
\mu = \sqrt{r^4-1}, \qquad \lambda = \sqrt{\frac{r^6 - 3r^2 +2}{r^4-1}}.
\end{equation}

In particular as $r\ra\infty$ we have 
\begin{subequations}
\label{eqn:small:resn:coeffs:expand}
\begin{gather}
\lambda \mu = r^3 \sqrt{1-3r^{-4}+2r^{-6}} = r^3 + O(r^{-1}), \\
\frac{r\mu}{\lambda} = (r^2-r^{-2})(1-3r^{-4}+2r^{-6})^{-1/2} = r^2 + O(r^{-2}).
\end{gather}
\end{subequations}

For $r\sim 1$ using $\lambda=r\dot{r}$ to transform back to the variable $t$ yields $t\sim \frac{2}{\sqrt{3}}\sqrt{r-1}$ and therefore
\[
u_0 -u_1 = 1 -\tfrac{3}{2}t^2 + O(t^4), \qquad \lambda = \tfrac{3}{2}t +O(t^3).
\]
Lemma \ref{lem:Smooth:extension:S2}(i) guarantees that the  resulting cohomogeneity one Calabi--Yau structure $(\omega,\Omega)$ extends smoothly at $r=1$ over a $2$--sphere $\sunitary{2}\times\sunitary{2}/\unitary{1}\times\sunitary{2}$. Moreover $(\omega,\Omega)$ is asymptotic as $r \ra \infty$ to the conifold. 
To see this explicitly recall from \eqref{eqn:SU(2):to:SU(3):structures} how the \mbox{\sunitary{3}--structure} $(\omega,\Omega)$ is obtained from 
the $1$--parameter family of \sunitary{2}--structures $(\eta(t), \omega_i(t))$.
Converting from $t$ to $r$ using $\lambda dt = \lambda \tfrac{dr}{\dot{r}} = rdr$
we obtain 
\begin{subequations}
\label{eqn:SU3:small}
\begin{gather}
\omega = \omega_1 + \eta \wedge dt = u_0\, \omegase_0 + u_1\, \omegase_1 + \lambda \etase \wedge dt = 
\omegase_0 + r^2 \omegase_1 + r \etase \wedge dr,\\
\Real{\Omega} = \omega_2 \wedge  \eta - \omega_3 \wedge dt = -\lambda \mu \,\omegase_3 \wedge \etase - \tfrac{\mu r}{\lambda} \omegase_2 \wedge dr,\\
\Imag{\Omega} = \omega_3 \wedge \eta + \omega_2 \wedge dt = \lambda\mu \, \omegase_2 \wedge \etase - \tfrac{\mu r}{\lambda} \omegase_3 \wedge dr.
\end{gather}
\end{subequations}
Substituting the expansions from \eqref{eqn:small:resn:coeffs:expand} into \eqref{eqn:SU3:small} and comparing with \eqref{eq:conical:su2} we obtain
\begin{subequations}
\label{eqn:SU3:small:expansion}
\begin{align}
\omega &= \omega_C + O(r^{-2}),\\
\Real{\Omega} &= \Real{\Omega_C} + O(r^{-4}),\\
\Imag{\Omega} &= \Imag{\Omega_C} + O(r^{-4}),
\end{align}
\end{subequations}
where we used the cone metric to compute norms. We stress that up to scaling and discrete symmetries, \eqref{eqn:SU3:small} is the unique solution to \eqref{eqn:Hypo:evolution} that extends smoothly over a singular orbit $S^2$.
\endproof

\proof[Proof of Theorem \ref{thm:AC:CY}(ii)]
Consider instead the evolution of an invariant hypo structure with $(y_{0}, z_{0}) \neq (0,0)$. By acting with the flow of the Reeb vector field and by changing the phase of $\omega _{2} + i \omega _{3}$, we can always assume that $y_0=y_2=z_3=0$ in \eqref{eqn:invt:hypo:A2} and hence
\[
\eta = \lambda \etase, \qquad \omega_1 = \mu\, \omegase_1, \qquad \omega_2 = -\mu\, \omegase_3, \qquad \omega_3 = v_{0}\, \omegase_0 + v_{2}\,\omegase_2,
\]
with $$-v_{0}^{2}+v_{2}^2=\mu ^{2}.$$ 
Then the \nh evolution equations \eqref{eqn:Hypo:evolution} become
\begin{equation}\label{eqn:Stenzel:odes}
\dot{\mu}=2\lambda, \qquad \partial _{t}(\lambda \mu) = 3v_{2}, \qquad \partial _{t}(\lambda v_{0})=0, \qquad \partial_{t}(\lambda v_{2}) = 3\mu.
\end{equation}
If we introduce a new dependent variable $s$ defined by 
\[
\frac{ds}{dt}=\frac{1}{\lambda}>0,
\]
then we can integrate the resulting system of ODEs explicitly as follows.
In terms of $s$ the ODE system \eqref{eqn:Stenzel:odes} is equivalent to 
\begin{equation}
\label{eqn:Stenzel:odes:s}
\frac{d}{ds}(\mu^3) = 6(\mu \lambda)^2, \quad \frac{d}{ds} (\mu \lambda) = 3\lambda v_2, \quad \frac{d}{ds}(\lambda v_0) = 0, \quad \frac{d}{ds}(\lambda v_2) = 3 \mu \lambda.
\end{equation}
In particular both $\mu \lambda$ and $\lambda v_2$ satisfy the second order ODE $f{''} = 9f$.
Applying Lemma \ref{lem:Smooth:extension:S3} one can determine the various constants of integration that ensure that the resulting 
cohomogeneity one \sunitary{3}--structure extends smoothly over the exceptional orbit  (a totally geodesic round $S^3$) that without loss of generality we assume occurs at $s=0$.
Up to the action of the discrete symmetries of Proposition \ref{prop:symmetries}, the solution of \eqref{eqn:Stenzel:odes:s} takes the form
\[
\lambda v_0 = -\kappa, \qquad \lambda \mu = \kappa \sinh{3s}, \qquad \lambda v_2 = \kappa \cosh{3s}, \qquad \mu ^3 = \kappa^2(\sinh{3s}\cosh{3s}-3s),
\]
where $\kappa$ is a positive real parameter and $s \in [0,+\infty)$. The parameter $\kappa>0$ can be changed by scaling: the choice $\kappa = \tfrac{2}{3}$ is equivalent to the normalisation $\lambda(0)=1$; geometrically this corresponds to the exceptional orbit at $s=0$ being a unit $3$--sphere.

Solving for the coefficients $\lambda$, $\mu$, $v_1$ and $v_2$ we obtain
\begin{equation}\label{eqn:Stenzel}
\begin{gathered}
\lambda = \left( \frac{2}{3}\right) ^{\frac{1}{3}} \frac{\sinh{3s}}{(\sinh{3s}\cosh{3s}-3s)^{\frac{1}{3}}}, \qquad \mu = \left( \frac{2}{3}\right) ^{\frac{2}{3}}(\sinh{3s}\cosh{3s}-3s)^{\frac{1}{3}},\\
v_{0} = -\left( \frac{2}{3}\right) ^{\frac{2}{3}}\frac{ (\sinh{3s}\cosh{3s}-3s)^{\frac{1}{3}} }{ \sinh{3s} }, \qquad v_{2} = \left( \frac{2}{3}\right) ^{\frac{2}{3}}\frac{ (\sinh{3s}\cosh{3s}-3s)^{\frac{1}{3}} }{ \tanh{3s} }.
\end{gathered}
\end{equation}

In order to analyse the asymptotics of the resulting Calabi--Yau structure as $s \rightarrow \infty$, consider the change of variable $r^2 = \mu (s)$ which, since $\omega=d\left( -\frac{\mu}{2}\etase \right)$, defines a symplectomorphism between $(0,\infty) \times \Ndiag$ and the conifold. Thus
\[
r^2 \sim \frac{e^{2s}}{3^{\frac{2}{3}}} \left( 1 + O(s e^{-6s}) \right)^{\frac{1}{3}},
\]
and therefore
\[
\omega = \omega_C, \qquad \Omega = \Omega _C + \xi + O\left( \frac{\log{r}}{r^6} \right),
\]
where $\xi = \frac{2}{3r}\omegase_0 \wedge \left( dr-ir\etase\right) = O(r^{-3})$ and we used the cone metric to compute norms.
\endproof

\enlargethispage{2\baselineskip}
\subsection{Nearly Hypo structures} 
In the previous theorem we described cohomogeneity one Calabi--Yau $3$--folds as the evolution of invariant hypo structures under \eqref{eqn:Hypo:evolution}. Following the same philosophy, in this section we consider the \nk analogue of hypo \sunitary{2}--structures: in order to understand cohomogeneity one \nk manifolds we study the class of \sunitary{2}--structures induced on hypersurfaces in a \nk 6--manifold. 
Fernandez \etal \cite[Definition 3.1]{Fernandez:nearly:hypo} named these \emph{nearly hypo} structures. 

To this end consider a family of \sunitary{2}--structures on $N$ induced by a nearly K\"ahler structure on $N \times \R$. By \eqref{eqn:SU(2):to:SU(3):structures} and the definition of a nearly K\"ahler structure in terms of $(\omega, \Omega)$, the \sunitary{2}--structures on $N$ must then satisfy
\begin{subequations}
\begin{equation}\label{eqn:Nearly:Hypo}
d\omega _{1} = 3\eta \wedge \omega _{2}, \qquad d(\eta \wedge \omega _{3}) = -2\omega _{1}^{2},
\end{equation}
and evolve according to the evolution equations
\begin{equation}\label{eqn:Nearly:Hypo:evolution}
\partial _{t}\omega _{1}=-3\omega _{3}-d\eta, \qquad \partial _{t}(\eta \wedge \omega _{2}) =-d\omega _{3}, \qquad \partial _{t}(\eta \wedge \omega _{3}) = d\omega _{2} + 4\eta \wedge \omega _{1}.
\end{equation}
\end{subequations}
\begin{definition}
An \sunitary{2}--structure $(\eta, \omega_1, \omega_2, \omega_3)$ satisfying \eqref{eqn:Nearly:Hypo} 
is called \emph{nearly hypo}. 
We call equations \eqref{eqn:Nearly:Hypo:evolution} the \emph{nearly hypo evolution equations}.
\end{definition}

\begin{remark}
\label{rmk:nh:discrete:sym}
Observe that the involutions $\tau_1$ and $\tau_2$ defined in Remark \ref{rmk:discrete:sym:su2} both preserve the \nh equations. 
In the latter case this corresponds to the fact that $(\omega,\Omega)$ satisfies the \nk equations \eqref{eq:NK} 
if and only if $(-\omega,-\overline{\Omega})$ does. 
Moreover, $(\eta,\omega_1,\omega_2,\omega_3)(t)$ solves the \nh evolution equations 
if and only if $\tau_2(\eta,\omega_1,\omega_2,\omega_3)(t)$ does (and similarly for 
$\tau_1(\eta,\omega_1,\omega_2,\omega_3)(-t)$). Unlike the hypo case, we are not free to change the phase of the holomorphic volume form. 
\end{remark}

As a first example, we now give a simple but fundamental class of mildly singular \nk structures associated with any \se structure on a compact smooth $5$--manifold $N$ via the so-called \emph{sine-cone} construction and explain its relation both to \gtwo geometry in dimension 7 and to \nh structures in dimension 5.

\subsubsection{The sine-cone construction of singular \nk spaces}
The metric cone $C(N)$ over $N$ endowed with an \sunitary{2}--structure equipped with the conical \sunitary{3}--structure
$(\omega_C,\Omega_C)$ defined in \eqref{eq:conical:su2} is Calabi--Yau if and only if the \sunitary{2}--structure is \se.
Hence the metric product $\R \times C(N)$ is a (non smooth) metric cone $C'(N)$ whose holonomy is contained in $\sunitary{3} \subset \gtwo$. 
Because of the $\R$--invariance of $C'(N)$ its cross-section is not smooth but is instead the sine-cone (or metric suspension) over $N$, 
\ie $SC(N):= [0,\pi] \times N$ endowed with the Riemannian metric $\gsc = dr^2 + \sin^2{r}\, g_N$. 

Unless the cone $C(N)$ is isometric to $\C^3$ the sine-cone $SC(N)$ is a compact but singular metric space with two isolated conical singularities at $r=0$ and $r=\pi$ both modelled on $C(N)$. 
Since $C'(N)$ is a (singular) Ricci-flat cone, its cross-section $SC(N)$ with the metric $\gsc$ is a singular Einstein space with scalar curvature 30.
Moreover, since $C'(N)$ has holonomy contained in $\gtwo$, its cross-section $SC(N)$ admits a \nk structure compatible 
with the sine-cone metric $\gsc$ and therefore every orientable hypersurface of $SC(N)$ must admit a \nh structure. 
In particular, the (totally geodesic) hypersurface $\{\tfrac{\pi}{2}\} \times N$ admits a \nh structure whose induced metric 
is isometric to the Sasaki--Einstein metric $g_N$. 

In fact we notice as an immediate consequence of the Sasaki--Einstein and \nh structure equations \eqref{eqn:Sasaki:Einstein} and \eqref{eqn:Nearly:Hypo} respectively 
that if $(\eta,\omega_1, \omega_2, \omega_3)$ 
is Sasaki--Einstein then the ``rotated'' \sunitary{2}--structure $(\eta,-\omega_3,\omega_2,\omega_1)$ is in fact \nh and  
they both induce the same Riemannian metric. More generally, we find that 
the following explicit $1$--parameter family of \nh structures solves the 
\nh  evolution equations \eqref{eqn:Nearly:Hypo:evolution} and induces the ``rotated \se'' \nh structure when $t=\pi/2$:
\begin{equation}\label{eqn:SC}
\begin{gathered}
\eta = \sin{t}\,\etase, \qquad \omega _{1} = \sin^{2}{t} \left( \cos{t}\,\omegase_1 - \sin{t}\,\omegase_3\right),\\
\omega _{2} = \sin^{2}{t}\, \omegase_2, \qquad \omega _{3} = \sin^{2}{t} \left( \sin{t}\,\omegase_1 + \cos{t}\,\omegase_3\right),
\end{gathered}
\end{equation}
for $t \in [0,\pi]$.
Clearly the metric induced on $\{t\} \times N$ by this \nh structure is $\sin^2{t}\, g_N$ as required for the 
\nh structure induced by the \nk structure on the sine-cone. Nearly K\"ahler sine-cones were introduced in \cite{Fernandez:nearly:hypo} generalising the construction of (non-smooth) $\textup{Spin}(7)$--cones that had appeared in the physics literature \cite{Acharya:sine:cone,Bilal:Metzger}; see also \cite[\S 14.4]{BG:Sasaki} for further references. 

Sine-cones (also called spherical or metric suspensions) have also played a key role in the structure theory for 
spaces with lower Ricci curvature bounds. They provide non-smooth metric spaces that have extremal properties 
analogous to the round metric on spheres (which is of course the sine-cone over a lower-dimensional round sphere of the appropriate size)
and therefore appear in several ``almost rigidity" statements, \eg Cheeger and Colding's Almost Maximal Diameter Theorem
\cite[Theorems 5.12 \& 5.14]{Cheeger:Colding:almost:rigidity}
that generalises the classical Maximal Diameter Theorem of Cheng to (singular) limit spaces.

\subsubsection{Invariant \nh structures on $\Ndiag$}
We now specialise to the case of invariant \nh structures on $\Ndiag$. Since we will construct cohomogeneity one \nk manifolds by studying the nearly hypo evolution equations \eqref{eqn:Nearly:Hypo:evolution} restricted to invariant \nh structures, the following result will play a crucial role in the rest of the paper.

\begin{prop}\label{prop:Invariant:nearly:hypo:structures}
Invariant nearly hypo \sunitary{2}--structures on $\Ndiag$ are parametrised by the product of a circle with the open set $\calu$ in $\lorentz{2}$ defined by \eqref{eqn:so12}. Here the embedding of $\lorentz{2} \subset \R^+ \times \R^+ \times \lorentz{3}$ is given by equations \eqref{eqn:invt:nh:A} and \eqref{eqn:invt:nh:lambda:mu} and the \sorth{2} factor corresponds to the orbits of the action of the Reeb vector field $U^-$. Moreover, $\calu$ is a (trivial) real line bundle over $\R^2$. In particular, the space of invariant \nh structures is a smooth connected $4$--manifold.
\begin{proof}
For an invariant \sunitary{2}--structure $\psi_{\lambda,\mu,A}$ on $\Ndiag$ the defining equations \eqref{eqn:Nearly:Hypo} are equivalent to
\[
TA\omegase_1=3\lambda A\omegase_2, \qquad \langle A\omegase_3, \omegase_1 \rangle = \tfrac{\mu}{\lambda},
\]
where $T$ is the matrix defined in \eqref{eqn:Torsion:standard:SE}. Together with the requirements $\langle A\omegase_i,A\omegase_2 \rangle =0$ for $i=1,3$ and $|A\omegase_2|=1$ this implies that $A$ is of the form
\begin{equation}
\label{eqn:invt:nh:A}
A  = \left( \begin{array}{cccc}
1 & 0  & 0  & 0\\
0 & 1 & 0  & 0\\
0 & 0 & \cos{\theta} & -\sin{\theta}\\
0 & 0  & \sin{\theta} & \cos{\theta}\\
\end{array} \right) \left( \begin{array}{cccc}
w_{0} & x_{0}  & 0  & y_{0}\\
w_{1} & x_{1} & 0   & \frac{\mu}{\lambda}\\
w_{2} & -\lambda  & 0 & y_{2}\\
0 & 0  & -1 & 0\\
\end{array} \right) \in \sorth{2} \times \lorentz{2}.
\end{equation}

The matrix
\[
\left( \begin{array}{ccc}
w_{0} & x_{0}     & y_{0}\\
w_{1} & x_{1}     & \frac{\mu}{\lambda}\\
w_{2} & -\lambda  &  y_{2}\\
\end{array} \right)
\]
lies in the open set $\calu$ of \lorentz{2} defined by
\begin{equation}
\label{eqn:so12}
\calu = \{ B \in \lorentz{2} \, |\, B_{32} <0, B_{23} >0 \},
\end{equation}
where $B_{ij}$ denotes the $(i,j)$ entry of the matrix $B$. The map $\sorth{2} \times \calu \rightarrow \R ^+ \times \R^+ \times\lorentz{3}$ defined by \eqref{eqn:invt:nh:A} and
\begin{equation}
\label{eqn:invt:nh:lambda:mu}
\calu \ni B \longmapsto \left( -B_{32}, -B_{23}B_{32} \right) \in \R ^+ \times \R^+
\end{equation}
is clearly injective.

We conclude by describing the open set $\calu$ more precisely. Identify $\lorentz{2}$ with the unit tangent bundle of the hyperbolic plane $\lorentz{2}/\sorth{2}$. The projection of a matrix $B \in \lorentz{2}$ to its first column is the bundle projection and the fibrewise circle action is given by
\begin{equation}\label{eq:Circle:fibre:proj:H}
B \longmapsto B \left( \begin{array}{ccc}
1 & 0 & 0\\
0 & \cos{\varphi}     &  \sin{\varphi}\\
0 & -\sin{\varphi}  &  \cos{\varphi}\\
\end{array} \right).
\end{equation}
The inequalities $B_{32} <0$ and  $B_{23} >0$ define two open intervals of length $\pi$ in each circle fibre. These two intervals must intersect either in a connected open subinterval or a pair of points. In the latter case, however, these two points correspond to matrices of the form 
\[
B=\left( \begin{array}{ccc}
w_0 & x_0 & y_0\\
w_1 & x_1 & 0\\
w_2 & 0 & y_2\\
\end{array} \right) \in \lorentz{2}.
\]
Since $x_1 y_2 = w_0>0$, rotating by $\pm \frac{\pi}{2}$ in the circle containing $B$ we can arrange that both inequalities are satisfied. Thus the first case occurs and $\calu$ is an interval-bundle over $\lorentz{2}/\sorth{2} \simeq \R^2$ as claimed.
\end{proof}
\end{prop}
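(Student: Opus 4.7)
The plan is to impose the nearly hypo equations \eqref{eqn:Nearly:Hypo} algebraically on the family $\psi_{\lambda,\mu,A}$ parametrized by $\R^+ \times \R^+ \times \lorentz{3}$ from Proposition \ref{prop:invariant:SU(2):structures} and then analyze the resulting solution set. Unlike the hypo case, the \nh equations are not scale invariant, so there is no initial reduction to $\lambda=\mu=1$; nevertheless the overall strategy parallels that of Proposition \ref{prop:invariant:hypo:structures}.

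First I would use the torsion formulas \eqref{eqn:Torsion:invariant:structures} to translate \eqref{eqn:Nearly:Hypo} into purely algebraic conditions on $(\lambda,\mu,A)$: the equation $d\omega_1 = 3\eta\wedge\omega_2$ becomes the vector identity $TA\omegase_1 = 3\lambda\,A\omegase_2$, while $d(\eta\wedge\omega_3) = -2\omega_1^2$ becomes the scalar identity $\langle A\omegase_3, \omegase_1 \rangle = \mu/\lambda$. Combined with the requirement $A \in \lorentz{3}$ (orthogonality and normalization of the columns with respect to the Lorentzian inner product), these force the columns of $A$ corresponding to $\omegase_0$ and $\omegase_1$ to be essentially determined, and pin down those corresponding to $\omegase_2,\omegase_3$ up to a single rotation in the $(\omegase_2,\omegase_3)$--plane. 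This produces the factorization \eqref{eqn:invt:nh:A} of $A$ as a product of an $\sorth{2}$ matrix with a matrix $B \in \lorentz{2}$; the $\sorth{2}$ factor is precisely the action of the Reeb vector field $U^-$ on $\omega_2 + i\omega_3$, accounting for the circle factor in the parametrization. The remaining data is encoded in $B$, with $\lambda$ and $\mu$ recovered from its entries via \eqref{eqn:invt:nh:lambda:mu} as $\lambda = -B_{32}$ and $\mu = -B_{23}B_{32}$. The positivity requirements $\lambda,\mu>0$ are then exactly the inequalities $B_{32}<0$, $B_{23}>0$ cutting out $\calu$, and injectivity of the map from $\sorth{2} \times \calu$ onto the space of invariant \nh structures is manifest.

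The main obstacle is the final topological analysis showing that $\calu$ is a trivial interval bundle over $\R^2$. For this I would identify $\lorentz{2}$ with the unit tangent bundle of the hyperbolic plane $\lorentz{2}/\sorth{2} \simeq \R^2$, where the bundle projection sends $B$ to its first column (a future-directed unit timelike vector) and the circle fibre action is right-multiplication as in \eqref{eq:Circle:fibre:proj:H}. Each of the two open conditions $B_{32}<0$ and $B_{23}>0$ cuts out an open arc of length $\pi$ on each $\sorth{2}$--fibre, so their intersection is either a connected open subinterval or a pair of isolated boundary points (the degenerate case where the arcs are complementary). The delicate step is excluding this degenerate case on every fibre: the boundary configuration forces $B$ into the specific block form with $B_{23}=B_{32}=0$, and the Lorentzian constraints then yield $x_1 y_2 = w_0 > 0$; a short check then shows that rotating by $\pm \pi/2$ in the fibre produces a matrix in $\calu$ with the same first column, so every fibre is nonempty and connected. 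This gives the desired trivial line bundle structure $\calu \to \R^2$, and hence a smooth connected $4$--manifold of invariant \nh structures as claimed.
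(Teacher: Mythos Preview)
Your proposal is correct and follows essentially the same approach as the paper's proof: the algebraic translation of the nearly hypo equations via \eqref{eqn:Torsion:invariant:structures}, the resulting factorization \eqref{eqn:invt:nh:A}, the recovery of $(\lambda,\mu)$ from $B$, and the topological analysis of $\calu$ as an interval bundle via the unit tangent bundle description of $\lorentz{2}$ all match the paper exactly. In particular, the key observation $x_1 y_2 = w_0 > 0$ and the $\pm\tfrac{\pi}{2}$ rotation argument to exclude the degenerate fibre case are precisely what the paper uses.
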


As an immediate corollary we obtain a characterisation of the invariant nearly hypo structures embedded as hypersurfaces of the sine-cone.

\begin{corollary}\label{cor:characterisation:SC}
Let $\psi_{\lambda, \mu,A}$ be an invariant \nh structure on $\Ndiag$ such that $x_0=0=y_0$ (equivalently, $w_1=0=w_2$) in \eqref{eqn:invt:nh:A}. Then $(\Ndiag,\psi_{\lambda, \mu,A})$ is an invariant hypersurface of the sine-cone over an invariant \se structure on $\Ndiag$.  
\end{corollary}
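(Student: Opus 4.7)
My plan is to reduce the parametrisation of invariant \nh structures given in Proposition \ref{prop:Invariant:nearly:hypo:structures} using the hypothesis $x_0 = y_0 = 0$ and then match the resulting data directly with the explicit sine-cone slice formulas \eqref{eqn:SC}.

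Write $A = R(\theta) \cdot B$ as in \eqref{eqn:invt:nh:A}, where $R(\theta)$ is the rotation in the $(\omegase_2, \omegase_3)$--plane and $B$ is the $4 \times 4$ Lorentz matrix. The hypothesis $x_0 = 0 = y_0$ forces the first row of $B$ to be $(w_0, 0, 0, 0)$. Since $B \in \lorentz{3}$, this row has Lorentz norm $-w_0^2 = -1$, so $w_0 = \pm 1$; fixing the time-oriented component of $O(1,3)$ we take $w_0 = 1$. Row orthogonality of $B$ (with respect to the inner product of signature $(-,+,+,+)$) then immediately forces $w_1 = w_2 = 0$. The converse implication follows symmetrically from column orthogonality, establishing the equivalence claimed in parentheses.

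With these vanishings $B$ reduces to a $2 \times 2$ block $\bigl(\begin{smallmatrix} x_1 & \mu/\lambda \\ -\lambda & y_2 \end{smallmatrix}\bigr)$ acting on the $(\omegase_1, \omegase_3)$--directions. The remaining Lorentz constraints together with the sign conditions $B_{32} < 0, B_{23} > 0$ defining $\calu$ force this block to lie in $\sorth{2}$, yielding $x_1^2 + \lambda^2 = 1$, $\mu = \lambda^2$, and $y_2 = x_1$. Introducing a parameter $t \in (0, \pi)$ by $\sin t = \lambda$ (so $\cos t = \pm x_1$, either sign achievable after the action of the involution $\tau_1\circ\tau_2$), I substitute these relations back into \eqref{eq:eta:invt}--\eqref{eq:omega:invt} (and apply $R(\theta)$) and compare with \eqref{eqn:SC}: the result is the slice at time $t$ of the sine-cone built over the invariant \se structure obtained from \eqref{eq:invt:Sasaki:Einstein} by rotating $\omegase_2 + i\omegase_3$ by the phase $e^{i\theta}$. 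This phase rotation traverses the circle of invariant \se structures identified in Remark \ref{rmk:invt:SE}, so $(\Ndiag, \psi_{\lambda,\mu,A})$ is identified with the hypersurface $\{t\} \times \Ndiag$ inside the sine-cone $SC(\Ndiag)$.

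The argument is essentially pure linear algebra; the only subtlety is the bookkeeping of signs (choice of time-oriented component of $\lorentz{3}$, sign of $x_1 = \pm\sqrt{1-\lambda^2}$, and direction of $R(\theta)$), all of which can be absorbed using the discrete symmetries $\tau_1, \tau_2$ of Remark \ref{rmk:nh:discrete:sym} together with the Reeb phase rotation. No genuine obstacle is expected.
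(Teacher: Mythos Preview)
Your proposal is correct and is exactly the natural argument the paper has in mind; the paper states the corollary without proof, treating it as an immediate consequence of the parametrisation in Proposition~\ref{prop:Invariant:nearly:hypo:structures}, and your write-up simply makes that reduction explicit. The only minor quibble is the remark about absorbing the sign of $x_1$ via $\tau_1\circ\tau_2$: in fact both signs of $x_1=\cos t$ are already realised as $t$ ranges over $(0,\pi)$ in the sine-cone, so no discrete symmetry is needed there (the two choices correspond to slices at $t$ and $\pi-t$).
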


\section{Cohomogeneity one \nk manifolds}\label{sec:Coho1:NK}

In this section we begin the study of cohomogeneity one \nk $6$--manifolds proper. 
After quickly reviewing basic facts about smooth compact simply connected cohomogeneity one spaces 
we recall Podest\`a and Spiro's classification of possible compact simply connected cohomogenity one 
\nk $6$--manifolds. The only potentially interesting cases all turn out to have principal orbit 
$S^2 \times S^3$ invariant under $\sunitary{2} \times \sunitary{2}$ with isotropy group $\Delta \unitary{1}$.
Using our results on $\sunitary{2} \times \sunitary{2}$--invariant \nh structures  on $S^2 \times S^3$ 
we specialise the \nh evolution equations \eqref{eqn:Nearly:Hypo:evolution}  
to this invariant setting and derive the fundamental ODEs \eqref{eq:nk:odes:t} 
satisfied by any $\sunitary{2} \times \sunitary{2}$--invariant \nk structure 
on the (open dense) subset of principal orbits.
We note the continuous and discrete symmetries of the 
fundamental ODEs, explaining their geometric origins. 
The discrete symmetries in particular play an important role 
in our construction of new complete cohomogeneity one \nk structures. 
We have been unable to find a closed form for the general solution 
to \eqref{eq:nk:odes:t}; however, four explicit solutions are described 
and their geometric significance explained. 
Two of these four solutions also play important roles in the proof of the Main Theorem.

\medskip
Let $M$ be a complete \nk $6$--manifold (as always in the sense of Definition \ref{def:NK}) acted upon isometrically by a connected compact Lie group $G$ with cohomogeneity one, \ie the orbit space $M/G$ is $1$--dimensional. Since $M$ is Einstein with positive Einstein constant, $M$ is compact with finite fundamental group. By \cite[Theorem 9.3, Chapter I]{Bredon} the universal cover of $M$ is also a cohomogeneity one \nk manifold; hence there is no loss of generality in assuming $M$ simply connected, and we will do so throughout the rest of this paper.

From the general theory of cohomogeneity one spaces \cite[Theorem 8.2, Chapter IV]{Bredon}, $M/G$ is then a connected closed interval $[0,T] \subset \R$. The open set $M^\ast \subset M$ corresponding to the interior of $M/G$ is diffeomorphic to $(0,T) \times G/K$. We call $K \subset G$ the principal isotropy subgroup and $G/K$ the \emph{principal orbit}. Corresponding to the boundary points of $M/G$ there are two lower-dimensional \emph{singular orbits} with 
isotropy subgroups $K_1$ and  $K_2$ respectively. We call $K_1$ and $K_2$ the singular isotropy subgroups. They both contain the principal isotropy subgroup $K$ and the coset $K_i/K$ is diffeomorphic to a sphere. Moreover, there are representations $K_i \ra \orth{V_i}$ on Euclidean spaces $V_i$ such that a neighbourhood of the singular orbit $G/K_i$ in $M$ is $G$--equivariantly diffeomorphic to a neighbourhood of the zero section of the vector bundle $G\times _{K_i}V_i \ra G/K_i$. In fact, $M$ is obtained by identifying the two disc bundles $G\times_{K_i}D_i$, $D_i \subset V_i$, along their common boundary $G/K$.

\enlargethispage{\baselineskip}
The set of inclusions $K \subset K_1, K_2 \subset G$ is called the \emph{group diagram} of $M$. Two cohomogeneity one manifolds are $G$--equivariantly diffeomorphic if their group diagrams can be obtained one from the other with the following operations:
\begin{enumerate}
\item interchanging $K_1$ and $K_2$;
\item conjugating $K, K_1, K_2$ by the same element of $G$;
\item replacing $K_1$ with $h K_1 h^{-1}$, where $h$ is an element of the connected component of the normaliser of $K$ in $G$.
\end{enumerate}

In \cite[Theorem 1.1]{Podesta:Spiro:I} Podest\`a and Spiro classified all possible group diagrams of cohomogeneity one \nk $6$--manifolds; 
the list is given in Table \ref{tab:Group:diagrams}. 
\begin{table}[!h]
\centering
\begin{tabular}{ccccc}
$G$ &  $K$  &  $K_1$  &  $K_2$  & $M$ \\ \hline
 $\sunitary{2} \times \sunitary{2}$  &  $\triangle\unitary{1}$ & $\triangle\sunitary{2}$ & $\triangle\sunitary{2}$ & $S^3 \times S^3$\\ 
 $\sunitary{2} \times \sunitary{2}$  &  $\triangle\unitary{1}$ & $\triangle\sunitary{2}$ & $\unitary{1} \times \sunitary{2}$ & $S^6$\\
  $\sunitary{2} \times \sunitary{2}$  &  $\triangle\unitary{1}$ & $\unitary{1} \times \sunitary{2}$ & $\sunitary{2}\times\unitary{1}$ & $CP^3$\\
$\sunitary{2} \times \sunitary{2}$  &  $\triangle\unitary{1}$ & $\unitary{1} \times \sunitary{2}$ & $\unitary{1} \times \sunitary{2}$ & $S^2\times S^4$\\
$\sunitary{3}$ & $\sunitary{2}$ & $\sunitary{3}$ & $\sunitary{3}$ & $S^6$\\
\end{tabular}
\vspace{1em}
\caption{Group diagrams of cohomogeneity one \nk 6--manifolds}
\label{tab:Group:diagrams}
\end{table}
The last case is the sine-cone over the round \se structure on $\Sph^5 \simeq SU(3)/SU(2)$. Since the space of invariant metrics on $\Sph^5$ is $1$--dimensional, it is clear that this is the unique \nk structure arising in that case.

The interesting case is therefore $G=\sunitary{2} \times \sunitary{2}$ with principal orbit $\Ndiag =\sunitary{2} \times \sunitary{2}/ \triangle\unitary{1}$, which motivates our interest in invariant \nh structures on this homogeneous space. We will see later in the section that the first three group diagrams in the list are realised by known homogeneous \nk manifolds; on the other hand, no \nk structure is known to exist on $S^2 \times S^4$. In fact, this case is overlooked in \cite{Podesta:Spiro:I}.

\begin{remark*} B\"ohm \cite{Bohm:Spheres} constructed infinitely many cohomogeneity one Einstein metrics on some of the manifolds of Table \ref{tab:Group:diagrams}. By \cite[Lemma 3.1]{Podesta:Spiro:I} these cannot be induced by a \nk structure since on a \nk $6$--manifold not isometric to the round $6$--sphere any isometry must also preserve the almost complex structure. In particular, the isotropy group of the principal orbit must be contained in \sunitary{2}. For example, the metrics constructed by B\"ohm on $S^3\times S^3$ are invariant under the action of $\sorth{3}\times\sorth{4}$ with principal isotropy group $\sorth{2}\times\sorth{3} \subset \sorth{5}$.
\end{remark*}

\subsection{The fundamental ODE system}

In order to study the \nh evolution equations \eqref{eqn:Nearly:Hypo:evolution} restricted to invariant 
\nh structures it is convenient to introduce an alternative parametrisation to the one of Proposition \ref{prop:Invariant:nearly:hypo:structures}, whose main advantage is to reduce the problem to the study of a first order ODE system, rather than the mixed differential and algebraic system given by \eqref{eqn:Nearly:Hypo} and \eqref{eqn:Nearly:Hypo:evolution}.

Given an invariant \nh structure $\psi_{\lambda, \mu, A}$ with $A$ as in \eqref{eqn:invt:nh:A} we write
\begin{subequations}\label{eq:invt:nh}
\begin{equation}
\label{eq:invt:nh:eta:omega1}
\eta = \lambda \etase, \quad \omega_1 = u_0\, \omegase_0 + u_1\, \omegase_1 + u_2\cos{\theta}\, \omegase_2 + u_2\sin{\theta}\,\omegase_3.
\end{equation}
The first equation in \eqref{eqn:Nearly:Hypo} and \eqref{eqn:Nearly:Hypo:evolution}, respectively, is equivalent to
\begin{equation}
\label{eq:invt:nh:omega2:omega3}
 \omega_2 = -\frac{u_2}{\lambda}\sin{\theta}\,\omegase_2 + \frac{u_2}{\lambda}\cos{\theta}\, \omegase_3, \quad 
\omega_3 = \frac{v_0}{\lambda} \omegase_0 + \frac{v_1}{\lambda} \omegase_1 + \frac{v_2}{\lambda}\cos{\theta}\,\omegase_2 + \frac{v_2}{\lambda}\sin{\theta}\,\omegase_3,
\end{equation}
\end{subequations}
where $v_0, v_1, v_2$ are determined by $\lambda$ and $u_0, u_1, u_2$ via
\begin{subequations}\label{eq:nk:odes:t:1}
\begin{gather}
\lambda \dot{u}_0 + 3v_0=0,\\
\lambda \dot{u}_1 + 3v_1 - 2\lambda^2=0,\\
\lambda \dot{u}_2 + 3v_2=0.
\end{gather}
\end{subequations}
Here $\dot{}$ denotes differentiation with respect to the arc length parameter $t$ along a geodesic orthogonal to the principal orbits. Thus if $B \in \calu$ is the matrix
\[
B=\left( \begin{array}{ccc}
w_0 & x_0 & y_0\\
w_1 & x_1 & y_1\\
w_2 & x_2 & y_2\\
\end{array} \right)
\]
then the change of variables from the parametrisation of invariant \nh structures in Proposition \ref{prop:Invariant:nearly:hypo:structures} to the one in \eqref{eq:invt:nh} is
\begin{equation}\label{eq:change:parametrisation:nh}
u_i=\mu\, x_i,\qquad v_i=\lambda\mu \,y_i,\quad i=0,1,2,\qquad \text{ where } \lambda = -x_2, \quad \mu = -x_2 y_1.
\end{equation}

The second equation of \eqref{eqn:Nearly:Hypo:evolution} implies
\[
\dot{\theta} =0, \qquad \lambda \dot{u}_2 + 3v_2=0.
\]
Since we are free to change $\theta$ by acting by the flow of the Reeb vector field $U^-$, we assume without loss of generality that $\theta=0$. Then the last equation of \eqref{eqn:Nearly:Hypo:evolution} is
\begin{subequations}\label{eq:nk:odes:t:2}
\begin{gather}
\dot{v}_0 -4\lambda u_0=0,\\
\dot{v}_1-4\lambda u_1=0,\\
\lambda \dot{v}_2 - 4\lambda^2u_2 + 3u_2=0.
\end{gather}
\end{subequations}

Besides \eqref{eq:nk:odes:t:1} and \eqref{eq:nk:odes:t:2}, necessary conditions for $\psi_{\lambda, \mu, A}$ to define a \nh structure are the algebraic constraints
\begin{subequations}
\label{eq:constraints}
\begin{gather}
\label{eq:omega13}
I_1=\langle u,v \rangle = 0,\\
I_2=\lambda^2 |u|^2-u_2^2=0,\\
I_3=\lambda ^2 \abs{u}^2-\abs{v}^2=0,\\
\label{eq:NK2}
I_4=v_1 - |u|^2=0,
\end{gather}
\end{subequations}
which correspond to $\omega_1 \wedge \omega_3=0$, $\omega_1^2=\omega_2^2$, $\omega_1^2=\omega_3^2$ and the second equation of \eqref{eqn:Nearly:Hypo}, respectively; $\omega_2 \wedge \omega_1 =0=\omega_2\wedge\omega_3$ follow immediately from \eqref{eq:invt:nh}. Here $\abs{\cdot}$ and $\langle \cdot, \cdot \rangle$ denote the metric of signature $(-++)$ on $\R^{1,2}$. Furthermore, the pair of vectors $u,v \in \R^{1,2}$ has to satisfy the sign constraint
\begin{equation}\label{eq:constraints:sign}
u_1v_2-u_2v_1>0,
\end{equation}
which corresponds to the requirement that the matrix $A$ in \eqref{eqn:invt:nh:A} lies in the restricted Lorentz group.

Following \cite[Proposition 5.1]{Podesta:Spiro:II}, we now derive a differential equation for $\lambda$ which will imply that equations \eqref{eq:constraints} have to be imposed only at the initial time and will then be conserved for all time. Differentiating \eqref{eq:constraints} using \eqref{eq:nk:odes:t:1} and \eqref{eq:nk:odes:t:2} we obtain:
\begin{subequations}
\begin{gather}\label{eq:constraints:derivative}
\lambda\dot{I}_1=3I_2 + 3I_3 +2\lambda^2I_4,\\
\lambda\dot{I}_2=-6\lambda^2 I_1 + 2\lambda^2\abs{u}^2\dot{\lambda} +4\lambda^4 u_1 + 6u_2 v_2,\\
\lambda\dot{I}_3=-14\lambda^2 I_1,\\
\lambda\dot{I}_4=6I_2.
\end{gather}
\end{subequations}
Thus if $\lambda$ satisfies the first order differential equation 
\begin{equation}\label{eq:nk:odes:t:3}
\lambda^2\abs{u}^2\dot{\lambda} +2\lambda^4 u_1 + 3u_2 v_2=0,
\end{equation}
then $I=(I_1,\dots,I_4$) satisfies a homogeneous linear system (with coefficients depending on $\lambda$) and is therefore uniquely determined by the initial conditions.

The following proposition follows immediately from this discussion. 

\begin{prop}\label{prop:nk:odes}
Let $(\lambda,u,v)$ be a solution of the ODE system 
\begin{subequations}
\label{eq:nk:odes:t}
\begin{gather}
\lambda \dot{u}_0 + 3v_0=0,\\
\lambda \dot{u}_1 + 3v_1 - 2\lambda^2=0,\\
\lambda \dot{u}_2 + 3v_2=0,\\
\dot{v}_0 -4\lambda u_0=0,\\
\dot{v}_1-4\lambda u_1=0,\\
\lambda \dot{v}_2 - 4\lambda^2u_2 + 3u_2=0,\\
\lambda^2\abs{u}^2\dot{\lambda} +2\lambda^4 u_1 + 3u_2 v_2=0.
\end{gather}
\end{subequations}
defined on an interval $(a,b)\subset\R$ on which $u_2<0$, $\lambda, \mu^2:=|u|^2>0$ and \eqref{eq:constraints:sign} is satisfied. Assume also that $I_1(t_0)=\dots=I_4(t_0)=0$ for some $a < t_0 < b$. Then \eqref{eq:invt:nh} defines an invariant \nk structure on $(a,b) \times \Ndiag$. Conversely, any \nk structure on $(a,b) \times \Ndiag$ invariant under the action of $\sunitary{2} \times \sunitary{2}$ on $\Ndiag$ takes the form \eqref{eq:invt:nh} for a solution $(\lambda,u,v)$ of \eqref{eq:nk:odes:t} satisfying the given sign constraints and with $I_1=I_2=I_3=I_4=0$ for all time.

Moreover, given an invariant \nh structure $\psi_{\lambda,\mu,A}$ on $\Ndiag$ there exists a unique solution of \eqref{eq:nk:odes:t} with initial condition $\psi_{\lambda,\mu,A}$ (in particular, $I_1(0)=\dots=I_4(0)=0$). In particular, up to the action of the flow of the Reeb vector field there exists a $2$--parameter family of local invariant \nk structures on $(a,b)\times \Ndiag$.
\end{prop}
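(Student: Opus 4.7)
The proof strategy combines three elements: translating the \nh evolution equations \eqref{eqn:Nearly:Hypo:evolution} into a first-order ODE system via the parametrisation \eqref{eq:invt:nh}, ensuring that the algebraic constraints \eqref{eq:constraints} characterising invariant \nh structures are preserved by the flow, and invoking Picard--Lindel\"of together with Proposition \ref{prop:Invariant:nearly:hypo:structures}.

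First I would substitute the ansatz \eqref{eq:invt:nh} into the \nh evolution equations \eqref{eqn:Nearly:Hypo:evolution}, using the structure relations \eqref{eqn:Torsion:invariant:structures} to compute $d\eta$ and $d\omega_i$. The second equation of \eqref{eqn:Nearly:Hypo:evolution} forces $\dot\theta=0$, so Reeb invariance lets me set $\theta=0$ without loss of generality. The first and third equations of \eqref{eqn:Nearly:Hypo:evolution} then decompose along the invariant basis $(\omegase_0,\omegase_1,\omegase_2,\omegase_3)$ into the six relations \eqref{eq:nk:odes:t:1} and \eqref{eq:nk:odes:t:2}, while the expression for $\omega_2$ in \eqref{eq:invt:nh:omega2:omega3} comes out as a direct algebraic consequence of the first equation of \eqref{eqn:Nearly:Hypo}.

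The central technical step, and the one I expect to be the main obstacle, is the derivation of the extra ODE \eqref{eq:nk:odes:t:3} for $\dot\lambda$ and the proof that it guarantees propagation of the four algebraic constraints $I_1=I_2=I_3=I_4=0$. These relations encode respectively $\omega_1\wedge\omega_3=0$, $\omega_1^2=\omega_2^2$, $\omega_1^2=\omega_3^2$, and the second equation of \eqref{eqn:Nearly:Hypo}, and together with the sign condition \eqref{eq:constraints:sign} they are exactly what distinguishes an invariant \nh structure from a general invariant \sunitary{2}--structure satisfying the evolution ODEs. A careful but direct computation using only \eqref{eq:nk:odes:t:1} and \eqref{eq:nk:odes:t:2} produces the identities \eqref{eq:constraints:derivative}: three of the four right-hand sides are already linear in $I$, while $\lambda\dot{I}_2$ acquires the extra term $2\lambda^2|u|^2\dot\lambda+4\lambda^4 u_1+6u_2v_2$. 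Equation \eqref{eq:nk:odes:t:3} is forced as the unique first-order closing condition that kills precisely this obstruction; once imposed, $I=(I_1,\ldots,I_4)$ satisfies a homogeneous linear system with smooth coefficients depending only on $(\lambda,u,v)$, so $I(t_0)=0$ implies $I\equiv 0$ throughout the interval of existence.

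Both implications of the proposition now follow. A solution of \eqref{eq:nk:odes:t} satisfying the stated sign conditions and with $I(t_0)=0$ yields a curve of invariant \nh structures satisfying \eqref{eqn:Nearly:Hypo:evolution}, and hence an invariant \nk structure on $(a,b)\times\Ndiag$ via \eqref{eqn:SU(2):to:SU(3):structures}. Conversely, any such invariant \nk structure can be written in the form \eqref{eq:invt:nh} after the Reeb normalisation $\theta=0$, and the preceding analysis shows its coefficients must solve \eqref{eq:nk:odes:t} with $I\equiv 0$. For the final statement, the right-hand side of \eqref{eq:nk:odes:t} is smooth on the open locus where $\lambda>0$ and $|u|^2>0$ (the coefficient $\lambda^2|u|^2$ of $\dot\lambda$ in \eqref{eq:nk:odes:t:3} being invertible there), so Picard--Lindel\"of produces a unique maximal solution from any invariant \nh initial datum. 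The parameter count then follows by combining this existence statement with the description of the space of invariant \nh structures in Proposition \ref{prop:Invariant:nearly:hypo:structures} and quotienting by the Reeb $S^1$ action.
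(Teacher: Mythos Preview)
Your proposal is correct and follows essentially the same route as the paper: the paper derives \eqref{eq:nk:odes:t:1}--\eqref{eq:nk:odes:t:2} from the evolution equations, computes the derivatives \eqref{eq:constraints:derivative} of the constraints, imposes \eqref{eq:nk:odes:t:3} precisely to make $I$ satisfy a homogeneous linear system, and then states that the proposition follows immediately from this discussion together with standard ODE theory. Your identification of \eqref{eq:nk:odes:t:3} as the unique closing condition killing the inhomogeneous term in $\lambda\dot I_2$, and your appeal to Picard--Lindel\"of on the locus $\lambda>0$, $|u|^2>0$ for the final existence statement, match the paper's reasoning exactly.
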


\begin{remark*}
In \cite[Thorem 5]{Conti} Conti shows that any compact real analytic \nh 5--manifold can be embedded into a (local) real analytic \nk  6--manifold. The final statement of the Proposition, which follows from standard ODE theory, is a specialisation of Conti's result to the case of invariant \nh structures on $\Ndiag$.
\end{remark*}

\subsection{Symmetries of the fundamental ODE system}

In the rest of the paper we will make repeated use of various symmetries of the fundamental ODE system \eqref{eq:nk:odes:t}; 
the discrete symmetries of \eqref{eq:nk:odes:t} in particular will turn out to play a crucial role in our construction of new  complete \nk  metrics on  $S^3 \times S^3$ and $S^6$.  

To facilitate the description of these symmetries 
we introduce the alternative notation $(\lambda, u_0, u_1, u_2, v_0, v_1, v_2, t)$ for a solution 
$\Psi= \big( \lambda (t), u_0(t), u_1(t),u_2(t),v_0(t),v_1(t),v_2(t)\big)$ 
of \eqref{eq:nk:odes:t}.

\begin{prop}[\mbox{\cf \cite[Proposition 4.2]{Podesta:Spiro:II}}]
\label{prop:symmetries}
The system \eqref{eq:nk:odes:t} is invariant under the following symmetries.
\begin{enumerate}
\item Time translation $t \mapsto t+t_0$, $t_0 \in \R$.
\item Time reversal
\[
\tau_1 \co (\lambda, u_0, u_1, u_2, v_0, v_1, v_2, t) \longmapsto (-\lambda, u_0, u_1, u_2, v_0, v_1, v_2, -t).
\]
\item The involutions $\tau_2, \tau_3, \tau_4$ defined by 
\begin{align*}
&\tau_2 \co (\lambda, u_0, u_1, u_2, v_0, v_1, v_2, t) \longmapsto (-\lambda, -u_0, -u_1, -u_2, v_0, v_1, v_2, t),\\
&\tau_3 \co (\lambda, u_0, u_1, u_2, v_0, v_1, v_2, t) \longmapsto (\lambda, u_0, u_1, -u_2, v_0, v_1, -v_2, t),\\
&\tau_4 \co (\lambda, u_0, u_1, u_2, v_0, v_1, v_2, t) \longmapsto (-\lambda, u_0, -u_1, u_2, -v_0, v_1, -v_2, t).
\end{align*}
\end{enumerate}
$\tau_2$ preserves the constraint $u_2=-\lambda \mu$, while the remaining $\tau_i$ send this constraint into $u_2=\lambda \mu$. 
All the symmetries preserve the constraints \eqref{eq:constraints}.
\proof
The fact that these transformations are symmetries of \eqref{eq:nk:odes:t} is straightforward to verify by direct computation. 
Instead we concentrate on explaining the geometric origin of each of these symmetries. The involutions $\tau_1$ and $\tau_2$ are simply the specialisation to the invariant case
of the involutions $\tau_1$ and $\tau_2$ defined in Remark \ref{rmk:discrete:sym:su2} and we already noted in Remark \ref{rmk:nh:discrete:sym} 
that both involutions preserve the \nh condition. The existence of the remaining involutions $\tau_3$ and $\tau_4$ is 
specific to the case of $\sunitary{2}\times\sunitary{2}$--invariant \sunitary{2}--structures: $\tau_3$ and $\tau_4$ are induced by automorphisms of $\sunitary{2} \times \sunitary{2}$ that fix $\triangle\unitary{1}$. More precisely, $\tau_4$ is the action on invariant \sunitary{2}--structures of the outer automorphism of $\sunitary{2} \times \sunitary{2}$ that exchanges the two factors. 
The Reeb vector field $U^-$ generates the group of inner automorphisms of $\sunitary{2} \times \sunitary{2}$ fixing $\triangle \unitary{1}$. By normalising nearly hypo structures so that $w_3=0$, \ie $\theta =0$ or $\pi$ in \eqref{eqn:invt:nh:A}, we quotient out this action except for a residual $\Z _2$--action generated by $\tau_3$.
\endproof
\end{prop}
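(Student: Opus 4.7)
The plan is to verify each symmetry by direct substitution into \eqref{eq:nk:odes:t} and then check preservation of the algebraic constraints. Time translation is immediate since the system is autonomous, and for the four involutions one can organise the book-keeping using two general observations. First, under $\tau_1$ the time reversal flips $\dot{}$ while $\lambda \mapsto -\lambda$; hence products $\lambda \dot{f}$ are preserved, and the derivative $\dot{\lambda}$ itself is preserved (two signs cancel because $t$ and $\lambda$ both flip). Under $\tau_2, \tau_3, \tau_4$ time is unchanged, so $\dot{f}$ transforms exactly like $f$. Second, the Lorentzian norm $|u|^2 = -u_0^2 + u_1^2 + u_2^2$ is preserved by each $\tau_i$, since each flips either all three $u_i$'s or a pair of them.

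With these two observations in hand each of the seven equations of \eqref{eq:nk:odes:t} either is left fixed or is globally multiplied by $-1$ after the substitution, and the check reduces to counting sign flips term by term. The only equation that merits a little attention is the $v_2$--equation $\lambda \dot{v}_2 - 4\lambda^2 u_2 + 3u_2 = 0$, where the coefficient $-4\lambda^2 + 3$ is not homogeneous in $\lambda$; here I would verify separately that $\lambda \dot{v}_2$, $\lambda^2 u_2$ and $u_2$ all transform with the same overall sign under each $\tau_i$. The last equation of \eqref{eq:nk:odes:t} deserves a similar direct check of each of its three terms.

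For the constraint $u_2 = \pm\lambda\mu$, I would use \eqref{eq:change:parametrisation:nh}, from which $\lambda = -x_2$ and $\mu = -x_2 y_1$; only $\tau_2$ flips both $\lambda$ and $u_2$ while preserving $\mu$, so $u_2 = -\lambda\mu$ is preserved, whereas $\tau_1, \tau_3, \tau_4$ send this sign to its opposite. Preservation of the quadratic constraints $I_1, \dots, I_4$ can be verified either directly (each $\tau_i$ leaves each $I_j$ fixed up to a global sign, since the sign flips again come in pairs) or deduced from the linear system for $(I_1, \dots, I_4)$ established in the paragraph preceding Proposition \ref{prop:nk:odes}: once we know $\tau_i$ maps solutions of \eqref{eq:nk:odes:t} to solutions, the $I_j$ of the transformed solution satisfy the same homogeneous linear ODE, so $I = 0$ at one time implies $I = 0$ throughout.

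The computation itself is routine, so the main substantive point is to identify the geometric origin of the four involutions rather than to overcome a technical obstacle. The involutions $\tau_1$ and $\tau_2$ are simply the specialisations to the invariant setting of the generic \nh symmetries of Remarks \ref{rmk:discrete:sym:su2} and \ref{rmk:nh:discrete:sym}; $\tau_4$ comes from the outer automorphism of $\sunitary{2} \times \sunitary{2}$ swapping the two factors (which fixes $\triangle\unitary{1}$) and has to be translated into the coordinates of \eqref{eq:invt:nh}; and $\tau_3$ should be interpreted as the residual $\Z _2$ stabiliser of the normalisation $\theta = 0$ inside the circle action generated by the Reeb vector field $U^-$, which otherwise rotates in the $(\omegase_2, \omegase_3)$--plane.
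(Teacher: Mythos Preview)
Your proposal is correct and follows essentially the same approach as the paper: a direct term-by-term sign check (which the paper simply calls ``straightforward''), followed by the same geometric identifications of $\tau_1,\tau_2$ with the generic \nh involutions and of $\tau_3,\tau_4$ with automorphisms of $\sunitary{2}\times\sunitary{2}$ fixing $\triangle\unitary{1}$. One minor slip: in your second observation you say each $\tau_i$ flips ``either all three $u_i$'s or a pair of them'', but in fact $\tau_3$ and $\tau_4$ each flip only one $u_i$; this is harmless since $|u|^2$ involves only squares, so the conclusion stands.
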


\subsection{Special solutions of the fundamental ODE system}\label{sec:Homogeneous}

There are four distinguished solutions to the ODE system \eqref{eq:nk:odes:t}: the sine-cone over the homogeneous Sasaki--Einstein metric on $\Ndiag$ and the three homogeneous \nk solutions for which there is a subgroup of the full isometry group isomorphic to $\sunitary{2} \times \sunitary{2}$, namely 
\nk  $S^6$, $CP^3$ and $S^3 \times S^3$.
These special solutions will play a role in our analysis of general cohomogeneity one $\sunitary{2} \times \sunitary{2}$--invariant \nk metrics and provide explicit solutions of \eqref{eq:nk:odes:t} that we will record below. It is immediate to verify that the given expressions define solutions of \eqref{eq:nk:odes:t} but we also refer to \cite[\S 4.2]{Podesta:Spiro:II} where these expressions are derived. 

\begin{example}[The sine-cone]\label{Sine:Cone}
The sine-cone over the standard \se structure on $\Ndiag$ has already been discussed in \eqref{eqn:SC}. In terms of \eqref{eq:invt:nh}, we have
\[
\begin{gathered}
\lambda = \sin{t}, \quad u_0=0, \quad u_1=\sin^2{t}\cos{t}, \quad u_2=-\sin^3{t},\\
v_0=0, \quad v_1=\sin^4{t}, \quad v_2=\sin^3{t}\cos{t},
\end{gathered}
\]
for $t \in [0,\pi]$. The two conical singularities of the sine-cone occur at $t=0$ and $t=\pi$.
\end{example}

\begin{example}[The round sphere]\label{S6}
In this case
\[
\begin{gathered}
\lambda = \frac{3}{2} \cos{t}, \quad u_0 = -\frac{3}{2}\sin{t} \left( 2-5 \cos^2{t}\right), \quad u_1=-3\sin{t}\left(1-2\cos^2{t}\right), \quad u_2=-\frac{9}{2} \sin{t}\cos^2{t},\\
v_0 = \frac{9}{4}\cos^{2}{t}\left( 4-5\cos^2{t}\right), \quad v_1=9\sin^{2}{t}\cos^2{t}, \quad v_2=\frac{9}{4}\cos^2{t}\left( 3\cos^{2}{t}-2\right),
\end{gathered}
\]
for $t \in [0,\tfrac{\pi}{2}]$. The open set of principal orbits is compactified by adding a $3$--sphere $\sunitary{2} \times \sunitary{2}/\triangle\sunitary{2}$ at $t=0$ and a $2$--sphere $\sunitary{2} \times \sunitary{2}/\sunitary{2}\times\unitary{1}$ at $t=\frac{\pi}{2}$.
\end{example}

\begin{example}[Homogeneous \nk structure on $S^3\times S^3$]\label{S3xS3}
The homogeneous \nk structure on $S^3 \times S^3$ corresponds to the solution
\[
\begin{gathered}
\lambda = 1, \quad u_0=u_1 = \frac{1}{\sqrt{3}} \sin{(2\sqrt{3}t)}, \quad u_2 = -\frac{2}{\sqrt{3}}\sin{(\sqrt{3}t)},\\
v_0=-\frac{2}{3}\cos{(2\sqrt{3}t)}, \quad v_1=\frac{2}{3}\left( 1-\cos{(2\sqrt{3}t)}\right), \quad v_2=\frac{2}{3}\cos{(\sqrt{3}t)},
\end{gathered}
\]
for $t \in [0,\tfrac{\pi}{\sqrt{3}}]$. (Notice that there is a typo in \cite[\S 4.2.3]{Podesta:Spiro:II}: the range of $t$ with their normalisations should be $t \in [0,\tfrac{\pi}{\sqrt{6}}]$.) The singular orbits are two $3$--spheres, $\sunitary{2} \times \sunitary{2}/\triangle\sunitary{2}$ at $t=0$ and $\sunitary{2} \times \sunitary{2}/\phi_3(\triangle\sunitary{2})$ at $t=\frac{\pi}{\sqrt{3}}$, where $\phi_3$ is the inner automorphism of $\sunitary{2}\times\sunitary{2}$ generated by $\frac{\pi}{4}U^-$.
\end{example}

\begin{example}[Homogeneous \nk structure on $\CP^3$]\label{CP3}
In this case,
\[
\begin{gathered}
\lambda = \frac{3\sqrt{2}}{4}\sin{(\sqrt{2}t)}, \quad u_0=\frac{3}{8}\left( 3\cos^2{(\sqrt{2}t)}-1 \right) , \quad u_1 = \frac{3}{4}\cos{(\sqrt{2}t)}, \quad u_2 = -\frac{9}{8}\sin^2{(\sqrt{2}t)},\\
v_0=-\frac{9}{8}\cos{(\sqrt{2}t)}\sin^2{(\sqrt{2}t)}, \quad v_1=\frac{9}{8}\sin^2{(\sqrt{2}t)}, \quad v_2=\frac{9}{8}\cos{(\sqrt{2}t)}\sin^2{(\sqrt{2}t)},
\end{gathered}
\]
for $t \in [0,\tfrac{\pi}{\sqrt{2}}]$. The singular orbits are both $2$--spheres, $\sunitary{2} \times \sunitary{2}/\unitary{1}\times\sunitary{2}$ at $t=0$ and $\sunitary{2} \times \sunitary{2}/\sunitary{2}\times\unitary{1}$ at $t=\frac{\pi}{\sqrt{2}}$.
\end{example}

\section{Solutions that extend smoothly over the singular orbits}
\label{sec:closing}

In the previous sections we have described the subset consisting of principal orbits of a cohomogeneity one \nk manifold as a $1$--parameter family of \nh structures. In this section we will discuss (singular) boundary conditions for the ODE system \eqref{eq:nk:odes:t}, \ie study conditions under which a cohomogeneity one \nk structure extends smoothly across a singular orbit. Recall from Table \ref{tab:Group:diagrams} that in the $\sunitary{2}\times\sunitary{2}$--invariant case there are only three types of singular orbits, a $3$--sphere $\sunitary{2}\times\sunitary{2}/\triangle\sunitary{2}$ and $2$--spheres $\sunitary{2}\times\sunitary{2}/\unitary{1}\times\sunitary{2}$ and $\sunitary{2}\times\sunitary{2}/\sunitary{2}\times\unitary{1}$. The latter two are exchanged by the outer automorphism of $\sunitary{2}\times\sunitary{2}$.
The main results of the section are Theorems \ref{thm:Singular:IVP:S^2} and \ref{thm:Singular:IVP:S^3}:
these establish the existence of two $1$--parameter families of local cohomogeneity one \nk $6$--manifolds 
that close smoothly on a singular orbit that is a round sphere of dimension two or dimension three, respectively. 
In both cases the parameter is the size of the singular orbit. In subsequent sections, by studying 
the behaviour of these two $1$--parameter families as the size of the singular orbit shrinks to zero,  we will show that they should be viewed 
as \nk deformations of the Calabi--Yau structures on the small resolution and on the smoothing of the conifold respectively. 

\medskip
To understand the conditions under which invariant tensors on a cohomogeneity one manifold extend smoothly across a singular orbit we will appeal to a method due to Eschenburg and Wang \cite[\S 1]{Eschenburg:Wang}. For the convenience of the reader, we describe it  briefly here.  

Let $M^{n}$ be a smooth manifold with a cohomogeneity one isometric action of a compact Lie group $G$. Let $Q = G/K'$ be a singular orbit. Set $V=T_{q}M/T_{q}Q$ and recall that a neighbourhood of $Q$ in $M$ is $G$--equivariantly diffeomorphic to a neighbourhood of the zero section of the normal bundle $E = G \times _{K'} V \ra Q$.

In view of our applications and for concreteness, we only discuss conditions under which a $G$--invariant section $h \in \Gamma \left( E;\End (TE) \right)$ extends smoothly over the zero section, but the method generalises to arbitrary tensors. $G$--invariance implies that $h$ is determined by its restriction to $V \simeq E_{q}$. The choice of a complement $\Lie{p}$ of the Lie algebra of $K'$ in the Lie algebra of $G$ determines a trivialisation $TE|_{V} = V \oplus \Lie{p}$. Fix a point $v_{0} \in V$ with $|v_{0}|=1$ (having fixed an invariant inner product on $V$) and denote by $K$ its stabiliser in $G$. The principal orbits of $M$ are diffeomorphic to $G/K = G \times _{K'} S^{d-1} \ra Q$, where $S^{d-1}$ is the unit sphere in $V$. 

Introduce polar coordinates $(t, \sigma ) \in [0,\infty ) \times S^{d-1} \simeq V$. Then $h \in \Gamma \left( V;\End (V \oplus \Lie{p})\right)$ can be thought of as a $1$--parameter family of maps $h_{t} \in \Gamma \left( S^{d-1};\End (V \oplus \Lie{p})\right)$. Since $K'$ acts transitively on $S^{d-1}$, the space $\mathcal{W}$ of $K'$--equivariant maps $h_{t} \co S^{d-1} \ra \End (V \oplus \Lie{p})$ is isomorphic to $\End (V \oplus \Lie{p}) ^{K}$ via the evaluation at $v_{0}$. Denote by $\mathcal{W}_{p}$ the subspace of $\mathcal{W}$ consisting of the restriction of homogeneous polynomials of degree $p$. Notice that we can always increase the degree of $h \in \mathcal{W}_{p}$ by $2$ by multiplying $h$ by $v \mapsto |v|^{2}$. However, by finite dimensionality of $\mathcal{W}$ and polynomial approximation, we can find a basis of the vector space $\End (V \oplus \Lie{p}) ^{K}$ such that every element corresponds to a $K'$--equivariant homogeneous polynomial of minimum degree $p \geq 0$. Then a curve $h_{t} \in \mathcal{W}$ defined for $t \in [0,T)$ represents a smooth section $h \in C^{\infty} \left( E;\End (TE) \right)$ if and only if it has Taylor series at $t=0$, $h_{t} = \sum _{p \geq 0}{h_{p}t^{p}}$, with $h_{p} \in \mathcal{W}_{p}$ for all $p$ \cite[Lemma 1.1]{Eschenburg:Wang}. In this way the problem is reduced to a representation-theoretic computation.

We now specialise our discussion to the case $n=6$, $G= \sunitary{2} \times \sunitary{2}$,  $K=\triangle\unitary{1}$ and 
$K'=\Delta \sunitary{2}, \unitary{1} \times \sunitary{2}$ or $\sunitary{2}\times\unitary{1}$.

\subsection{Closing smoothly on an $S^2$}

We first consider the case $K'=\unitary{1} \times \sunitary{2}$ and $Q \simeq S^{2}$. Then $\Lie{p} = \Lie{n}_1$ $=\textup{Span}\{E_1,V_1\}$ in the notation of \eqref{eqn:Invariant:vector:fields} and $V \simeq \HH$, where $\unitary{1}\times\sunitary{2}$ acts on $V$ via $(e^{i\theta},q) \cdot x = qxe^{-i\theta}$. In particular, the vector bundle $E = \left( \sunitary{2}\times\sunitary{2} \right)\times_{\unitary{1}\times\sunitary{2}}V$ is isomorphic to $\mathcal{O}(-1)\oplus\mathcal{O}(-1) \ra S^2$.

As a $\Delta\unitary{1}$--representation $V = \R^2 \oplus \Lie{n}_{2}$, where $\Lie{n}_{2} \simeq \Lie{n}$ is the standard representation of \unitary{1} and $\R ^2$ is a trivial $2$--dimensional representation. It follows that $\End (V \oplus \Lie{p}) ^{\unitary{1}} = \End(\R^2) \oplus 4\End (\Lie{n})^{\unitary{1}}$. Moreover, $\End (\Lie{n})^{\unitary{1}}$ is $2$--dimensional, spanned by the identity and the complex structure.

Since \sunitary{2} already acts transitively on the unit sphere in $V$, any $\unitary{1}\times\sunitary{2}$--equivariant polynomial $h$ on $\Sph^3 \subset V$ with values in $\End (V \oplus \Lie{p})$ must satisfy $h(q) = M^{-1} h(1) M$, where $h(1) \in \End (V \oplus \Lie{p}) ^{\unitary{1}}$, $M = (1,q) \in \unitary{1} \times \sunitary{2}$ and we identified $\Sph^3$ with \sunitary{2}. A simple computation then shows that
\begin{enumerate}
\item $\id _{\Lie{n}_1}, j_{\Lie{n}_1}, \id _{\R ^2} + \id _{\Lie{n}_2}, j _{\R ^2} + j _{\Lie{n}_2}$ correspond to constant polynomials (they are preserved by $\unitary{1} \times \sunitary{2}$);
\item $\End (\Lie{n}_i, \Lie{n}_j) ^\unitary{1}$ with $i \neq j$ correspond to polynomials of degree $1$;
\item $\id _{\R ^2} - \id _{\Lie{n}_2}, j _{\R ^2} - j _{\Lie{n}_2}$ and $\text{Sym}^{2}_{0}(\R ^2)$ correspond to degree $2$ polynomials.
\end{enumerate}
Here $j_X$ denotes the standard complex structure on $X$.

The general theory of \cite{Eschenburg:Wang} now gives conditions for the smooth extension of any $\unitary{1}\times\sunitary{2}$--equivariant $\End (V \oplus \Lie{p})$--valued polynomial on $V$. However, in order to interpret these conditions as initial data for the ODE system \eqref{eq:nk:odes:t} it is necessary to change coordinates from this description of a neighbourhood of the singular orbit to the coframe $dt, u^-, e_{i}, v_{i}$ that we introduced on the set of principal orbits. To this end embed $S^2 \times V$ in $\Imag{\HH} \times \HH$ and let $\sunitary{2} \times \sunitary{2}$ act via $(q_{1},q_{2}) \cdot (x,y) = (q_1 x q_1^\ast, q_{2}y q_{1}^\ast)$. Along the ray $\gamma (t)= (i,t)$ the vector fields $U^-,E_{i},V_{i}$ of \eqref{eqn:Invariant:vector:fields} are
\[
\begin{gathered}
U^-=(0,-it), \qquad E_{1} = \frac{1}{2\sqrt{2}}(-2k,-jt), \qquad V_{1} = \frac{1}{2\sqrt{2}}(2j,-kt),\\
\qquad E_{2} = \frac{1}{2\sqrt{2}}(0,jt), \qquad V_{2} = \frac{1}{2\sqrt{2}}(0,kt).
\end{gathered}
\]
Thus if $(t,q_1,q_2,q_3)$ are coordinates on $V = \HH$, along $\gamma$ we have $dt=dq_0$, $tu^-=dq_1$, $te_{2} = 2\sqrt{2}dq_2$ and $tv_{2}=2\sqrt{2}dq_3$, while $e_1, v_1$ are $1$--forms along $S^2$.

In the notation of \eqref{eq:invt:nh:eta:omega1} we write an invariant $2$--form on $(0,\epsilon) \times \Ndiag$ as
\[
\omega = \lambda \etase \wedge dt + u_0\,\omegase_0 + u_1\,\omegase_1 + u_2\,\omegase_2+u_3\,\omegase_3,
\]
where $\lambda, u_0, \dots, u_3$ are functions depending on time only. Recalling \eqref{eq:invt:Sasaki:Einstein}, the change of variables above yields
\[
\begin{gathered}
\omega = -\frac{2\lambda}{3t} dq_0 \wedge dq_1 + \frac{u_0+u_1}{12}e_1\wedge v_1 + \frac{2(u_0-u_1)}{3t^2}dq_2\wedge dq_3 +\\ \frac{\sqrt{2}u_2}{6t}\left( e_1\wedge dq_3 + dq_2 \wedge v_1\right) + \frac{\sqrt{2}u_3}{6t}\left( e_1\wedge dq_2 - dq_3 \wedge v_1\right).
\end{gathered}
\]

It is now straightforward to use the representation-theoretic computation to deduce conditions on the coefficients of $\omega$ so that it extends smoothly at $t=0$ along the singular orbit.

\begin{lemma}\label{lem:Smooth:extension:S2} Let $\omega = \lambda dt \wedge \etase + u_0\, \omegase_0 + u_1\, \omegase_1 + u_2\, \omegase_2 + u_3\, \omegase_3$ be an invariant $2$--form on $(0,T) \times \Ndiag$. Then:
\begin{enumerate}
\item $\omega$ extends smoothly over a singular orbit $\sunitary{2}\times\sunitary{2}/\unitary{1}\times\sunitary{2}$ at $t=0$ if and only if
\begin{enumerate}
\item $u_0, u_1, u_2, u_3$ are even and $\lambda$ is odd;
\item $u_2 (0) = 0 = u_3 (0)$ and $u_0 (t) - u_1 (t) = -\dot{\lambda}(0) t^2 + O(t^4)$, where $\dot{\lambda}$ denotes the derivative of $\lambda$ with respect to $t$.
\end{enumerate}
\item $\omega$ extends smoothly over a singular orbit $\sunitary{2}\times\sunitary{2}/\sunitary{2}\times\unitary{1}$ at $t=0$ if and only if
\begin{enumerate}
\item $u_0, u_1, u_2, u_3$ are even and $\lambda$ is odd;
\item $u_2 (0) = 0 = u_3 (0)$ and $u_0 (t) + u_1 (t) = \dot{\lambda}(0) t^2 + O(t^4)$.
\end{enumerate}
\end{enumerate}
\proof
The statement in (i) follows immediately from the representation-theoretic computation, since $u_0+u_1$ must be even, $\frac{u_2}{t}$ and $\frac{u_3}{t}$ odd, $-\frac{2\lambda}{3t}$ and $\frac{2(u_0-u_1)}{3t^2}$ are both even and well-defined at $t=0$ where they must take the same value. Since $\unitary{1}\times\sunitary{2}$ and $\sunitary{2}\times\unitary{1}$ are exchanged by the outer automorphism of $\sunitary{2}\times\sunitary{2}$, the statement in (ii) follows immediately from (i) after applying the discrete symmetry $\tau_4$ of Proposition \ref{prop:symmetries}.
\endproof
\end{lemma}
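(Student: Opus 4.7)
The strategy is to apply the Eschenburg--Wang machinery directly to the local coordinate expression for $\omega$ on $V = \HH$ that the text has already computed, reading off parity and vanishing conditions term by term. The five summands appearing there correspond to five distinct $K'$-equivariant basis 2-forms on a neighbourhood of the zero section of $E = \oo(-1)\oplus\oo(-1) \ra S^2$: two constant 2-forms on the fibre ($dq_0\wedge dq_1$ and $dq_2\wedge dq_3$), one form pulled back from the base ($e_1\wedge v_1$), and two combinations of mixed type ($e_1\wedge dq_3 + dq_2\wedge v_1$ and $e_1\wedge dq_2 - dq_3\wedge v_1$) whose $V$-dependence is linear. The representation-theoretic computation already carried out shows that the first three basis elements are of polynomial degree $0$ (they lie in the span of the constant basis $\id_{\Lie{n}_1}, j_{\Lie{n}_1}, \id_{\R^2}+\id_{\Lie{n}_2}, j_{\R^2}+j_{\Lie{n}_2}$), while the last two lie in $\End(\Lie{n}_1,\Lie{n}_2)^{\unitary{1}}$ and therefore correspond to polynomials of minimal degree $1$.

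Working term by term, the Eschenburg--Wang criterion requires: the radial coefficient of a basis element of degree $p$ extends smoothly if and only if it is an even (resp.\ odd) smooth function of $t$ whose Taylor series begins with $t^p$ when $p$ is even (resp.\ odd). Thus $-2\lambda/(3t)$ must be smooth and even, forcing $\lambda$ to be odd; $(u_0+u_1)/12$ must be smooth and even; $2(u_0-u_1)/(3t^2)$ must be smooth and even, which forces $u_0-u_1$ to be even and to vanish to order at least $2$ at $t=0$; finally $\sqrt{2}u_i/(6t)$ for $i=2,3$ must be smooth and odd, hence $u_2$ and $u_3$ are even with $u_2(0)=u_3(0)=0$. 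Combined this yields the parity and vanishing statements in (a) and the first half of (b).

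The remaining matching condition in (b) comes from the fact that at $v=0$ the singular-orbit stabiliser $K' = \unitary{1}\times\sunitary{2}$ acts on $V = \HH$ by left $\sunitary{2}$-multiplication and right $\unitary{1}$-multiplication, and the resulting $K'$-invariant subspace of $\Lambda^2 V^*$ is one-dimensional, spanned by the K\"ahler form $dq_0\wedge dq_1 + dq_2\wedge dq_3$. The values of the coefficients of $dq_0\wedge dq_1$ and $dq_2\wedge dq_3$ at $t=0$ must therefore coincide: expanding $\lambda = \dot\lambda(0)\,t + O(t^3)$ and $u_0-u_1 = c\,t^2 + O(t^4)$ and equating the $t\to 0$ limits of $-2\lambda/(3t)$ and $2(u_0-u_1)/(3t^2)$ yields $c=-\dot\lambda(0)$, which is precisely the statement $u_0-u_1 = -\dot\lambda(0)\,t^2 + O(t^4)$.

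For part (ii), rather than redoing the entire analysis I will invoke the outer automorphism of $\sunitary{2}\times\sunitary{2}$ that exchanges the two factors: this interchanges the two types of singular $2$-sphere stabilisers $\unitary{1}\times\sunitary{2}$ and $\sunitary{2}\times\unitary{1}$ and its action on invariant $\sunitary{2}$-structures is exactly the discrete symmetry $\tau_4$ of Proposition~\ref{prop:symmetries}, which flips the sign of $u_1$ (and of $\lambda, v_0, v_2$). Consequently the smoothness criterion on the $\sunitary{2}\times\unitary{1}$-orbit is obtained from (i) by the substitution $u_1 \mapsto -u_1$, which turns the matching condition $u_0-u_1 = -\dot\lambda(0)\,t^2+O(t^4)$ into $u_0+u_1 = \dot\lambda(0)\,t^2+O(t^4)$ (the sign of $\dot\lambda(0)$ flips because $\tau_4$ changes the sign of $\lambda$). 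The main point requiring care is bookkeeping the $\tau_4$-action on each coefficient; with this done, the reduction is immediate.
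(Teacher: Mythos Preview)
Your proof is correct and follows essentially the same approach as the paper's: you apply the Eschenburg--Wang criterion term by term to the explicit coordinate expression of $\omega$ already computed in the text, extracting the parity and matching conditions, and then deduce (ii) from (i) via $\tau_4$. Your write-up is considerably more detailed than the paper's terse one-paragraph proof, but the logic and all the key steps coincide.
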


\subsection{Closing smoothly on an $S^3$}

When $K'=\Delta\sunitary{2}$ and $Q \simeq S^{3}$ , $V$ and $\Lie{p}$ are both isomorphic to the adjoint representation of \sunitary{2}. The vector bundle $E$ is trivial and we can identify it with $T^\ast S^3$.

Since $\Lie{su}_{2}= \R \oplus \Lie{n}$ as a \unitary{1}--representation, $\End (V \oplus \Lie{p}) ^{\unitary{1}} = 4 \End (\R \oplus \Lie{n})^{\unitary{1}}$. It is easy to see that $\End (\R \oplus \Lie{n})^{\unitary{1}}$ is a $3$--dimensional vector space, spanned by $\id_{\R}$, $\id_{\Lie{n}}$ and $j_{\Lie{n}}$. Under the identification of the action of \sunitary{2} on its Lie algebra with the standard action of \sorth{3} on $\R ^{3}$, \sorth{3}--equivariant polynomials on $\R ^{3}$ with values in $\End (\R ^{3})$ are generated by the constant polynomial $\id$, the degree two polynomial $v \mapsto \langle \,\cdot\, , v \rangle v$ and the degree one polynomial $v \mapsto \,\cdot\, \times v$, which by evaluation at $(1,0,0)$ correspond to $\id _{\R} + \id_{\Lie{n}}$, $\id _{\R}$ and $j_{\Lie{n}}$, respectively.

In order to understand how to change coordinates from this description of the neighbourhood of the singular orbit to the coframe $dt, u^-, e_{i}, v_{i}$ on the set of principal orbits, think of $S^3 \times \R ^3$ as embedded in $\HH \times \Imag{\HH}$ and let $\sunitary{2} \times \sunitary{2}$ act via $(q_{1},q_{2}) \cdot (x,y) = (q_2 x q_1^\ast, q_{1}y q_{1}^\ast)$. Along the ray $\gamma (t)= (1,it)$ the vector fields $U^-,E_{i},V_{i}$ of \eqref{eqn:Invariant:vector:fields} are
\[
\begin{gathered}
U^-=(-i,0), \qquad E_{1} = \frac{1}{2\sqrt{2}}(-j,-2kt), \qquad V_{1} = \frac{1}{2\sqrt{2}}(-k,2jt),\\
\qquad E_{2} = \frac{1}{2\sqrt{2}}(j,0), \qquad V_{2} = \frac{1}{2\sqrt{2}}(k,0).
\end{gathered}
\]
Thus if $(t,x,y)$ are coordinates on $\R ^3 = \Imag\HH$, along $\gamma$ we have $e_{+} = -\frac{\sqrt{2}}{t}dy$ and $v_{+}=\frac{\sqrt{2}}{t}dx$, where $e_{\pm}, v_\pm$ and are the $1$--forms dual to the vector fields $E_{1}\pm E_2$ and $V_1 \pm V_2$, respectively. On the other hand, $u^-, e_-, v_-$ gives instead a coframe on the singular orbit $S^3$.

\begin{lemma}[\mbox{\cf \cite[Proposition 6.1]{Podesta:Spiro:II}}]
\label{lem:Smooth:extension:S3}
An invariant $2$--form $\omega = \lambda dt\wedge \etase + u_0\, \omegase_0 + u_1\, \omegase_1 + u_2\, \omegase_2 + u_3\, \omegase_3$ on $(0,T) \times \Ndiag$ extends smoothly over a singular orbit $\sunitary{2}\times\sunitary{2}/\triangle\sunitary{2}$ if and only if
\begin{enumerate}[label=(\roman*)]
\item $u_0, u_1, u_2$ are odd and $\lambda, u_3$ are even;
\item $u_0 + u_2 = O(t^3)$, $u_3 = O(t^2)$ and $u_1 (t) = 2\lambda (0) t + O(t^3)$.
\end{enumerate}
\end{lemma}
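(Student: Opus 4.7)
The plan is to carry out the Eschenburg--Wang analysis just as for Lemma \ref{lem:Smooth:extension:S2}, now with $K'=\Delta\sunitary{2}$ and $V\simeq\mathbb{R}^3$ carrying the adjoint representation of $SO(3)$. First, using the coordinate change $e_{+}=-\tfrac{\sqrt{2}}{t}dy$, $v_{+}=\tfrac{\sqrt{2}}{t}dx$ together with $\etase=\tfrac{2}{3}u^{-}$, a direct expansion of $\omega=\lambda\,dt\wedge\etase+\sum_{i=0}^{3}u_i\,\omegase_i$ along the ray in terms of the $\unitary{1}$-invariant $2$-forms on $T_qQ\oplus V$ gives
\[
\omega=\tfrac{2\lambda}{3}\,dt\wedge u^{-}+\tfrac{u_0+u_2}{12t^{2}}\,dx\wedge dy+\tfrac{u_0-u_2}{24}\,e_-\wedge v_-+\tfrac{\sqrt{2}\,u_1}{24t}\,(e_-\wedge dx+v_-\wedge dy)+\tfrac{\sqrt{2}\,u_3}{24t}\,(v_-\wedge dx-e_-\wedge dy).
\]

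Next I would identify the minimum polynomial degrees of the five $\unitary{1}$-invariant basis $2$-forms above. Splitting $\Lambda^{2}(T_qQ\oplus V)^{*}=\Lambda^{2}T_qQ^{*}\oplus(T_qQ^{*}\wedge V^{*})\oplus\Lambda^{2}V^{*}$ and using the $SO(3)$-isomorphism $T_qQ\simeq V$ together with the decomposition $V^{*}\otimes V^{*}=\mathbb{R}\oplus V\oplus\Sy^{2}_{0}V$ (whose $SO(3)$-equivariant polynomial maps from $V$ have minimum degrees $0$, $1$, $2$ respectively), the basis $2$-forms acquire minimum polynomial degrees $0,1,1,1,2$: $dx\wedge dy$ and $e_-\wedge v_-$ come from the two $\Lambda^{2}$ summands (each $\simeq V$, degree $1$); $v_-\wedge dx-e_-\wedge dy$ comes from the antisymmetric $V$-piece inside $T_qQ^{*}\wedge V^{*}$; and the pair $dt\wedge u^{-}$ and $e_-\wedge dx+v_-\wedge dy$ receives contributions from the ``diagonal'' $\mathbb{R}$-piece (degree $0$) and $\Sy^{2}_{0}V$-piece (degree $2$) of $T_qQ^{*}\wedge V^{*}$, in a fixed ratio determined by the trace decomposition $v_0^\flat\otimes v_0^\flat=\tfrac{1}{3}g+(\Sy^{2}_{0}\text{-part})$.

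The Eschenburg--Wang criterion then forces each coefficient to be a power series in $t$ of the same parity as the minimum polynomial degree of its basis $2$-form, with vanishing order at least that degree. The three degree-$1$ coefficients yield $u_0-u_2$ odd, $u_0+u_2=O(t^{3})$ and odd, and $u_3=O(t^{2})$ and even, while the even-parity coefficients of $dt\wedge u^{-}$ and $e_-\wedge dx+v_-\wedge dy$ give $\lambda$ even and $u_1$ odd. The precise leading relation $u_1(t)=2\lambda(0)t+O(t^{3})$ then follows because the unique (up to scale) $\mathbb{R}$-invariant (degree-$0$) polynomial pins down the ratio between the $t=0$ values of $\tfrac{2\lambda}{3}$ and $\tfrac{\sqrt{2}\,u_1}{24t}$.

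The main obstacle will be the bookkeeping of normalisation constants needed to extract the specific factor $2$ in this last relation: one must trace how the singular-orbit coframe $u^{-},e_-,v_-$ relates, under $T_qQ\simeq V$, to an $SO(3)$-equivariant basis of $T_qQ^{*}$, and then combine this with the prefactors $\tfrac{1}{12}$ and $\tfrac{2}{3}$ appearing in the definitions of $\omegase_i$ and $\etase$. As a sanity check, the homogeneous \nk structure on $S^3\times S^3$ of Example \ref{S3xS3} has $\lambda\equiv 1$ and $u_1=\tfrac{1}{\sqrt{3}}\sin(2\sqrt{3}\,t)=2t+O(t^{3})$, confirming the expected $u_1(t)=2\lambda(0)t+O(t^{3})$.
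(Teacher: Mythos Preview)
Your overall strategy is exactly the paper's: the paper does not write out a proof of this lemma but only records the $SO(3)$--representation theory (the minimum degrees $0,1,2$ attached to $\id_{\R}+\id_{\Lie n}$, $j_{\Lie n}$, $\id_{\R}$) and the coordinate change along the ray, leaving the routine Eschenburg--Wang check implicit. Your decomposition $\Lambda^{2}(T_qQ\oplus V)^*=\Lambda^{2}T_qQ^*\oplus(T_qQ^*\otimes V^*)\oplus\Lambda^{2}V^*$ with irreducible pieces $\R\oplus 3V\oplus\Sy^2_0V$ and the degree assignment $0,1,1,1,2$ is correct and matches the paper's setup.

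There is, however, a genuine computational gap in your displayed expression for $\omega$. The substitution $e_{+}=-\tfrac{\sqrt{2}}{t}\,dy$, $v_{+}=\tfrac{\sqrt{2}}{t}\,dx$, taken literally from the paper, is not correct as a relation between coframes on the $6$--manifold. Along the ray the vector fields $E_-$ and $V_-$ are \emph{not} tangent to the singular orbit: from the paper's own formulae $E_1=\tfrac{1}{2\sqrt{2}}(-j,-2kt)$ and $E_2=\tfrac{1}{2\sqrt{2}}(j,0)$ one finds $E_-=E_1-E_2=\tfrac{1}{\sqrt{2}}(-j,-kt)$, so $E_-$ carries a normal component $-\tfrac{t}{\sqrt{2}}\partial_y$. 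Dually one gets $e_{+}+e_{-}=-\tfrac{\sqrt{2}}{t}\,dy$ and $v_{+}+v_{-}=\tfrac{\sqrt{2}}{t}\,dx$, not the relations you used. When you correct this, the coefficients of $e_-\wedge v_-$ and of $e_-\wedge dx+v_-\wedge dy$ change: for instance $\omegase_1$ contributes a nonzero $e_-\wedge v_-$ term, so the coefficient of $e_-\wedge v_-$ in $\omega$ becomes $\tfrac{u_0-u_1}{12}$ rather than your $\tfrac{u_0-u_2}{24}$, and the coefficient of $e_-\wedge dx+v_-\wedge dy$ becomes $\tfrac{\sqrt{2}(-u_0+u_1-u_2)}{24t}$ rather than $\tfrac{\sqrt{2}\,u_1}{24t}$.

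You rightly anticipate that the bookkeeping for the constant $2$ is the delicate step, and your sanity check against Example~\ref{S3xS3} is good practice; but the check only confirms the target relation, not the intermediate expansion from which you would derive it. With your displayed formula the degree--$0$/degree--$2$ splitting does not reproduce $u_1'(0)=2\lambda(0)$. The fix is exactly the point you flagged as the ``main obstacle'': you must track how the left-invariant coframe $e_\pm,v_\pm$ decomposes into the $T_qQ^*$-- and $V^*$--parts, which is more than a normalisation constant because $E_-,V_-$ genuinely mix the two summands for $t>0$.
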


\begin{remark*}
Lemmas \ref{lem:Smooth:extension:S2} and \ref{lem:Smooth:extension:S3} give conditions for an invariant $2$--form $\omega$ to extend smoothly along a singular orbit. Suppose now we have a solution $\Psi = ( \lambda, u,v)$ of the fundamental ODE system \eqref{eq:nk:odes:t} such that $\lambda, u_0, u_1, u_2$ satisfy the conditions of either of these lemmas (with $u_3=0$) at $t=0$ and the constraints $I_1=\dots=I_4=0$ of \eqref{eq:constraints} hold for all time. Thus $\Psi$ defines an invariant \nk structure $(\omega,\Omega)$ on $(0,T) \times \Ndiag$ and $\omega$ extends smoothly at $t=0$. Since $3\Real{\Omega}=d\omega$ and $\Imag{\Omega}$ is determined algebraically by $\Real{\Omega}$ \cite[\S 2]{Hitchin:stable:forms}, it follows that the whole $\sunitary{3}$--structure $(\omega,\Omega)$ extends smoothly at $t=0$.
\end{remark*}

\subsection{Extension of symmetries over a singular orbit}

Before dealing with the existence of solutions of the fundamental ODE system \eqref{eq:nk:odes:t} closing smoothly on a singular orbit, we discuss here which symmetries of Proposition \ref{prop:symmetries} extend over the singular orbits of Table \ref{tab:Group:diagrams}.

\begin{lemma}\label{lem:symmetries:singular:orbits}
Let $\Psi(t)$ be a solution of the fundamental ODE system \eqref{eq:nk:odes:t} defined on $[0,T)$.
\begin{enumerate}
\item If $\Psi$ extends smoothly over a singular orbit $S^2 = \sunitary{2} \times \sunitary{2} / \unitary{1} \times \sunitary{2}$  at $t=0$ then so does $\tau_i(\Psi)$ for $i=1,2,3$, while $\tau_4(\Psi)$ extends smoothly over a singular orbit $\sunitary{2} \times \sunitary{2} / \sunitary{2} \times \unitary{1}$.
\item If $\Psi$ extends smoothly over a singular orbit $S^3 = \sunitary{2} \times \sunitary{2} / \triangle\sunitary{2}$  at $t=0$ then so does $\tau_i(\Psi)$ $i=1,2,4$, while $\tau_3(\Psi)$ extends smoothly over a singular orbit $\sunitary{2} \times \sunitary{2} / \phi_{3}\left(\triangle\sunitary{2}\right)$, where $\phi_3=\text{Ad}\left( \exp{\left(\frac{\pi}{4}U^- \right)} \right)$.
\end{enumerate}
\proof
It is obvious that $\tau_1$ and $\tau_2$ preserve the boundary conditions for closing smoothly on a singular orbit of any type. It remains to check whether the automorphisms $\phi_3$ and $\phi_4$ corresponding to $\tau_3$ and $\tau_4$, respectively, fix not only $\triangle \unitary{1}$ but also the stabiliser $K'$ of a point on the singular orbit.

Denote by $(\phi_j)_\ast$ the infinitesimal action of $\phi_j$, $j=3,4$, on $\R U^- \oplus \Lie{n}_1 \oplus \Lie{n}_2$, identified with the tangent space of $\Ndiag$ at a point. Then
\[
(\phi_3)_\ast = \text{Ad}\left( \exp{\left(\frac{\pi}{4}U^- \right)} \right) =\left( \begin{array}{ccc} 1 & 0 & 0\\ 0 & i & 0\\ 0 & 0 & -i\\ \end{array} \right), \qquad
(\phi_4)_\ast = \left( \begin{array}{ccc} -1 & 0 & 0\\ 0 & 0 & 1\\ 0 & 1 & 0\\ \end{array} \right).
\]

The Lie algebras of $K_1=\unitary{1} \times \sunitary{2}$ and $K_2=\sunitary{2} \times \unitary{1}$ are $\R U^+ \oplus \R U^- \oplus \Lie{n}_1$ and $\R U^+ \oplus \R U^- \oplus \Lie{n}_2$, respectively. Thus $\phi_3$ preserves both $K_1$ and $K_2$, while $\phi_4$ exchanges them. Similarly, $(\phi_3)_\ast$ exchanges $\Lie{n}_+$ and $\Lie{n}_-$, where $\Lie{n}_\pm$ is the subspace generated by $E_\pm = E_1 \pm E_2$ and $V_\pm = V_1 \pm V_2$, while $\phi_4$ preserves $\triangle\sunitary{2}$.
\endproof
\end{lemma}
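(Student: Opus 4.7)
The plan is to separate the four involutions into two conceptually different classes. The involutions $\tau_1$ and $\tau_2$ are generic in the sense that they do not involve the principal isotropy group: $\tau_1$ is time reversal (combined with $\lambda \mapsto -\lambda$) and $\tau_2$ flips the sign of $\lambda$ and of all $u_i$. By contrast, $\tau_3$ and $\tau_4$ arise from Lie group automorphisms of $\sunitary{2}\times\sunitary{2}$ that fix the diagonal $\triangle\unitary{1}$; as explained in Proposition \ref{prop:symmetries}, $\tau_3$ comes from the inner automorphism $\phi_3 = \text{Ad}(\exp(\tfrac{\pi}{4}U^-))$ and $\tau_4$ from the outer automorphism $\phi_4$ that swaps the two factors. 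I would therefore attack the lemma at both levels: the hands-on level of Taylor coefficients via Lemmas \ref{lem:Smooth:extension:S2} and \ref{lem:Smooth:extension:S3}, and the structural level of how $\phi_3, \phi_4$ act on the singular isotropy $K'$.

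First I would dispose of $\tau_1$ and $\tau_2$. Each of the smoothness conditions in Lemmas \ref{lem:Smooth:extension:S2} and \ref{lem:Smooth:extension:S3} has the form of a parity statement on $\lambda, u_0, u_1, u_2$ together with a vanishing/matching condition on Taylor coefficients at $t=0$. A direct inspection shows that the substitution $t \mapsto -t$, $\lambda \mapsto -\lambda$ preserves the parity of each variable, and likewise $(\lambda, u_0, u_1, u_2) \mapsto -(\lambda, u_0, u_1, u_2)$ preserves all parities. The matching condition on $\dot{\lambda}(0) t^2$ is unaffected because $\tau_1$ leaves $\dot{\lambda}(0)$ fixed while $\tau_2$ flips both sides simultaneously. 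This covers $\tau_i(\Psi)$ for $i=1,2$ in both cases \emph{(i)} and \emph{(ii)}.

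For $\tau_3$ and $\tau_4$ the efficient approach is the Lie-theoretic one. I would argue that a smooth extension at $t=0$ over the singular orbit $G/K'$ translates, under a solution-preserving automorphism $\phi$ of $G$ with $\phi(K) = K$, into a smooth extension over $G/\phi(K')$. Thus for each of the two singular orbit types I only need to compute $\phi_j(K')$. Working at the infinitesimal level with the basis $U^-, E_1, E_2, V_1, V_2$ one checks that $(\phi_3)_\ast$ acts as the identity on $\R U^\pm$ and exchanges $\Lie{n}_+$ with $\Lie{n}_-$ (where $\Lie{n}_\pm := \lspan(E_1 \pm E_2, V_1 \pm V_2)$), while $(\phi_4)_\ast$ acts as $-1$ on $\R U^-$ and swaps $\Lie{n}_1 \leftrightarrow \Lie{n}_2$. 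From this: for the $S^2$ case $K' = \unitary{1}\times\sunitary{2}$ has Lie algebra $\R U^+ \oplus \R U^- \oplus \Lie{n}_1$, which is preserved by $\phi_3$ (since $U^-$ lies in it, forcing the inner automorphism to restrict trivially on its infinitesimal normaliser) but sent by $\phi_4$ to $\R U^+ \oplus \R U^- \oplus \Lie{n}_2$, the Lie algebra of $\sunitary{2}\times\unitary{1}$. For the $S^3$ case, $\triangle\sunitary{2} = \lspan(U^+, E_+, V_+)$ is clearly preserved by $\phi_4$ but $\phi_3$ sends it to $\phi_3(\triangle\sunitary{2})$ since it exchanges $E_+ \leftrightarrow iE_-$ and $V_+ \leftrightarrow iV_-$ (modulo signs). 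This yields the claimed assignments of singular isotropies.

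The main (very minor) obstacle is notational rather than conceptual: one must verify that the algebraic automorphism statement really is equivalent to the analytic smoothness statement. I would do this by either (a) checking that the Taylor conditions of Lemmas \ref{lem:Smooth:extension:S2} and \ref{lem:Smooth:extension:S3} transform correctly under $\tau_3, \tau_4$ — for instance, $\tau_4$ sends the $u_0 - u_1$ condition for $\unitary{1}\times\sunitary{2}$ precisely into the $u_0 + u_1$ condition for $\sunitary{2}\times\unitary{1}$, together with the sign flip of $\dot\lambda(0)$ — or (b) invoking the general $G$-equivariance principle: if the invariant \sunitary{3}--structure associated to $\Psi$ extends smoothly across $G/K'$, then its pullback by any $G$-automorphism $\phi$ defines an invariant \sunitary{3}--structure associated to $\tau_j(\Psi)$ that extends smoothly across $G/\phi(K')$. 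Both checks are straightforward, but I would favour (b) as it makes the uniform reason behind the case analysis transparent.
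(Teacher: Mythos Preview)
Your proposal is correct and follows essentially the same approach as the paper: dispose of $\tau_1,\tau_2$ by direct inspection of the parity/Taylor conditions, and handle $\tau_3,\tau_4$ by computing how the underlying automorphisms $\phi_3,\phi_4$ act on the singular isotropy subgroups. Your write-up is in fact slightly more explicit than the paper's in two respects --- you spell out why the Taylor conditions are preserved under $\tau_1,\tau_2$, and you flag (and resolve) the passage from ``$\phi$ maps $K'$ to $\phi(K')$'' to ``$\tau_j(\Psi)$ extends smoothly over $G/\phi(K')$'' --- but the strategy and the computations are the same.
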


\subsection{Existence of solutions extending smoothly over a singular orbit}

The main results of this section are the existence of two $1$--parameter families of solutions to the fundamental ODE system \eqref{eq:nk:odes:t} closing smoothly on a singular orbit.

\begin{theorem}[Invariant \nk metrics extending smoothly over a singular orbit $S^2$]
\label{thm:Singular:IVP:S^2}
For every $a>0$ there exists a unique solution $\Psi_a$ to \eqref{eq:nk:odes:t} satisfying the initial conditions
\[
\lambda (0)=u_2 (0)=0, \quad u_0 (0)=u_1(0)=a^2, \quad v_0(0)= v_1 (0)=v_2 (0)=0,
\]
and such that $\lambda, u_0, u_1, u_2$ satisfy the conditions of Lemma \ref{lem:Smooth:extension:S2}(i). In other words, for every $a>0$ there exists a unique $\sunitary{2} \times \sunitary{2}$--invariant smooth \nk structure defined in a sufficiently small neighbourhood of the zero section of $\mathcal{O}(-1) \oplus \mathcal{O}(-1) \ra S^2$ and such that the zero section has volume $a^2$ with respect to the induced metric. Moreover, $\Psi_a$ depends continuously on $a \in (0, \infty)$.
\end{theorem}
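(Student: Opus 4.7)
The plan is to treat this as a singular initial value problem for \eqref{eq:nk:odes:t}: at $t = 0$ we need $\lambda(0) = 0 = u_2(0)$ and $u_0(0) = u_1(0) = a^2$, while the last equation in \eqref{eq:nk:odes:t} has $\lambda^2|u|^2$ in the denominator and so is genuinely singular there. The standard desingularisation, as used in \cite{Eschenburg:Wang} and in many later cohomogeneity one existence proofs, is to change variables in accordance with the parity and leading-order data supplied by Lemma \ref{lem:Smooth:extension:S2}(i). Specifically, I would set
\[
\lambda = t L, \quad u_0 = a^2 + t^2 U_0, \quad u_1 = a^2 + t^2 U_1, \quad u_2 = t^2 U_2, \quad v_i = t^2 V_i,
\]
and rewrite \eqref{eq:nk:odes:t} as a system in the new unknowns $(L, U_0, U_1, U_2, V_0, V_1, V_2)$. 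A leading-order expansion at $t = 0$, combined with the algebraic constraints \eqref{eq:constraints} (whose leading parts must vanish for any genuine \nh structure), forces all the initial values to be uniquely determined by $a$: one finds $L(0) = \tfrac32$, $U_0(0) = -3a^2$, $U_1(0) = \tfrac32 - 3a^2$, $U_2(0) = -\tfrac{3\sqrt{3}}{2} a$, and the $V_i(0)$ are then read off from the first six equations. Signs are fixed by the orientation requirements $\lambda > 0$ and $u_2 < 0$ for $t > 0$.

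In the rescaled variables the first six equations of \eqref{eq:nk:odes:t} are manifestly regular at $t = 0$, while the seventh (the $\dot\lambda$ equation) becomes a ratio whose numerator and denominator each vanish to order $t^4$. I would verify that at this leading order the ratio is finite and equals $L(0) = \tfrac32$, so the seventh equation also yields a regular ODE for $L$. Standard smooth dependence for the Cauchy problem then produces a unique smooth solution of the rescaled system on a maximal interval $[0, T_a)$, and continuous dependence on $a$ follows at once since the system depends polynomially on the parameter. To finish, two consistency checks are required. First, the algebraic constraints \eqref{eq:constraints} are propagated for all time: by \eqref{eq:constraints:derivative} together with the $\dot\lambda$ equation \eqref{eq:nk:odes:t:3} the vector $I = (I_1, \dots, I_4)$ satisfies a linear homogeneous ODE system along any solution, and the leading-order matching above arranges $I_j(t) = O(t^k)$ for $k$ large enough to force $I_j \equiv 0$ by uniqueness. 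Second, the parity conditions in Lemma \ref{lem:Smooth:extension:S2}(i) hold automatically because the rescaled system is invariant under $t \mapsto -t$ (combined with $L \mapsto -L$), so the rescaled unknowns are naturally even functions of $t$; this translates into $u_i$ even and $\lambda$ odd, and together with the leading-order relation $u_0 - u_1 = -L(0)\,t^2 + O(t^4) = -\dot\lambda(0)\,t^2 + O(t^4)$ this is precisely the condition of Lemma \ref{lem:Smooth:extension:S2}(i). The remark after Lemma \ref{lem:Smooth:extension:S3} then upgrades the smooth extension of $\omega$ to a smooth extension of the full \sunitary{3}--structure across the singular $S^2$, and the normalisation $u_0(0) = u_1(0) = a^2$ identifies $a^2$ with the volume of the zero section in the induced metric.

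The main obstacle is showing that the singular $\dot\lambda$ equation is compatible with the rescaling, \ie that its numerator and denominator both vanish to order $t^4$ with a finite ratio equal to $L(0)$. This is not automatic: it forces the quadratic relation $L(0)^2 = 9/4$, which arises from combining the leading-order parts of the constraints $I_2 = 0$ and $I_3 = 0$. It is this geometrically forced value of $L(0)$ (together with the sign choice $L(0) > 0$) that uniquely determines the initial data for the rescaled system and makes the desingularised Cauchy problem well-posed; without it one would either have no solution or a higher-dimensional family of solutions rather than the one-parameter family $\{\Psi_a\}_{a > 0}$ asserted in the theorem.
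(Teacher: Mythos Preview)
Your substitution and your computation of the initial data are correct and agree with the paper. The gap is the claim that after rescaling the system becomes a regular Cauchy problem. It does not: for example, $\lambda\dot u_0 + 3v_0 = 0$ becomes
\[
\dot U_0 = -\frac{1}{t}\Bigl(2U_0 + \frac{3V_0}{L}\Bigr),
\]
and each of the seven rescaled equations still carries a $\tfrac{1}{t}$ factor. What you obtain is a system of the singular form $\dot y = \tfrac{1}{t}M_{-1}(y) + M(t,y)$ with $M_{-1}(y_0) = 0$, and this is \emph{not} covered by the standard Cauchy--Lipschitz theorem you invoke. Existence and uniqueness of a smooth solution require the additional spectral hypothesis that $h\,\text{Id} - d_{y_0}M_{-1}$ be invertible for every integer $h\ge 1$; this is precisely Theorem~\ref{thm:Singular:IVP}. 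The paper's proof computes $d_{y_0}M_{-1}$ explicitly and verifies that $\det\bigl(h\,\text{Id} - d_{y_0}M_{-1}\bigr) = (h+1)(h+2)^3(h+4)^3 \neq 0$ for all $h\ge 0$.

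This spectral check is not a technicality you can absorb into ``standard smooth dependence'': the condition $M_{-1}(y_0)=0$ only lets the formal Taylor recursion \emph{start}; if some integer $h\ge 1$ were an eigenvalue of $d_{y_0}M_{-1}$, the recursion at order $h$ would either be inconsistent or underdetermined, destroying existence or uniqueness respectively. Your discussion of the propagation of the constraints $I_j$ and of the parity conditions is fine, but it presupposes the existence and uniqueness of a smooth solution that you have not yet established.
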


The singular orbit $S^2$ here is $\sunitary{2}\times\sunitary{2}/\unitary{1}\times\sunitary{2}$. The composition $\tau_2\circ\tau_3\circ\tau_4$ preserves the sign constraints $\lambda>0$, $u_2<0$, $v_1>0$ and by Lemma \ref{lem:symmetries:singular:orbits} it sends $\Psi_{a}$ to a solution of \eqref{eq:nk:odes:t} which extends smoothly across a singular orbit $\sunitary{2}\times\sunitary{2}/\sunitary{2}\times\unitary{1}$.

\begin{theorem}[Invariant \nk metrics extending smoothly over a singular orbit $S^3$]
\label{thm:Singular:IVP:S^3}
For every $b>0$ there exists a unique solution $\Psi_b$ to \eqref{eq:nk:odes:t} satisfying the initial conditions
\[
\lambda (0)=b,\quad v_0(0)=-v_2(0)=-\frac{2}{3}b^3,\quad v_1(0)=0, \quad u_i (0)=0, \quad i=0,1,2.
\]
In other words, for every $b>0$  there exists a unique $\sunitary{2} \times \sunitary{2}$--invariant smooth \nk structure 
defined in a sufficiently small neighbourhood of the zero section of $T^\ast S^3$ such that the zero section has volume $b^{3}$ with respect to the induced metric. Moreover, $\Psi_b$ depends continuously on $b \in (0, \infty)$.
\end{theorem}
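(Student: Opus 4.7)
This is a singular initial value problem: the first six equations of \eqref{eq:nk:odes:t} are regular at $t=0$ since $\lambda(0)=b>0$, but the last, $\lambda^2|u|^2\dot\lambda+2\lambda^4 u_1+3u_2 v_2=0$, degenerates because $u(0)=0$ forces its leading coefficient $\lambda^2|u|^2$ to vanish. My plan is a formal power series argument together with a convergence step, ending with verification of the algebraic constraints and of smooth extension.

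\emph{Formal solution.} Guided by Lemma \ref{lem:Smooth:extension:S3}, I would posit a solution with $\lambda, v_0, v_1, v_2$ even in $t$ and $u_0, u_1, u_2$ odd. Matching like powers of $t$ in the first six equations gives linear recursions that uniquely determine each Taylor coefficient of $(u_i, v_i)$ in terms of those of $\lambda$ and of previously computed quantities; from the initial data one finds $\dot u_0(0)=2b^2$, $\dot u_1(0)=2b$, $\dot u_2(0)=-2b^2$, $\ddot v_1(0)=8b^3$, and so on. The seventh equation is more delicate: its leading $t^1$-order at $t=0$ reads $2\lambda(0)^4\dot u_1(0)+3\dot u_2(0)v_2(0)=4b^5-4b^5=0$ and is automatically satisfied, so it does not over-determine the system; at each higher odd order it provides one linear equation with a nonvanishing coefficient for the next Taylor coefficient of $\lambda$, determining it inductively.

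\emph{Convergence and smooth extension.} To convert the formal series into an honest analytic solution, I would regularize by introducing $\bar u_i(t):=u_i(t)/t$ and $\mu(t):=\dot\lambda(t)/t$ (both even, with nontrivial values at $t=0$). In these variables the six regular ODEs become $t\,\dot{\bar u}_i=\text{analytic}$ and $t\,\dot v_i=\text{analytic}$ with right-hand sides vanishing at $t=0$ by the initial data, while the seventh equation becomes $t\,\dot\mu=\text{analytic}$ after pulling out the common factor $t^3$ from $2\lambda^4 u_1+3u_2 v_2$. The resulting IVP is of Fuchsian type with indicial data compatible with the initial conditions, and standard theory (or Picard iteration in a weighted space of real-analytic germs) yields a unique analytic solution on some maximal interval $[0,T)$ with $T=T(b)>0$. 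Parametric analyticity in $b$ then gives continuous dependence of $\Psi_b$ on $b$. Finally, by the proof of Proposition \ref{prop:nk:odes}, the four constraint functions $(I_1,\dots,I_4)$ satisfy a homogeneous linear ODE, so their identical vanishing reduces to a direct check at $t=0$ with the given data; this, together with Lemma \ref{lem:Smooth:extension:S3} applied to $\omega$ (parity is satisfied, and $u_0+u_2=O(t^3)$, $u_1=2bt+O(t^3)$ hold automatically), implies that the whole \sunitary{3}--structure extends smoothly across the singular orbit $S^3=\sunitary{2}\times\sunitary{2}/\triangle\sunitary{2}$, with induced round metric of volume $b^3$ (from $\eta=\lambda\etase$ and $\lambda(0)=b$).

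The hard part will be the convergence step, in particular the careful verification that after the rescalings $\bar u_i=u_i/t$ and $\mu=\dot\lambda/t$ the seventh equation really factors the expected $t^3$ out of $2\lambda^4 u_1+3u_2 v_2$. This requires tracking the cancellations to two orders in $t$, which rests on both the given initial conditions and the algebraic constraints \eqref{eq:constraints}. Once this bookkeeping is done, existence, uniqueness, and continuous dependence all follow from standard ODE theory.
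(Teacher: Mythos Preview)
Your approach is essentially the paper's: reduce to a first-order singular IVP of Fuchsian type and invoke the abstract existence result, which is precisely the paper's Theorem \ref{thm:Singular:IVP}. The paper simply says the proof is similar to that of Theorem \ref{thm:Singular:IVP:S^2} and points to Proposition \ref{prop:Bubbling:AC:CY}(ii) for the relevant change of variables: there one introduces $s$ with $ds/dt=1/\lambda$, sets $u_i=s\,y_i$, $v_i=\text{const}+s^2 y_{i+3}$, $\lambda^2=y_7$, checks $M_{-1}(y_0)=0$, and computes $\det(h\,\text{Id}-d_{y_0}M_{-1})=(h+1)^2(h+2)^4(h+3)$ to verify hypothesis (ii) of Theorem \ref{thm:Singular:IVP}. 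Everything else (constraints, smooth extension, continuous dependence) is as you say.

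Two points where your write-up is looser than the paper. First, introducing $\mu=\dot\lambda/t$ is unnecessary and your claim that ``the seventh equation becomes $t\,\dot\mu=\text{analytic}$'' is not what actually happens: after substituting $u_i=t\bar u_i$ and dividing by $t$, the seventh equation reads $\lambda^2 t^2|\bar u|^2\mu = -(2\lambda^4\bar u_1+3\bar u_2 v_2)$, which determines $\mu$ \emph{algebraically}, not differentially. It is simpler to keep $\lambda$ itself as the seventh unknown: one gets $t\dot\lambda=-\tfrac{2\lambda^4\bar u_1+3\bar u_2 v_2}{\lambda^2|\bar u|^2}$, which is analytic in $(\lambda,\bar u,v)$ and vanishes at $t=0$ by the cancellation you noted, so the system is already in the form $\dot y=\tfrac{1}{t}M_{-1}(y)+M(t,y)$. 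Second, the step you flag as ``the hard part'' really is one: uniqueness and convergence in Theorem \ref{thm:Singular:IVP} hinge on the invertibility of $h\,\text{Id}-d_{y_0}M_{-1}$ for all $h\geq 1$, i.e.\ on an explicit eigenvalue computation for the linearisation of $M_{-1}$. Your inductive claim that the seventh equation has ``a nonvanishing coefficient for the next Taylor coefficient of $\lambda$'' is exactly this indicial condition, and it must be checked rather than asserted; the paper does this by displaying the $7\times 7$ matrix and factoring its characteristic polynomial.
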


\begin{remark}\label{rmk:a:b:homogeneous}
The solutions of Examples \ref{S6}, \ref{S3xS3} and \ref{CP3} of course belong to the two \mbox{$1$--parameter} families of the theorems. More precisely, the round \nk structure on $S^6$, Example \ref{S6}, is $\Psi_a$ with $a=\sqrt{3}$ and $\Psi_b$ with $b=\frac{3}{2}$, the homogeneous \nk structure on $S^3\times S^3$, Example \ref{S3xS3}, is $\Psi_b$ with $b=1$ and the homogeneous \nk structure on $\CP^3$, Example \ref{CP3}, is $\Psi_a$ with $a=\frac{\sqrt{3}}{2}$.
\end{remark}

The main technical tool for proving these theorems is the following general result about first order singular initial value problems. We will appeal to it repeatedly in the paper.

\begin{theorem}
\label{thm:Singular:IVP}
Consider the singular initial value problem
\begin{equation}\label{eq:Singular:IVP}
\dot{y}=\frac{1}{t}M_{-1}(y)+M(t,y), \qquad y(0)=y_0,
\end{equation}
where $y$ takes values in $\R^k$, $M_{-1}\co \R^k\ra \R^k$ is a smooth function of $y$ in a neighbourhood of $y_0$ and $M\co\R\times\R^k\ra\R^k$ is smooth in $t,y$ in a neighbourhood of $(0,y_0)$. Assume that
\begin{enumerate}
\item $M_{-1}(y_0)=0$;
\item $h\text{Id}-d_{y_0}M_{-1}$ is invertible for all $h \in \N$, $h \geq 1$.
\end{enumerate}
Then there exists a unique solution $y(t)$ of \eqref{eq:Singular:IVP}. Furthermore $y$ depends continuously on $y_0$ satisfying (i) and (ii).
\end{theorem}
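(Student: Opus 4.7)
The plan is the classical two-step strategy for this type of regular-singular initial value problem: first build a formal Taylor series solution at $t=0$ order by order, and then upgrade it to a bona fide $C^0$ solution via a contraction-mapping argument on a weighted Banach space. Non-resonance hypothesis (ii) plays the role of the Frobenius indicial condition and is precisely what is needed at both stages. Continuous dependence on $y_0$ will drop out of the parameter-dependent fixed-point theorem once existence and uniqueness are in hand.

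For the first step I would posit $y(t) = y_0 + \sum_{h \geq 1} y_h t^h$ and substitute into \eqref{eq:Singular:IVP}. Since $M_{-1}(y_0) = 0$, the expansion of $\tfrac{1}{t}M_{-1}(y(t))$ in powers of $t$ begins with $d_{y_0}M_{-1}(y_1) + O(t)$, and matching the coefficient of $t^{h-1}$ on both sides yields the recursion
\[
(h\,\mathrm{Id} - d_{y_0}M_{-1})\, y_h \;=\; F_h(y_1, \ldots, y_{h-1}),
\]
where $F_h$ is polynomial in the lower-order $y_j$ and the Taylor coefficients of $M_{-1}$ and $M$ near $(0,y_0)$. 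Hypothesis (ii) inverts the operator on the left for every $h \geq 1$, so the $y_h$ are uniquely determined. They depend rationally on $y_0$ with non-vanishing denominators, in particular continuously.

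For the second step I would fix $N$ larger than the real parts of all eigenvalues of $A := d_{y_0}M_{-1}$, let $y_N(t)$ denote the formal Taylor polynomial truncated at order $N$, and note that by construction the residual
\[
R_N(t) \;:=\; \dot{y}_N(t) - \tfrac{1}{t}M_{-1}(y_N(t)) - M(t, y_N(t))
\]
is $O(t^N)$. Seeking the solution as $y = y_N + z$, the unknown $z$ must satisfy
\[
L z \;:=\; \dot z - \tfrac{1}{t}\,A\,z \;=\; -R_N(t) + Q(t,z),
\]
where $Q(t,z) = O(t\abs{z}) + O(\abs{z}^2)$ collects the $O(t)$ error from linearising $M_{-1}$ at $y_N(t)$ instead of $y_0$ together with the higher-order nonlinear remainder. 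The operator $L$ is inverted by the integrating factor $t^{-A}$, giving $L^{-1}f(t) = t^{A}\int_0^t s^{-A} f(s)\,ds$, and our choice of $N$ ensures this formula maps functions of size $O(t^N)$ to functions of size $O(t^{N+1})$. On the weighted Banach space
\[
X_N \;=\; \bigl\{ z \in C^0([0,\epsilon];\R^k) \,:\, \|z\|_N := \sup\nolimits_{0 < t \le \epsilon} t^{-(N+1)}\abs{z(t)} < \infty \bigr\},
\]
for $\epsilon$ small enough the map $z \mapsto L^{-1}\bigl(-R_N + Q(\cdot, z)\bigr)$ becomes a contraction on a small closed ball: the $\abs{z}^2$ part of $Q$ gains a factor $\epsilon^{N+1}$, while the $O(t\abs{z})$ piece gains a factor $\epsilon$. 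Banach's theorem then produces the unique $z \in X_N$, hence a unique solution $y$ on $[0,\epsilon]$; smoothness for $t>0$ is standard ODE regularity, and any competing solution of \eqref{eq:Singular:IVP} with the prescribed initial value must share the formal expansion (by invertibility of $h\,\mathrm{Id} - A$) and hence lie in $y_N + X_N$, where it must equal our fixed point.

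The main obstacle is the quantitative linear analysis of $L^{-1}$ on the weighted space $X_N$: one must verify carefully that non-resonance translates into uniform boundedness of the integrating-factor operator with constants depending on how far the eigenvalues of $A$ are from positive integers. Once this is checked, the nonlinear contraction estimate is routine, and the parameter-dependent version of the Banach fixed-point theorem, combined with the continuous dependence of each $y_h$ on $y_0$, yields continuous dependence of the solution on $y_0$ in the full class of pairs $(y_0,A)$ satisfying (i) and (ii).
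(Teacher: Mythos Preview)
Your proposal is correct and follows essentially the same approach as the paper, which outlines precisely this two-step strategy (formal power series via the indicial recursion, then truncation plus contraction mapping) with references to Malgrange, Eschenburg--Wang, and Ferus--Karcher. You have fleshed out the details more than the paper does, but the architecture is identical.
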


The condition (ii) guarantees the existence of a unique formal power series solution $y(t)$ to \eqref{eq:Singular:IVP}. Once a formal power series solution has been shown to exist, one can follow the arguments of \cite[Theorem 7.1]{Malgrange}, \cite[\S 5]{Eschenburg:Wang} and \cite[\S 4]{Ferus:Karcher}: use a truncation of the power series of sufficiently high degree as an approximate solution to \eqref{eq:Singular:IVP} and deform it to a genuine solution by applying a contraction mapping fixed point argument. As for the continuous dependence on the initial conditions, one argues as in \cite[\S 4]{Ferus:Karcher}: the coefficients of the formal power series solution $y(t)$ depend differentiably on $y_0$ satisfying (i) and (ii) and the operator used in the fixed point argument is uniformly contracting with respect to the initial conditions.

\proof[Proof of Theorem \ref{thm:Singular:IVP:S^2}]
We first reformulate the problem in the form \eqref{eq:Singular:IVP}.  

By Lemma \ref{lem:Smooth:extension:S2}.(i) if $\Psi$ is a solution to \eqref{eq:nk:odes:t} that extends smoothly along a singular orbit $S^2 = \sunitary{2} \times \sunitary{2}/\unitary{1} \times \sunitary{2}$ we can write
\[
\begin{gathered}
u_0(t)=a^2 + t^2 y_1 (t), \qquad u_1 (t) = a^2 + t^2 y_2 (t),\qquad u_2(t) = t^2 y_3(t),\\
v_0 (t) = t^2 y_4(t), \qquad v_1(t)=t^2 y_5(t), \qquad v_2(t) = t^2 y_6(t), \qquad \lambda (t) = t\, y_7(t),
\end{gathered}
\]
for some $a > 0$.

In terms of $y=(y_1, \dots, y_7)$ the system \eqref{eq:nk:odes:t} becomes
\begin{gather*}
\begin{align*}
&\dot{y}_1=-\frac{1}{t}\left( 2y_1 + 3\frac{y_4}{y_7} \right),\qquad &\dot{y}_4=-\frac{1}{t}\left( 2y_4 - 4a^2 y_7 \right) + 4t y_1 y_7,\\
&\dot{y}_2=-\frac{1}{t}\left( 2y_2 + 3\frac{y_5}{y_7}-2y_7\right),\qquad &\dot{y}_5=-\frac{1}{t}\left( 2y_5 - 4a^2 y_7 \right) + 4t y_2 y_7,\\
&\dot{y}_3=-\frac{1}{t}\left( 2y_3 + 3\frac{y_6}{y_7}\right),\qquad &\dot{y}_6=-\frac{1}{t}\left( 2y_6 + 3\frac{y_3}{y_7} \right) + 4t y_3 y_7,
\end{align*}\\
\dot{y}_7=-\frac{1}{t}\left( y_7 + \frac{y_7^2}{y_2-y_1} + \frac{3y_3 y_6}{2a^2y_7^2 (y_2-y_1)}\right) + M_7 (t,y),
\end{gather*}
where
\[
\begin{gathered}
tM_7 (t,y) = \frac{y_7^2}{y_2-y_1} + \frac{3y_3 y_6}{2a^2y_7^2 (y_2-y_1)} -\\
\frac{2y_7^2(a^2+t^2y_2)}{2a^2(y_2-y_1)+t^2(-y_1^2+y_2^2+y_3^2)} - \frac{3y_3y_6}{y_7^2\left( 2a^2(y_2-y_1)+t^2(-y_1^2+y_2^2+y_3^2) \right)}.
\end{gathered}
\]
Hence $y$ solves an ODE system as in \eqref{eq:Singular:IVP}.

It remains to check the hypotheses of Theorem \ref{thm:Singular:IVP}. First, the requirement $M_{-1}\left( y_0 \right) =0$ uniquely determines the initial condition $y_0$ in terms of $a$:
\begin{equation}
\label{eq:initial:condition:y:a0}
y_0 = \left( -3a^2, -3a^2+\frac{3}{2}, -\frac{3\sqrt{3}}{2}a, 3a^2, 3a^2, \frac{3\sqrt{3}}{2}a, \frac{3}{2} \right),
\end{equation}
where we assumed $u_2 <0$ without loss of generality.

For $y$ sufficiently close to $y_0$, $a$ bounded away from $0$ and $t$ sufficiently small we see that $M_{-1}$ and $M$ are real analytic functions of all of their arguments and depend smoothly on $a>0$.

Finally, the linearisation of $M_{-1}$ at $y_0$ is 
\[
d_{y_0}M_{-1}=\left( \begin{array}{ccccccc}
-2 & 0 & 0 & -2 & 0 & 0 & 4a\\
0 & -2 & 0 & 0 & -2 & 0 & 2+4a\\
0 & 0  & -2  & 0 & 0 & -2 & 2\sqrt{3a}\\
0 & 0 & 0 & -2 & 0 & 0 & 4a\\
0 & 0 & 0 & 0 & -2 & 0 & 4a\\
0 & 0 & -2 & 0 & 0 & -2 & -2\sqrt{3a}\\
1 & -1 & -\frac{2}{\sqrt{3a}} & 0 & 0 & \frac{2}{\sqrt{3a}} & -7\\
\end{array}\right).
\]
The determinant of $h \text{Id} - d_{y_{0}}M_{-1}$ is
\[
(h+1)(h+2)^3(h+4)^3 \geq 512 > 0 
\]
for all integer $h \geq 0$.
\endproof

\begin{remark}\label{rmk:Power:Series:a0}
The first few terms of the Taylor series of $\Psi_{a}$ at $t=0$ are:
\begin{subequations}
\begin{gather*}
\lambda(t) = \tfrac{3}{2}t  - \frac{2a^2+3}{12a^2} t^3 + \frac{116a^4 - 381a^2 + 261}{1440a^4}t^5 + \frac{5500a^6 - 26523a^4 + 34209a^2 - 13149}{90720a^6}t^7 + \dots,\\
u_0(t)= a^2  -3a^2 t^2 + \frac{52a^2-3}{24}t^4 -\frac{172a^4 + 3a^2 - 18}{270a^2} t^6 + \dots,\\
u_1(t)= a^2 -\tfrac{3}{2}(2a^2-1)t^2 + \frac{52a^4 - 32a^2 - 3}{24a^2} t^4 -\frac{2752a^6 - 1688a^4 + 93a^2 - 261}{4320a^4}t^6+ \dots,\\
u_2(t) = \tfrac{-3\sqrt{3}}{2}a t^2 + \frac{\sqrt{3}(16 a^2 - 3)}{12 a} t^4 + \frac{\sqrt{3} (-3412 a^4  + 267 a^2 + 423)}{8640 a^3}t^6 + \dots,\\
v_0(t) = 3a^2 t^2 - \left(\frac{1}{4}+\frac{14a^2}{3}\right)t^4 + \frac{5516a^4+429a^2+261}{2160a^2}t^6 + \dots,\\
v_1(t)= 3a^2 t^2 + \left(2 - \frac{14a^2}{3}\right)t^4 + \frac{5516a^4-2541a^2-549}{2160a^2}t^6 + \dots,\\
v_2(t)= \tfrac{3}{2}\sqrt{3}at^2 - \frac{\sqrt{3}(34a^2-3)}{12a}t^4 + \sqrt{3}\frac{13492a^4+273a^2 -423}{8640 a^3}t^6+ \dots.
\end{gather*}
\end{subequations}
The particular form of the coefficients will play an important role in Proposition \ref{prop:Limit:large:a0}.
\end{remark}

The proof of Theorem \ref{thm:Singular:IVP:S^3} is similar, \cf also \cite[Theorem 6.4]{Podesta:Spiro:II} and Proposition \ref{prop:Bubbling:AC:CY}.

\section{The orbital volume function and maximal volume orbits}\label{sec:Orbital:volume}

It follows from previous work that any complete $\sunitary{2}\times \sunitary{2}$--invariant \nk $6$--manifold must arise as the completion 
of some element (or possibly an element from both) of the two $1$--parameter families 
$\{ \Psi_a \}_{a>0}, \{ \Psi_b\}_{b>0}$ constructed in the previous section. 
Our strategy to understand whether any of these solutions closes smoothly along a \emph{second} singular orbit at some time $T>0$ 
will be to consider pairs of solutions from these two families and to try to match them across a principal orbit. In this section we find a geometrically preferred slice to use as a tool in this matching. 

To this end we study the properties of the orbital volume function $V(t)$, \ie the volume of the hypersurfaces $\{ t \} \times \Ndiag$, 
for these two $1$--parameter families of solutions. 
The main result of this section, Proposition \ref{prop:Existence:maximal:volume:orbit}, establishes that the orbital volume function 
of every solution constructed in Theorems \ref{thm:Singular:IVP:S^2} and \ref{thm:Singular:IVP:S^3} has a unique (strict) maximum. 
An important ingredient of the proof of Proposition \ref{prop:Existence:maximal:volume:orbit} is 
Proposition \ref{prop:Maximal:volume:orbits} which establishes key properties of the space of possible maximal volume orbits $\calv$, 
foremost of which are: (i) the orbital volume restricted to $\calv$ is bounded below by the volume of the invariant \se structure on $\Ndiag$ and is 
achieved only by a ``rotated" invariant \se structure; (ii) for any $C\ge 1$ the subset of maximal volume orbits with volume 
bounded above by $C$ is compact and nonempty.

Proposition \ref{prop:Existence:maximal:volume:orbit}  allows us to define two curves $\alpha$ and $\beta: (0,\infty) \to \calv$
that parametrise the maximal volume orbits of the two $1$--parameter families $\{ \Psi_a \}_{a>0}$ and $\{ \Psi_b\}_{b>0}$ respectively 
inside the space of all maximal volume orbits. 
The maximal volume orbit is the preferred principal orbit on which to investigate matching conditions; such matching conditions are then
described in Lemmas \ref{lem:Doubling} and \ref{lem:Matching} (which we call the Doubling and Matching Lemmas respectively)  
in terms of properties of the curves $\alpha$ and $\beta$ (or a certain projection of them). 
Establishing enough information about the curves $\alpha$ and $\beta$ to prove that they satisfy the conditions 
of the Doubling or Matching Lemmas in some cases will occupy the rest of the paper.
\medskip

According to Proposition \ref{prop:Invariant:metrics}, the volume of $\Ndiag$ with respect to the metric induced by an invariant \sunitary{2}--structure 
$\psi_{\lambda,\mu,A}$ has the simple form $V = V_0 \,\lambda \mu^2$, where $V_0$ is the volume of $\Ndiag$ with respect to the standard \se metric. For notational convenience in the rest of the paper we will write $V$ for $(V/V_0)$. While it is possible to derive evolution equations for the orbital volume function $V$ and related quantities directly from the fundamental ODE system \eqref{eq:nk:odes:t}, for some purposes 
it is more convenient to consider the system of ODEs describing arbitrary cohomogeneity one Einstein metric: 
computations will be minimised and, more importantly, we will be able to recognise often complicated algebraic expressions 
involving the \sunitary{2}--structure $\psi$ as basic geometric quantities.

\subsection{The Einstein equations for families of equidistant hypersurfaces}\label{sec:Einstein:odes}

Let $g_t$ be a family of Riemannian metrics on an $n$--dimensional oriented manifold $N$ and consider the metric $\hat{g}$ on $\R_t \times N$ defined by $\hat{g}=dt^2 + g_t$. 
We recall the derivation of the Einstein equations for $\hat{g}$ given by Eschenburg--Wang in \cite[Proposition 2.1]{Eschenburg:Wang}.

Denote by $\nu$ the unit normal of the family of hypersurfaces $N_t:=\{ t \} \times N$ and by $L$ the Weingarten operator given by
\[L(X)=\hat{\nabla} _X \nu, \quad \text{for every\ } X \in TN,
\] 
where $\hat{\nabla}$ is the Levi-Civita connection of the metric $\hat{g}$. Then $L=\frac{1}{2}g^{-1}g'$ and 
\begin{equation}\label{eqn:Riccati}
L'+L^2+\hat{R}_\nu=0
\end{equation}
is the Riccati equation. Here $\hat{R}_\nu$ is the normal-tangential component of the curvature of $\hat{g}$, \ie $\hat{R}_\nu(X)=\hat{R}(X,\nu)\nu$ for every $X \in TN$, and $'$ denotes differentiation with respect to $t$.

Assume now that $\hat{g}$ is Einstein with scalar curvature $(n+1)\Lambda$. Then the Gauss equation for the hypersurface $N_t$ implies that
\begin{equation}\label{eqn:Einstein:odes}
L'+lL-r+\Lambda\id =0,
\end{equation}
where $l=\trace{L}$ is the mean curvature of the hypersurface $N_t$ and $r$ is the Ricci-endomorphism of the metric $g_t$. Moreover, 
if we regard $L$ as a $TN$--valued $1$--form and $d^\nabla$ is the exterior differential induced by the Levi-Civita connection of $g_t$
then the vanishing of the Ricci curvature in normal-tangential directions can be expressed using the Codazzi equation as 
\begin{equation}
\label{eq:trace:codazzi}
\trace (X \lrcorner d^\nabla L) =0, \quad \text{for all\ } X \in TN.
\end{equation}
Taking the trace of both \eqref{eqn:Riccati} and \eqref{eqn:Einstein:odes} yields
\begin{subequations}\label{eqn:Mean:Curvature:odes}
\begin{gather}
l'+|L|^2+\Lambda =0,\\
\Scal -(n-1)\Lambda = l^2 - |L|^2,
\end{gather}
\end{subequations}
where $\Scal$ is the scalar curvature of $g_t$ and $|L|^2=\trace{L^2}$.

The Einstein equations are therefore a first order system for a pair $(g,L)$ consisting of a Riemannian metric $g$ on $N$ and a symmetric endomorphism $L$ of $TN$ subject to the additional constraints \eqref{eq:trace:codazzi} and (\ref{eqn:Mean:Curvature:odes}a). (\ref{eqn:Mean:Curvature:odes}b) is then a conserved quantity of this system.

Consider now a solution $\Psi = (\psi _t)_{0 < t < T}$ of the fundamental ODE system \eqref{eq:nk:odes:t}. Each invariant nearly hypo structure $\psi _t$ on $\Ndiag$ determines an invariant metric $g$. Furthermore, differentiating the map from invariant \nh structures to invariant metrics along the direction of the vector field \eqref{eq:nk:odes:t} defines the Weingarten operator $L$. The pair $(g,L)$ then satisfies the system above with $n=5=\Lambda$.

The positivity of $\Lambda$ has a number of immediate consequences.  

\begin{prop}\label{prop:crit:vol}
Let $\Psi$ be a solution of the fundamental ODE system \eqref{eq:nk:odes:t} defined on $(-T_1,T_2)$ for $T_1,T_2>0$. Moreover, let $V$ be the orbital volume function $V=\lambda\mu^2$, where $\mu^2 = |u|^2$ in the parametrisation of \eqref{eq:invt:nh}, and $l$ be the mean curvature $l=\trace{L}=\frac{\dot{V}}{V}$.
\begin{enumerate}
\item Every critical point of $V$ is a strict maximum.
\item $T_1+T_2\leq \pi$.
\item Fix $t_0 \in (0,\pi)$ such that $l(0)=5\cot{(t_0)}$. Then
\begin{align*}
l(t)  & \leq  5 \cot{(t+t_0}), \quad \text{for\ }t>0;\\
l(t)  & \geq  5 \cot{(t+t_0)}, \quad \text{for\ }t<0.
\end{align*}
\item For the same $t_0$ as above and any $t \in (-T_1,T_2)$
\[
V(t) \leq V(0) \frac{ \sin^5{(t+t_0)} }{ \sin^5{(t_0)} }.
\]
\end{enumerate}
\proof
Since $V'=lV$ and therefore $V''=l'V$ at any critical point of $V$,  (\ref{eqn:Mean:Curvature:odes}a) implies (i).

Decomposing $L$ into its trace $l$ and its traceless part $\mathring{L}$, we write $|L|^2=\frac{1}{5}l^2 + |\mathring{L}|^2$. Then \eqref{eqn:Mean:Curvature:odes} implies that $l$ satisfies the inequality 
\[l'+\frac{1}{5}l^2 + 5 \leq 0.
\]
Comparing $l$ with a solution of the scalar Riccati-type equation $l'+\frac{1}{5}l^2 + 5 = 0$ we obtain (ii) and (iii) by \cite[Theorem 4.1]{Eschenburg:Comparison:hypersurfaces}. Direct integration of $V' = lV$ then yields (iv).
\endproof
\end{prop}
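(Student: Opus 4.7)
The plan is to specialise the Eschenburg--Wang Einstein ODE system of Section~\ref{sec:Einstein:odes} to our parameters $n = 5$, $\Lambda = 5$, and to reduce every statement to a comparison with a scalar Riccati equation. Decomposing the Weingarten operator as $L = \tfrac{1}{5} l\,\Id + \mathring{L}$, the identity $|L|^2 = \tfrac{1}{5} l^2 + |\mathring L|^2$ combined with the first equation of \eqref{eqn:Mean:Curvature:odes} yields the pointwise differential inequality
\[
l' + \tfrac{1}{5} l^2 + 5 \le 0.
\]

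Statement (i) would then be almost immediate: at a critical point of $V = \lambda \mu^2 > 0$ one has $l = V'/V = 0$, whence $V'' = l' V \le -5V < 0$, so the critical point is a strict maximum. For (iii) the natural model is $y(t) = 5 \cot(t + t_0)$, which solves the corresponding scalar Riccati equation $y' + \tfrac{1}{5} y^2 + 5 = 0$ with $y(0) = l(0)$; I would apply the standard scalar Riccati comparison of \cite[Theorem 4.1]{Eschenburg:Comparison:hypersurfaces} to obtain $l \le y$ for $t > 0$, and then reverse time via $s = -t$, under which the inequality flips to $dl/ds \ge \tfrac{1}{5} l^2 + 5$, to obtain $l \ge y$ for $t < 0$. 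Statement (ii) would follow because the model $y$ has vertical asymptotes at $t = -t_0$ and $t = \pi - t_0$ while $l$ must remain finite throughout $(-T_1, T_2)$, forcing $T_1 \le t_0$ and $T_2 \le \pi - t_0$. Finally (iv) is obtained by integrating $V'/V = l$ against the bound of (iii) separately on $[0, t]$ for $t > 0$ and on $[t, 0]$ for $t < 0$, producing the claimed $\sin^5$ envelope.

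The only delicate point I anticipate is the time asymmetry in (iii): the Riccati inequality is intrinsically oriented in $t$, which is precisely why the two inequalities of (iii) have opposite signs on the two sides of zero, and care is needed to ensure that one extracts an upper bound for $t > 0$ but a lower bound for $t < 0$. Once this is handled by the reversal argument above, the remainder of the argument is a routine combination of scalar Riccati comparison with the identity $V' = l V$.
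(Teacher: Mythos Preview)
Your proposal is correct and follows essentially the same approach as the paper: the same decomposition $|L|^2 = \tfrac{1}{5}l^2 + |\mathring L|^2$, the same Riccati inequality $l' + \tfrac{1}{5}l^2 + 5 \le 0$, the same comparison with $5\cot(t+t_0)$ citing Eschenburg, and the same integration for (iv). You supply slightly more detail than the paper on the time-reversal needed for the $t<0$ half of (iii), which the paper leaves implicit in its citation, but otherwise the arguments are identical.
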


We will now write down explicit formulae for the geometric quantities appearing in (\ref{eqn:Mean:Curvature:odes}b) in terms of an invariant \nh structure $\psi_{\lambda,\mu,A}$ as an illustration of how complicated these algebraic expressions can be. Two of these explicit formulae will be useful in the next section.

\begin{lemma}\label{lem:Scalar:curvature:nearly:hypo}
Let $\psi_{\lambda,\mu,A}$ be one of the invariant nearly hypo structures on $\Ndiag$ parametrised in Proposition \ref{prop:Invariant:nearly:hypo:structures}. Let $x_i$, $y_i$ and $w_i$ be given by \eqref{eqn:invt:nh:A} and, acting with the flow of the Reeb vector field if necessary, assume that $\theta=0$.
\begin{enumerate}
\item The scalar curvature of the induced invariant metric $g$ on $
\Ndiag$ is
\[
\Scal = 20 - 4\frac{\lambda ^2 x_1^2}{\mu^2} + 24\frac{x_1 y_2}{\mu}-4\frac{\lambda^2w_1^2}{\mu ^2} -9\frac{w_2^2}{\lambda^2}.
\]
\item The mean curvature $l$ is
\[
l=2\frac{x_1}{y_1}-3\frac{y_2}{x_2}.
\]
\item The norm squared of the traceless part of the Weingarten operator $L$ is
\[
|\mathring{L}|^2= \frac{36}{5}\left( \frac{\lambda x_1}{\mu } -\frac{y_2}{\lambda}\right) ^2 + 4\frac{\lambda^2 w_1^2}{\mu ^2} +9\frac{w_2^2}{\lambda^2}.
\]
\end{enumerate}
\proof
First we give an expression for the scalar curvature of a general invariant \sunitary{2}--structure on $\Ndiag$ and then specialise to the case of invariant \nh structures. Bedulli and Vezzoni \cite[Theorem 3.4]{Bedulli:Vezzoni} give a formula for the scalar curvature of the metric induced by an \sunitary{2}--structure on a $5$--manifold.
Since $\etase$ spans the space of invariant $1$--forms on $\Ndiag$ and invariant functions on $\Ndiag$ are constant, this formula simplifies considerably 
in the case of invariant \sunitary{2}--structures:
\begin{equation}\label{eqn:Scalar:curvature}
\Scal = -5\phi_{11}^{2} - \sum _{i=1}^3 { \phi_i ^2 } -4\phi_1\phi_{23} -4\phi_2\phi_{31} -4\phi_3\phi_{12}-\sum_{i=0}^3{\frac{1}{2}|\sigma_i|^2}, 
\end{equation}
where $\phi_i$, $\phi_{ij}$ and $\sigma_i$ are the \sunitary{2}--irreducible components of the intrinsic torsion $\Theta$ of the \sunitary{2}--structure as described 
in Conti--Salamon \cite[Proposition 2.3]{Conti:Salamon}.
These irreducible components of $\Theta$ are determined by the exterior derivatives of $\eta$ and the triple $\omega_i$ via 
\begin{equation}
\label{eq:torsion:su2:deriv}
d\omega _{i} = \alpha _{i} \wedge \omega _{i} + \sum _{j=1}^{3}{\phi _{ij} \eta \wedge \omega _{j}} + \sigma _{i}, \qquad d\eta = \eta \wedge \alpha _{4} + \sum _{i=1}^{3}{\phi_{i} \omega _{i}} + \sigma _{4}.
\end{equation}
To apply these two formulae we need to express the components of the intrinsic torsion of the invariant \sunitary{2}--structure $\psi_{\lambda,\mu,A}$ in terms of the parameters 
$(\lambda, \mu, A) \in \R ^{+} \times \R ^+ \times \lorentz{3}$. Recall that from equations \eqref{eq:eta:invt} and \eqref{eq:omega:invt} we have
$\eta = \lambda \etase$, $\omega _i = \mu A \omegase_i$. From \eqref{eqn:Torsion:invariant:structures} we obtain
\[
d\eta = -2\frac{\lambda}{\mu}\omega _1, \qquad d\omega _i = \frac{1}{\lambda} \eta \wedge (A^{-1}TA)\omega_i,
\]
where $T$ is defined in \eqref{eqn:Torsion:standard:SE}. 

We now specialise to the case of invariant \nh \sunitary{2}--structures. An explicit computation of $A^{-1}$ and $A^{-1}TA$ yields
\[
\begin{gathered}
d\eta = -2\frac{\lambda}{\mu}\left( -w_1 \omega_0 + x_1 \omega_1 +\frac{\mu}{\lambda} \omega_3 \right), \qquad d\omega _1 = 3\eta \wedge \omega_2,\\
d\omega_2 = \eta \wedge \frac{1}{\lambda }\left( -3 w_2 \omega_0 - 3 \lambda \omega_1 +3 y_2 \omega_3 \right), \qquad d\omega _3 = -3\frac{y_2}{\lambda}\eta \wedge \omega_2.
\end{gathered}
\]
The first part of the lemma follows from \eqref{eqn:Scalar:curvature} and \eqref{eq:torsion:su2:deriv} using the fact that $|\omega_0|^2=2$ with respect to $g$.

The expression for the mean curvature $l$ follows from \eqref{eq:nk:odes:t} and the change of variables \eqref{eq:invt:nh} since $l=\frac{\dot{V}}{V}=\frac{\dot{\lambda}}{\lambda}+2\frac{\dot{\mu}}{\mu}$. Once (i) and (ii) are known, (iii) follows from (\ref{eqn:Mean:Curvature:odes}b).
\endproof
\end{lemma}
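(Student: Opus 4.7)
The proof proposal is structured by treating the three parts in the order (i), (ii), (iii), with (iii) being deduced algebraically from the other two.

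For (i) I would apply the intrinsic-torsion formula for the scalar curvature of a 5-dimensional \sunitary{2}--structure due to Bedulli--Vezzoni, as in \eqref{eqn:Scalar:curvature}. The key simplification in our invariant setting is that by Lemma \ref{lem:invariant:forms} the space of invariant $1$--forms on $\Ndiag$ is spanned by $\etase$ and invariant functions are constant; in particular, in the decomposition \eqref{eq:torsion:su2:deriv} the $1$--forms $\alpha_i$ and the ``scalar'' parts of the torsion that would otherwise involve $d$ of coefficient functions drop out. What remains is to compute the exterior derivatives of $\eta=\lambda\etase$ and $\omega_i=\mu A\omegase_i$ explicitly, using \eqref{eqn:Torsion:invariant:structures}, and then to read off the components $\phi_i$, $\phi_{ij}$, $\sigma_i$. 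Because $A$ normalises the computations in a tractable way once $\theta=0$ is imposed (\eqref{eqn:invt:nh:A}), the matrices $A^{-1}$ and $A^{-1}TA$ can be written in closed form; the non-zero torsion components are then linear in the entries $w_1,w_2,x_1,y_2,\lambda/\mu$, and substituting these back into \eqref{eqn:Scalar:curvature} yields the claimed expression. The constant term $20$ arises from the ``standard'' torsion of the \se background, \ie the pieces that persist even when $(w_1,w_2)=(0,0)$ and $\psi_{\lambda,\mu,A}$ is a rescaled Sasaki--Einstein structure.

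For (ii) I would bypass curvature considerations altogether and simply use that the mean curvature of the principal orbit inside the cohomogeneity one \sunitary{3}--manifold is $l=\dot V/V$, where $V=\lambda\mu^2$ is the orbital volume function. Thus
\begin{equation*}
l=\frac{\dot\lambda}{\lambda}+2\frac{\dot\mu}{\mu}.
\end{equation*}
The idea is now to read $\dot\lambda/\lambda$ and $\dot\mu/\mu$ off from the fundamental ODE system \eqref{eq:nk:odes:t}, using the change of variables \eqref{eq:change:parametrisation:nh} to pass between $(\lambda,u_i,v_i)$ and the Lorentzian parameters $(\lambda,\mu,A)\in\R^+\times\R^+\times\lorentz{2}$. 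Concretely, $\mu=-x_2 y_1$ and $\lambda=-x_2$, and the evolution equations for $u_1$, $v_1$ (which in these variables become evolution equations for $\mu x_1$ and $\lambda\mu y_1$) combined with the constraint \eqref{eq:NK2} should allow one to express $\dot\mu/\mu$ in terms of $y_2/x_2$, while the differential equation \eqref{eq:nk:odes:t:3} for $\lambda$ gives $\dot\lambda/\lambda$ in terms of $x_1/y_1$ (together with terms that cancel). Collecting and simplifying produces the stated formula.

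Part (iii) is then purely algebraic: splitting $L=\tfrac{1}{5}l\,\Id+\mathring L$ with $\mathring L$ traceless gives $|L|^2=\tfrac{1}{5}l^2+|\mathring L|^2$, and substituting into the conserved quantity (\ref{eqn:Mean:Curvature:odes}b), together with $n=5$ and $\Lambda=5$, yields
\begin{equation*}
|\mathring L|^2=\tfrac{4}{5}l^2-\Scal+20.
\end{equation*}
Plugging in the explicit expressions from (i) and (ii), expanding, and using the identities satisfied by the entries of $A\in\lorentz{2}$ (notably $x_1 y_1=1+w_1^2$--type Lorentzian relations) should yield the claimed decomposition into a horizontal ``diagonal'' piece $\tfrac{36}{5}(\lambda x_1/\mu-y_2/\lambda)^2$ plus the non-diagonal pieces $4\lambda^2 w_1^2/\mu^2+9w_2^2/\lambda^2$. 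The main obstacle I anticipate is bookkeeping: the algebra in part (i) is lengthy, and for part (iii) one needs the identities among $w_i,x_i,y_i$ from the constraint $B\in\lorentz{2}$ to collapse many cross terms. The geometric insight that makes (iii) clean is that the non-diagonal metric directions (those parametrised by $w_1,w_2$) contribute \emph{independently} to $|\mathring L|^2$ from the purely ``diagonal'' nearly hypo evolution captured by $\lambda x_1/\mu-y_2/\lambda$.
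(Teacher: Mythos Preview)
Your proposal is correct and matches the paper's proof essentially step for step: part (i) via the Bedulli--Vezzoni scalar curvature formula for \sunitary{2}--structures, simplified using invariance and the explicit computation of $A^{-1}TA$; part (ii) from $l=\dot V/V=\dot\lambda/\lambda+2\dot\mu/\mu$ together with the fundamental ODE system and the change of variables \eqref{eq:change:parametrisation:nh}; and part (iii) algebraically from (i), (ii) and the Gauss constraint (\ref{eqn:Mean:Curvature:odes}b). The only minor comment is that for (iii) the paper does not explicitly invoke the Lorentzian identities among the $w_i,x_i,y_i$ --- once you have $|\mathring L|^2=\tfrac{4}{5}l^2-\Scal+20$ the simplification to the stated form is direct.
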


Formulae in the parametrisation $(\lambda, u, v)$ of \eqref{eq:invt:nh} follow immediately by applying the change of variables \eqref{eq:change:parametrisation:nh}.

\subsection{The space of invariant maximal volume orbits}
If a solution of \eqref{eq:nk:odes:t} gives rise to a \emph{complete} invariant \nk metric on a closed $6$--manifold $M$ then clearly it would 
contain a unique orbit of maximal volume. 
In general the two $1$--parameter families of invariant \nk metrics provided by Theorems \ref{thm:Singular:IVP:S^2} and \ref{thm:Singular:IVP:S^3} give rise to \emph{incomplete} invariant \nk metrics. Nevertheless, in this section we show that every solution $\Psi$ in the two \mbox{$1$--parameter} families of Theorems \ref{thm:Singular:IVP:S^2} and \ref{thm:Singular:IVP:S^3} has a unique \emph{maximal volume orbit}, \ie the orbital volume function $V=\lambda\mu^2$ reaches a strict maximum before $\Psi$ blows up.

Let $l$ be the mean curvature of an invariant \nh structure $\psi \in \calu \times \sorth{2}$, where $\calu$ is the open set of $\lorentz{2}$ defined in Proposition \ref{prop:Invariant:nearly:hypo:structures}. Here for any $\delta>0$ sufficiently small we extend $\psi$ to an invariant nearly K\"ahler structure $\Psi$ on $(-\delta,\delta) \times \Ndiag$ by Proposition \ref{prop:nk:odes} and define $l$ by $\dot{V}(0)=l\,V(0)$. In this manner, we regard $l$ as a smooth function on the space of invariant \nh structures and denote by $\calv$ its zero level set. $\calv$ is clearly invariant under the circle action generated by the Reeb vector field and therefore, in the parametrisation of Proposition \ref{prop:Invariant:nearly:hypo:structures}, $\calv = \calv_0 \times \sorth{2}$ for a subset $\calv_0$ of $\calu$.

\begin{prop}
\label{prop:Maximal:volume:orbits}
The space $\calv = \calv_0 \times \sorth{2}$ of invariant nearly hypo structures on $\Ndiag$ that satisfy the additional constraint $l=0$ is a smooth manifold.

Let $\pi\co\calv_0 \rightarrow \R^+ \times \R^+$ be the map given by $\psi_{\lambda,\mu,A} \mapsto (\lambda,\mu)$. Then $\pi (\psi)=\pi(\psi')$ if and only if $\psi'$ lies in the orbit of $\psi$ under the group of discrete symmetries generated by the involutions $\tau_1, \tau_2, \tau_3, \tau_4$ of Proposition \ref{prop:symmetries}. The image of $\calv_0$ under $\pi$ is the wedge 
\[W=\{ (\lambda, \mu)\in \R^+ \times \R^+ \, |\, \mu \geq \lambda \geq 1 \}
\] 
and the projection $\pi\co \calv_0 \ra W$ is a $4$--fold cover branched along the boundary of $W$.

In particular, the volume $V=\lambda\mu^2$ of an invariant \nh structure $\psi \in \calv$ is bounded below by $1$ with equality if and only if $\psi$ is an invariant \se structure on $\Ndiag$ and the set $\calv \cap \{ V \leq C\}$  is compact for any $C \geq 1$.
\proof
By the first equation in \eqref{eqn:Mean:Curvature:odes} at any point the differential of $l$ evaluated on the tangent vector corresponding to the evolution equations \eqref{eqn:Nearly:Hypo:evolution} is strictly negative and therefore $\calv$ is a smooth submanifold of $\calu \times \sorth{2}$. We will now describe $\calv$ in detail.

Let $\psi$ be an invariant \nh structure written in the form \eqref{eq:invt:nh} with $\theta=0$. By Lemma \ref{lem:Scalar:curvature:nearly:hypo}(ii) and the change of variables \eqref{eq:change:parametrisation:nh}, $\psi$ is a critical point of $V$ if and only if
\begin{equation}
\label{eq:crit:vol}
2\lambda^4 u_1 = 3u_2v_2.
\end{equation}
We now use \eqref{eq:crit:vol} to rewrite the constraints \eqref{eq:constraints} in terms of $\lambda$, $\mu$ and $u_1$. 
First of all, substituting $u_2 = -\lambda \mu$ in \eqref{eq:crit:vol} yields
\begin{equation}
\label{eq:v2:crit}
v_2 = -\frac{2\lambda^3 u_1}{3\mu}.
\end{equation}
Notice that the inequality \eqref{eq:constraints:sign} then reads
\begin{equation}
\label{eq:V:crit:sign}
u_1^2<\frac{3\mu^4}{2\lambda^2}.
\end{equation}
We now substitute the expression \eqref{eq:v2:crit} for $v_2$ along with those for $u_2$ and $v_1$ in terms of $\lambda$ and $\mu$ into the constraints \eqref{eq:constraints}. We obtain
\begin{subequations}
\label{eq:V:crit}
\begin{gather}
u_0^2 = u_1^2 + \mu^2(\lambda^2-1),\\
\frac{9\mu^2}{4\lambda^6} v_0^2 = u_1^2 +  \frac{9\mu^4}{4\lambda^6}(\mu^2-\lambda^2),\\
\frac{3\mu}{2\lambda^3} u_0 v_0 = \frac{\mu}{2\lambda^3} u_1 (3\mu^2+2\lambda^4).
\end{gather}
\end{subequations}

Equating the product of the first two equations with the square of the third we see that $x=u_1^2$ satisfies the quadratic equation
\begin{equation}
\label{eq:u1:quadratic}
x^2-cx+d=0,
\end{equation}
where $c = \mu^2\left(1+\frac{3\mu^2}{\lambda^2}+\frac{9\mu^2}{4\lambda^4}\right)>0$ and $d=\frac{9\mu^6}{4\lambda^6}(\lambda^2-1)(\mu^2-\lambda^2)$. The discriminant $\Delta = c^2-4d$ of the quadratic \eqref{eq:u1:quadratic} is
\[
\Delta = \frac{\mu^4}{\lambda^8} \left( \frac{45}{2}\mu^4\lambda^2 + 15 \mu^2\lambda^6 + \frac{81}{16} \mu^4 + \lambda^8 - \frac{9}{2} \mu^2 \lambda^4\right) \ge \frac{\mu^4}{\lambda^8} \left( \frac{45}{2} \mu^4 \lambda^2 + 15 \mu^2 \lambda^6\right) \ge 0,
\]
where equality holds if and only if $\lambda=\mu=0$. Hence \eqref{eq:u1:quadratic} always has two distinct real roots.

The larger root $x_+$ always has to be discarded because it contradicts \eqref{eq:V:crit:sign}. When $d<0$ the smallest root $x_-=\frac{1}{2}(c-\sqrt{\Delta})<0$. Assume then that $d\geq 0$. We distinguish two cases: (i)  $\mu < \lambda <1$; (ii) $\mu \geq \lambda \geq 1$.

In case (i) we show that the choice $x=x_-$ is incompatible with \eqref{eq:V:crit}. Indeed, if the first two equalities of \eqref{eq:V:crit} were satisfied with this choice of $u_1^2$, then we must have $c + 2\mu^2 (\lambda^2 -1) \geq \sqrt{\Delta}$ and $c + \tfrac{9\mu^4}{2\lambda^6} (\mu^2-\lambda^2) \geq \sqrt{\Delta}$. However, the squares of the expressions on the left hand side of these inequalities are $\Delta + \tfrac{\mu^4}{\lambda^6}(3\mu^2 + 2 \lambda^4)^2(\lambda^2-1)$ and $\Delta + \tfrac{9\mu^6}{4\lambda^{12}}(3\mu^2 + 2 \lambda^4)^2(\mu^2 - \lambda^2)$, both strictly less than $\Delta$ by hypothesis.

In case (ii) the solution $x=x_-$ to \eqref{eq:u1:quadratic} is both non-negative and compatible with \eqref{eq:V:crit:sign} and \eqref{eq:V:crit}. Thus an admissible solution to the quadratic equation \eqref{eq:u1:quadratic} only exists (and is unique) when $(\lambda, \mu) \in W$. In this case $u_1$ is determined up to sign by the pair $(\lambda, \mu)$. The first two equations in \eqref{eq:V:crit} therefore determine $u_0$ and $v_0$ up to sign in terms of $(\lambda, \mu)$. Acting by $\tau_4\circ\tau_1$ if necessary we can assume without loss of generality that $u_1 \geq 0$. Then $u_1, u_2=-\lambda \mu, v_1=\mu^2, v_2, |u_0|, |v_0|$ are completely determined by $(\lambda, \mu)$. Moreover, the third equation of \eqref{eq:V:crit} implies that $u_0 v_0 \geq 0$. The involution $\tau_2 \circ \tau_3 \circ \tau_4$ maps $(u_0,v_0) \mapsto (-u_0,-v_0)$ while keeping $\lambda,u_1,u_2,v_1,v_2$ fixed and therefore can be used to remove the remaining ambiguity in the choice of sign of $u_0, v_0$.
\endproof
\end{prop}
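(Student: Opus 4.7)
The plan is to prove the five assertions in order of increasing subtlety. For smoothness of $\calv$, I would appeal to the scalar Riccati equation (\ref{eqn:Mean:Curvature:odes}a), which restricted to the direction of the nearly hypo evolution reads $\dot l = -\tfrac{1}{5}l^2 - |\mathring L|^2 - 5$. At any $\psi$ with $l(\psi)=0$ this gives $\dot l \le -5 < 0$, so $dl$ is nonzero along the evolution vector field and $\{l=0\}$ cuts out a smooth embedded hypersurface in $\calu \times \sorth{2}$. Since the Reeb $\sorth{2}$ acts by isometries, the mean curvature $l$ depends only on the $\calu$ factor, yielding the product decomposition $\calv = \calv_0 \times \sorth{2}$.

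Next I would reduce the description of $\calv_0$ to an explicit algebraic problem in the variables of \eqref{eq:invt:nh}. After normalising $\theta = 0$ by the Reeb action, the nearly hypo constraints $I_2=I_4=0$ of \eqref{eq:constraints} already force $u_2 = -\lambda\mu$ and $v_1 = \mu^2$. The critical-point condition $l=0$, via Lemma \ref{lem:Scalar:curvature:nearly:hypo}(ii) together with \eqref{eq:change:parametrisation:nh}, is equivalent to $2\lambda^4 u_1 = 3 u_2 v_2$, which solves for $v_2 = -\tfrac{2\lambda^3 u_1}{3\mu}$. Substituting back into the remaining constraints $I_1 = I_3 = 0$ leaves three scalar relations among $u_0, u_1, v_0$ with coefficients depending only on $(\lambda,\mu)$: two quadratic (for $u_0^2$ and $v_0^2$) and one bilinear (for $u_0 v_0$). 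Multiplying the two quadratic relations and comparing with the square of the bilinear one eliminates $u_0$ and $v_0$ and leaves a quadratic equation $x^2 - cx + d = 0$ for $x := u_1^2$, whose coefficients are explicit polynomial expressions in $(\lambda,\mu)$.

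The heart of the argument is the sign analysis of this quadratic against the inequality \eqref{eq:constraints:sign}, which in these variables reads $u_1^2 < \tfrac{3\mu^4}{2\lambda^2}$. A short direct estimate shows the discriminant $c^2 - 4d$ is strictly positive unless $\lambda=\mu=0$, so there are two distinct real roots; the larger root exceeds the bound in \eqref{eq:constraints:sign} and is discarded, and the smaller root is non-negative precisely when $d \ge 0$, i.e.\ when $\lambda^2-1$ and $\mu^2-\lambda^2$ have the same sign. This splits into the two regions $\mu < \lambda < 1$ and $\mu \ge \lambda \ge 1$. The main obstacle I expect is excluding the first region: even though the quadratic has a non-negative root there, one must show the two quadratic relations in \eqref{eq:V:crit} cannot simultaneously yield non-negative values of $u_0^2$ and $v_0^2$. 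I would handle this by squaring the forced inequalities $c + 2\mu^2(\lambda^2-1) \ge \sqrt{\Delta}$ and $c + \tfrac{9\mu^4}{2\lambda^6}(\mu^2-\lambda^2) \ge \sqrt{\Delta}$ and observing that each squared left-hand side is strictly smaller than $\Delta$ when $\mu < \lambda < 1$, producing the required contradiction. This identifies $\pi(\calv_0)$ with $W$.

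Finally I would account for the discrete fibres and deduce the two geometric corollaries. Over any $(\lambda,\mu)$ in the interior of $W$ the smaller root $x_-$ determines $u_1$ up to sign, the two quadratic relations then determine $(u_0^2, v_0^2)$, and the bilinear relation forces $u_0 v_0 \ge 0$, leaving exactly four admissible sign choices. The compositions $\tau_4 \circ \tau_1$ and $\tau_2 \circ \tau_3 \circ \tau_4$ from Proposition \ref{prop:symmetries} preserve the sign normalisations $\lambda > 0$, $u_2 < 0$ and the constraint $l = 0$, and act on the fibre by flipping the pair $(u_1, v_0)$ (together with $v_2$) and jointly flipping $(u_0, v_0)$ respectively; together they generate a Klein four-group acting freely and transitively on the four sign choices, giving the claimed $4$-fold cover, with branching precisely where $u_1 = 0$ on $\mu = \lambda$ or $u_0 = v_0 = 0$ on $\lambda = 1$. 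The volume bound $V = \lambda \mu^2 \ge 1$ is then immediate from the shape of $W$, with equality only at $(1,1)$ where $x_- = 0$ and $u_0 = v_0 = 0$, which recovers exactly the invariant Sasaki--Einstein locus of Remark \ref{rmk:invt:SE}. Compactness of $\calv \cap \{V \le C\}$ follows because $\pi$ is a proper four-fold cover onto $W \cap \{\lambda \mu^2 \le C\}$, and the latter is a compact subset of $\R^+ \times \R^+$ for every $C \ge 1$.
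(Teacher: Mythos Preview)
Your proposal is correct and follows essentially the same route as the paper: the same Riccati-based transversality for smoothness, the same elimination leading to the quadratic in $u_1^2$, the same sign/discriminant analysis isolating the wedge $W$, and the same pair of discrete symmetries $\tau_4\circ\tau_1$ and $\tau_2\circ\tau_3\circ\tau_4$ realising the Klein-four deck group. One small slip: on the edge $\lambda=1$ one has $d=0$ hence $u_1=0$ and therefore $u_0=0$ by \eqref{eq:V:crit}(a), but $v_0^2=\mu^2(\mu^2-1)\neq 0$ for $\mu>1$; so the branching there is characterised by $u_0=u_1=0$ (not $u_0=v_0=0$), while on $\mu=\lambda$ it is $u_1=v_0=0$.
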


Thanks to the compactness statement we deduce the following crucial proposition.

\begin{prop}
\label{prop:Existence:maximal:volume:orbit}
Let $\Psi$ be one of the solutions of \eqref{eq:nk:odes:t} given by Theorems \ref{thm:Singular:IVP:S^2} and \ref{thm:Singular:IVP:S^3}. Then $\Psi$ intersects $\calv$ in a unique point.
\proof
The proof is identical in the two cases and we therefore give it only for the family $\Psi_{a}$. Let $S$ be the set of $a \in (0,\infty)$ such that $\Psi_{a}$ intersects $\calv$. The solution of \eqref{eq:nk:odes:t} of Example \ref{S6}, \ie the standard \nk structure on $S^6$, admits a maximal volume orbit and a singular orbit $S^2$ (and $S^3$). Hence $S$ is non-empty.

$S$ is open: by Proposition \ref{prop:crit:vol}(i) every nearly hypo structure with $l=0$ is a non-degenerate maximum of the orbital volume function. Moreover, by Proposition \ref{prop:Maximal:volume:orbits}, $l=0$ defines a smooth hypersurface in the space of invariant \nh structures and therefore if $\Psi_{a}$ has a maximal volume orbit, so does $\Psi_{a'}$ for any $a'$ sufficiently close to $a$.

$S$ is closed: suppose that a sequence $a_i$ in $S$ converges to some $a \in (0, \infty)$. By the continuous dependence on the initial conditions, we can find some time $t>0$ sufficiently small such that the orbital volume $V(t)$ and the mean curvature $l(t)$ remain uniformly bounded for all $a_i$. By Proposition \ref{prop:crit:vol}(iii), the maximal volume orbits of $\Psi_{a_i}$ have uniformly bounded volume. By Proposition \ref{prop:Maximal:volume:orbits} the set of maximal volume orbits with an upper bound on the volume is compact and therefore $\Psi_{a}$ must also contain a maximal volume orbit.

We conclude that $S=(0,\infty)$. The intersection point is unique by Proposition \ref{prop:crit:vol}(i).
\endproof 
\end{prop}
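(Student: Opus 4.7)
The plan is to establish the proposition by a continuity argument in the parameter $a$ (or $b$), treating the two families uniformly; I will describe it for $\{\Psi_a\}_{a>0}$ since the argument for $\{\Psi_b\}_{b>0}$ is identical. Uniqueness is actually the easy half: Proposition \ref{prop:crit:vol}(i) says every critical point of the orbital volume $V(t)=\lambda\mu^2$ along a solution is a \emph{strict} local maximum, so $V$ cannot have two critical points (between two strict maxima there would have to be a minimum). Hence once existence of a maximal volume orbit is shown, uniqueness is automatic.

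For existence I would define $S:=\{a\in(0,\infty) : \Psi_a \text{ meets } \calv\}$ and prove $S$ is open, closed and nonempty in $(0,\infty)$. Nonemptiness is free from Remark \ref{rmk:a:b:homogeneous}: the round nearly K\"ahler structure on $S^6$ corresponds to $\Psi_a$ with $a=\sqrt{3}$, and since it closes smoothly on a second singular orbit it certainly passes through a maximal volume orbit in between. Openness follows from transversality: at a maximal volume orbit of $\Psi_a$ the mean curvature function $l$ vanishes, and by Proposition \ref{prop:crit:vol}(i) (\ie $\dot l<0$ at a critical point of $V$) the curve $\Psi_a$ crosses the smooth hypersurface $\calv=\{l=0\}$ of Proposition \ref{prop:Maximal:volume:orbits} transversally. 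Combined with the continuous dependence of $\Psi_a$ on $a$ provided by Theorem \ref{thm:Singular:IVP:S^2}, this gives openness.

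The main step, and the place I expect to have to be careful, is closedness. Suppose $a_i\in S$ with $a_i\to a\in(0,\infty)$. The worry is that the maximal volume orbits of $\Psi_{a_i}$ could escape to infinity in $\calv$. To rule this out I would pick a small $t_0>0$ at which $\Psi_{a_i}(t_0)$ converges to $\Psi_a(t_0)$, so that $V(t_0)$ and $l(t_0)$ are uniformly bounded in $i$. Part (iv) of Proposition \ref{prop:crit:vol} then gives a uniform upper bound on $V$ along the whole remaining interval of definition, hence in particular at the maximal volume orbits of $\Psi_{a_i}$. Proposition \ref{prop:Maximal:volume:orbits} now kicks in: the sublevel set $\calv\cap\{V\le C\}$ is compact, so the maximal volume orbits of the $\Psi_{a_i}$ subconverge in $\calv$, and by continuous dependence on the initial data the limit must be a point of $\Psi_a\cap\calv$. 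Thus $a\in S$.

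Connecting these three facts, $S$ is a nonempty open and closed subset of the connected set $(0,\infty)$, so $S=(0,\infty)$, completing the proof. The two pieces of genuine content behind the argument are (i) the a priori volume bound from the Riccati comparison in Proposition \ref{prop:crit:vol}, which prevents blow-up of the orbital volume before the maximum is reached, and (ii) the compactness result for bounded sublevel sets of $V$ inside $\calv$ from Proposition \ref{prop:Maximal:volume:orbits}; without either, closedness of $S$ could fail.
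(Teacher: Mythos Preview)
Your proof is correct and follows essentially the same route as the paper's: a connectedness argument showing $S$ is nonempty (via the round $S^6$), open (transversality of the crossing of $\calv$ from Proposition~\ref{prop:crit:vol}(i) and Proposition~\ref{prop:Maximal:volume:orbits}), and closed (uniform volume bound from the Riccati comparison plus compactness of $\calv\cap\{V\le C\}$), with uniqueness from Proposition~\ref{prop:crit:vol}(i). The only cosmetic difference is that you invoke part (iv) of Proposition~\ref{prop:crit:vol} for the volume bound whereas the paper cites part (iii); these are equivalent for the purpose at hand.
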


Since the solutions $\Psi_a, \Psi_b$ of Theorems \ref{thm:Singular:IVP:S^2} and \ref{thm:Singular:IVP:S^3} depend continuously on $a,b>0$, respectively, Proposition \ref{prop:Existence:maximal:volume:orbit} yields two continuous curves $\alpha, \beta\co (0,\infty) \ra \calv_0$. Using Proposition \ref{prop:Maximal:volume:orbits} we project these two curves onto the wedge $W$.

\begin{definition}\label{def:Maximal:volume:orbits}
Let $\alphaW, \betaW\co (0,\infty) \ra W$ be the two continuous curves which parametrise the maximal volume orbit of the solutions $\Psi_{a}$, $a >0$, and $\Psi_{b}$, $b >0$, respectively, up to the action of discrete symmetries.
\end{definition}
The fact that $\alphaW$ and $\betaW$ parametrise maximal volume orbits up to discrete symmetries follows from Proposition \ref{prop:Maximal:volume:orbits}.

We now use the curves $\alphaW$ and $\betaW$ to state conditions for pairs of solutions $\Psi_\epsilon$, $\Psi_{\epsilon'}$ to match across their maximal volume orbit and therefore define a complete invariant \nk structure on closed manifold. The task of the rest of paper is to study the behaviour of the curves $\alphaW$ and $\betaW$ to establish whether these matching conditions are ever satisfied.

\subsection{Doubling and matching}

Suppose that $\Psi_1, \Psi_2$ are two of the solutions of \eqref{eq:nk:odes:t} given by Theorems \ref{thm:Singular:IVP:S^2} and \ref{thm:Singular:IVP:S^3}. In the parametrisation of \eqref{eq:invt:nh} we write
\[
\begin{gathered}
\Psi_1(t)=\Big( \lambda(t), u_0(t),u_1(t),u_2(t), v_0(t), v_1(t), v_2(t)\Big),\\
\Psi_2(t)=\left( \hat{\lambda}(t), \hat{u}_0(t), \hat{u}_1(t), \hat{u}_2(t), \hat{v}_0(t), \hat{v}_1(t), \hat{v}_2(t)\right).
\end{gathered}
\]
Suppose that the maximal volume orbit of $\Psi_1, \Psi_2$ occurs at $t=T_1,T_2$, respectively, and that $\left( \lambda(T_1),\mu(T_1)\right) = \left( \hat{\lambda}(T_2), \hat{\mu}(T_2) \right)$, \ie the two maximal volume orbits coincide up to the action of discrete symmetries. In particular, $u_i (T_1), v_i(T_1)$ coincide with $\hat{u}_i(T_2), \hat{v}_i(T_2)$ up to some sign.

Acting by $\tau_2\circ\tau_3\circ\tau_4$ on $\Psi_1$ we define
\[
\widetilde{\Psi}_1(t) = \Big( \lambda(t), -u_0(t), u_1(t), u_2(t), -v_0(t), v_1(t), v_2(t)\Big).
\]
By the uniqueness of solutions of \eqref{eq:nk:odes:t} with given initial conditions, if $u_1(T_1)=\hat{u}_1(T_2)$ then necessarily $\Psi_2=\Psi_1$ or $\Psi_2=\widetilde{\Psi}_1$ depending on the sign of $u_0 (T_1), \hat{u}_0(T_2)$. Thus we assume without loss of generality that $u_1(T_1)=-\hat{u}_1(T_2)$.

Acting by a time translation and $\tau_1 \circ \tau_2 \circ \tau_3$ or $\tau_1 \circ \tau_4$, respectively, define
\[
\Psi_2^\pm(t) = \left( \hat{\lambda}(\tau), \mp \hat{u}_0(\tau), -\hat{u}_1(\tau), \hat{u}_2(\tau), \pm\hat{v}_0(\tau), \hat{v}_1(\tau), -\hat{v}_2(\tau)\right),
\]
where $\tau=T_1+T_2-t$ for $T_1 \leq t \leq T_1+T_2$. By our assumptions either $\Psi_1 (T_1) = \Psi_2^+(T_1)$ and we define a smooth solution of \eqref{eq:nk:odes:t}
\begin{equation}
\label{eq:Matching1}
\Psi (t) = \left\{ \begin{array}{ll}
 \Psi_1(t), & 0 \leq t \leq T_1,\\
 \Psi_2^+(t), & T_1\leq t \leq T_1+T_2,\\
\end{array}\right .
\end{equation}
or $\Psi_1\left(T_1 \right) = \Psi_2^-\left( T_1\right)$ and we consider
\begin{equation}
\label{eq:Matching2}
\Psi (t) = \left\{ \begin{array}{ll}
 \Psi_1(t), & 0 \leq t \leq T_1,\\
 \Psi_2^-(t), & T_1\leq t \leq T_1+T_2.\\
\end{array}\right .
\end{equation}
We deduce the following two lemmas.

\begin{lemma}[Doubling Lemma]
\label{lem:Doubling}
$ $ 

\begin{enumerate}
\item Suppose that there exists $a \in (0,\infty)$ such that $\alphaW (a)$ lies in the portion of the boundary of $W$ with $\lambda =1, \mu >1$. Then \eqref{eq:Matching1} with $\Psi_1=\Psi_2=\Psi_{a}$ defines a smooth invariant \nk structure on $S^2 \times S^4$.
\item Suppose that there exists $a \in (0,\infty)$ such that $\alphaW (a)$ lies in the portion of the boundary of $W$ with $\lambda =\mu >1$. Then \eqref{eq:Matching2} with $\Psi_1=\Psi_2=\Psi_{a}$ defines a smooth invariant \nk structure on $\CP^3$.
\item Suppose that there exists $b \in (0,\infty)$ such that $\betaW (b)$ lies on the boundary of $W$ and $\betaW (b) \neq (1,1)$. Then \eqref{eq:Matching1} or \eqref{eq:Matching2} with $\Psi_1=\Psi_2=\Psi_{b}$ defines a smooth invariant \nk structure on $S^3 \times S^3$.
\end{enumerate}
\proof
The point $(1,1) \in W$ corresponds to the standard Sasaki--Einstein structure on $\Ndiag$ and therefore has to be excluded.

In view of the previous discussion we have only to explain the topology of the underlying manifold $M$ obtained by the gluing construction of \eqref{eq:Matching1} and \eqref{eq:Matching2}.

Recall that $\tau_3$ is induced by the inner automorphism $\phi_3$ of $\sunitary{2} \times \sunitary{2}$ generated by $\frac{\pi}{4}U^{-}$, \ie $\phi_3$ is conjugation by an element of the normaliser $N$ of $\triangle\unitary{1}$ in $\sunitary{2}\times\sunitary{2}$. It is clear that $N=\unitary{1} \times \unitary{1}$ is the maximal torus of $\sunitary{2} \times \sunitary{2}$. In particular $N$ is connected and $\phi_3$ can be deformed to the identity through a path in $N$. On the other hand, $\tau_4$ is induced by the outer automorphism of $\sunitary{2} \times \sunitary{2}$, which fixes both $\triangle\unitary{1}$ and $\triangle\sunitary{2}$ but exchanges $\unitary{1} \times \sunitary{2}$ and $\sunitary{2}\times\unitary{1}$. It is then straightforward to deduce the group diagrams of the resulting cohomogeneity one manifolds. The result follows from Table \ref{tab:Group:diagrams}.
\endproof
\end{lemma}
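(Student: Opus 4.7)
The plan is to reduce the proof to three steps: (i) describing the maximal-volume orbit algebraically on each boundary stratum of $W$ using Proposition \ref{prop:Maximal:volume:orbits}; (ii) verifying that the corresponding matching formula produces a genuine smooth solution of \eqref{eq:nk:odes:t} on the doubled interval; and (iii) identifying the resulting closed cohomogeneity one manifold via Lemma \ref{lem:symmetries:singular:orbits} and Table \ref{tab:Group:diagrams}.

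For step (i) I inspect the quadratic \eqref{eq:u1:quadratic}: its constant term $d$ vanishes precisely on the boundary of $W$, forcing the smaller root $u_1^2=0$ there. Plugging $u_1=0$ back into \eqref{eq:v2:crit} and \eqref{eq:V:crit} gives $v_2=0$ on every boundary stratum; on the $\lambda=1$ stratum one gets additionally $u_0=0$ and $v_0\neq 0$, while on the $\lambda=\mu$ stratum one gets $v_0=0$ and $u_0\neq 0$. The corner $(1,1)$ is excluded in (iii) because there all of $u_0,u_1,v_0,v_2$ vanish and $V=1$ is in fact the minimum rather than a strict maximum of the orbital volume, so it cannot arise as the max-volume orbit of a non-degenerate solution $\Psi_b$.

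For step (ii), continuity of either piecewise definition at $t=T_1$ is the algebraic condition $\Psi_1(T_1)=\Psi_2^{\pm}(T_1)$, which unpacks to the vanishing at $T_1$ of a specific triple drawn from $\{u_0,u_1,v_0,v_2\}$. The triples dictated by step (i) match exactly one of these two patterns on each boundary stratum, which selects \eqref{eq:Matching1} or \eqref{eq:Matching2} accordingly. Smoothness of the concatenation is then automatic from Proposition \ref{prop:symmetries}: $\Psi_2^{\pm}$ is the image of $\Psi_a$ (or $\Psi_b$) under a composition of ODE-symmetries combined with a time translation, hence is itself a solution of \eqref{eq:nk:odes:t} agreeing with the continuation of $\Psi_1$ at the regular point $t=T_1$; uniqueness of the Cauchy problem then promotes the continuous matching to a smooth one, and the algebraic constraints \eqref{eq:constraints} are automatically preserved because each $\tau_i$ preserves them.

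For step (iii), the type of the singular orbit appearing at $t=T_1+T_2$ is read off from the symmetry used in the matching via Lemma \ref{lem:symmetries:singular:orbits}: $\tau_4$ is the outer automorphism swapping the two $\sunitary{2}$ factors and so exchanges the $S^2$ stabilisers $\unitary{1}\times\sunitary{2}$ and $\sunitary{2}\times\unitary{1}$, whereas $\tau_3$ corresponds to the inner automorphism $\phi_3$ by an element of the connected maximal torus $\unitary{1}\times\unitary{1}\subset\sunitary{2}\times\sunitary{2}$ and therefore preserves each stabiliser type up to the equivalences allowed in group diagrams. Combining this with the singular isotropy group at $t=0$ of $\Psi_a$ or $\Psi_b$ supplied by Theorems \ref{thm:Singular:IVP:S^2} and \ref{thm:Singular:IVP:S^3}, and reading the resulting group diagram off Table \ref{tab:Group:diagrams}, gives the claimed diffeomorphism type in each case. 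The main delicate point will be keeping the sign conventions of the $\tau_i$, the $\Psi_2^{\pm}$, the matchings \eqref{eq:Matching1}--\eqref{eq:Matching2}, and the boundary data of (i) all aligned, since this is where misidentification between cases is easiest.
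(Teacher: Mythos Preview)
Your proposal is correct and follows essentially the same approach as the paper: your steps (i)--(ii) make explicit the algebraic matching that the paper bundles into the phrase ``in view of the previous discussion,'' and your step (iii) is the same argument as the paper's, namely that $\tau_3$ comes from an inner automorphism lying in the connected normaliser $\unitary{1}\times\unitary{1}$ (hence preserves the stabiliser type up to group-diagram equivalence) while $\tau_4$ is the outer automorphism swapping the two $S^2$ stabilisers, after which Table~\ref{tab:Group:diagrams} identifies the manifold. Your closing caveat about sign-tracking is apt: the correspondence between $\Psi_2^{\pm}$, the compositions $\tau_1\tau_2\tau_3$ versus $\tau_1\tau_4$, and the labels \eqref{eq:Matching1}--\eqref{eq:Matching2} is exactly where one must be careful, and you should verify directly (e.g.\ against the explicit $\CP^3$ solution of Example~\ref{CP3}, whose maximal-volume orbit has $\lambda=\mu$, $v_0=0$, $u_0\neq 0$) that your bookkeeping reproduces the lemma's pairing of boundary strata with diffeomorphism types.
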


\begin{remark*}
Observe that the \nk structures of the lemma are obtained by ``doubling'' a solution $\Psi_a$ or $\Psi_b$ across its maximal volume orbit. In particular, the two singular orbits are spheres of the same dimension and have the same volume. Because of the non-trivial action of the discrete symmetries, however, the maximal volume orbit is not totally geodesic. By \cite[Lemma 2.1]{Fernandez:nearly:hypo} this can only occur for the ``rotated'' \se structure in a \nk sine-cone.
\end{remark*}

\begin{lemma}[Matching Lemma]
\label{lem:Matching}
$ $

\begin{enumerate}
\item Suppose that there exists $a<a' \in (0, \infty)$ such that $\alphaW (a)=\alphaW (a')$. Set $\Psi_1 = \Psi_{a}$ and $\Psi_2 = \Psi_{a'}$. Then either \eqref{eq:Matching1} yields a smooth \nk structure on $S^2 \times S^4$ or \eqref{eq:Matching2} defines a smooth invariant \nk structure on $\CP^3$. 
\item Suppose that there exists $b<b' \in (0, \infty)$ such that $\betaW (b)=\betaW (b')$. Set $\Psi_1 = \Psi_{b}$ and $\Psi_2 = \Psi_{b'}$. Then either \eqref{eq:Matching1} or \eqref{eq:Matching2} defines a smooth invariant \nk structure on $S^3 \times S^3$.
\item Suppose that there exist $a, b \in (0,\infty)$ such that $\alphaW (a)=\betaW (b)$. Then either \eqref{eq:Matching1} or \eqref{eq:Matching2} with $\Psi_1=\Psi_{a}$ and $\Psi_2=\Psi_{b}$ defines a smooth invariant \nk structure on $S^6$.
\end{enumerate}
\proof
In all cases $\Psi_2 \neq \Psi_1, \widetilde{\Psi}_1$ and therefore one of the conditions of \eqref{eq:Matching1} or \eqref{eq:Matching2} are satisfied. The topology of the resulting $6$--manifold follows from Table \ref{tab:Group:diagrams}.
\endproof
\end{lemma}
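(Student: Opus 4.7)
The plan is to invoke directly the matching construction spelled out in the discussion immediately preceding the lemma statement. The three cases differ only in which pair of solutions from the two $1$-parameter families $\{\Psi_a\}$, $\{\Psi_b\}$ we feed into that construction and in how the topology of the resulting closed $6$-manifold is read off from Table \ref{tab:Group:diagrams}.

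First I would verify that in each case the hypothesis translates into equality of the two maximal volume orbits modulo the discrete symmetry group generated by $\tau_1, \tau_2, \tau_3, \tau_4$. By Proposition \ref{prop:Maximal:volume:orbits} this group acts as the deck transformation group of the $4$-fold branched cover $\pi \co \calv_0 \to W$, so the assumption $\alphaW(a) = \alphaW(a')$ (respectively $\alphaW(a) = \betaW(b)$ or $\betaW(b) = \betaW(b')$) means that the maximal volume orbits of $\Psi_1$ and $\Psi_2$ coincide after acting by an element of this group. At the common maximal volume orbit the values $(\lambda, u, v)$ of $\Psi_1$ and $\Psi_2$ therefore agree up to a sign pattern dictated by one of the $\tau_i$. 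Since by hypothesis $\Psi_2 \ne \Psi_1$ and $\Psi_2 \ne \widetilde{\Psi}_1$ (both fail trivially when $a \ne a'$ or when the two singular orbits have different dimensions), uniqueness of solutions of \eqref{eq:nk:odes:t} with prescribed data at $t = T_1$ forces either $\Psi_1(T_1) = \Psi_2^+(T_1)$ or $\Psi_1(T_1) = \Psi_2^-(T_1)$. Exactly one of \eqref{eq:Matching1} or \eqref{eq:Matching2} therefore produces a $C^\infty$ solution on the doubled interval $[0, T_1 + T_2]$ that extends smoothly over singular orbits at both endpoints.

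The second step is to identify the diffeomorphism type of the resulting closed cohomogeneity one $6$-manifold. For this I would trace what happens to each singular orbit under the discrete symmetries entering the matching, using Lemma \ref{lem:symmetries:singular:orbits}. In case (i) both $\Psi_a$ and $\Psi_{a'}$ close on an orbit of type $\unitary{1} \times \sunitary{2}$, and after matching the two singular orbits are either both $\unitary{1} \times \sunitary{2}$ or one of each type $\unitary{1} \times \sunitary{2}$ and $\sunitary{2} \times \unitary{1}$, depending on whether $\tau_4$ enters the matching; by Table \ref{tab:Group:diagrams} these give respectively $S^2 \times S^4$ or $\CP^3$. In case (ii) both singular orbits are copies of $\triangle\sunitary{2}$ (possibly one conjugated by the inner automorphism $\phi_3$), which by Table \ref{tab:Group:diagrams} yields $S^3 \times S^3$. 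In case (iii) one singular orbit is a $2$-sphere and the other a $3$-sphere, and Table \ref{tab:Group:diagrams} then forces $S^6$.

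The main obstacle, to the extent there is one, lies in the careful sign bookkeeping in the first step: the alternative between \eqref{eq:Matching1} and \eqref{eq:Matching2} hinges on the sign of $u_0(T_1)\,\hat{u}_0(T_2)$, and in case (i) the two sign choices lead to topologically distinct manifolds. Beyond this there is no further analytic content: the hypotheses of the lemma and the detailed analysis of the discrete symmetries have been arranged precisely so that the discussion preceding the statement applies verbatim.
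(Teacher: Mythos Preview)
Your proof is correct and follows essentially the same approach as the paper's: the paper's own proof is a two-sentence appeal to the discussion preceding the Doubling and Matching Lemmas together with Table~\ref{tab:Group:diagrams}, and you have simply unpacked that discussion, invoking Proposition~\ref{prop:Maximal:volume:orbits} for the sign ambiguities at the maximal volume orbit and Lemma~\ref{lem:symmetries:singular:orbits} to trace the singular isotropy groups through the discrete symmetries. There is no substantive difference in strategy.
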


%\pagebreak

\section{Limits of the two $1$--parameter families of smoothly closing \nk solutions as $a$ or $b$ tend to $0$}\label{sec:limits}

In order to be able to satisfy the conditions of the Doubling or Matching Lemmas \ref{lem:Doubling} and \ref{lem:Matching} we will need 
information about the behaviour of the two curves $\alpha$ and $\beta$ parameterising the maximal volume orbits of the two $1$--parameter families
 $\{ \Psi_a \}_{a>0}$ and $\{ \Psi_b\}_{b>0}$ respectively. In this section we establish properties about the families 
$\{ \Psi_a \}_{a>0}$ and $\{ \Psi_b\}_{b>0}$ in the limit where the size of the singular orbit tends to zero, \ie $a$ or $b \to 0$.
This information will suffice for our applications to constructing a new complete cohomogeneity one \nk structure on $S^3 \times S^3$ 
in Section \ref{sec:S3xS3}; however, to prove the existence of a new complete cohomogeneity one \nk structure on $S^6$ 
we will also need some understanding of the family $\{ \Psi_a\}_{a>0}$ in the limit where $a \to \infty$. 
We turn to this latter problem in Section \ref{sec:S6}.

The four compact $\sunitary{2}\times\sunitary{2}$--manifolds of Table \ref{tab:Group:diagrams} can each be thought of as desingularisation of the sine-cone of Example \ref{Sine:Cone}, where a neighbourhood of each conical singularity is replaced with a copy of either $\mathcal{O}(-1)\oplus\mathcal{O}(-1)\ra S^2$ or of $T^\ast S^3$. By Theorem \ref{thm:AC:CY} both of these carry complete Calabi--Yau structures. 
Proposition \ref{prop:Bubbling:AC:CY} establishes that in the limit where the size of the singular orbit tends to zero, 
suitably rescaled $\Psi_a$ and $\Psi_b$ converge to the Calabi--Yau structures on the small resolution and the conifold respectively, and 
thus confirms our earlier expectation that the two $1$--parameter families $\{ \Psi_a\}_{a>0}$ and $\{ \Psi_b\}_{b>0}$ are \nk deformations 
of the two Calabi--Yau desingularisations of the conifold. Further confirmation of the desingularisation intuition is provided by the main result of this section, 
Theorem \ref{thm:Convergence:sine:cone}: this  establishes that 
on every compact set of $(0,\pi)$ the local \nk structures $\Psi_{a}$ and $\Psi_{b}$ converge to the sine-cone of Example \ref{Sine:Cone} as $a$ and $b \ra 0$.
The proof of this result uses the limiting behaviour of the rescalings of $\Psi_a$ or $\Psi_b$ as $a$ or $b \to 0$ as described above, along with the properties of the B\"ohm functional $\bohm$ 
and results from the previous section about the space of maximal volume orbits $\calv$.

\subsection{Bubbling-off of asymptotically conical Calabi--Yau $3$--folds}

Let $\{ \Psi_a \}_{a>0}$ and $\{ \Psi_b\}_{b>0}$ be the two $1$--parameter families of solutions to \eqref{eq:nk:odes:t} given by Theorems \ref{thm:Singular:IVP:S^2} and \ref{thm:Singular:IVP:S^3}, respectively. Set $\epsilon=a$ or $\epsilon=b$ and consider the scaling
\[
(\omega, \Omega) \longmapsto (\epsilon ^{-2}\omega, \epsilon^{-3}\Omega).
\]
Under such a transformation, the two $1$--parameter families define two families of local \nk structures with $\Scal = 30\epsilon^2$ defined in a neighbourhood of a singular orbit of fixed size. In terms of the parametrisation of \eqref{eq:invt:nh}, a solution $\Psi_{\epsilon}(t) = \left( \lambda(t), u(t), v(t) \right)$ of \eqref{eq:nk:odes:t} defines a solution
\[
\widetilde{\Psi}_\epsilon = \left( \epsilon^{-1}\lambda(\epsilon t), \epsilon^{-2} u(\epsilon t), \epsilon^{-3} v(\epsilon t) \right)
\]
of the ODE system
\begin{subequations}
\label{eq:nk:odes:scaled}
\begin{gather}
\lambda \dot{u}_0 + 3\epsilon v_0=0,\\
\lambda \dot{u}_1 + 3\epsilon v_1 - 2\lambda^2=0,\\
\lambda \dot{u}_2 + 3\epsilon v_2=0,\\
\dot{v}_0 -4\epsilon\lambda u_0=0,\\
\dot{v}_1-4\epsilon\lambda u_1=0,\\
\epsilon\lambda \dot{v}_2 - 4\epsilon^2\lambda^2u_2 + 3u_2=0,\\
\epsilon\lambda^2\abs{u}^2\dot{\lambda} +2\epsilon\lambda^4 u_1 + 3u_2 v_2=0,
\end{gather}
\end{subequations}
together with the constraints 
\begin{subequations}
\label{eq:constraints:scaled}
\begin{gather}
\langle u,v \rangle = 0,\\
u_2=-\epsilon\lambda \mu \label{eq:constraints:scaled:1}\\
\lambda ^2 \abs{u}^2=\abs{v}^2,\\
v_1 = \epsilon\mu^2,
\end{gather}
\end{subequations}
where $\mu^2:=\abs{u}^2$. 

\begin{prop}[Rescaled \nk ``bubbles'' converge to Calabi--Yau structures]\hfill
\label{prop:Bubbling:AC:CY}
\begin{enumerate}
\item For every $a \geq 0$ there exists a unique smooth solution $\widetilde{\Psi}_{a}$ to \eqref{eq:nk:odes:scaled} with $\epsilon=a$ satisfying the initial conditions
\[
\lambda (0)=0=u_2 (0), \qquad u_0 (0)=1=u_1(0), \qquad v_i(0)=0,\quad i=0,1,2.
\]
\item For every $b \geq 0$ there exists a unique smooth solution $\widetilde{\Psi}_{b}$ to \eqref{eq:nk:odes:scaled} with $\epsilon=b$ satisfying the initial conditions
\[
\lambda (0)=1,\qquad u_i (0)=0,\quad i=0,1,2, \qquad v_0(0)=-\frac{2}{3}=-v_2(0),\qquad v_1(0)=0.
\]
\end{enumerate}
Moreover, $\widetilde{\Psi}_{a}$ and $\widetilde{\Psi}_{b}$ depend continuously on $a, b \in [0, \infty)$ and $\lim_{a \ra 0}\widetilde{\Psi}_{a}$, $\lim_{b \ra 0}\widetilde{\Psi}_{b}$ is the asymptotically conical Calabi-Yau structure of Theorem \ref{thm:AC:CY}(i) and (ii),  respectively.
\proof
The proof is analogous to the one of Theorems \ref{thm:Singular:IVP:S^2} and \ref{thm:Singular:IVP:S^3} and we only prove (ii).

For better comparison with Theorem \ref{thm:AC:CY}(ii) and since $\lambda (0)=1$, we introduce a new variable defined by $\frac{ds}{dt}=\frac{1}{\lambda}$. Write
\[
\begin{gathered}
u_0(s)=s y_1 (s), \qquad u_1 (s) = s y_2 (s),\qquad u_2(s) = \epsilon s y_3(s)\\
v_0 (s) = -\frac{2}{3}+s^2 y_4(s), \qquad v_1(s)=s^2 y_5(s), \qquad v_2(s) =\frac{2}{3} +s^2 y_6(s), \qquad \lambda^2 (s) = y_7(s).
\end{gathered}
\]
In terms of the new variables the system \eqref{eq:nk:odes:scaled} becomes
\begin{alignat*}{2}
\dot{y}_1 &=-\frac{y_1 -2b}{s}-3b sy_4,&\qquad  &\dot{y}_4 =-\frac{2y_4 - 4b y_1 y_7}{s},\\
\dot{y}_2 &=-\frac{y_2 -2y_7}{s} -3b sy_5,&\qquad  &\dot{y}_5 =-\frac{2y_5 - 4b y_2 y_7}{s},\\
\dot{y}_3 &=-\frac{y_3 + 2}{s} - 3sy_6,&\qquad  &\dot{y}_6 =-\frac{2y_6 - 4b^2 y_3 y_7 + 3y_3 }{s},
\end{alignat*}
\[
\dot{y}_7=-\frac{4\left( y_2 y_7^2 + y_3 \right)}{s\left( -y_1^2+y_2^2+b^2 y_3^2\right)} -\frac{6s y_3 y_6}{y_1^2+y_2^2+b^2 y_3^2},
\]
which has the form $\dot{y}=\frac{1}{s}M_{-1}(y)+M(s,y)$ of \eqref{eq:Singular:IVP}. Condition (i) in Theorem \ref{thm:Singular:IVP} fixes the initial condition
\[
y_0 = \left( 2b, 2, -2, 4b^2, 4b, 3-4b^2, 1 \right).
\]
Then the linearisation of $M_{-1}$ at $y_0$ is 
\[
d_{y_0}M_{-1}=\left( \begin{array}{ccccccc}
-1 & 0 & 0 & 0 & 0 & 0 & 0\\
0 & -1 & 0 & 0 & 0 & 0 & 2\\
0 & 0  & -1  & 0 & 0 & 0 & 0\\
4b & 0 & 0 & -2 & 0 & 0 & 8b^2\\
0 & 4b & 0 & 0 & -2 & 0 & 8b\\
0 & 0 & 4b^2-3 & 0 & 0 & -2 & -8b^2\\
0 & -1 & -1 & 0 & 0 & 0 & -4\\
\end{array}\right).
\]
Since
\[
\det{\left( h \text{Id} - d_{y(0)}A \right)}=(h+1)^2(h+2)^4(h+3) \neq 0
\]
for all $h \geq 1$ the hypotheses of Theorem \ref{thm:Singular:IVP} are satisfied and the existence of a continuous $1$--parameter family of solutions to \eqref{eq:nk:odes:scaled} follows.

Taking the limit $b\ra 0$ in the equations and initial conditions we immediately obtain that $\widetilde{\Psi}_0$ satisfies $u_0 = u_2 =v_1 =0$ (and therefore $\mu = u_1$), $u'_1=2\lambda^2$ and $v_0 = -\frac{2}{3}$. Moreover, the constraint \eqref{eq:constraints:scaled:1} implies that $sy_3=-\lambda\mu$ and therefore $(\lambda\mu)'=3v_2$ and $v'_2=3\lambda\mu$. Here $'$ denotes derivative with respect to $s$. Comparing the parametrisations \eqref{eq:hypo:Stenzel} and \eqref{eq:invt:nh}, by Theorem \ref{thm:AC:CY}(ii), $\widetilde{\Psi}_0$ is the unique invariant complete Calabi--Yau structure on $T^\ast S^3$ normalised so that $\lambda (0)=1$.
\endproof
\end{prop}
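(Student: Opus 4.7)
The plan is to mimic the proofs of Theorems \ref{thm:Singular:IVP:S^2} and \ref{thm:Singular:IVP:S^3}, now treating $\epsilon = a$ (respectively $\epsilon = b$) as an auxiliary parameter in a singular initial value problem of the form \eqref{eq:Singular:IVP}. The key observation is that the rescaled system \eqref{eq:nk:odes:scaled} depends smoothly on $\epsilon \ge 0$ and degenerates at $\epsilon = 0$ to (the invariant form of) the hypo evolution equations \eqref{eqn:Hypo:evolution}, so the $\epsilon = 0$ limit is automatically a Calabi--Yau solution rather than a genuinely \nk one.

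First I would reformulate each singular IVP in Fuchsian form $\dot y = \tfrac{1}{t} M_{-1}(y,\epsilon) + M(t,y,\epsilon)$ using the smoothness conditions of Lemmas \ref{lem:Smooth:extension:S2} and \ref{lem:Smooth:extension:S3}. For (i) the natural ansatz is $u_0 = 1 + t^2 y_1$, $u_1 = 1 + t^2 y_2$, $u_2 = t^2 y_3$, $(v_0,v_1,v_2) = t^2(y_4,y_5,y_6)$, $\lambda = t y_7$; for (ii) it is convenient to first introduce $s$ with $ds/dt = 1/\lambda$ (possible because $\lambda(0)=1\neq 0$) and then set $u_i = s y_i$ ($i=1,2,3$), $v_0 = -\tfrac{2}{3} + s^2 y_4$, $v_1 = s^2 y_5$, $v_2 = \tfrac{2}{3} + s^2 y_6$, $\lambda^2 = y_7$. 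The requirement $M_{-1}(y_0,\epsilon) = 0$ then determines the initial condition $y_0 = y_0(\epsilon)$ explicitly and polynomially in $\epsilon$.

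Next I would apply Theorem \ref{thm:Singular:IVP} with $\epsilon$ as an additional parameter. The main technical step is to verify the non-resonance condition that $h\,\Id - d_{y_0}M_{-1}$ is invertible for every integer $h \ge 1$ and every $\epsilon$ in a compact neighbourhood of the given value. This is a finite computation of the characteristic polynomial of a $7\times 7$ matrix whose entries are polynomials in $\epsilon$; one expects this polynomial to factor into a product $\prod_i(h + c_i(\epsilon))$ with each $c_i(\epsilon)$ bounded away from the negative integers, as happens in the proof of Theorem \ref{thm:Singular:IVP:S^2}. This verification, together with checking that denominators in the Fuchsian reformulation are non-vanishing near $y_0$, is the principal obstacle. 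Continuous dependence of $\widetilde\Psi_\epsilon$ on $\epsilon \in [0,\infty)$ then follows directly from the continuous dependence clause of Theorem \ref{thm:Singular:IVP}.

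Finally I would identify the $\epsilon = 0$ limit with the asymptotically conical Calabi--Yau structures of Theorem \ref{thm:AC:CY}. Setting $\epsilon = 0$ in \eqref{eq:nk:odes:scaled} and in the constraints \eqref{eq:constraints:scaled} shows directly that $\widetilde\Psi_0$ satisfies the invariant hypo evolution equations \eqref{eqn:Hypo:evolution}, while the initial conditions guarantee that the corresponding \sunitary{3}--structure extends smoothly over the singular orbit (a round $S^2$ in case (i), a round $S^3$ in case (ii)) with the prescribed normalisation. By the uniqueness part of Theorem \ref{thm:AC:CY}, $\widetilde\Psi_0$ must coincide with the Candelas--de la Ossa metric on $\mathcal{O}(-1) \oplus \mathcal{O}(-1) \to \PP^1$ in case (i) and with the Candelas--de la Ossa--Stenzel metric on $T^\ast S^3$ in case (ii), which completes the identification of the limit.
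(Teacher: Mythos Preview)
Your proposal is correct and follows essentially the same route as the paper: reformulate \eqref{eq:nk:odes:scaled} as a Fuchsian singular IVP with $\epsilon$ as an extra parameter, apply Theorem~\ref{thm:Singular:IVP}, and read off the $\epsilon=0$ limit as the Calabi--Yau solution of Theorem~\ref{thm:AC:CY}. The paper likewise proves only (ii) and introduces the same change of variable $ds/dt = 1/\lambda$.

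One technical point in your ansatz for (ii) would fail as written and is worth flagging. You set $u_2 = s\,y_3$, but the constraint \eqref{eq:constraints:scaled:1} forces $u_2 = -\epsilon\lambda\mu = O(\epsilon)$, so with your choice $y_3 \to 0$ as $\epsilon \to 0$ and equation \eqref{eq:nk:odes:scaled}g degenerates: at $\epsilon=0$ it reads $3u_2 v_2 = 0$ and no longer determines $\dot\lambda$. The paper instead writes $u_2(s) = \epsilon\, s\, y_3(s)$, which after dividing \eqref{eq:nk:odes:scaled}g by $\epsilon$ yields a regular equation for $\dot y_7$ with denominator $-y_1^2 + y_2^2 + b^2 y_3^2$, non-vanishing near $y_0$ for all $b \ge 0$. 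With this adjustment the linearisation $d_{y_0}M_{-1}$ has characteristic polynomial $(h+1)^2(h+2)^4(h+3)$, independent of $b$ as you anticipated, and Theorem~\ref{thm:Singular:IVP} applies uniformly on $[0,\infty)$.
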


\begin{remark}\label{rmk:Power:series:S^3}
With respect to the new variable $s$ introduced in the proof of the proposition the Taylor series of $\widetilde{\Psi}_{b}$ at $s=0$ is
\begin{subequations}
\begin{gather*}
\lambda^2(s) = 1 - \tfrac{9}{5} (b^2-1) s^2 + \tfrac{27}{35}(b^2-1)(2b^2-1) s^4 + \dots,\\
u_0(s) = 2bs - 4b^3 s^3 + \tfrac{6}{25}b^3(19b^2-9)s^5 + \dots,\\
u_1(s) = 2 s -\tfrac{2}{5}(13b^2-3)s^3 +\tfrac{6}{175} (172b^4-111b^2+9)s^5 + \dots,\\
u_2(s) = -2b s + b(4b^2-3)s^3 - \tfrac{3}{100}b (152b^4-192b^2+45) s^5 \dots, \\
v_0(s)= -\frac{2}{3} +4b^2s^2 -\frac{2}{5}b^2(19b^2-9)s^4+\dots,\\
v_1(s)=4b s^2 - \frac{4}{5}b(11 b^2-6)s^4+\dots,\\
v_2(s)=\frac{2}{3}-(4b^2-3)s^2+\frac{1}{20}(152b^4-192b^2+45)s^4+\dots.
\end{gather*}
\end{subequations}
One checks that as $b \ra 0$ these are exactly the first terms in the Taylor series at $s=0$ for the Calabi--Yau structure of Theorem \ref{thm:AC:CY}(ii).
\end{remark}

%v_1(s)=4b s^2 - \frac{2}{5}b(6b^2-7)s^4+\dots,\\

\subsection{Singular limit: the sine-cone}
Since the Calabi--Yau manifolds of Theorem \ref{thm:AC:CY} are asymptotic to the Calabi--Yau cone over the standard \se structure on $\Ndiag$, Proposition \ref{prop:Bubbling:AC:CY} implies that as $\epsilon = a, b$ converges to zero we can find some time $t_\epsilon >0$ such that the \sunitary{2}--structure $\Psi_{\epsilon} (t_\epsilon)$ is arbitrarily close to the standard invariant \se structure. We will now deduce from this that $\Psi_{\epsilon}$ converges to the sine-cone over the standard \se structure on $\Ndiag$ away from the singular orbit(s) as $\epsilon \ra 0$.

The main tool is a functional $\bohm$, introduced by B\"ohm in his work \cite{Bohm:Complete} on cohomogeneity one Einstein metrics, which plays the role of a Lyapunov function for the system \eqref{eq:nk:odes:t}.

Recall from Section \ref{sec:Einstein:odes} that the Einstein equations for a family of equidistant hypersurfaces $\R \times N$ is a first-order system for a pair $(g,L)$ of a Riemannian metric $g$ on $N$ and a symmetric endomorphism of $TN$ satisfying, among other constraints, (\ref{eqn:Mean:Curvature:odes}b). We restrict here to the case where the pair $(g,L)$ is homogeneous with respect to the action of a compact Lie group $G$. In \cite[Equation (8)]{Bohm:Complete} B\"ohm considers the function
\[
\bohm (g,L) = V(g)^{\frac{2}{n}}\left( |\mathring{L}|^2 + \Scal (g) \right),
\]
where $n$ is the dimension of $N$, $V(g)$ the volume of $N$ with respect to the metric $g$, $\mathring{L}$ is the traceless part of $L$ and $|\mathring{L}|^2=\trace{( \mathring{L}^2 )}$. Using (\ref{eqn:Mean:Curvature:odes}b), $\bohm (g,L)$ can be rewritten as
\begin{equation}\label{eqn:Lyapunov:function}
\bohm (g,L) = V(g)^{\frac{2}{n}} \left( (n-1)\Lambda + l^2 \right),
\end{equation}
where $l = \trace{L}$. Observe that the power of $V$ makes $\bohm$ scale-invariant. Moreover, suppose that $(g,L)$ is an integral curve of the ODE system defining an Einstein metric of cohomogeneity one on $(-\delta, \delta) \times N$, \ie $L = \frac{1}{2}g^{-1}g'$ and $(g,L)$ satisfies \eqref{eqn:Einstein:odes} and \eqref{eqn:Mean:Curvature:odes}. Then $\bohm$ is decreasing in $t$ whenever $l \geq 0$.

\begin{lemma}[{\cite[Proposition 2.2]{Bohm:Complete}}]\label{lem:Monotonicity:Bohm:functional}
If $(g_{t},L_t)$ is a $1$--parameter family of $G$--invariant pairs defining an Einstein metric of cohomogeneity one on $(-\delta, \delta ) \times N$ with Einstein constant $\Lambda$ then
\[
\frac{d}{dt}\bohm (g,L) = -2 \frac{n-1}{n} V^{\frac{2}{n}}(g) \trace{L} |\mathring{L}|^2.
\] 
\proof
Differentiate \eqref{eqn:Lyapunov:function} using (\ref{eqn:Mean:Curvature:odes}a) and $V'=lV$.
\endproof
\end{lemma}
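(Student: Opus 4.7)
The proof is a direct computation, so the plan is essentially to outline which identities get used and in what order. Start from the reformulation
\[
\bohm(g,L)= V^{2/n}\bigl( (n-1)\Lambda + c\, l^2 \bigr),
\]
noting (as a small check) that combining the definition $\bohm=V^{2/n}(|\mathring L|^2 + \Scal)$ with the constraint $\Scal-(n-1)\Lambda = l^2-|L|^2$ and the orthogonal decomposition $|L|^2=\tfrac{l^2}{n}+|\mathring L|^2$ gives $c=\tfrac{n-1}{n}$. First I would apply the product rule:
\[
\frac{d}{dt}\bohm = \tfrac{2}{n} V^{2/n-1} V' \bigl((n-1)\Lambda + c\, l^2 \bigr) + V^{2/n}\cdot 2c\, l\, l'.
\]

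Next I would substitute the two dynamical identities that the lemma hands us: the conservation law $V'=lV$ (so the first term contains a factor $l$) and the Riccati–trace equation (\ref{eqn:Mean:Curvature:odes}a), $l'=-|L|^2-\Lambda$. Factoring $\tfrac{2(n-1)}{n}\,V^{2/n}\, l$ out of the resulting expression leaves the bracket
\[
\Lambda + \tfrac{l^2}{n} + l' \;=\; \tfrac{l^2}{n} - |L|^2,
\]
after using $l'=-|L|^2-\Lambda$. The final step is to replace $|L|^2$ by $\tfrac{l^2}{n}+|\mathring L|^2$, which cancels the $\tfrac{l^2}{n}$ term and leaves $-|\mathring L|^2$ inside the bracket, producing the claimed identity
\[
\frac{d}{dt}\bohm(g,L) = -\tfrac{2(n-1)}{n}\, V^{2/n}\, l\, |\mathring L|^2.
\]

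There really is no obstacle beyond bookkeeping: the only input beyond the definition of $\bohm$ is the scalar mean-curvature evolution equation (\ref{eqn:Mean:Curvature:odes}a) together with the volume identity $V'=lV$. In particular, we never need to invoke (\ref{eqn:Einstein:odes}) or the Codazzi trace \eqref{eq:trace:codazzi}; the non-diagonal and transverse structure of the family $(g_t,L_t)$ enters only through these two scalar consequences of the Einstein condition. The mild subtlety worth flagging in the writeup is that $\bohm$ is scale-invariant and, after the algebra, all curvature terms cancel except the traceless part $|\mathring L|^2$, which is exactly what makes $\bohm$ behave as a Lyapunov function on the locus $l\ge 0$.
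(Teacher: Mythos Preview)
Your proof is correct and follows exactly the approach the paper indicates: differentiate \eqref{eqn:Lyapunov:function} and substitute (\ref{eqn:Mean:Curvature:odes}a) together with $V'=lV$. Your explicit verification that the coefficient of $l^2$ in \eqref{eqn:Lyapunov:function} is $\tfrac{n-1}{n}$ is a useful sanity check, since without that factor the cancellation yielding $-|\mathring L|^2$ in the bracket would fail.
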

 
We now specialise to the case of a homogeneous pair $(g,L)$ on $\Ndiag$ induced by a hypo or nearly hypo structure. Thus $n=5$ and $\Lambda =0$ or $\Lambda = 5$, respectively. By abuse of notation we write $\bohm (\psi)$ when the pair $(g,L)$ is determined by a (nearly) hypo structure $\psi$.

\begin{lemma}\label{lem:Traceless:2nd:ff}
Let $\Psi_\epsilon$ be one of the solutions of Theorems \ref{thm:Singular:IVP:S^2} and \ref{thm:Singular:IVP:S^3} with $\epsilon =a, b$, respectively.
\begin{enumerate}
\item $\bohm|_{\Psi_\epsilon}$ attains its minimum on the maximal volume orbit;
\item $\bohm (\psi) \geq 20$ for every invariant \nh structure $\psi$ with $l=0$ and equality holds if and only if $\psi$ is a rotated \se structure.
\end{enumerate}
\proof
By Proposition \ref{lem:Scalar:curvature:nearly:hypo}(iii) if $\mathring{L}=0$, then in particular $w_1=w_2=0$ and Corollary \ref{cor:characterisation:SC} implies that $\psi$ is an invariant hypersurface of the sine-cone. Hence $|\mathring{L}|>0$ on the solutions of Theorems \ref{thm:Singular:IVP:S^2} and \ref{thm:Singular:IVP:S^3} and Lemma \ref{lem:Monotonicity:Bohm:functional} implies (i). By \eqref{eqn:Lyapunov:function}, restricted to the space of invariant \nh structure with a fixed value of $l$, $\bohm$ is proportional to a fixed power of the volume. The statement in (ii) then follows from Proposition \ref{prop:Maximal:volume:orbits}.
\endproof
\end{lemma}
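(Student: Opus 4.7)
The plan is to dispatch the two parts separately: part (i) follows from the monotonicity formula of Lemma \ref{lem:Monotonicity:Bohm:functional} combined with the uniqueness of the maximal volume orbit (Proposition \ref{prop:Existence:maximal:volume:orbit}), while part (ii) is a direct consequence of the closed form \eqref{eqn:Lyapunov:function} of $\bohm$ together with the volume bound on $\calv$ supplied by Proposition \ref{prop:Maximal:volume:orbits}.

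For part (i), I specialise Lemma \ref{lem:Monotonicity:Bohm:functional} to $n=5$, $\Lambda=5$ to obtain
\[
\frac{d}{dt}\bohm \;=\; -\tfrac{8}{5}\,V^{2/5}\, l\,|\mathring{L}|^2.
\]
By Proposition \ref{prop:Existence:maximal:volume:orbit} each solution $\Psi_\epsilon$ in the families of Theorems \ref{thm:Singular:IVP:S^2} and \ref{thm:Singular:IVP:S^3} has a unique maximal volume orbit at some $t = T_\epsilon$, and by Proposition \ref{prop:crit:vol}(i) every critical point of $V$ along $\Psi_\epsilon$ is a strict maximum. Hence $l = \dot V/V > 0$ on $(0,T_\epsilon)$ and $l < 0$ past $T_\epsilon$, so $\bohm$ is non-increasing up to $T_\epsilon$ and non-decreasing thereafter and therefore attains its minimum at $\{T_\epsilon\}\times\Ndiag$. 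To make the minimum strict I would rule out $|\mathring{L}|^2 \equiv 0$ on any subinterval: by Lemma \ref{lem:Scalar:curvature:nearly:hypo}(iii) the vanishing of $|\mathring{L}|^2$ forces $w_1 = w_2 = 0$, which by Corollary \ref{cor:characterisation:SC} means $\Psi_\epsilon$ is an invariant hypersurface of a sine-cone; by uniqueness of solutions of \eqref{eq:nk:odes:t}, $\Psi_\epsilon$ would then coincide with a sine-cone solution throughout its interval of existence, contradicting the fact that $\Psi_\epsilon$ closes smoothly on a $2$-- or $3$--dimensional singular orbit (whereas sine-cones have only conical singularities).

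For part (ii), formula \eqref{eqn:Lyapunov:function} with $n = 5$ and $\Lambda = 5$ reads $\bohm(\psi) = V(\psi)^{2/5}\bigl(20 + l^2\bigr)$, which on the locus $\{l = 0\} = \calv$ reduces to $\bohm(\psi) = 20\,V(\psi)^{2/5}$. Proposition \ref{prop:Maximal:volume:orbits} then supplies precisely what is needed: on $\calv$ one has $V \geq 1$, with equality exactly on the preimage in $\calv_0$ of the vertex $(1,1)$ of the wedge $W$, which (up to the discrete symmetries $\tau_1,\dots,\tau_4$ and the Reeb $\sorth{2}$--action) parametrises the circle of invariant Sasaki--Einstein, i.e.\ rotated \se, structures on $\Ndiag$. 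The inequality $\bohm(\psi) \geq 20$ and the equality characterisation follow at once.

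The only nontrivial input beyond the closed form of $\bohm$ is the rigidity step in (i), namely that the solutions $\Psi_\epsilon$ never degenerate pointwise to a sine-cone hypersurface. Once that is secured by the uniqueness/sine-cone dichotomy outlined above, both assertions of the lemma follow by bookkeeping from previously established results.
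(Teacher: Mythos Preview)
Your proposal is correct and follows essentially the same approach as the paper's proof: both parts hinge on exactly the ingredients you identify (Lemma \ref{lem:Monotonicity:Bohm:functional} plus the sine-cone dichotomy from Lemma \ref{lem:Scalar:curvature:nearly:hypo}(iii) and Corollary \ref{cor:characterisation:SC} for (i), and \eqref{eqn:Lyapunov:function} plus the volume bound of Proposition \ref{prop:Maximal:volume:orbits} for (ii)). Your version simply spells out more explicitly the sign of $l$ on either side of $T_\epsilon$ and the ODE-uniqueness argument excluding the sine-cone, which the paper leaves implicit.
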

Observe that in view of part (ii) and the scale invariance of $\bohm$, we have $\bohm\equiv 20$ both on the sine-cone and the conifold. We now deduce the main result of this section.

\begin{theorem}\label{thm:Convergence:sine:cone}
As $a,b \ra 0$, $\Psi _{a}$ and $\Psi_{b}$ converge to the sine-cone over the standard Sasaki--Einstein structure on $\Ndiag$ on every relatively compact neighbourhood of the maximal volume orbit $t=\frac{\pi}{2}$ in $(0,\pi) \times \Ndiag$.
\proof
Consider first the functional $\bohm$ restricted to one of the Calabi--Yau structures of Theorem \ref{thm:AC:CY}. Since they are asymptotic to the conifold, $\bohm$ approaches the value $20$ asymptotically.

By the scale-invariance of $\bohm$ and Proposition \ref{prop:Bubbling:AC:CY} for every $\epsilon=a$ or $b$ sufficiently small we can then find some time $t_\epsilon>0$ such that $l(t_\epsilon)>0$ and $\bohm \left( \Psi _{\epsilon}(t_\epsilon) \right)$ is arbitrarily close to $20$.

Lemma \ref{lem:Traceless:2nd:ff} then implies that as $\epsilon \ra 0$ the maximal volume orbit $\Psi_{\epsilon}(t_{\max, \epsilon})$ converges to the ``rotated'' standard \se structure. The theorem is now a consequence of the continuous dependence on the initial conditions for solutions of \eqref{eq:nk:odes:t} (and their time reversals) starting from a principal orbit, \cf Proposition \ref{prop:nk:odes}.
\endproof
\end{theorem}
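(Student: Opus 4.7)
The plan is essentially the one hinted at in the last paragraph of the section: combine the bubbling result (Proposition \ref{prop:Bubbling:AC:CY}) with the monotonicity properties of the B\"ohm functional $\bohm$ to force the maximal volume orbit of $\Psi_\epsilon$ to converge to a rotated Sasaki--Einstein structure on $\Ndiag$, and then invoke continuous dependence on initial data for \eqref{eq:nk:odes:t} to propagate this convergence to any compact subset of $(0,\pi)\times \Ndiag$.

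First I would exploit scale invariance. Writing $\epsilon=a$ (resp.\ $b$), the rescaled solution $\widetilde{\Psi}_\epsilon$ converges by Proposition \ref{prop:Bubbling:AC:CY} to one of the two asymptotically conical Calabi--Yau structures of Theorem \ref{thm:AC:CY}. Since each such Calabi--Yau structure is asymptotic to the conifold and $\bohm\equiv 20$ on the conifold (the traceless Weingarten vanishes identically and $V^{2/n}\mathrm{Scal}$ is a constant of the cone), the value $\bohm(\widetilde{\Psi}_\epsilon(s))$ can be made arbitrarily close to $20$ at sufficiently large $s$; by scale invariance of $\bohm$ this means that for any $\delta>0$ we can find $t_\epsilon>0$ with $l(t_\epsilon)>0$ and $\bohm(\Psi_\epsilon(t_\epsilon))<20+\delta$ once $\epsilon$ is small enough. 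Moreover, since $\widetilde{\Psi}_\epsilon(s)$ tends to the conifold cross-section, the rescaling ratio forces the \emph{unscaled} orbit at $t=t_\epsilon$ to collapse: the singular orbit shrinks to a point and $t_\epsilon\to 0$.

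Next I would use the monotonicity of $\bohm$. By Lemma \ref{lem:Monotonicity:Bohm:functional}, along $\Psi_\epsilon$ we have $\tfrac{d}{dt}\bohm=-\tfrac{8}{5}V^{2/5}l\,|\mathring L|^2$, so $\bohm$ is non-increasing on the interval $[t_\epsilon,t_{\max,\epsilon}]$ where $l\ge 0$ (with $t_{\max,\epsilon}$ the unique time at which the maximal volume orbit is attained, cf.\ Proposition \ref{prop:Existence:maximal:volume:orbit}). Combining with Lemma \ref{lem:Traceless:2nd:ff}(ii), which gives $\bohm(\Psi_\epsilon(t_{\max,\epsilon}))\ge 20$, we conclude
\[
20\le \bohm\bigl(\Psi_\epsilon(t_{\max,\epsilon})\bigr)\le \bohm\bigl(\Psi_\epsilon(t_\epsilon)\bigr)<20+\delta.
\]
Letting $\epsilon\to 0$ gives $\bohm(\Psi_\epsilon(t_{\max,\epsilon}))\to 20$. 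By the rigidity statement in Lemma \ref{lem:Traceless:2nd:ff}(ii), on the submanifold $\calv$ of maximal volume orbits the level set $\{\bohm=20\}$ consists precisely of rotated invariant Sasaki--Einstein structures. Hence the maximal volume orbits $\Psi_\epsilon(t_{\max,\epsilon})$ subconverge, up to the $S^1$-action of the Reeb flow, to the standard \se structure on $\Ndiag$.

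Finally I would propagate this statement to the full \nk structure on compact subsets of $(0,\pi)\times\Ndiag$. The explicit sine-cone solution of Example \ref{Sine:Cone}, restricted to a neighbourhood of $t=\pi/2$, is exactly the solution of \eqref{eq:nk:odes:t} (viewed in forward and reverse time from $t=\pi/2$) whose value at $t=\pi/2$ is the rotated standard \se structure. Proposition \ref{prop:nk:odes} gives well-posedness of \eqref{eq:nk:odes:t} starting from any principal orbit, and hence continuous dependence on initial data: on any compact subinterval of $(0,\pi)$ around $\pi/2$ on which the sine-cone solution stays bounded away from its singularities, $\Psi_\epsilon$ (suitably time-shifted so that $t_{\max,\epsilon}\leftrightarrow \pi/2$) converges uniformly to the sine-cone. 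The main subtlety, and the step that requires a little care, is verifying that the time-shift is harmless: the blow-down of the singular orbit forces $t_{\max,\epsilon}\to\pi/2$ (which can be seen either from Proposition \ref{prop:crit:vol}(iii)--(iv) bounding $l$ and $V$ via comparison with $5\cot(t+t_0)$, or directly from the known time interval $[0,\pi]$ of the sine-cone). With that, the convergence is established on every compact neighbourhood of $\{\pi/2\}\times\Ndiag$, completing the proof.
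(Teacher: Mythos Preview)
Your proof is correct and follows essentially the same approach as the paper: use the bubbling result and scale-invariance of $\bohm$ to get $\bohm(\Psi_\epsilon(t_\epsilon))$ close to $20$ with $l(t_\epsilon)>0$, use monotonicity to sandwich $\bohm$ at the maximal volume orbit between $20$ and $20+\delta$, invoke the rigidity in Lemma~\ref{lem:Traceless:2nd:ff}(ii) to force convergence to the rotated Sasaki--Einstein structure, and finish by continuous dependence on initial data. The paper packages the monotonicity step into Lemma~\ref{lem:Traceless:2nd:ff}(i) rather than citing Lemma~\ref{lem:Monotonicity:Bohm:functional} directly, and it leaves the time-shift $t_{\max,\epsilon}\to\pi/2$ implicit, but your more explicit treatment of both points is fine.
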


\section{An exotic \nk structure on $S^3 \times S^3$}\label{sec:S3xS3}

In this section we prove the existence of an inhomogeneous \nk structure on $S^3 \times S^3$. According to part (iii) of the Doubling Lemma \ref{lem:Doubling} every point of intersection of the curve $\betaW$ of Definition \ref{def:Maximal:volume:orbits} with the boundary of $W$ (recall Proposition \ref{prop:Maximal:volume:orbits}) 
corresponds to a smooth invariant \nk structure on $S^3 \times S^3$.

Our strategy is inspired by B\"ohm's work in \cite[\S 4]{Bohm:Spheres}: we will consider a function $f$ on the space of invariant \nh structures such that every solution $\Psi_b$ of Theorem \ref{thm:Singular:IVP:S^3} intersects the level set $f^{-1}(0)$ transversally and such that the intersection of $f^{-1}(0)$ with the space of maximal volume orbits $\calv_0$ lies in the preimage of the boundary of $W$ under the projection of Proposition \ref{prop:Maximal:volume:orbits}. Studying how the number of zeroes of $f$ before the maximal volume orbit of $\Psi_b$ varies  as a function of $b>0$ will allow us to detect intersection points of $\betaW$ with the boundary of $W$.

There are in fact various possible choices for the function $f$. Our choice is motivated by the fact that we will need to know more than the existence of an intersection point of $\betaW$ with the boundary of $W$. Indeed $\betaW (1)$, the maximal volume orbit of the homogeneous \nk structure on $S^3 \times S^3$, lies on the portion of $\partial W$ where $\lambda = 1$. For our applications in the next section, we will need to know that $\betaW$ has at least one intersection point with the second line $\lambda = \mu$ defining the boundary of $W$. By (\ref{eq:V:crit}b), $\lambda = \mu$ is equivalent to $v_0 =0$ on a maximal volume orbit. We will then count zeroes of $v_0$ (equivalently, critical points of $u_0$) and show that there exists $b \in (0,1)$ such that $v_0=0$ on the maximal volume orbit of $\Psi_b$.

\subsection{Counting zeroes of $v_0$}

Let $\Psi = (\lambda, u, v)$ be a solution of \eqref{eq:nk:odes:t}. The pair $(u_0,v_0)$ then satisfies the system
\[
\lambda\dot{u}_0 = -3v_0, \qquad \dot{v}_0=4\lambda u_0.
\]
An immediate consequence is that critical points of $u_0$ (equivalently, zeroes of $v_0$) are non-degenerate unless $u_0=0=v_0$. By Corollary \ref{cor:characterisation:SC} this can only occur on an invariant hypersurface of the sine-cone of Example \ref{Sine:Cone}.

For all $b \in (0,\infty)$ let $T_{b}$ denote the time of the maximal volume orbit of $\Psi_{b}$.

\begin{definition}
\label{def:count:critical:points}
For all $b \in (0,\infty)$ such that $v_0 (T_{b}) \neq 0$ let $\calc (b)$ be the number of zeroes of $v_0$ in $(0,T_{b})$.
\end{definition}

The following properties of $\calc(b)$ follow easily from the non-degeneracy of critical points of $u_0$, \cf \cite[Lemmas 4.4 and 4.5]{Bohm:Spheres}.

\begin{lemma}$ $
\label{lem:count:critical:points}
\begin{enumerate}
\item Given $0<b'<b''$, suppose that $v_0 (T_{b}) \neq 0$ for all $b \in [b',b'']$. Then $\calc (b)$ is constant on $[b',b'']$.
\item Suppose that $b_\ast>0$ is the unique value of $b \in [b_\ast-\delta, b_\ast+\delta]$ such that $v_0(T_{b})=0$. Then
\[
\left| \calc (b')- \calc (b'') \right| \leq 1
\]
for all $b', b'' \in [b_\ast-\delta, b_\ast+\delta]$ with $b' < b_\ast < b''$.
\end{enumerate}
It follows that for all $0<b'<b''$ there exist at least $|\calc (b')- \calc (b'')|$ values of $b \in [b',b'']$ such that $v_0 (T_{b})=0$.
\end{lemma}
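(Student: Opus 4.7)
My plan is to deduce both statements from three basic facts: (a) continuous dependence of $\Psi_b$ on $b$, which is part of Theorem \ref{thm:Singular:IVP:S^3}; (b) continuous dependence of the maximal-volume time $T_b$ on $b$; and (c) non-degeneracy (simplicity) of every zero of $v_0$ along each individual $\Psi_b$. For (b) I would apply the implicit function theorem to the mean curvature $l(t,b)$ at the point $t=T_{b_0}$: by the first equation of \eqref{eqn:Mean:Curvature:odes} we have $\dot{l}(T_{b_0}) = -|L(T_{b_0})|^2 - 5 \le -5 < 0$, so $T_b$ is a smooth function of $b$ near each $b_0 > 0$. For (c) I would observe that the first and fourth equations of \eqref{eq:nk:odes:t}, namely $\lambda \dot{u}_0 = -3v_0$ and $\dot{v}_0 = 4\lambda u_0$, form a \emph{linear} system in the pair $(u_0, v_0)$ once $\lambda$ is regarded as a known coefficient; since $\lambda > 0$ on $[0, T_b]$ and the initial condition of Theorem \ref{thm:Singular:IVP:S^3} gives $v_0(0) = -\tfrac{2}{3} b^3 \neq 0$, linear ODE uniqueness prevents $(u_0, v_0)$ from vanishing simultaneously at any point of $\Psi_b$. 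Hence at each zero of $v_0$ one has $\dot{v}_0 = 4\lambda u_0 \neq 0$, so the zero is simple.

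Given (a)--(c), part (i) will follow by a standard zero-tracking argument. Fix $b_0 \in [b', b'']$ and enumerate the finitely many zeros $t_1 < \cdots < t_k$ of $v_0$ in $(0, T_{b_0})$. Near each $t_i$ the implicit function theorem applied to the simple zero produces a continuous map $b \mapsto t_i(b)$ satisfying $v_0(t_i(b), b) = 0$. Uniform convergence $\Psi_b \to \Psi_{b_0}$ on $[0, T_{b_0} + \varepsilon]$, together with compactness of the complement of $\bigcup_i (t_i - \rho, t_i + \rho)$ in $[0, T_{b_0}]$, rules out the appearance of new zeros of $v_0$ for $b$ sufficiently close to $b_0$. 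The hypothesis $v_0(T_b) \neq 0$ on $[b', b'']$ combined with continuity of $T_b$ then prevents any $t_i(b)$ from crossing the moving right endpoint $T_b$, so $\calc$ is locally constant and hence constant on the connected interval $[b', b'']$.

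For (ii), the same tracking argument applies separately on $[b_\ast - \delta, b_\ast)$ and $(b_\ast, b_\ast + \delta]$, showing $\calc$ is constant on each side of $b_\ast$. As $b$ crosses $b_\ast$, exactly one simple zero $t_i(b)$ arrives at or departs from the endpoint $T_b$, changing $\calc$ by at most $1$. The concluding sentence then follows immediately: if $S := \{b \in [b', b''] : v_0(T_b) = 0\}$ is infinite the claim is trivial, and otherwise one partitions $[b', b'']$ at the isolated points of $S$, applies (i) on each open subinterval and (ii) at each crossing, and uses the triangle inequality.

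The one subtlety worth flagging is that zeros of $v_0$ could in principle escape or be born near the singular initial time $t = 0$ as $b$ varies. This is handled by the continuous dependence on initial data in Theorem \ref{thm:Singular:IVP} (so that $v_0$ extends continuously in $(t, b)$ down to $t = 0$) together with the fact that $v_0(0, b) = -\tfrac{2}{3} b^3$ is uniformly bounded away from $0$ on any compact subinterval of $(0, \infty)$ containing $[b', b'']$.
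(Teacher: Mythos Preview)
Your proposal is correct and follows essentially the same route as the paper, which merely records the non-degeneracy of the zeroes of $v_0$ and then refers the reader to \cite[Lemmas 4.4 and 4.5]{Bohm:Spheres} for the standard zero-tracking argument you have spelled out. The only minor difference is in how non-degeneracy is obtained: the paper observes (just before the lemma) that $(u_0,v_0)=(0,0)$ forces $\Psi$ to lie on the sine-cone via Corollary~\ref{cor:characterisation:SC}, whereas you argue directly from linear ODE uniqueness using the nonzero initial value $v_0(0)=-\tfrac{2}{3}b^3$; both are valid and yield the same conclusion.
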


Recalling Example \ref{S3xS3} and Remark \ref{rmk:a:b:homogeneous}, when $b=1$ then $v_0 = -6\cos{(2\sqrt{3}t)}$ and $T_{b}=\frac{\pi}{2\sqrt{3}}$. The existence of a second \nk structure on $S^3 \times S^3$ is now a consequence of Lemma \ref{lem:count:critical:points} and the following result.

\begin{prop}
\label{prop:count:critical:points:small:b0}
For all $b>0$ sufficiently small $\calc (b) \geq 2$.
\end{prop}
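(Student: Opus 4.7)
The strategy is to combine the bubble-off analysis of Proposition~\ref{prop:Bubbling:AC:CY} with a matched asymptotic expansion in the sine-cone limit to produce two sign changes of $v_0$ in $(0, T_b)$ for small $b$. The key observation is that the pair $(u_0, v_0)$ on any solution of \eqref{eq:nk:odes:t} satisfies the decoupled linear subsystem $\lambda \dot u_0 + 3v_0 = 0$, $\dot v_0 - 4\lambda u_0 = 0$, equivalently the second-order equation $\ddot u_0 + (\dot\lambda/\lambda)\dot u_0 + 12 u_0 = 0$. On the bubble region $t = O(b)$, Proposition~\ref{prop:Bubbling:AC:CY} yields convergence of the rescaled $\widetilde \Psi_b$ to the Stenzel Calabi--Yau structure on $T^\ast S^3$, on which $\widetilde v_0 = -\tfrac{2}{3}\widetilde\lambda^{-1} < 0$ everywhere. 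Hence $v_0(t) < 0$ on any interval $(0, Kb]$ once $b$ is small enough.

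For $t$ on compact subsets of $(0, \pi)$, Theorem~\ref{thm:Convergence:sine:cone} gives $\dot\lambda/\lambda \to \cot t$, so after setting $x = \cos t$ the $u_0$-equation degenerates to Legendre's equation of degree $3$, with fundamental solutions $P_3(x)$ and $Q_3(x)$. The first-order-in-$b$ correction to Stenzel satisfies $\dot{\widetilde u}_0^{(1)} = 2\,\widetilde\lambda^{-2}$ (using $\widetilde v_0^{(0)} = -\tfrac{2}{3}\widetilde\lambda^{-1}$), which integrates to a finite positive constant $C_\ast := \int_0^\infty 2\,\widetilde\lambda^{-2}\,ds$ by the exponential growth of $\widetilde\lambda$ on Stenzel. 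Matching the inner asymptote $\widetilde u_0 \to b\, C_\ast$ to a Frobenius expansion of the outer Legendre solution near $x = 1$ selects the regular mode $P_3$; this yields, uniformly on compact subsets of $(0, \pi)$,
\[
b^{-3} u_0(t) \to C_\ast\, P_3(\cos t), \qquad b^{-3} v_0(t) \to \tfrac{C_\ast}{2}\sin^2 t\,(5\cos^2 t - 1).
\]
The outer limit of $b^{-3} v_0$ is strictly positive on $(0, t_\ast)$, where $t_\ast := \arccos(1/\sqrt 5) \approx 63^\circ$, and vanishes transversally at $t_\ast$.

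Combining the two regimes via the continuous dependence on $b$ from Theorem~\ref{thm:Singular:IVP:S^3}: for $b$ sufficiently small, $v_0 < 0$ on an initial bubble interval $(0, Kb]$ and $v_0 > 0$ on a compact subinterval $[\delta, t_\ast - \delta]$, so the intermediate value theorem forces at least one zero in the transition region; transversality of the outer zero at $t_\ast$ then yields a second zero near $t_\ast$ for small $b$. Applying Theorem~\ref{thm:Convergence:sine:cone} to the volume function $V = \lambda \mu^2$ (together with Proposition~\ref{prop:crit:vol}(i), which guarantees uniqueness of the maximum) gives $T_b \to \pi/2 > t_\ast$ as $b \to 0$, so both zeros lie in $(0, T_b)$ and $\calc(b) \geq 2$.

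The main obstacle is the matched asymptotic expansion itself, in particular showing that the coefficient of the logarithmically singular Legendre function $Q_3(\cos t)$ in the outer expansion is of strictly lower order than $b^3$, so that the regular mode $P_3$ dominates. This requires higher-order matching in the overlap region, using the explicit Taylor coefficients of $\widetilde\Psi_b$ recorded in Remark~\ref{rmk:Power:series:S^3}, and verifying that no additional zeros of $v_0$ appear that would disrupt the sign-change count.
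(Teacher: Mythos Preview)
Your overall strategy---reducing to the Legendre equation in the sine-cone limit---is the same starting point as the paper's, but your matched asymptotic expansion has the gap you yourself flag, and in fact some of the specific claims are incorrect. On the Stenzel bubble the rescaled $\widetilde v_0$ is the constant $-\tfrac{2}{3}$, not $-\tfrac{2}{3}\widetilde\lambda^{-1}$ (see the end of the proof of Proposition~\ref{prop:Bubbling:AC:CY}). More seriously, from $\widetilde\lambda\,\dot{\widetilde u}_0 = -3b\,\widetilde v_0$ one gets $d\widetilde u_0^{(1)}/ds = 2$ in the Stenzel variable $s$, hence $\widetilde u_0^{(1)} = 2s$. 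Since $\widetilde\lambda \sim c\,e^s$ and therefore $\tau \sim c\,e^s$ on Stenzel, this means $\widetilde u_0^{(1)} \sim 2\log\tau$ as $\tau\to\infty$: the inner correction diverges logarithmically. Matching to the outer Frobenius expansion then \emph{forces} a nonzero $Q_3$ contribution, and the $P_3$ coefficient turns out to be of order $\log(1/b)$ rather than a finite constant. So the asserted limit $b^{-3}u_0 \to C_\ast P_3(\cos t)$ with finite $C_\ast = \int_0^\infty 2\widetilde\lambda^{-2}\,ds$ cannot hold as stated, and the sign analysis of $v_0$ built on it is not justified.

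The paper bypasses matched asymptotics entirely via a Sturm comparison argument. The key observation is that one does not need to identify the limit of $u_0$: it is enough to exhibit \emph{some} solution $\xi_0$ of the limiting Legendre equation $(\sin t\,\xi')' + 12\sin t\,\xi = 0$ possessing two zeros and a strict negative minimum, all inside a compact subinterval of $(0,\tfrac{\pi}{2})$. Lemma~\ref{lem:linearisation:SC:sol} constructs such a $\xi_0$ as a perturbation of the singular solution $\xi_{\text{sing}}$ (so a genuine combination of both Legendre modes, restricted away from the endpoints). A robust Sturm-type comparison (Lemma~\ref{lem:Sturm:Liouville}), together with the uniform convergence $\lambda\to\sin t$ on compact sets from Theorem~\ref{thm:Convergence:sine:cone}, then forces $u_0$ to acquire a strict negative minimum before the maximal volume orbit (Proposition~\ref{prop:Minimum}). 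Since the initial conditions $u_0(0)=0$, $\dot u_0(0)=2b^2>0$ already guarantee a positive maximum, this yields two critical points of $u_0$, i.e.\ two zeros of $v_0$, and hence $\calc(b)\ge 2$. No matching, no tracking of scales, and no control of the $Q_3$ coefficient is required.
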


In the rest of the section we prove this proposition. 

For every $b>0$ consider the solution $\Psi_{b}$ of \eqref{eq:nk:odes:t} given by Theorem \ref{thm:Singular:IVP} and observe that $u_0$ is a solution of the second order ODE
\begin{equation}\label{eq:ODE:u0}
(\lambda u'_0)' + 12\lambda u_0 =0,
\end{equation}
with initial conditions 
$$u_0(0)=0, \quad u'_0(0)=2b^2>0.$$ 
By Theorem \ref{thm:Convergence:sine:cone}, $\lambda (t) \to \sin{t}$ uniformly on compact sets of $(0,\pi)$ as $b \ra 0$. Since we are interested in the behaviour of $u_0$ for small $b$, it makes sense to compare $u_0$, which satisfies \eqref{eq:ODE:u0}, with a solution of the simpler limiting equation
\begin{equation}
\label{eq:linearisation:SC}
(\sin{t}\, \xi')'  + 12\sin{t}\, \xi =0.
\end{equation}

\begin{remark*}
In fact \eqref{eq:linearisation:SC} naturally arises when considering the linearisation of \eqref{eq:nk:odes:t} on the sine-cone of Example \ref{Sine:Cone}. Indeed, since $u_0=0=v_0$ on the sine-cone it is not difficult to see that the space of solutions of the linearised equations consists of the vector field corresponding to time-translation along the sine-cone and the $2$--parameter family of solutions to \eqref{eq:linearisation:SC}.
\end{remark*}

Equation \eqref{eq:linearisation:SC} is one of Legendre's equations. Its solutions take the form
\begin{equation}\label{eq:linearisation:SC:sol}
\xi_0(t) = C_1 \,\xi_{\text{reg}} + C_2\, \xi_\text{sing}
\end{equation}
for constants $C_1, C_2 \in \R$ where 
\[
\xi_{\text{reg}} = 5\cos^3{t} - 3\cos{t}, \quad  \xi_\text{sing}= \frac{5}{2}\cos^2{t} + \frac{1}{8}\cos{t}\,(4\cos^2{t}-6\sin^2{t})\log{\frac{1-\cos{t}}{1+\cos{t}}}-\frac{2}{3}\,.
\]

\begin{lemma}\label{lem:linearisation:SC:sol}
There exists a solution $\xi_0$ of \eqref{eq:linearisation:SC} with the following property: there exists $0<t_1<t_2<t_3<\frac{\pi}{2}$ such that $\xi_0(t_1)=0=\xi_0(t_2)$, $\xi_0 \geq 0$ on $[t_1,t_2]$ and $\xi_0$ has a negative minimum at $t_3$.
\proof
The function $\xi_0$ in \eqref{eq:linearisation:SC:sol} with $C_1=0, C_2=1$ has all the required properties except that $t_3=\frac{\pi}{2}$. Since the Legendre polynomial $5\cos^3{t} - 3\cos{t}$ vanishes at $t=\frac{\pi}{2}$ and has strictly positive derivative there, by taking $C_1>0$ small enough we ensure that the qualitative behaviour of $\xi_0$ is unchanged and at the same time its minimum occurs at $t_3 < \frac{\pi}{2}$. See Figure~1.
\endproof
\end{lemma}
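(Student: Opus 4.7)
The plan is to implement the suggestion in the proof sketch and make rigorous the statement that $\xi_{\text{sing}}$ already has the required qualitative shape apart from the location of the minimum, and that adding a small positive multiple of $\xi_{\text{reg}}$ pushes the minimum strictly to the left of $\pi/2$ while preserving the two zeros.

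\medskip

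First I would verify directly that $\xi_{\text{reg}}$ and $\xi_{\text{sing}}$ as written span the solution space of \eqref{eq:linearisation:SC}. A short calculation identifies $\xi_{\text{reg}}(t)=5\cos^3 t-3\cos t$ with the Legendre polynomial $P_3(\cos t)$ solving $(\sin t \xi')'+12\sin t\,\xi=0$, with simple zeros at $\cos t=\pm\sqrt{3/5}$ and $\cos t=0$; in particular $\xi_{\text{reg}}$ has a unique zero $t_\ast=\arccos\sqrt{3/5}\in(0,\pi/2)$ and satisfies $\xi_{\text{reg}}(\pi/2)=0$, $\xi_{\text{reg}}'(\pi/2)=3>0$. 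The function $\xi_{\text{sing}}$ is the standard second solution obtained by reduction of order and involves $\log\frac{1-\cos t}{1+\cos t}$, which I would check satisfies the ODE by a direct substitution.

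\medskip

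Next I would analyse $\xi_{\text{sing}}$ on $(0,\pi/2)$. As $t\to 0^+$ the log term behaves like $2\log t\to -\infty$ with a strictly positive prefactor, so $\xi_{\text{sing}}(t)\to-\infty$. The substitution $t\mapsto\pi-t$ shows that $\xi_{\text{sing}}$ is symmetric about $t=\pi/2$ (the logarithm flips sign and so does its odd prefactor $\cos t(4\cos^2 t-6\sin^2 t)$, while the remaining terms are even about $\pi/2$), whence $\xi_{\text{sing}}'(\pi/2)=0$. At the endpoint $\xi_{\text{sing}}(\pi/2)=-\tfrac{2}{3}$, and plugging into the ODE gives $\xi_{\text{sing}}''(\pi/2)=8>0$, so $\pi/2$ is a local minimum. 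A single numerical evaluation at $t=\pi/3$ gives $\xi_{\text{sing}}(\pi/3)=\tfrac{5}{8}+\tfrac{7\log 3}{32}-\tfrac{2}{3}>0$, so $\xi_{\text{sing}}$ must vanish at least once in $(0,\pi/3)$ and once in $(\pi/3,\pi/2)$. Sturm separation against $\xi_{\text{reg}}$, which has the unique zero $t_\ast$ in $(0,\pi/2)$, forces exactly two zeros $0<t_1^0<t_2^0<\pi/2$ of $\xi_{\text{sing}}$, with $\xi_{\text{sing}}>0$ on $(t_1^0,t_2^0)$ and $\xi_{\text{sing}}<0$ on $(0,t_1^0)\cup(t_2^0,\pi/2)$; both zeros are simple because a double zero would force $\xi_{\text{sing}}\equiv 0$.

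\medskip

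Finally I would set $\xi_0=C_1\xi_{\text{reg}}+\xi_{\text{sing}}$ with $C_1>0$ small and argue by continuity. Since $t_1^0,t_2^0$ are simple zeros of $\xi_{\text{sing}}$, for $C_1$ small the implicit function theorem produces nearby simple zeros $t_1(C_1)<t_2(C_1)$ of $\xi_0$ with $\xi_0\ge 0$ in between; by the Sturm-separation argument above $\xi_0$ still has exactly two zeros in $(0,\pi/2)$. At $t=\pi/2$ one computes $\xi_0(\pi/2)=-\tfrac{2}{3}<0$ and $\xi_0'(\pi/2)=3C_1>0$, so the minimum of $\xi_0$ on $(t_2(C_1),\pi/2]$ now occurs at an interior point $t_3=t_3(C_1)<\pi/2$, and by continuity $\xi_0(t_3)$ stays close to $-\tfrac{2}{3}$ and is in particular negative. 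This yields the desired $0<t_1<t_2<t_3<\pi/2$.

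\medskip

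The only slightly delicate step is the Sturm-separation count; everything else is direct verification. The main thing to get right is ensuring that the perturbation preserves the global picture, which follows cleanly from simplicity of the zeros of $\xi_{\text{sing}}$ and strict positivity of $\xi_{\text{reg}}'(\pi/2)$.
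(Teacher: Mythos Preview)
Your proposal is correct and follows exactly the approach outlined in the paper's proof, supplying rigorous justification for each step that the paper leaves as a sketch (the behaviour of $\xi_{\text{sing}}$ near $0$ and at $\pi/2$, the explicit sign check at $t=\pi/3$, the Sturm separation argument bounding the number of zeros, and the perturbation argument using $\xi_{\text{reg}}'(\pi/2)=3>0$). One small addition that makes the conclusion cleaner: since the interior minimum $t_3$ satisfies $\xi_0'(t_3)=0$, the ODE gives $\xi_0''(t_3)=-12\,\xi_0(t_3)>0$, so $t_3$ is indeed a strict local minimum and in fact $\xi_0(t_3)\le \xi_0(\pi/2)=-\tfrac{2}{3}$.
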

The solution $\xi_0$ in the previous lemma is singular at $t=0$ and $\pi$, but in the following we will only consider its restriction to the interval $[t_1,\pi-t_1]$.

\begin{figure} 
\label{fig:legendre}
\begin{center} 
\includegraphics[scale=0.65]{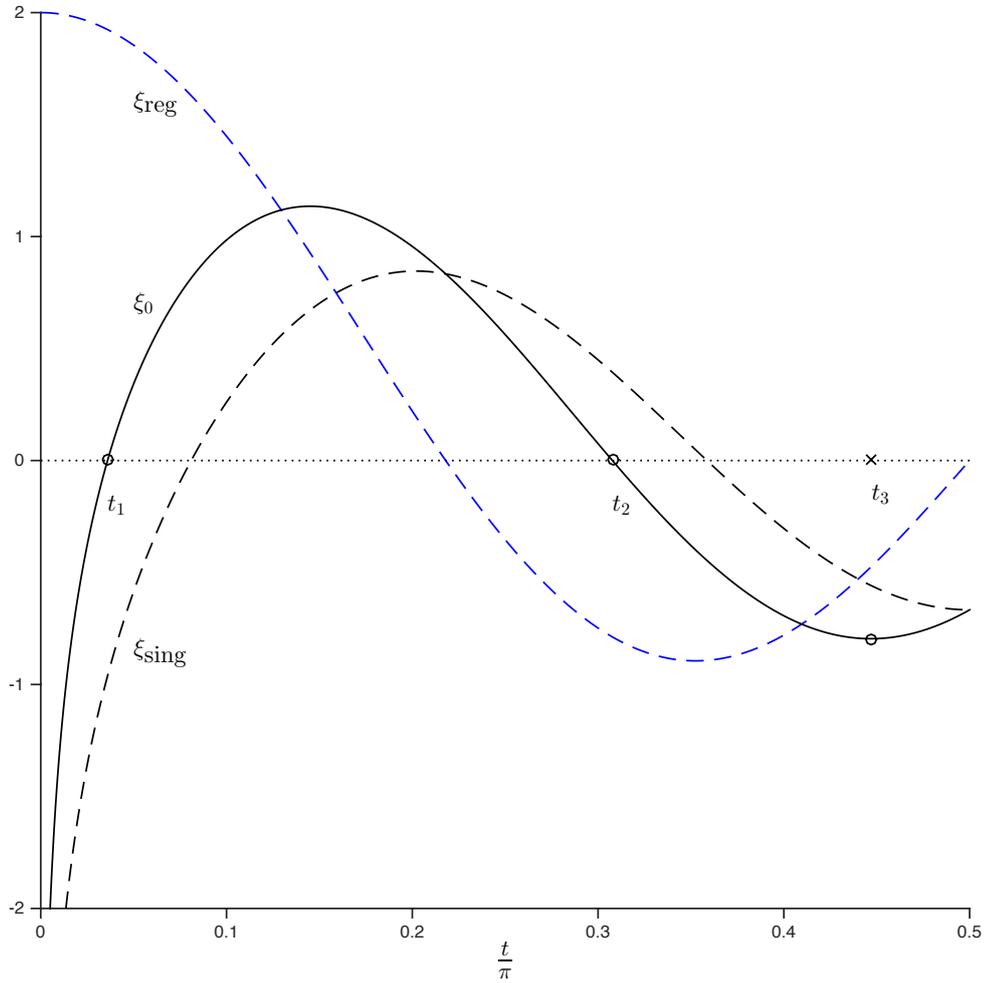}
\end{center}  
\caption{\small \sl Solution $\xi_0$ of \eqref{eq:linearisation:SC} as in Lemma \ref{lem:linearisation:SC:sol}}
\end{figure}

\subsection{A Sturm comparison argument}

We are now going to compare the function $u_0$ for a solution $\Psi_{b}$ of \eqref{eq:nk:odes:t} with $b$ small enough, with the solution $\xi_0$ of the Legendre equation \eqref{eq:linearisation:SC} given in the previous lemma. The comparison result we need is the following generalisation of the Sturm comparison theorem for Sturm--Liouville equations.

\begin{lemma}\label{lem:Sturm:Liouville}
Let $\lambda_1, \lambda_2, q_1, q_2$ be continuous functions on $[t_1, t_3] \subset \R$ such that
\[
\lambda _1 \geq \lambda_2 >0, \qquad q_2 \geq q_1,
\]
for all $t \in [t_1, t_3]$. Suppose that $\xi$ is a solution of
\[
( \lambda_1 \xi ')'+q_1\xi=0
\]
such that $\xi(t_1)=0=\xi(t_2)$ for some $t_1 < t_2<t_3$, $\xi (t) \geq 0$ for all $t \in [t_1,t_2]$ and $\xi$ has a strict negative minimum at $t_3$. Then any solution $u$ of
\[
( \lambda_2 u ')'+q_2u=0
\]
has a strict negative minimum in $(t_1, t_4)$ unless $u(t_1)<0$ and $u'(t_1) \geq 0$.
\end{lemma}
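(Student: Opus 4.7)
\emph{Proof plan.} The natural approach is the Pr\"ufer transformation, which converts each second-order ODE into a first-order phase equation. Writing $\xi = R_\xi \sin\phi_\xi$ and $\lambda_1 \xi' = R_\xi \cos\phi_\xi$ with amplitude $R_\xi > 0$, and analogously $(u, \lambda_2 u') = R_u(\sin\phi_u, \cos\phi_u)$ with $R_u > 0$, a direct calculation gives the phase ODEs
\[
\phi_\xi' = \lambda_1^{-1}\cos^2\phi_\xi + q_1 \sin^2\phi_\xi, \qquad \phi_u' = \lambda_2^{-1}\cos^2\phi_u + q_2 \sin^2\phi_u.
\]
In the application of this lemma both $\lambda_i$ and $q_i$ are strictly positive on $[t_1, t_3] \subset (0, \pi)$, so both phases are strictly increasing along the interval.

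Next I would pin down the reference phase $\phi_\xi$: normalising $\phi_\xi(t_1) = 0$, the condition $\xi \geq 0$ on $[t_1, t_2]$ with $\xi(t_2) = 0$ forces $\phi_\xi(t_2) = \pi$, and the strict negative minimum at $t_3$, where $\xi(t_3) < 0$ and $\xi'(t_3) = 0$, fixes $\phi_\xi(t_3) = 3\pi/2$. On the other hand, the quadrant in which $(u(t_1), \lambda_2(t_1) u'(t_1))$ lies determines $\phi_u(t_1)$ modulo $2\pi$, and a direct case analysis identifies the excluded configuration $u(t_1) < 0, u'(t_1) \geq 0$ with $\phi_u(t_1) \in [3\pi/2, 2\pi)$ in the $[0, 2\pi)$-representation; in every remaining (nontrivial) configuration we may choose the representative so that $\phi_u(t_1) \in [0, 3\pi/2)$, in particular $\phi_u(t_1) \geq \phi_\xi(t_1)$.

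The heart of the argument is the Sturm phase-comparison: whenever $\phi_u = \phi_\xi$, the hypotheses $\lambda_1 \geq \lambda_2 > 0$ and $q_2 \geq q_1$ yield
\[
\phi_u' - \phi_\xi' = (\lambda_2^{-1} - \lambda_1^{-1})\cos^2\phi + (q_2 - q_1)\sin^2\phi \geq 0,
\]
so a standard Gr\"onwall argument applied to $\delta := \phi_u - \phi_\xi$ gives $\phi_u(t) \geq \phi_\xi(t)$ for $t \geq t_1$, hence $\phi_u(t_3) \geq 3\pi/2$. Combined with strict monotonicity and $\phi_u(t_1) < 3\pi/2$, this produces a unique $t^* \in (t_1, t_3]$ at which $\phi_u(t^*) = 3\pi/2$. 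There $\cos\phi_u(t^*) = 0$ and $\sin\phi_u(t^*) = -1$, so $u'(t^*) = 0$ and $u(t^*) < 0$; plugging into the ODE gives $u''(t^*) = -q_2(t^*) u(t^*)/\lambda_2(t^*) > 0$, so $t^*$ is a strict local minimum of $u$ with negative value.

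The \emph{main obstacle} is ensuring the crossing $t^*$ lies in the open interval $(t_1, t_3)$ rather than at the endpoint $t_3$. When $\phi_u(t_1) > 0 = \phi_\xi(t_1)$, the Gr\"onwall argument upgrades to a strict inequality $\phi_u(t) > \phi_\xi(t)$ for all $t > t_1$, and in particular $\phi_u(t_3) > 3\pi/2$, forcing $t^* < t_3$. The only borderline case is $\phi_u(t_1) = 0$, corresponding to $u(t_1) = 0$ with $u'(t_1) > 0$, where one needs strictness of at least one coefficient inequality on a set of positive measure; this is automatic in the application of Section \ref{sec:S3xS3} because $\lambda_2 = \lambda(t)$ is a nontrivial deformation of $\lambda_1 = \sin t$ (for fixed $b > 0$ one has $\lambda(0) = b > 0 = \sin 0$, so by continuity $\lambda \not\equiv \sin t$), and the integrated strict positivity then delivers $\phi_u(t_3) > 3\pi/2$.
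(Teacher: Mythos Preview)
The paper does not spell out a proof; it just cites the classical Sturm comparison theorem (Birkhoff--Rota, Chapter 10, Theorem 3) and B\"ohm's Proposition 5.9. Those arguments are Wronskian/Picone-identity based: one tracks the sign of a quantity like $\lambda_1 \xi' u - \lambda_2 u' \xi$ between consecutive zeros of $\xi$ to force $u$ to vanish, then iterates. Your Pr\"ufer-phase approach is a different but equally classical route, and in some ways better adapted here since the goal is to locate an interior \emph{negative critical point} of $u$ rather than merely a zero; the phase value $3\pi/2$ encodes exactly the condition $u<0$, $u'=0$.

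Two points on the execution. First, your argument uses $q_2>0$ in an essential way (to get $u''(t^*)=-q_2(t^*)u(t^*)/\lambda_2(t^*)>0$, hence a strict minimum) and implicitly assumes that $\xi$ has no zero in $(t_2,t_3)$ in order to pin $\phi_\xi(t_3)=3\pi/2$ rather than $3\pi/2+2k\pi$. Neither hypothesis is in the lemma as stated, but both hold in the only place the lemma is invoked (Proposition \ref{prop:Minimum}, where $q_2=12\lambda>0$ and $\xi$ is the function of Lemma \ref{lem:linearisation:SC:sol}), so this is harmless. Second, your justification for the strict inequality $\phi_u(t_3)>3\pi/2$ in the borderline case $\phi_u(t_1)=0$ points to $\lambda(0)=b\ne\sin 0$, but the comparison interval $[t_1,t_3']$ in Proposition \ref{prop:Minimum} is bounded away from $0$. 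The actual reason strictness holds there is that the comparison is against $\lambda_1=\sin t+\delta$ and $q_1=12(\sin t-\delta)$, and the hypothesis $|\lambda-\sin t|<\delta$ forces $\lambda_1>\lambda_2$ and $q_2>q_1$ strictly on all of $[t_1,t_3']$; then $F_2(t,\phi)>F_1(t,\phi)$ pointwise and your Gr\"onwall upgrade goes through.
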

The proof is analogous to that of the classical Sturm comparison theorem, \cf \cite[\mbox{Theorem 3}, Chapter 10]{Birkhoff:Rota} and \cite[Proposition 5.9]{Bohm:Spheres}.

\begin{prop}
\label{prop:Minimum}
There exists $\epsilon>0$ such that for all $b < \epsilon$ the function $u_0$ has a strict negative minimum before the maximal volume orbit.
\proof
Fix $0<t_1<t_3<\frac{\pi}{2}$ as in Lemma \ref{lem:linearisation:SC:sol}.

By Theorem \ref{thm:Convergence:sine:cone} for all $\delta>0$ there exists $\epsilon>0$ such that if $b<\epsilon$ then $|\lambda(t) - \sin{t}|<\delta$ for all $t\in [t_1,\pi-t_1]$.

Consider the Sturm--Liouville equation
\begin{equation}\label{eq:linearisation:SC:deformation}
\left( (\sin{t}+\delta)\,\xi'\right)'+12(\sin{t}-\delta)\,\xi=0
\end{equation}
on $[t_1, \pi-t_1]$ with initial conditions $\xi(t_1)=\xi_0(t_1)$, $ \xi'(t_1)=\xi_0'(t_1)$, where $\xi_0$ is the solution of \eqref{eq:linearisation:SC} given by Lemma \ref{lem:linearisation:SC:sol}. Since the coefficients of \eqref{eq:linearisation:SC:deformation} converge uniformly to the ones of \eqref{eq:linearisation:SC} and $\sin{t}+\delta \geq \sin{t_1}>0$, by choosing $\delta>0$ small enough we can guarantee that $\xi$ has the same behaviour as $\xi_0$, \ie $\xi$ has two zeroes $t_1, t'_2$ and a negative minimum at $t_3'<\frac{\pi}{2}$.

We can then apply Lemma \ref{lem:Sturm:Liouville} with $\lambda_1=\sin{t}+\delta$, $q_1=\sin{t}-\delta$, $\lambda_2=q_2=\lambda$ to compare $u_0$ to $\xi$ on $[t_1,t_3']$: either $u_0$ has a strict negative minimum in $(t_1, t_3')$ or $u_0(t_1)<0$ and $u'_0(t_1) \geq 0$. In the former case, by taking $\epsilon$ smaller if necessary we can make sure that the maximal volume orbit of $\Psi_{b}$ occurs at some time $t > t_3'$ for all $b < \epsilon$. In the latter case, because of the initial conditions, $u_0$ must have a negative minimum in $[0,t_1]$.  
\endproof
\end{prop}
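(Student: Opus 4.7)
The plan is to compare $u_0$, which solves the Sturm--Liouville equation $(\lambda u_0')' + 12 \lambda u_0 = 0$ with initial data $u_0(0)=0$, $u_0'(0)=2b^2>0$, to the explicit solution $\xi_0$ of the limiting equation \eqref{eq:linearisation:SC} supplied by Lemma \ref{lem:linearisation:SC:sol}. Since the sine-cone is the $b \to 0$ limit of the family $\Psi_b$ by Theorem \ref{thm:Convergence:sine:cone}, one expects $u_0$ to inherit enough of the oscillatory behaviour of $\xi_0$ to produce a strict negative minimum before the maximal volume orbit $T_b$. The two subtleties are (i) to set up a Sturm comparison despite $\lambda$ not being pointwise comparable to $\sin t$, and (ii) to handle the ``unless'' escape clause in Lemma \ref{lem:Sturm:Liouville}.

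First I would fix $0 < t_1 < t_2 < t_3 < \pi/2$ exactly as in Lemma \ref{lem:linearisation:SC:sol}. The sine-cone attains its maximal volume orbit at $t = \pi/2$, so Theorem \ref{thm:Convergence:sine:cone} combined with the upper semicontinuity of the location of the maximum implies that for $b$ sufficiently small one has $T_b > t_3$ and $|\lambda(t) - \sin t| < \delta$ uniformly on $[t_1, \pi - t_1]$, for any preassigned $\delta>0$.

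Next I would introduce the perturbed Sturm--Liouville problem
\[
\bigl((\sin t + \delta)\,\xi'\bigr)' + 12(\sin t - \delta)\,\xi = 0
\]
on $[t_1,\pi-t_1]$ with initial conditions $\xi(t_1)=0$ and $\xi'(t_1)=\xi_0'(t_1)$. The leading coefficient $\sin t + \delta$ is uniformly bounded below by $\sin t_1 >0$, and both coefficients converge in $C^0$ to those of \eqref{eq:linearisation:SC} as $\delta \to 0$. By continuous dependence of ODE solutions on coefficients, for $\delta$ small enough $\xi$ is $C^1$-close to $\xi_0$ on the compact interval $[t_1,\pi-t_1]$; in particular $\xi$ still has two zeros $t_1$ and some $t_2' \in (t_1, \pi/2)$, remains non-negative on $[t_1, t_2']$, and attains a strict negative minimum at some $t_3' < \pi/2$. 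Shrinking $\epsilon$ further if necessary, we may also arrange $t_3' < T_b$.

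I would then apply Lemma \ref{lem:Sturm:Liouville} on $[t_1, t_3']$ with $\lambda_1 = \sin t + \delta$, $q_1 = 12(\sin t - \delta)$, $\lambda_2 = \lambda$, $q_2 = 12\lambda$; the hypotheses $\lambda_1 \geq \lambda_2 > 0$ and $q_2 \geq q_1$ hold thanks to the sandwich $\sin t - \delta \leq \lambda \leq \sin t + \delta$. The conclusion is that $u_0$ admits a strict negative minimum in $(t_1, t_3')$ \emph{unless} $u_0(t_1) < 0$ and $u_0'(t_1) \geq 0$. In the first alternative the minimum occurs before $T_b$ and we are finished. In the second alternative the initial data $u_0(0)=0$, $u_0'(0)=2b^2>0$ force $u_0$ to be strictly positive immediately after $t=0$, so $u_0(t_1) <0$ implies by continuity that $u_0$ already has a strict negative minimum in $(0, t_1) \subset (0, T_b)$. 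The main obstacle is the perturbation step: one must check carefully that the qualitative shape of $\xi_0$ (two zeros bracketing a strict interior minimum before $\pi/2$) is robust under small $C^0$ changes of the coefficients, and order the small parameters $\delta$ and $\epsilon$ correctly. Both are routine continuity arguments once $\xi_0$ has been pinned down explicitly in Lemma \ref{lem:linearisation:SC:sol}.
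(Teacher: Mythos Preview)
Your proof is correct and follows essentially the same route as the paper: perturb the coefficients of the limiting Legendre equation by $\pm\delta$ to manufacture a comparison equation whose solution $\xi$ retains the qualitative shape of $\xi_0$, then invoke Lemma \ref{lem:Sturm:Liouville} with $\lambda_1=\sin t+\delta$, $\lambda_2=\lambda$ to force a negative minimum of $u_0$ (handling the escape clause via the initial conditions). One tiny imprecision: in the escape case $u_0(t_1)<0$, $u_0'(t_1)\geq 0$, the minimum could occur at $t_1$ itself rather than in the open interval $(0,t_1)$ when $u_0'(t_1)=0$; since $t_1<t_3'<T_b$ this does not affect the conclusion, and the paper accordingly writes $[0,t_1]$ rather than $(0,t_1)$.
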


Because of the initial conditions, $u_0$ must also have a positive maximum before achieving the minimum in the proposition. Thus Proposition \ref{prop:count:critical:points:small:b0} is proved.

\begin{remark}\label{rmk:a:non:doubling}
The whole argument can also be carried out in the case of the solutions $\Psi_a$ of Theorem \ref{thm:Singular:IVP:S^2}: Definition \ref{def:count:critical:points}, Lemma \ref{lem:count:critical:points} and Proposition \ref{prop:Minimum} still hold. However in this case $u_0$ is a solution of \eqref{eq:ODE:u0} with initial conditions $u_0(0)=a^2>0$, $u'_0(0)=0$. We therefore conclude that $\calc (a) \geq 1$ for $a>0$ sufficiently small. When $a^2=3$, \ie $\Psi_a$ is the homogeneous \nk structure on $S^6$ given in Example \ref{S6}, one checks that $\calc(a)=0$. Thus there must exist at least one value of $a \in (0,\sqrt{3})$ that satisfies the hypothesis of Lemma \ref{lem:Doubling}(ii). On the other hand, by Remark \ref{rmk:a:b:homogeneous}, $a=\frac{\sqrt{3}}{2}$ already gives such a solution, the homogeneous \nk structure on $\CP^3$.  
\end{remark}

\begin{theorem}
\label{thm:exotic:S3xS3}
There exists $b \in (0,1)$ such that $\Psi_{b}$ defines an inhomogeneous \nk structure on $S^3 \times S^3$.
\proof
By Lemma \ref{lem:count:critical:points} and Proposition \ref{prop:count:critical:points:small:b0} there exists at least one $b \in (0,1)$ such that the maximal volume orbit of $\Psi_{b}$ lies on the portion of the boundary of $W$ with $\lambda = \mu >1$. By part (iii) of the Doubling Lemma \ref{lem:Doubling} this is enough to guarantee that $\Psi_{b}$ defines a smooth \nk structure on $S^3 \times S^3$. It remains to show that this is not homogeneous and therefore defines a new \nk structure.

Consider the Riccati equation \eqref{eqn:Riccati}. Since $L=\frac{1}{2}g^{-1}\dot{g}$, the component in the direction of the Reeb vector field $U^-$ gives
\[
\frac{\ddot{\lambda}}{\lambda}+\hat{R}(U^-,U^-)=0.
\]
Suppose that the new \nk structure is homogeneous. We first show that $\hat{R}(U^-,U^-)$ must be constant. Let $\hat{g}$ be the metric on $S^3\times S^3$ induced by the new \nk structure and $\nabla$ denote its Levi-Civita connection. Observe that $\hat{R}(U^-,U^-)=\hat{R}(\partial_t,J\partial_t,\partial_t,J\partial_t)$ because $J\partial_t$ is parallel to the Reeb vector field $U^-$. Since $\partial_t$ is the unit tangent vector of a geodesic in $(S^3 \times S^3,\hat{g})$ we have $\nabla _{\partial_t}\partial_t=0$. Moreover, the \nk property implies that $(\nabla _X J)X=0$ for every tangent vector $X$ (see  \cite[p. 517]{Bar} for a simple derivation from the \gtwo--holonomy perspective). Therefore $\nabla _{\partial_t}(J\partial_t)=0$. By Butruille's classification of homogeneous \nk $6$--manifolds \cite{Butruille}, if $(S^3\times S^3,\hat{g})$ is homogeneous it must be a $3$--symmetric space. Then part (ii) in \cite[Proposition 4.3]{Gray:1972} implies that
\[
(\nabla_X R)(X,JX,X,JX)=0
\]
for every tangent vector $X$. We conclude that $\hat{R}(U^-,U^-)$ is constant.

Now, since $\lambda (0)=\lambda(T)>0$, where $T$ is the maximal existence time of $\Psi_{b}$, and $\lambda$ is even at $t=0$ and $t=T$, the only possibility would be $\hat{R}(U^-,U^-)=0$ and $\lambda$ a constant. However, it is easy to check that the unique solution of \eqref{eq:nk:odes:t} with $\lambda=\text{const}$ is $\Psi_{b}$ with $b=1$, \ie the homogeneous \nk structure on $S^3 \times S^3$ of Example \ref{S3xS3}. For example, this follows from the power series expansions of Remark \ref{rmk:Power:series:S^3}.
\endproof
\end{theorem}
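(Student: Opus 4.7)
The plan is to locate an intersection of the curve $\betaW$ with the boundary component $\{\lambda=\mu>1\}$ of $W$ by a discrete jump argument on the counting function $\calc(b)$, and then invoke the Doubling Lemma \ref{lem:Doubling}(iii).

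First I would compute $\calc(1)$ explicitly from the homogeneous solution of Example \ref{S3xS3}: the orbital volume is $V=v_1=\tfrac{2}{3}\bigl(1-\cos(2\sqrt 3\, t)\bigr)$, uniquely maximised at $T_1=\pi/(2\sqrt 3)$, where $v_0(T_1)=\tfrac{2}{3}\neq 0$; meanwhile $v_0(t)=-\tfrac{2}{3}\cos(2\sqrt 3\, t)$ vanishes exactly once in $(0,T_1)$, so $\calc(1)=1$. Combined with Proposition \ref{prop:count:critical:points:small:b0} giving $\calc(b)\geq 2$ for all small $b>0$, Lemma \ref{lem:count:critical:points} produces some $b_\ast\in(0,1)$ with $v_0(T_{b_\ast})=0$. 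Substituting $v_0=0$ into equation (\ref{eq:V:crit}b) on the branch $\mu\geq\lambda\geq 1$ identified in Proposition \ref{prop:Maximal:volume:orbits} forces both $u_1=0$ and $\mu=\lambda$, so $\betaW(b_\ast)$ lies on $\{\lambda=\mu>1\}\subset\partial W$. The Doubling Lemma \ref{lem:Doubling}(iii) then produces a complete smooth $\sunitary{2}\times\sunitary{2}$--invariant \nk structure on $S^3\times S^3$; the excluded point $(1,1)$ does not occur because $\Psi_{b_\ast}(T_{b_\ast})$ satisfies the strict \nh equations rather than the \se equations.

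To establish inhomogeneity I would argue by contradiction. By Butruille's theorem \cite{Butruille} any homogeneous \nk $6$--manifold must be $3$--symmetric, so Gray's identity $(\nabla_X R)(X,JX,X,JX)=0$ from \cite[Proposition 4.3]{Gray:1972} would apply. Combined with the universal \nk identity $(\nabla_X J)X=0$, specialised to the unit tangent $\partial_t$ of a normal geodesic joining the two singular $S^3$--orbits, this yields constancy of $\hat R(U^-,U^-)$ along that geodesic. The Reeb component of the Riccati equation \eqref{eqn:Riccati} reads $\ddot\lambda/\lambda + \hat R(U^-,U^-)=0$, so $\lambda$ would satisfy $\ddot\lambda+C\lambda=0$ for some constant $C$. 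The smooth closing conditions of Lemma \ref{lem:Smooth:extension:S3} give $\dot\lambda(0)=\dot\lambda(2T_{b_\ast})=0$ and $\lambda(0)=\lambda(2T_{b_\ast})=b_\ast>0$, with $\lambda>0$ throughout; elementary case analysis in the sign of $C$ (using evenness at the endpoints and positivity in between) rules out every option except $C=0$ and $\lambda\equiv b_\ast$. But inspection of \eqref{eq:nk:odes:t} shows that the only solution with $\lambda$ constant is the homogeneous one at $b=1$, contradicting $b_\ast\in(0,1)$.

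The conceptual crux of the proof is the inhomogeneity step, specifically extracting constancy of the sectional curvature $\hat R(U^-,U^-)$ from the $3$--symmetric hypothesis; once this is in hand the ODE uniqueness for $\lambda$ is essentially automatic. The counting portion is a direct assembly of already constructed ingredients, the key observation being that the matching condition $\lambda=\mu$ at the boundary of $W$ corresponds, via (\ref{eq:V:crit}b) on the correct branch of $\calv_0$, exactly to the easier-to-track vanishing of $v_0$ on the maximal volume orbit.
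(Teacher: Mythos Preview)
Your proposal is correct and follows the paper's proof essentially verbatim: the same counting argument via $\calc(b)$ (with the explicit computation $\calc(1)=1$ already appearing in the text preceding Proposition~\ref{prop:count:critical:points:small:b0}), the same identification $v_0=0\Leftrightarrow\lambda=\mu$ via (\ref{eq:V:crit}b), and the identical inhomogeneity argument through Butruille's classification, Gray's 3--symmetric curvature identity, and the Reeb component of the Riccati equation forcing $\lambda$ constant. The only slight looseness is your justification for excluding the vertex $(1,1)$; the clean argument is that if the maximal volume orbit were the rotated \se structure then by ODE uniqueness $\Psi_{b_\ast}$ would coincide with the sine-cone, contradicting its smooth closure on an $S^3$ of positive size (cf.\ Lemma~\ref{lem:Properties:curves:max:vol:orbits}(iii)).
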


\begin{remark}
A similar argument shows that the solutions of Theorems \ref{thm:Singular:IVP:S^2} and \ref{thm:Singular:IVP:S^3} are mutually non-isometric. Indeed, using the constancy of $\hat{R}(U^-,U^-)$ on \nk $3$--symmetric spaces and the Taylor series of Remarks \ref{rmk:Power:Series:a0} and \ref{rmk:Power:series:S^3} one can show that $\Psi_a$ and $\Psi_b$ cannot be homogeneous unless $a=\frac{\sqrt{3}}{2},\sqrt{3}$ and $b=1,\frac{3}{2}$, the known homogeneous examples of Remark \ref{rmk:a:b:homogeneous}. On the other hand, assume that $f$ is an isometry between, say, the metric induced by $\Psi_a$ and $\Psi_{a'}$ for $a \neq a'$. Since $f$ cannot preserve the $\sunitary{2}\times\sunitary{2}$--orbits, the tangent space of a point is spanned by Killing fields and $\Psi_a, \Psi_a'$ would then be homogeneous. 
\end{remark}

\section{An exotic \nk structure on $S^6$}\label{sec:S6}

In this section we prove the existence of an inhomogeneous \nk structure on $S^6$. By part (iii) of the Matching Lemma \ref{lem:Matching} we have to show that there exist two values $a, b \in (0,\infty)$ such that the two curves $\alphaW, \betaW$ parametrising the maximal volume orbits of the solutions of Theorems \ref{thm:Singular:IVP:S^2} and \ref{thm:Singular:IVP:S^3} up to discrete symmetries intersect. One intersection point is already known to exist: by Remark \ref{rmk:a:b:homogeneous} the choice $a=\sqrt{3}$ and $b=\frac{3}{2}$ yields the standard \nk structure on $S^6$. We will show that there exists a second intersection point.
The key new ingredient is Proposition \ref{prop:Limit:large:a0} which gives us some control over the solution $\Psi_a$ as $a \to \infty$. 

\begin{theorem}
\label{thm:exotic:S6}
There exists $a \neq \sqrt{3}$ and $b \in (0,1)$ such that $\Psi_{a}$ and $\Psi_{b}$ satisfy the conditions of part (iii) of the Matching Lemma \ref{lem:Matching} and therefore 
define an inhomogeneous \nk structure on $S^6$.
\end{theorem}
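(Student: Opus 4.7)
The strategy is a continuity argument in the space $\calv_0$ of invariant maximal volume orbits: use Theorem \ref{thm:exotic:S3xS3} to build a relatively compact region $\Omega \subset \calv_0$ whose boundary consists of arcs of $\beta$ and their discrete-symmetry images, show that $\alpha(a)$ lies inside $\Omega$ as $a \to 0^+$, prove that $\alpha(a)$ escapes every compact set as $a \to \infty$, and then apply part (iii) of the Matching Lemma \ref{lem:Matching} at a crossing of $\partial\Omega$.

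In more detail, by Theorem \ref{thm:exotic:S3xS3} there exists $b_\ast \in (0,1)$ with $\betaW(b_\ast)$ on the portion $\{\lambda = \mu > 1\}$ of $\partial W$, while $\betaW(1)$ lies on $\{\lambda = 1, \mu > 1\}$ by Remark \ref{rmk:a:b:homogeneous}. Lifted to the $4$-fold branched cover $\pi\co\calv_0 \to W$ of Proposition \ref{prop:Maximal:volume:orbits}, the arc $\beta([b_\ast, 1])$ together with its three images under the group generated by the involutions $\tau_1,\tau_2,\tau_3,\tau_4$ of Proposition \ref{prop:symmetries} forms a closed loop bounding a relatively compact region $\Omega \subset \calv_0$ containing the homogeneous \se point. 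Theorem \ref{thm:Convergence:sine:cone} applied to $\Psi_a$ implies that $\alpha(a)$ converges to the homogeneous \se point as $a \to 0^+$, so $\alpha(a)$ lies in the interior of $\Omega$ for all sufficiently small $a > 0$.

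The main analytic step is to prove that $\alphaW(a)$ leaves every compact subset of $W$ as $a \to \infty$. The specific dependence on $a$ of the coefficients in Remark \ref{rmk:Power:Series:a0} suggests the ad hoc rescaling
\[
\tilde t = a\, t, \quad \tilde \lambda = a^{-1}\lambda, \quad \tilde u_i = a^{-2} u_i, \quad \tilde v_i = a^{-3} v_i,
\]
under which $\Psi_a$ satisfies the $\epsilon = a^{-1}$ version of the scaled system \eqref{eq:nk:odes:scaled} with initial data corresponding to a singular orbit $S^2$ of unit size. As $a \to \infty$, by continuous dependence on $\epsilon$ as in Proposition \ref{prop:Bubbling:AC:CY}(i), the rescaled solution converges on every compact $\tilde t$-interval to the Candelas--de la Ossa asymptotically conical Calabi--Yau structure on $\mathcal{O}(-1)\oplus\mathcal{O}(-1)$ of Theorem \ref{thm:AC:CY}(i), along which both $\tilde \lambda$ and $\tilde \mu$ are unbounded at infinity. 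Combined with the uniform-in-$a$ lower bound on the time $T_a$ of the maximal volume orbit provided by Proposition \ref{prop:crit:vol}(ii)--(iv), this should yield $\tilde t_a := a T_a \to \infty$ as $a \to \infty$, so $\lambda(T_a) = a\,\tilde\lambda(\tilde t_a)$ and $\mu(T_a) = a\,\tilde\mu(\tilde t_a)$ both tend to $\infty$.

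Once unboundedness of $\alpha$ is established, continuity of $a \mapsto \alpha(a)$ produces an $a_\ast > 0$ with $\alpha(a_\ast) \in \partial\Omega$; by construction this means $\alphaW(a_\ast) = \betaW(b')$ in $W$ for some $b' \in [b_\ast, 1] \subset (0,1)$, after possibly premultiplying $\Psi_{b'}$ by one of the discrete symmetries, which preserves the smooth closure over the $S^3$ singular orbit by Lemma \ref{lem:symmetries:singular:orbits}(ii). Part (iii) of the Matching Lemma \ref{lem:Matching} then produces a smooth complete cohomogeneity one $\sunitary{2}\times\sunitary{2}$-invariant \nk structure on $S^6$ at parameters $(a_\ast, b')$; since the round $S^6$ corresponds to $\betaW(3/2)$ and $3/2 \notin [b_\ast, 1]$, necessarily $b' \neq 3/2$ and hence $a_\ast \neq \sqrt{3}$. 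Inhomogeneity of the new $S^6$ is proved as in the last step of Theorem \ref{thm:exotic:S3xS3}: by Butruille's classification a homogeneous \nk $S^6$ is round, so \cite[Proposition 4.3]{Gray:1972} together with the Riccati equation would force $\hat R(U^-,U^-) = -\ddot\lambda/\lambda$ to be constant along the transverse geodesic; this, combined with \eqref{eq:nk:odes:t} and the Taylor series of Remark \ref{rmk:Power:Series:a0} matched against the smooth closing conditions at both singular orbits, pins $(a_\ast, b')$ to $(\sqrt{3}, 3/2)$, contradicting the previous step. The main obstacle is making the rescaling in the third paragraph quantitative enough to confirm that the maximal volume orbit of $\Psi_a$ really is captured inside the region where the Calabi--Yau bubble approximation remains accurate, which is the step that the crude estimates from Proposition \ref{prop:crit:vol} alone do not immediately provide.
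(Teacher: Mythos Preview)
Your overall strategy---build a bounded region from an arc of $\beta$ and its discrete-symmetry images, trap $\alpha$ inside as $a\to 0^+$, prove $\alpha$ escapes as $a\to\infty$, and read off an intersection---is exactly the paper's approach. The construction of the region (your $\Omega$, the paper's $D$ in Lemma~\ref{lem:behaviour:betaH}) and the inhomogeneity argument at the end are essentially correct, though the paper works in the hyperboloid model $H\simeq\calv_0$ and is more careful to choose the $\beta$-arc so that it stays in a single open quadrant, guaranteeing that the resulting closed curve is simple.

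The genuine gap is in your unboundedness argument. Your proposed rescaling $\tilde t=at$, $\tilde\lambda=a^{-1}\lambda$, $\tilde u=a^{-2}u$, $\tilde v=a^{-3}v$ does \emph{not} transform \eqref{eq:nk:odes:t} into system \eqref{eq:nk:odes:scaled} with $\epsilon=a^{-1}$: check for instance \eqref{eq:nk:odes:t}(f), which becomes $a^3\tilde\lambda\tilde v_2'-4a^2\tilde\lambda^2\tilde u_2+3\tilde u_2=0$ rather than the $\epsilon=a^{-1}$ version $a^{-1}\tilde\lambda\tilde v_2'-4a^{-2}\tilde\lambda^2\tilde u_2+3\tilde u_2=0$. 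Geometrically this is clear: your rescaling corresponds to $\omega\mapsto a^{-2}\omega$, which sends the scalar curvature to $30a^2\to\infty$, the opposite of a Calabi--Yau limit. Hence Proposition~\ref{prop:Bubbling:AC:CY} cannot be invoked here, and there is no convergence to Candelas--de la Ossa as $a\to\infty$ via this route.

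What the paper does instead (Proposition~\ref{prop:Limit:large:a0}) is an \emph{inhomogeneous} rescaling suggested by the Taylor coefficients of Remark~\ref{rmk:Power:Series:a0}: keep $t$ and $\lambda$ unscaled and set $\widetilde\Psi_a=(\lambda,\,u_0/a^2,\,u_1/a^2,\,u_2/a,\,v_0/a^2,\,v_1/a^2,\,v_2/a)$. After also rewriting the $\dot\lambda$ equation using the conservation laws as in \eqref{eq:lambda:modified:equation}, this yields a singular initial value problem with parameter $\epsilon=a^{-1}$ to which Theorem~\ref{thm:Singular:IVP} applies uniformly down to $\epsilon=0$. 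The limit $\widetilde\Psi_\infty$ is \emph{not} Calabi--Yau, but it exists on a fixed $t$-interval and has positive mean curvature and volume at some fixed $t_\ast>0$; then Proposition~\ref{prop:crit:vol}(iv) gives $V_{\max}\gtrsim a^4$, forcing $\alpha$ to leave every compact set. This inhomogeneous rescaling is the missing idea.
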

The rest of the section contains the proof of the theorem, which consists of various steps.

We first give an alternative parametrisation of the space of maximal volume orbits $\calv$ to that of Proposition \ref{prop:Maximal:volume:orbits}. Recall that $\calv = \calv_0 \times \sorth{2}$, where $\calv_0=\calu \cap l^{-1}(0) \subset \lorentz{2}$. Denote by $\pi$ the natural projection $\lorentz{2}\ra \lorentz{2}/\sorth{2}$ to $\calv_0$. We identify $\lorentz{2}/\sorth
{2}$ with the upper hyperboloid
\[
H=\{ w=(w_0,w_1,w_2)\in \R^{1,2},\, |w|^2=-1, w_0>0 \}
\]
and take $(w_1,w_2) \in \R^2$ as global coordinates on $H$.

\begin{lemma}\label{lem:Maximal:volume:orbits}
The projection $\pi\co \calv_0 \ra H$ is a homeomorphism.
\proof
Since $H$ is endowed with the quotient topology we only have to show that $\pi$ is a bijection.

In the notation of Proposition \ref{prop:Invariant:nearly:hypo:structures}, let
\[
B=\left( \begin{array}{ccc} w_0 & x_0 & y_0\\ w_1 & x_1 & y_1\\ w_2 & x_2 & y_2 \end{array}\right) \in \calu 
\]
parametrise an invariant \nh structure with $\theta=0$ as usual. Thus $x_2<0$, $y_1>0$ and $B \in \lorentz{2}$. The projection $\pi\co\lorentz{2}\ra H$ is then the map $B\mapsto w \in H$. Let $\pi^{-1}(w)$ be the circle fibre \eqref{eq:Circle:fibre:proj:H} of $\pi$ through $B$, parametrised by an angle variable $\phi$. Since $\pi^{-1}(w) \cap \calu$ is an interval containing $0$ of length at most $\pi$, we take $\tan{\phi}$ as a coordinate on $\pi^{-1}(w) \cap \calu$. The range of $\tan{\phi}$ is then the connected interval containing $0$ where $x_1\tan{\phi}+y_1 > 0$ and $x_2-y_2\tan{\phi}< 0$.

By Lemma \ref{lem:Scalar:curvature:nearly:hypo}(ii) the restriction of $l$ to $\pi^{-1}(w) \cap \calu$ is
\[
l(\tan{\phi}) = 2\frac{x_1-y_1\tan{\phi}}{x_1\tan{\phi}+y_1} - 3\frac{x_2\tan{\phi}+y_2}{x_2-y_2\tan{\phi}}.
\]

Now, on one hand
\[
l'(\tan{\phi}) = -2\frac{x_1^2+y_1^2}{(x_1\tan{\phi}+y_1)^2}-3\frac{x_2^2+y_2^2}{(x_2-y_2\tan{\phi})^2}<0
\]
and therefore $\pi\co\calv_0\ra H$ is injective. On the other hand $l(\tan{\phi})$ approaches values of opposite sign as $\tan{\phi}$ converges to the endpoints of its range, as can be easily checked by working out the precise range of $\tan{\phi}$ according to the sign of $x_1, y_2$. More precisely, $l(\tan{\phi})$ approaches $\pm \infty$ at the boundary of its range unless (i) $x_1,y_2>0$ or (ii) $x_1, y_2<0$.

In case (i) $\max{\left( -\frac{y_1}{x_1},\frac{x_2}{y_2}\right)}<\tan{\phi}<\infty$ and $-2\frac{y_1}{x_1}+3\frac{x_2}{y_2}<l(\tan{\phi})<\infty$, while in case (ii) $-\infty<\tan{\phi}<\min{\left( -\frac{y_1}{x_1},\frac{x_2}{y_2}\right)}$ and $-\infty<l(\tan{\phi})<-2\frac{y_1}{x_1}+3\frac{x_2}{y_2}$.
\endproof
\end{lemma}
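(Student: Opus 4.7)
The fibres of $\pi\co\lorentz{2}\to H$ are the $\sorth{2}$-orbits under the right action \eqref{eq:Circle:fibre:proj:H}, each diffeomorphic to a circle. Since $H$ carries the quotient topology, continuity of $\pi|_{\calv_0}$ is automatic, so the task reduces to showing bijectivity, plus continuity of the inverse. The plan is to show that each fibre arc $\pi^{-1}(w)\cap\calu$ intersects the zero locus $\calv_0=\{l=0\}$ in exactly one point. Combined with the fact (from Proposition \ref{prop:Maximal:volume:orbits}) that $\calv_0$ is a smooth hypersurface in $\calu$ transverse to the circle action, this upgrades the set-theoretic bijection to a smooth bijection that is a local diffeomorphism, hence a homeomorphism.

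\textbf{Description of the fibre arcs.} I fix $w\in H$ and a point $B\in\pi^{-1}(w)\cap\calu$ normalised (by acting with $\sorth{2}$ if needed) so that it corresponds to $\theta=0$ in the notation of Proposition \ref{prop:Invariant:nearly:hypo:structures}. I parametrise the circle $\pi^{-1}(w)$ by $\tan\phi$ via \eqref{eq:Circle:fibre:proj:H}. The two defining inequalities of $\calu$ translate into $x_1\tan\phi+y_1>0$ and $x_2-y_2\tan\phi<0$, each excluding a closed half-circle of length $\pi$. Their intersection is an open subinterval of the circle containing $\phi=0$, on which I will work.

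\textbf{Monotonicity of $l$ along the fibre.} Substituting the parametrisation into the formula for $l$ from Lemma \ref{lem:Scalar:curvature:nearly:hypo}(ii) I get an explicit rational function of $\tan\phi$. Differentiating with respect to $\tan\phi$ yields a sum of two manifestly negative terms, each of the form $-|(\cdot,\cdot)|^{2}/(\cdot)^{2}$. Thus $l$ is strictly decreasing on the arc, forcing injectivity of $\pi|_{\calv_0}$.

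\textbf{Surjectivity via boundary behaviour.} For existence of a zero I examine the limits of $l$ at the two endpoints of the arc. Generically the two bounding inequalities become active at distinct endpoints of the circle, and each causes the corresponding denominator in $l$ to vanish with a definite sign, producing limits $\pm\infty$; the intermediate value theorem then supplies the unique zero. The only special configurations are when both inequalities become active at the same endpoint, which corresponds to the sign conditions $x_1,y_2>0$ or $x_1,y_2<0$. In these cases the range of $l$ is a half-line with finite endpoint $-2\tfrac{y_1}{x_1}+3\tfrac{x_2}{y_2}$; a quick sign check using $x_2<0$ and $y_1>0$ shows this finite endpoint is negative in case (i) and positive in case (ii), so $0$ still lies in the range. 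This gives surjectivity.

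\textbf{Main obstacle.} The only delicate point is the endpoint analysis for surjectivity: one has to identify the exceptional sign configurations under which the two half-circles excluded by the $\calu$ inequalities share an endpoint, and check in each case that the finite end of the range of $l$ lies on the correct side of $0$. Monotonicity, continuity of the inverse, and the generic $\pm\infty$ case are all essentially formal once the explicit formula for $l$ on the fibre is in hand.
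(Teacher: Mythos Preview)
Your proposal is correct and follows essentially the same route as the paper: parametrise each $\sorth{2}$-fibre of $\pi$ by $\tan\phi$, compute $l$ explicitly on the fibre via Lemma \ref{lem:Scalar:curvature:nearly:hypo}(ii), observe that its derivative is a sum of two strictly negative terms (injectivity), and then analyse the boundary behaviour, splitting off the two exceptional sign cases $x_1,y_2>0$ and $x_1,y_2<0$ (surjectivity). You are in fact slightly more explicit than the paper in two places: you spell out why the finite endpoint $-2y_1/x_1+3x_2/y_2$ has the correct sign in cases (i) and (ii), and you address continuity of the inverse via transversality rather than leaving it to the quotient-topology remark. One minor note: the transversality you invoke is not literally the one proved in Proposition \ref{prop:Maximal:volume:orbits} (which shows $dl$ is nonzero in the direction of the evolution equations, not the $\sorth{2}$-direction), but your own monotonicity computation $l'(\tan\phi)<0$ already supplies exactly the transversality you need.
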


The lemma implies that the two continuous curves $\alpha$ and $\beta$ parametrising the maximal volume orbits of $\Psi_a$,  $\Psi_b$ can also be regarded as curves in $H$.
\begin{definition}\label{def:Maximal:volume:orbits:H}
Let $\alphaH, \betaH\co (0,\infty) \ra H$ be the two continuous curves in $H\simeq \calv_0$ parametrising the maximal volume orbits of the solutions 
$\{ \Psi_a \}_{a>0}$ and $\{ \Psi_b\}_{b>0}$ of Theorems \ref{thm:Singular:IVP:S^2} and \ref{thm:Singular:IVP:S^3}.
\end{definition}

We collect properties of $\alphaH, \betaH$ that are readily deduced from results of the previous sections.

\begin{lemma}\label{lem:Properties:curves:max:vol:orbits}$ $
\begin{enumerate}
\item The curves $\alphaH, \betaH$ do not self-intersect.
\item The curves $\alphaH$ and $\betaH$ cannot intersect for positive values of the parameters $a,b>0$.
\item $\lim_{a\ra 0^+}{\alphaH (a)}=\lim_{b\ra 0^+}{\betaH (b)}=(0,0)$ and $\alphaH (a)$, $\betaH (b)$ are distinct from the origin for $a, b>0$.
\end{enumerate}
\proof
Parts (i) and (ii) follow from the uniqueness of solutions to \eqref{eq:nk:odes:t} with given initial conditions since $H \simeq \calv_0$ by Lemma \ref{lem:Maximal:volume:orbits}. Part (iii) follows from Theorem \ref{thm:Convergence:sine:cone} and Corollary \ref{cor:characterisation:SC}.
\endproof
\end{lemma}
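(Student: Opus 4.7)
The plan is to deduce all three parts from the uniqueness statement in Proposition \ref{prop:nk:odes}, the homeomorphism $\pi\co\calv_0\to H$ of Lemma \ref{lem:Maximal:volume:orbits}, and the convergence result of Theorem \ref{thm:Convergence:sine:cone}. The unifying idea is that a point of $\calv_0$ encodes a complete invariant \nh structure (in the normalisation $\theta=0$ used to derive \eqref{eq:nk:odes:t}) and therefore a point in the phase space of \eqref{eq:nk:odes:t}; coincidences of maximal volume orbits thus upgrade to coincidences of entire solutions via ODE uniqueness.

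\textbf{Parts (i) and (ii).} Since $\pi$ is a homeomorphism, a self-intersection of $\alphaH$ (resp.\ $\betaH$) is the same as a self-intersection of $\alpha$ (resp.\ $\beta$) in $\calv_0$, and similarly for intersections between $\alphaH$ and $\betaH$. Suppose first that $\alpha(a)=\alpha(a')$ for some $0<a<a'$. Then $\Psi_{a}$ and $\Psi_{a'}$ pass through the same phase-space point at their respective maximal volume times $T_{a}, T_{a'}$; since \eqref{eq:nk:odes:t} is autonomous, uniqueness (Proposition \ref{prop:nk:odes}) forces $\Psi_{a'}(t)=\Psi_{a}(t+T_{a}-T_{a'})$. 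Tracing backwards, both solutions close smoothly on an $S^2$ and the induced volumes are $a^2$ and $(a')^2$ respectively; matching them via the time translation gives $a=a'$, proving (i) for $\alphaH$. The identical argument (with singular-orbit volume $b^3$) handles $\betaH$. For (ii), an intersection $\alphaH(a)=\betaH(b)$ forces $\Psi_{a}$ and $\Psi_{b}$ to coincide as solutions of \eqref{eq:nk:odes:t} up to time translation; but $\Psi_{a}$ extends backwards from its maximal volume orbit to a smooth $S^2$ (Lemma \ref{lem:Smooth:extension:S2}) while $\Psi_{b}$ extends backwards to a smooth $S^3$ (Lemma \ref{lem:Smooth:extension:S3}), and the two boundary behaviours are plainly incompatible (the former requires $\lambda(0)=0$ and the latter $\lambda(0)>0$), yielding a contradiction.

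\textbf{Part (iii).} By Theorem \ref{thm:Convergence:sine:cone}, $\Psi_{a}$ and $\Psi_{b}$ converge to the sine-cone of Example \ref{Sine:Cone} uniformly on compact subsets of $(0,\pi)$ as $a,b\to 0^+$. At $t=\pi/2$ the sine-cone has $u_0=v_0=0$ and $\lambda=\mu=1$, which via the change of variables \eqref{eq:change:parametrisation:nh} corresponds to $w_1=w_2=0$, i.e.\ to $(0,0)\in H$. Since the maximal volume orbit of each $\Psi_{\epsilon}$ is a strict maximum (Proposition \ref{prop:crit:vol}) and $V=\sin^5 t$ on the sine-cone has a strict maximum at $t=\pi/2$, the maximal volume times converge and hence $\alphaH(a),\betaH(b)\to(0,0)$. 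For the non-vanishing, suppose $\alphaH(a_0)=(0,0)$ for some $a_0>0$. By Lemma \ref{lem:Maximal:volume:orbits} the preimage in $\calv_0$ consists of a single invariant \nh structure $\psi_\ast$, and by Corollary \ref{cor:characterisation:SC} this $\psi_\ast$ is an invariant hypersurface of the sine-cone; in fact it is the rotated \se slice at $t=\pi/2$. Thus $\Psi_{a_0}$ and the sine-cone agree at a single phase-space point, and ODE uniqueness (Proposition \ref{prop:nk:odes}) forces $\Psi_{a_0}$ to be a time translate of the sine-cone. This contradicts the smooth $S^2$-closure of $\Psi_{a_0}$ from Theorem \ref{thm:Singular:IVP:S^2}, because the sine-cone has conical singularities rather than smooth orbit collapses. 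The same argument rules out $\betaH(b_0)=(0,0)$ for $b_0>0$.

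\textbf{Main obstacle.} None of the steps is technically hard given the machinery already assembled; the one conceptually essential input is that under Lemma \ref{lem:Maximal:volume:orbits} the point $(0,0)\in H$ is \emph{precisely} the sine-cone's maximal volume orbit, so that Corollary \ref{cor:characterisation:SC} combined with ODE uniqueness rigidifies the full solution from information at a single principal orbit. The only bookkeeping subtlety is ensuring that the Reeb $\sorth{2}$--action has indeed been quotiented out (which the $\theta=0$ normalisation in \eqref{eq:invt:nh} guarantees), so that equality in $\calv_0$ really means equality of phase-space points and not merely equality modulo an unaccounted symmetry.
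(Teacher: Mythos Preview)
Your proof is correct and follows essentially the same approach as the paper: parts (i) and (ii) via the homeomorphism $H\simeq\calv_0$ of Lemma \ref{lem:Maximal:volume:orbits} combined with ODE uniqueness, and part (iii) via Theorem \ref{thm:Convergence:sine:cone} together with Corollary \ref{cor:characterisation:SC}. You have simply unpacked the paper's terse two-line argument, correctly spelling out why the $S^2$ and $S^3$ boundary behaviours are incompatible and why the origin in $H$ corresponds exactly to the rotated \se slice of the sine-cone.
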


It will be important to understand the induced action of the discrete symmetries of Proposition \ref{prop:symmetries} on $H$. Observe that in terms of the parametrisation $(\lambda, u, v)$ of \eqref{eq:invt:nh} 
\begin{equation}\label{eq:Projection:transverse:unit:volume:metric}
w_0 = \frac{u_1 v_2-u_2 v_1}{V}, \qquad w_1 = \frac{u_0 v_2-u_2 v_0}{V}, \qquad w_2 = \frac{u_1 v_0-u_0 v_1}{V},
\end{equation}
where $V=\lambda\mu^2$ and $\mu^2=|u|^2$, since $w$ is the Minkowski ``cross-product'' of the two orthogonal space-like vectors $\frac{u}{|u|}, \frac{v}{|v|} \in \R^{1,2}$. An immediate consequence of Proposition \ref{prop:symmetries} and \eqref{eq:Projection:transverse:unit:volume:metric} is the following lemma.

\begin{lemma}\label{lem:Symmetries:curves:max:vol:orbits} Set $\epsilon =a,b > 0$ and let $\Psi_\epsilon$ be the solution to \eqref{eq:nk:odes:t} given by Theorems \ref{thm:Singular:IVP:S^2} or \ref{thm:Singular:IVP:S^3}, respectively. The image of $\alphaH$ or $\betaH$ under the involutions
\[
(w_1,w_2)\mapsto (-w_1,-w_2), \qquad (w_1,w_2)\mapsto (-w_1,w_2), \qquad (w_1,w_2)\mapsto (w_1,-w_2)\]
parametrises the maximal volume orbit of, respectively,
\[
\tau_2\circ\tau_3\circ\tau_4(\Psi_\epsilon),\qquad \tau_1\circ\tau_2\circ\tau_3(\Psi_\epsilon),\qquad \tau_1\circ\tau_4(\Psi_\epsilon).
\]
\end{lemma}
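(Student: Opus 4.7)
The plan is to verify each assertion by direct computation, using the explicit description of the involutions $\tau_i$ from Proposition \ref{prop:symmetries} together with the formulae \eqref{eq:Projection:transverse:unit:volume:metric} for $(w_1, w_2)$ in terms of $(\lambda, u, v)$. The statement in the lemma is said to be ``immediate'' for good reason: the content is purely book-keeping.

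First I would record the general fact that the orbital volume $V = \lambda\mu^2 = \lambda(-u_0^2 + u_1^2 + u_2^2)$ is invariant under each of the three compositions listed. Each composition produces an even number of sign flips among $\{\lambda, u_0, u_1, u_2\}$, and the time reversal contributed by $\tau_1$ does not affect the algebraic formulae. Since each composition sends solutions of \eqref{eq:nk:odes:t} to solutions and preserves $V$ (up to the time reparametrisation $t\mapsto -t$), it maps the unique maximal volume orbit of the original solution to the unique maximal volume orbit of the image. Thus the image of $\alphaH$ (respectively $\betaH$) under the appropriate involution of $H$ does parametrise a curve of maximal volume orbits; it remains to identify this curve with the one associated to the transformed family.

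Next, for each of $\tau_2\circ\tau_3\circ\tau_4$, $\tau_1\circ\tau_2\circ\tau_3$, and $\tau_1\circ\tau_4$, I would tabulate the induced sign pattern on the seven-tuple $(\lambda, u_0, u_1, u_2, v_0, v_1, v_2)$ by composing the sign rules of Proposition \ref{prop:symmetries}, and then substitute these into
\[
w_1 = \frac{u_0 v_2 - u_2 v_0}{V}, \qquad w_2 = \frac{u_1 v_0 - u_0 v_1}{V}.
\]
For instance, a direct check shows $\tau_2\circ\tau_3\circ\tau_4$ sends $(\lambda, u_0, u_1, u_2, v_0, v_1, v_2)$ to $(\lambda, -u_0, u_1, u_2, -v_0, v_1, v_2)$, which yields $(w_1, w_2)\mapsto (-w_1, -w_2)$; the remaining two cases are entirely analogous short sign-tracking computations.

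Finally, to close the loop, I would invoke Lemma \ref{lem:symmetries:singular:orbits} to verify that when one of these compositions is applied to $\Psi_\epsilon$ from Theorem \ref{thm:Singular:IVP:S^2} or \ref{thm:Singular:IVP:S^3}, the resulting local solution again closes smoothly on a singular orbit (possibly of a different stabiliser type from $\triangle\sunitary{2}$, $\unitary{1}\times\sunitary{2}$, or $\phi_3(\triangle\sunitary{2})$). Because the image solution then lies in the same $1$--parameter family $\{\Psi_\epsilon\}$ up to relabelling, its maximal volume orbit—parametrised by the corresponding transformed curve in $H$—is exactly the image of the original curve under the involution computed above. There is no genuine analytic obstacle; the only thing to get right is the sign bookkeeping.
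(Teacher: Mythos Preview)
Your approach is exactly the paper's: the authors record the lemma as ``an immediate consequence of Proposition~\ref{prop:symmetries} and \eqref{eq:Projection:transverse:unit:volume:metric}'', and your first two paragraphs carry out precisely that sign-tracking. The one explicit computation you give for $\tau_2\circ\tau_3\circ\tau_4$ is correct and the others are indeed analogous.

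Your third paragraph, however, is both unnecessary and slightly off. The lemma does not assert that $\tau_2\circ\tau_3\circ\tau_4(\Psi_\epsilon)$ (or the other images) lies in the original family $\{\Psi_\epsilon\}$; it only asserts that the $H$--coordinates of its maximal volume orbit are the stated reflection of those of $\Psi_\epsilon$. That the transformed solution has a maximal volume orbit at all follows already from your first paragraph (invariance of $V$ under the symmetry, together with uniqueness of the critical point), so Lemma~\ref{lem:symmetries:singular:orbits} is not needed here. In fact the transformed solution generally closes on a \emph{different} singular isotropy type (e.g.\ $\sunitary{2}\times\unitary{1}$ rather than $\unitary{1}\times\sunitary{2}$), so it is not literally a member of the original family --- but the lemma never claims it is. Drop the last paragraph and the proof is complete.
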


Finally, the results of Section \ref{sec:S3xS3} allow us to deduce the following crucial property of the curve $\betaH$. 

\begin{lemma}\label{lem:behaviour:betaH}
There exists $0<b'<b''\leq 1$ such that the arc $\betaH (b)$, $b' \leq b \leq b''$, and its image under the involutions of Lemma \ref{lem:Symmetries:curves:max:vol:orbits} form the boundary of a bounded closed set $D \subset H$ which contains the origin in its interior.
\proof
By the proof of Proposition \ref{prop:count:critical:points:small:b0} for $b>0$ sufficiently small the function $u_0$ in $\Psi_{b}$ has at least two critical points and one zero before the maximal volume orbit. On the other hand, by Remark \ref{rmk:a:b:homogeneous} the solution $\Psi_{b}$ with $b=1$ is the homogeneous \nk structure on $S^3\times S^3$. Thus for $b=1$ the function $u_0$ has a unique maximum before the maximal volume orbit and a unique zero, which occurs at the maximal volume orbit. We do not know whether the number of critical points or zeroes of $u_0$ before the maximal volume orbits is monotone in $b$. However, the observations above guarantee the existence of an interval $0<b'<b''\leq 1$ such that $u_0$ has a unique maximum and a unique zero before the maximal volume orbit for all $b'\leq b \leq b''$, a minimum on the maximal volume orbit when $b=b'$ and a zero on the maximal volume orbit when $b=b''$. 

By \eqref{eq:V:crit} and \eqref{eq:Projection:transverse:unit:volume:metric}, on a maximal volume orbit $w_1=0$ if and only if $v_0=0$ and similarly $w_2=0$ if and only if $u_0=0$. Thus the arc $\betaH (b)$, $b' < b < b''$ is contained in an \emph{open} quadrant of the $(w_1,w_2)$--plane. We conclude that the arc $\betaH (b)$, $b' \leq b \leq b''$ together with its image under the involutions of Lemma \ref{lem:Symmetries:curves:max:vol:orbits} form a continuous closed curve $\gamma$ in $H \simeq \R^2$. This curve is simple by Lemma \ref{lem:Properties:curves:max:vol:orbits}(i) and does not contain the origin by part (iii) of the same Lemma. The existence of the domain $D$ follows from the Jordan curve theorem. By the construction of $\gamma$ the origin is contained in the interior of $D$. 
\endproof
\end{lemma}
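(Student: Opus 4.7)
The plan is to exploit the relationship between vanishing of the coordinates $(w_1,w_2)$ at a maximal volume orbit and vanishing of $u_0,v_0$ there. From the constraint equations \eqref{eq:V:crit} together with \eqref{eq:Projection:transverse:unit:volume:metric}, on a maximal volume orbit one has $w_1=0$ if and only if $v_0=0$, and $w_2=0$ if and only if $u_0=0$. Thus to prove Lemma \ref{lem:behaviour:betaH} it suffices to produce parameters $b'<b''\leq 1$ for which $\betaH(b')$ lies on the $w_1$-axis, $\betaH(b'')$ lies on the $w_2$-axis, and the intermediate arc $\betaH([b',b''])$ remains in a single open quadrant of the $(w_1,w_2)$-plane.

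First I would establish the two boundary parameters. For the upper endpoint $b''$, I take $b''=1$ (the homogeneous $S^3\times S^3$ solution of Example \ref{S3xS3}): there $u_0(t)=\tfrac{1}{\sqrt{3}}\sin(2\sqrt{3}t)$, which has a single maximum before the maximal volume orbit $T_b=\tfrac{\pi}{2\sqrt{3}}$ and vanishes precisely at that orbit, so $\betaH(1)$ lies on the $w_2$-axis. For $b'$, I would use the analysis of Section \ref{sec:S3xS3}: Proposition \ref{prop:count:critical:points:small:b0} shows that for all sufficiently small $b>0$ one has $\calc(b)\geq 2$, while $\calc(1)=0$. By the monotonicity/jump properties in Lemma \ref{lem:count:critical:points}, any change in $\calc$ along the way can only happen at a $b$ where $v_0(T_b)=0$, i.e., a $b$ at which $\betaH(b)$ hits the $w_1$-axis. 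Let $b'$ be the largest such $b$ in $(0,1)$.

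Next I would choose $b'<b''\leq 1$ more carefully so that between $b'$ and $b''$ neither $u_0(T_b)$ nor $v_0(T_b)$ vanishes; by construction this is possible, taking $b''$ to be the first value above $b'$ at which $u_0(T_b)=0$ (either $b''=1$ or some smaller parameter). On this closed interval, $u_0(T_b)$ and $v_0(T_b)$ are each of constant sign on the open sub-interval, so $\betaH((b',b''))$ lies in a single open quadrant $Q$ of the $(w_1,w_2)$-plane and its closure meets the two bounding axes only at its endpoints. Now apply the three involutions of Lemma \ref{lem:Symmetries:curves:max:vol:orbits}: reflection across each axis produces an arc in each of the other three open quadrants sharing endpoints on the axes with neighbouring reflections. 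The union is a continuous closed curve $\gamma\subset H$, which is simple by Lemma \ref{lem:Properties:curves:max:vol:orbits}(i) applied to $\betaH$ and its symmetric images (distinct solutions of \eqref{eq:nk:odes:t} cannot meet at an invariant \nh structure).

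Finally I apply the Jordan curve theorem to get a bounded closed region $D\subset H\simeq\mathbb R^2$ bounded by $\gamma$. To see that the origin $(0,0)\in H$ lies in the interior of $D$ rather than in its unbounded complement, I use Lemma \ref{lem:Properties:curves:max:vol:orbits}(iii): as $b\to 0^+$, $\betaH(b)\to(0,0)$ while staying away from the origin for $b>0$. Since the arc $\betaH((0,b'])$ connects a neighbourhood of the origin to the point $\betaH(b')$ on $\gamma$ without crossing $\gamma$ except possibly at its endpoint, any sufficiently small punctured neighbourhood of the origin meets the bounded component bounded by $\gamma$. Hence $(0,0)\in\mathrm{int}(D)$, completing the proof. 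The main subtlety is ensuring that the closed curve $\gamma$ is simple (that the reflected arcs do not accidentally re-cross the original one); this is where I would invoke uniqueness of solutions to \eqref{eq:nk:odes:t} from given initial data, together with the fact that the discrete symmetries act nontrivially on all the relevant points.
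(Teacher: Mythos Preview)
Your approach is essentially the same as the paper's: use the equivalences $w_1=0\Leftrightarrow v_0=0$ and $w_2=0\Leftrightarrow u_0=0$ on a maximal volume orbit, produce endpoints $b',b''$ on the two axes via the results of Section~\ref{sec:S3xS3}, observe the intermediate arc lies in a single open quadrant, reflect, and apply the Jordan curve theorem. Your selection of $b'$ (the largest $b<1$ with $v_0(T_b)=0$) and then $b''$ (the first $b>b'$ with $u_0(T_b)=0$) is a clean variant of the paper's construction.

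There is, however, a genuine gap in your argument that the origin lies in the interior of $D$. You claim that the path $\betaH((0,b'])$ connects a neighbourhood of the origin to $\betaH(b')\in\gamma$ ``without crossing $\gamma$ except possibly at its endpoint''. Lemma~\ref{lem:Properties:curves:max:vol:orbits}(i) only tells you $\betaH$ does not self-intersect, so $\betaH((0,b'))$ avoids the \emph{original} arc $\betaH([b',b''])$; it does \emph{not} tell you that $\betaH((0,b'))$ avoids the three reflected arcs. A priori $\betaH((0,b'))$ may wander through several quadrants and could intersect one of the reflected copies, so the parity of crossings is unknown and the argument collapses. You would need to show, for instance, that $\Psi_{b_1}$ and $\tau_1\circ\tau_4(\Psi_{b_2})$ (etc.) cannot share a maximal volume orbit for $b_1<b'\le b_2\le b''$; this is not immediate.

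The fix is much simpler than the path argument and is what the paper means by ``by construction of $\gamma$''. The arc $\betaH([b',b''])$ lies in a single closed quadrant and meets the two bounding half-axes precisely at its two endpoints, both nonzero. Its three reflected images lie in the other three closed quadrants, meeting the axes only at the four shared endpoints. Hence every half-axis from the origin meets $\gamma$ at exactly one point, and each arc, while in its open quadrant, has argument strictly between the two axis values; the total change of argument around $\gamma$ is therefore $\pm 2\pi$, giving winding number $\pm 1$ about the origin. Equivalently, the segments from the origin to the four axis points together with the four arcs bound four curvilinear triangles whose union is $D$. Either way the origin lies in $\operatorname{int}(D)$. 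This also gives simplicity of $\gamma$ for free (the four arcs live in disjoint open quadrants and share only their endpoints), so your final paragraph invoking ODE uniqueness for simplicity is unnecessary.
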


By Lemma \ref{lem:Properties:curves:max:vol:orbits}(ii) the boundary of $D$ cannot contain the points $\alphaH (\tfrac{\sqrt{3}}{2})$, $\alphaH (\sqrt{3})$ (the maximal volume orbits of the homogeneous \nk structures on $\CP^3$ and $S^6$ respectively by Remark \ref{rmk:a:b:homogeneous}) nor their image under the group generated by reflections along the axes. If $\alphaH (\tfrac{\sqrt{3}}{2})$ or $\alphaH (\sqrt{3})$ do not belong to $D$ then the proof of Theorem \ref{thm:exotic:S6} is complete, because the curve $\alphaH$ must intersect the boundary of $D$. The bad case is therefore when $\alphaH (\tfrac{\sqrt{3}}{2})$ and $\alphaH (\sqrt{3})$ both lie in the interior of $D$.

\begin{prop}\label{prop:Limit:large:a0}
The curve $\alphaH$ exits any compact set of $\calv_0$ as $a\ra\infty$.
\proof
In order to understand the behaviour of $\Psi_{a}$ as $a \ra \infty$, we observe that the Taylor series of Remark \ref{rmk:Power:Series:a0} suggest that we consider the rescaling
\begin{equation}\label{eq:Scaling:large:a0}
\widetilde{\Psi}_{a}(t)=\left( \lambda (t), \frac{u_0(t)}{a^2},\frac{u_1(t)}{a^2},\frac{u_2(t)}{a},\frac{v_0(t)}{a^2},\frac{v_1(t)}{a^2},\frac{v_2(t)}{a}\right).
\end{equation}
Observe that $\widetilde{\Psi}_a(t)$ does not satisfy the constraints \eqref{eq:constraints} and therefore does not define an \sunitary{2}--structure.

We now want to derive differential equations satisfied by $\widetilde{\Psi}
_a$ which are well-behaved in the limit $a \ra +\infty$. Using the conserved quantities \eqref{eq:constraints} satisfied by $\Psi_a$, we can rewrite the last equation of \eqref{eq:nk:odes:t} as
\begin{equation}\label{eq:lambda:modified:equation}
\dot{\lambda}=-\frac{2\lambda^2u_1}{v_1}-\frac{3v_2}{u_2}.
\end{equation}
Then $\widetilde{\Psi}_{a}$ is a solution of the ODE system
\begin{alignat*}{2}
\lambda \dot{u}_0 &+ 3v_0=0,&\quad &\dot{v}_0 -4\lambda u_0=0,\\
\lambda \dot{u}_1 &+ 3v_1 - 2\epsilon^2\lambda^2=0,&\quad &\dot{v}_1 -4\lambda u_1=0,\\
\lambda \dot{u}_2 &+ 3v_2=0,&\quad &\lambda \dot{v}_2 - 4\lambda^2u_2 + 3u_2=0,
\end{alignat*}
where $\epsilon = \frac{1}{a}$, together with  \eqref{eq:lambda:modified:equation}.

The conditions of Lemma \ref{lem:Smooth:extension:S2} and of Theorem \ref{thm:Singular:IVP:S^2} suggest that we write
\[
\begin{gathered}
u_0(t)=1 + t^2 y_1 (t), \qquad u_1 (t) = 1 + t^2 y_2 (t),\qquad u_2(t) = t^2 y_3(t),\\
v_0 (t) = t^2 y_4(t), \qquad v_1(t)=t^2 y_5(t), \qquad v_2(t) = t^2 y_6(t), \qquad \lambda (t) = ty_7(t).
\end{gathered}
\]
Then $y=(y_1, \dots, y_7)$ satisfies
\begin{alignat*}{2}
\dot{y}_1 &=-\frac{1}{t}\left( 2y_1 + 3\frac{y_4}{y_7} \right),&\qquad  &\dot{y}_4=-\frac{1}{t}\left( 2y_4 - 4 y_7 \right) + 4t y_1 y_7,\\
\dot{y}_2 &=-\frac{1}{t}\left( 2y_2 + 3\frac{y_5}{y_7}-2\epsilon^2y_7\right),&\qquad &\dot{y}_5=-\frac{1}{t}\left( 2y_5 - 4 y_7 \right) + 4t y_2 y_7,\\
\dot{y}_3 &=-\frac{1}{t}\left( 2y_3 + 3\frac{y_6}{y_7}\right),&\qquad &\dot{y}_6=-\frac{1}{t}\left( 2y_6 + 3\frac{y_3}{y_7} \right) + 4t y_3 y_7,
\end{alignat*}
\[
\dot{y}_7=-\frac{1}{t}\left( y_7 + 2\frac{y_7^2}{y_5} + 3\frac{y_6}{y_3}\right) -2t\frac{y_2y_7^2}{y_5},
\]
and the initial condition
\[
y_0 = \left( -3, -3+\frac{3}{2}\epsilon^2, -\frac{3\sqrt{3}}{2}, \,3,\, 3, \frac{3\sqrt{3}}{2}, \frac{3}{2} \right).
\]

Thus $y$ is a solution of a singular initial value problem of the form $\dot{y}=\frac{1}{t}M_{-1}(y)+M(t,y)$. It is immediate to check that $M_{-1}\left( y_0 \right) =0$ and that the linearisation of $M_{-1}$ at $y_0$
\[
d_{y_0}M_{-1}=\left( \begin{array}{ccccccc}
-2 & 0 & 0 & -2 & 0 & 0 & 4\\
0 & -2 & 0 & 0 & -2 & 0 & 4+2\epsilon^2\\
0 & 0  & -2  & 0 & 0 & -2 & 2\sqrt{3}\\
0 & 0 & 0 & -2 & 0 & 0 & 4\\
0 & 0 & 0 & 0 & -2 & 0 & 4\\
0 & 0 & -2 & 0 & 0 & -2 & -2\sqrt{3}\\
0 & 0 & \frac{2}{\sqrt{3}} & 0 & \frac{1}{2} & \frac{2}{\sqrt{3}} & -3\\
\end{array}\right)
\]
satisfies
\[
\det{\left( h \text{Id} - d_{y_0}M_{-1}\right)}=h(h+1)(h+2)^3(h+4)^2 \neq 0 
\]
for all integer $h \geq 1$. Theorem \ref{thm:Singular:IVP} then implies the existence of a $1$--parameter family $\widetilde{\Psi}_{a}$ depending continuously on $\epsilon=\frac{1}{a}\geq 0$.

Hence as $a \ra \infty$, $\widetilde{\Psi}_{a}$ approaches a well-defined smooth limit $\widetilde{\Psi}_\infty$ defined on an interval $0\leq t<T$. In particular, reversing the scaling \eqref{eq:Scaling:large:a0}, there exist smooth functions $l_\infty, V_\infty$ such that the mean curvature $l$ and the orbital volume function $V$ of $\Psi_{a}$ are $C^0$--close to $l_{\infty}$ and $a^4 V_{\infty}$, respectively, for $a$ sufficiently large. Fix $0<t_\ast<\min{(T,\tfrac{\pi}{2})}$ such that $l_\infty (t_\ast), V_\infty (t_\ast)>0$ (the initial conditions guarantee the existence of such $t_\ast$). Then the comparison results of Proposition \ref{prop:crit:vol} imply that for $a$ sufficiently large the volume $V_{\text{max}}$ of the maximal volume orbit of $\Psi_{a}$ satisfies
\[
V_{\text{max}} \geq a^4 \frac{V_\infty (t_\ast)}{\sin^5{(t_\ast+\tfrac{\pi}{2})}}-\delta,
\]
where $\delta>0$ can be choosen arbitrarily small as $a \ra \infty$. Thus the maximal volume orbit of $\Psi_a$ has unbounded volume as $a \ra \infty$. Recalling that $V=\lambda\mu^2$, the result follows immediately from the parametrisation of $\calv_0$ as a branched $4$--fold cover of the wedge $W$ in the $(\lambda,\mu)$--plane given in Proposition \ref{prop:Maximal:volume:orbits}.
\endproof
\end{prop}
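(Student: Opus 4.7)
My plan is to exploit the scalings visible in the Taylor series of $\Psi_{a}$ at $t=0$ recorded in Remark \ref{rmk:Power:Series:a0}: there $u_0,u_1,v_0,v_1$ grow like $a^2$, while $u_2,v_2$ grow like $a$ and $\lambda$ stays bounded (of order $1$). This suggests the rescaling
\[
\widetilde{\Psi}_{a}(t)=\Big(\lambda(t),\,a^{-2}u_0(t),\,a^{-2}u_1(t),\,a^{-1}u_2(t),\,a^{-2}v_0(t),\,a^{-2}v_1(t),\,a^{-1}v_2(t)\Big).
\]
Setting $\epsilon:=1/a$, the idea is to show that $\widetilde{\Psi}_{a}$ converges, as $\epsilon\to 0$, to a smooth limit $\widetilde{\Psi}_\infty$ defined on some interval $[0,T)$ with $T>0$ independent of $a$. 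If this holds, then on a fixed small interval $[0,t_\ast]$ with $t_\ast<\min(T,\pi/2)$ the orbital volume $V=\lambda\mu^2$ of $\Psi_{a}$ is bounded below by $c\,a^4$ for some $c>0$ and all $a$ sufficiently large. Since maximal volume orbits are global maxima of $V$ by Proposition \ref{prop:crit:vol}(i), and since $V=\lambda\mu^2$ is also the product of the $(\lambda,\mu)$ coordinates described in Proposition \ref{prop:Maximal:volume:orbits}, this forces the image of $\alphaW$ (and hence of $\alphaH$ via Lemma \ref{lem:Maximal:volume:orbits}) to leave every compact subset of $\calv_0$.

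The technical heart of the argument is therefore to make sense of the ODE system satisfied by $\widetilde{\Psi}_{a}$ uniformly as $\epsilon\to 0$ and to apply Theorem \ref{thm:Singular:IVP}. Direct substitution into \eqref{eq:nk:odes:t} does not work for the last equation, because $\lambda^2|u|^2\dot{\lambda}+2\lambda^4u_1+3u_2v_2=0$ degenerates in the rescaled variables. However, using the conserved constraints \eqref{eq:constraints} (which are preserved for all time by Proposition \ref{prop:nk:odes}), this equation can be replaced by
\[
\dot{\lambda}=-\frac{2\lambda^2u_1}{v_1}-\frac{3v_2}{u_2},
\]
which is well-behaved under the rescaling: substituting $u_1\mapsto a^2\widetilde{u}_1$, $v_1\mapsto a^2\widetilde{v}_1$, $u_2\mapsto a\widetilde{u}_2$, $v_2\mapsto a\widetilde{v}_2$ gives an equation in $\widetilde{\Psi}_{a}$ with coefficients independent of $a$. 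The remaining six equations of \eqref{eq:nk:odes:t} rescale naturally, with the only $a$-dependence appearing in the single term $2\epsilon^2\lambda^2$ in the equation for $\lambda\dot{u}_1$. Writing
\[
u_0=1+t^2y_1,\quad u_1=1+t^2y_2,\quad u_2=t^2y_3,\quad v_i=t^2y_{i+4},\quad \lambda=ty_7,
\]
(exactly as in the proof of Theorem \ref{thm:Singular:IVP:S^2}) converts the system into a singular initial value problem of the form \eqref{eq:Singular:IVP} with $M_{-1}$, $M$ depending smoothly on $\epsilon$ in a neighbourhood of $\epsilon=0$. The condition $M_{-1}(y_0)=0$ determines the initial value $y_0$ (whose entries come from $-3,-3+\tfrac{3}{2}\epsilon^2,-\tfrac{3\sqrt 3}{2},3,3,\tfrac{3\sqrt 3}{2},\tfrac{3}{2}$), and a routine determinant computation shows that $h\,\mathrm{Id}-d_{y_0}M_{-1}$ is invertible for every integer $h\ge 1$. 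Theorem \ref{thm:Singular:IVP} then delivers the family $\widetilde{\Psi}_{a}$ depending continuously on $\epsilon\in[0,\infty)$, hence on $a\in(0,\infty]$.

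With the limit $\widetilde{\Psi}_\infty$ in hand, fix $0<t_\ast<\min(T,\pi/2)$ small enough that $\widetilde{V}_\infty(t_\ast)>0$ and $\widetilde{l}_\infty(t_\ast)>0$ (both hold near $t=0$ by the initial conditions); by continuity the same positivity holds for $\widetilde{\Psi}_{a}$ with $a$ large. Undoing the rescaling, $V(t_\ast)\geq \tfrac{1}{2}a^4\widetilde{V}_\infty(t_\ast)$ and $l(t_\ast)$ is bounded below by a positive constant for $a$ large. Applying Proposition \ref{prop:crit:vol}(iv) from $t_\ast$ forward (choosing $t_0\in(0,\pi/2)$ with $5\cot t_0=l(t_\ast)$ of bounded size), the maximal volume is at least $c\,a^4$ for some $c>0$ independent of $a$. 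Because the orbital volume of any $\psi\in\calv$ equals $\lambda\mu^2$ in the coordinates of Proposition \ref{prop:Maximal:volume:orbits}, this unbounded growth forces $\alphaW(a)$ to leave every compact subset of the wedge $W$, and via the homeomorphism of Lemma \ref{lem:Maximal:volume:orbits} the same is true for $\alphaH(a)\in H\simeq\calv_0$. The main obstacle is thus the first, algebraic step of finding the correct scaling and rewriting of the $\dot{\lambda}$ equation so that the limit $\epsilon\to 0$ is non-singular; once that is done, the analytic content follows from the singular IVP machinery and the Riccati-type comparison of Proposition \ref{prop:crit:vol}.
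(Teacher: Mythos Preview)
Your proposal is correct and follows essentially the same route as the paper: the identical rescaling $\widetilde{\Psi}_a$, the same rewriting of the $\dot\lambda$ equation via the conserved quantities, the same substitution $y_1,\dots,y_7$ and application of Theorem~\ref{thm:Singular:IVP} to obtain a limit $\widetilde{\Psi}_\infty$ as $\epsilon\to 0$, followed by the Riccati comparison of Proposition~\ref{prop:crit:vol} to force the maximal orbital volume to blow up and hence $\alphaH$ to escape compacta via Proposition~\ref{prop:Maximal:volume:orbits}. The only cosmetic difference is that you make the step ``$l(t_\ast)>0$ forces $t_{\max}>t_\ast$, hence $V_{\max}\ge V(t_\ast)$'' explicit, whereas the paper phrases the same conclusion through the comparison inequality of Proposition~\ref{prop:crit:vol}(iv).
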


\proof[Proof of Theorem \ref{thm:exotic:S6}]
By Proposition \ref{prop:Limit:large:a0} the curve $\alphaH$ intersects the boundary of $D$. By Lemmas \ref{lem:Properties:curves:max:vol:orbits}(ii) and \ref{lem:Symmetries:curves:max:vol:orbits} such an intersection point can only occur on the image of the arc $\betaH (b)$, $b \in (b',b'')$, under the symmetries $(w_1,w_2)\mapsto (-w_1,w_2)$ or $(w_1,w_2)\mapsto (w_1,-w_2)$. Part (iii) of the Matching Lemma \ref{lem:Matching} then implies the existence of a smooth \nk structure on $S^6$. It remains only to show that this is not homogeneous.

As in the proof of Theorem \ref{thm:exotic:S3xS3} we look at the Riccati equation \eqref{eqn:Riccati} in the direction of the Reeb vector field $U^-$. If the constructed \nk structure on 
$S^6$ were homogeneous then it would have to be the standard \nk structure on $S^6$. In particular, $\hat{R}(U^-,U^-)=1$ and $\lambda = C_1\cos{t}+C_2\sin{t}$ for 
some constants $C_1$ and $C_2$. Without loss of generality assume that the singular orbit $S^3$ occurs at $t=0$. Since $\lambda$ must be even in $t$ we have $C_2=0$. The singular orbit $S^2$ must then occur at $t=\frac{\pi}{2}$ (the first zero of $\lambda$) and the Taylor series of Remark \ref{rmk:Power:Series:a0} (or the condition $y_7 (0)=\frac{3}{2}$ in the proof of Theorem \ref{thm:Singular:IVP:S^2}) imply that $C_1=\frac{3}{2}$. This however is impossible since $\lambda (0)=b<b''\leq 1$ by assumption.
\endproof

\section{Conjectures and numerical results}\label{sec:numerics}

Theorems \ref{thm:exotic:S3xS3} and \ref{thm:exotic:S6} guarantee the existence of at least one complete inhomogeneous 
\nk structure both on $S^3 \times S^3$ and on  $S^6$ (as stated in the Main Theorem). In fact we make the following: 

\begin{conjecture*}
The Main Theorem yields all (inhomogeneous) complete cohomogeneity one \nk structures on simply connected manifolds. In particular, $S^2 \times S^4$ does not admit any cohomogeneity one \nk structure and $\CP^3$ admits only its homogeneous one.
\end{conjecture*}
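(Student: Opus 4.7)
The plan is to turn the Doubling and Matching Lemmas \ref{lem:Doubling}, \ref{lem:Matching} into a complete accounting of boundary incidences and mutual intersections of the curves $\alphaW, \betaW$ (equivalently $\alphaH, \betaH$). Self-intersections of each curve are already ruled out by Lemma \ref{lem:Properties:curves:max:vol:orbits}(i), so the conjecture reduces to three claims: (a) $\alphaW$ meets $\partial W$ only at $a=\tfrac{\sqrt{3}}{2}$, which sits on the diagonal $\lambda=\mu$, and in particular never on the half-line $\lambda=1,\mu>1$; (b) $\betaW$ meets $\partial W$ only at $b=1$ and at the single $b\in(0,1)$ produced in Theorem \ref{thm:exotic:S3xS3}; and (c) $\alphaH$ and $\betaH$ meet (modulo the action of the involutions of Lemma \ref{lem:Symmetries:curves:max:vol:orbits}) at exactly the two points corresponding to the homogeneous and exotic \nk structures on $S^6$.

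First I would try to identify a quantity that is monotone along each family. Natural candidates are the volume $V=\lambda\mu^2$, the B\"ohm functional $\bohm$ restricted to $\calv$ (which by Lemma \ref{lem:Traceless:2nd:ff}(ii) has its unique minimum at the \se point), or the mean-curvature-free scalar $|\mathring{L}|^2$ read at the maximal volume orbit. Proposition \ref{prop:Limit:large:a0} gives $V(\alphaH(a))\to\infty$ as $a\to\infty$ and Theorem \ref{thm:Convergence:sine:cone} gives $V\to 0$ as $a\to 0$; a parallel singular initial value problem for $\Psi_b$ in the style of the rescaling \eqref{eq:Scaling:large:a0} should yield $V(\betaH(b))\to\infty$ as $b\to\infty$. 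The hard part is to upgrade these endpoint asymptotics to global monotonicity in the family parameter. This should reduce to an oscillation/index analysis of the Jacobi equation obtained by linearising \eqref{eq:nk:odes:t} in $a$ (or $b$) along $\Psi_a$ (or $\Psi_b$); an a priori comparison with the Sturm--Liouville-type equation \eqref{eq:linearisation:SC} on the sine-cone, in the spirit of Proposition \ref{prop:Minimum}, looks like the right tool to count oscillations of the variational field at the maximal volume orbit.

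Once monotonicity of some suitable scalar is available, the global picture in the hyperbolic plane $H$ becomes a winding-number problem. The curves $\alphaH$ and $\betaH$ are embedded arcs with known endpoints (the origin at $\epsilon\to 0$ and infinity at $\epsilon\to\infty$); the symmetries of Lemma \ref{lem:Symmetries:curves:max:vol:orbits} act as reflections across the coordinate axes of $H$; and the region $D$ of Lemma \ref{lem:behaviour:betaH} is a fundamental geometric object. I would attempt to show that the tangent vectors to $\alphaH$ and $\betaH$ rotate by a controlled amount, so that their algebraic intersection number with each other and with the boundary of $D$ equals the geometric one. The Taylor expansions of Remarks \ref{rmk:Power:Series:a0} and \ref{rmk:Power:series:S^3} should give the initial directions as $\epsilon\to 0^+$; Proposition \ref{prop:Limit:large:a0} governs the asymptotic direction as $a\to\infty$; the middle behaviour is the genuine obstacle and is precisely where I expect computer-assisted interval arithmetic, as hinted at in the discussion of numerical evidence, will be essential to close the argument.

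The subtlest case is the non-existence claim for $S^2\times S^4$. By Doubling Lemma \ref{lem:Doubling}(i) this amounts to showing $\alphaW(a)$ never lies on the locus $\lambda=1,\mu>1$; equivalently, that $w_1\neq 0$ at the maximal volume orbit of every $\Psi_a$ with $a\ne\tfrac{\sqrt{3}}{2}$. I would attempt two approaches in parallel. The geometric approach: any resulting \nk metric on $S^2\times S^4$ has an additional $\Z_2$ isometry exchanging the two singular orbits, generated by the involution from the doubling construction; combined with Butruille's rigidity theorem \cite{Butruille} and the known absence of a homogeneous \nk $S^2\times S^4$, any obstruction promoting this $\Z_2$ to a continuous symmetry would finish the argument. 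The analytic approach: substitute the constraint $\lambda=1$ (which by Corollary \ref{cor:characterisation:SC} puts the orbit on a sine-cone hypersurface) into the conserved quantities \eqref{eq:constraints} evaluated at the critical-volume equation \eqref{eq:crit:vol}, and attempt to derive an algebraic incompatibility with the singular initial conditions of Theorem \ref{thm:Singular:IVP:S^2} via a conserved-quantity or integration-by-parts argument along the trajectory. The main obstacle throughout, and the reason the full conjecture remains open, is that unlike in the diagonal cohomogeneity one setting of \cite{Bohm:Spheres,Bohm:Complete}, our parameter family lives on a higher-dimensional moduli space $\calv$ with no manifest Lyapunov structure; I expect the ultimate proof will combine a sharp monotonicity result for one of the curves with a rigorous numerical enclosure for the other.
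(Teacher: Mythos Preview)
The statement you are addressing is a \emph{Conjecture}, not a theorem: the paper does not prove it. Section~\ref{sec:numerics} explicitly presents it as open and offers only numerical support (the monotonicity of the maximal-volume-orbit volume in Conjecture~9.1, the zero-counting in Conjecture~9.2, and the plot of $\alphaH,\betaH$ in Figure~3). Your proposal is likewise not a proof but a research outline, and you are candid about this in your final paragraph. In that sense there is no disagreement with the paper; both the paper and you regard the full classification as open, and your suggested lines of attack (monotonicity of a scalar along the family, index/oscillation analysis of a linearised equation, interval arithmetic) are reasonable directions consonant with the paper's own remarks after Conjecture~9.1.

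That said, your outline contains two concrete errors worth flagging. First, you assert that self-intersections of $\alphaW,\betaW$ are ruled out by Lemma~\ref{lem:Properties:curves:max:vol:orbits}(i). That lemma concerns $\alphaH,\betaH$ in $H\simeq\calv_0$, not their projections $\alphaW,\betaW$ to the wedge $W$; the projection $\calv_0\to W$ is a $4$--fold branched cover (Proposition~\ref{prop:Maximal:volume:orbits}), and a self-intersection of $\alphaW$ corresponds to two distinct points of $\alphaH$ lying in the same fibre, i.e.\ related by a nontrivial discrete symmetry. The paper itself notes (immediately after Conjecture~9.1) that ruling out such self-intersections would \emph{follow} from the volume-monotonicity conjecture, not from anything already proven. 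Second, in your analytic approach to $S^2\times S^4$ you write that $\lambda=1$ ``by Corollary~\ref{cor:characterisation:SC} puts the orbit on a sine-cone hypersurface''. That corollary characterises sine-cone hypersurfaces by $w_1=w_2=0$ (equivalently $u_0=v_0=0$), whereas on a maximal volume orbit $\lambda=1$ gives only $u_0^2=u_1^2$ by \eqref{eq:V:crit}; these are different conditions, so the proposed algebraic incompatibility does not get off the ground as stated.
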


This conjecture is motivated by a systematic numerical study of the ODE system \eqref{eq:nk:odes:t}. In this final  less formal section we discuss numerical results in support of the Conjecture and provide some numerical information about the \nk structures of the Main Theorem. A more detailed account of the numerics may appear elsewhere.

\subsection{The numerical scheme}
The proofs of Theorems \ref{thm:Singular:IVP:S^2} and \ref{thm:Singular:IVP:S^3}, where existence of the two $1$--parameter families $\{ \Psi_a \}_{a>0}$ and $\{ \Psi_b\}_{b>0}$ is established, can be turned into a constructive numerical scheme useful in the study of the system \eqref{eq:nk:odes:t}. 
These proofs  showed the existence of recurrence relations that uniquely determine the coefficients of the Taylor series of $\Psi_a$ and $\Psi_b$  at $t=0$  
once initial conditions are fixed. 
The initial conditions are uniquely determined by the choice of $a$ or $b$ respectively, \eg see \eqref{eq:initial:condition:y:a0} for the initial conditions in terms of $a$. 
After computing the first several terms of the Taylor series by hand we made these recurrence relations explicit and then computed 
the first 50 nonzero terms in these Taylor series symbolically in MATLAB using its Symbolic Math Toolbox. The first few terms of these power series expansions are recorded in Remarks \ref{rmk:Power:Series:a0} and \ref{rmk:Power:series:S^3}. 

The main problem in using numerical methods to study the existence of 
new complete cohomogeneity one \nk structures is the inevitability  of singularities in the coefficients of the equations  \eqref{eq:nk:odes:t}.
We overcome this problem as follows.  First using the symbolic polynomials associated to the two families $\{ \Psi_a \}_{a>0}$ and $\{ \Psi_b\}_{b>0}$
described above we find (very high order) approximations to regular 
initial conditions for the ODE system  \eqref{eq:nk:odes:t} simply by evaluating these polynomials at some sufficiently small positive value $t_*$ of $t$. 
Now, by Proposition \ref{prop:Existence:maximal:volume:orbit}, we know that every solution $\Psi_a$ or $\Psi_b$ has a unique maximal volume 
orbit that it attains at some time $t_{\textrm{vmax}}$ before the solution develops its second singularity. Moreover, the maximal volume orbit is characterised algebraically 
by the equality \eqref{eq:crit:vol}. Therefore for each positive $a$ or $b$ we approximate the solution $\Psi_a$ or $\Psi_b$ on the interval 
$[t_*,t_{\textrm{vmax}}]$ by using one of the standard MATLAB ODE solvers (we found ODE45 to be suitable) to integrate numerically 
equation \eqref{eq:nk:odes:t} beginning with nonsingular initial conditions at $t=t_*$  and detecting $t_{\textrm{vmax}}$ by evolving the solution numerically until a zero of $2\lambda^4u_1-3u_2v_2$ occurs. 

In particular, this numerical scheme allows us to obtain very accurate numerical approximations to the two 
curves $\alpha$ and $\beta: (0,\infty) \to \calv$ parameterising the (unique)
maximal volume orbits of the two $1$--parameter families $\{ \Psi_a \}_{a>0}$ and $\{ \Psi_b\}_{b>0}$ respectively. 
We can therefore use these numerical approximations of $\alpha$ and $\beta$ to study when the conditions 
of the Doubling and the Matching Lemmas \ref{lem:Doubling} and \ref{lem:Matching} can be satisfied. 
This has the great advantage that we never have to solve numerically toward an unknown final time when the solution becomes 
singular again (some numerical experimentation makes it clear that in practice that more naive strategy is very unstable.)

\subsection*{Conjectures based on numerics}
The properties of the numerical approximations to the curves $\alpha$ and $\beta$ so obtained suggest a number of concrete conjectures on the behaviour of $\Psi_a$ and $\Psi_b$ as functions of the parameters, that one might hope to establish analytically.

\begin{conjecture}
The volume of the maximal volume orbit of $\Psi_a$ and $\Psi_b$ is strictly increasing in $a$ and $b$, respectively.
\end{conjecture}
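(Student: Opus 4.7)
\bigskip
\noindent\textbf{Proof proposal for Conjecture 9.1.} My plan is to treat the volume of the maximal volume orbit as a smooth function of the parameter $a$ (respectively $b$) and analyse its derivative via a Jacobi field argument along the $1$--parameter family $\{\Psi_a\}_{a>0}$ (resp.\ $\{\Psi_b\}_{b>0}$). Throughout this sketch I focus on the case of $\Psi_a$; the argument for $\Psi_b$ is entirely parallel.

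First I would set up the derivative. By Proposition \ref{prop:Existence:maximal:volume:orbit} there is a unique time $T_a$ at which $\Psi_a$ attains its maximal volume orbit, characterised by the algebraic condition $2\lambda^4 u_1 = 3 u_2 v_2$ from \eqref{eq:crit:vol}. Because this maximum is non-degenerate (Proposition \ref{prop:crit:vol}(i)) and $\Psi_a$ depends continuously (in fact real-analytically) on $a$ by Theorem \ref{thm:Singular:IVP:S^2}, the implicit function theorem shows that $T_a$ depends smoothly on $a\in(0,\infty)$. Writing $V_{\max}(a)=V(T_a,a)$ and using $\partial_t V(T_a,a)=0$, we obtain the clean formula
\[
V_{\max}'(a) \;=\; \frac{\partial V}{\partial a}(T_a,a).
\]
Thus the problem reduces to showing that the parameter variation $\partial_a V$ along the flow is strictly positive precisely at the maximal volume orbit.

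Next I would analyse the Jacobi field $J(t) := \partial_a \Psi_a(t)$. This satisfies the linearisation of \eqref{eq:nk:odes:t} around $\Psi_a$, together with singular initial conditions obtained by differentiating the singular IVP of Theorem \ref{thm:Singular:IVP:S^2} in $a$. Theorem \ref{thm:Singular:IVP} guarantees $J$ is the unique solution with these initial data and the Taylor expansion of $J$ at $t=0$ can be extracted term-by-term from Remark \ref{rmk:Power:Series:a0}; in particular $\partial_a u_0(0)=\partial_a u_1(0)=2a>0$ while all other initial components vanish. A direct computation shows that near $t=0$ the variation of the volume $V=\lambda v_1$ satisfies $\partial_a V(t,a)\sim 9 a\, t^{3}$ to leading order, which is strictly positive. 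The task then is to propagate this initial positivity forward to $t=T_a$.

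For the propagation step I see two possible strategies that I would attempt in parallel. The first is to reformulate the monotonicity via B\"ohm's functional. Since $\mathfrak{B}|_{\calv}=20\,V^{2/5}$ by \eqref{eqn:Lyapunov:function} and the identity $l=0$ on the maximal volume orbit, strict monotonicity of $V_{\max}$ in $a$ is equivalent to strict monotonicity of $\mathfrak{B}(T_a)$ in $a$. Lemma \ref{lem:Monotonicity:Bohm:functional} gives $\dot{\mathfrak{B}} = -\tfrac{8}{5} V^{2/5}\, l\,|\mathring L|^{2}\le 0$ with strict inequality on $(0,T_a)$ by Corollary \ref{cor:characterisation:SC}, so
\[
\mathfrak{B}(T_a) \;=\; \lim_{t\to 0^+} \mathfrak{B}(t,a)\;-\;\int_{0}^{T_a}\!\tfrac{8}{5} V^{2/5}\, l\,|\mathring L|^{2}\, dt .
\]
The plan is to rewrite both the boundary and the integral contributions in terms of the rescaled "bubble" $\widetilde{\Psi}_a$ of Proposition \ref{prop:Bubbling:AC:CY} (which, being scale-invariant, is $\mathfrak{B}$-equivalent to $\Psi_a$) and to show that as $a$ increases the integral decreases, by exhibiting an appropriate monotone coupling with the asymptotically conical Calabi--Yau family. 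The second strategy is more direct: derive from \eqref{eq:nk:odes:t} a second-order linear ODE for the scalar quantity $\partial_a V$ along $\Psi_a$, extract a Sturm--Liouville-type equation with coefficients depending on $\Psi_a$, and apply a comparison argument in the spirit of Lemma \ref{lem:Sturm:Liouville} together with the positivity at $t=0$ to conclude that $\partial_a V(T_a,a)>0$.

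The main obstacle, and the reason this remains a conjecture, is precisely the propagation step. The linearisation of \eqref{eq:nk:odes:t} is a coupled seven-dimensional system whose coefficients depend on $\Psi_a$ in a complicated way, and the conservation laws \eqref{eq:constraints} (which, crucially, constrain $J$ itself) are not obviously amenable to a scalar comparison principle. Moreover the endpoint $T_a$ is itself an implicit function of $a$, so any monotonicity statement must survive the algebraic constraint $2\lambda^4 u_1 = 3u_2v_2$. As evidence that the conjecture is sharp, the two known asymptotic regimes already exhibit the correct monotonicity: Theorem \ref{thm:Convergence:sine:cone} together with Proposition \ref{prop:Bubbling:AC:CY} yields $V_{\max}(a)\to 1$ as $a\to 0^+$ (the sine-cone value), while Proposition \ref{prop:Limit:large:a0} gives $V_{\max}(a)\to\infty$ as $a\to\infty$; interior monotonicity would then follow from ruling out any critical point of $V_{\max}(a)$, which is what the Jacobi-field analysis sketched above is designed to do.
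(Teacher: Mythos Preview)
The paper does not prove this statement: it is explicitly presented as a \emph{conjecture} supported only by numerical evidence (Figure~2). The sole analytic content the paper offers is the Remark immediately following the conjecture, which observes that the volume of the maximal volume orbit of $\Psi_a$ is \emph{eventually} strictly increasing (i.e.\ for all sufficiently large $a$) as a consequence of the rescaling argument in the proof of Proposition~\ref{prop:Limit:large:a0}, and speculates that an analogous rescaling might handle large $b$. There is therefore no ``paper's own proof'' to compare your proposal against; you are attempting something the authors left open.

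Your proposal is an honest strategy sketch rather than a proof, and you correctly identify the propagation step as the real obstacle. Two comments on the specifics. First, your B\"ohm-functional strategy has a concrete difficulty you did not flag: the boundary term $\lim_{t\to 0^+}\mathfrak{B}(t,a)$ is $+\infty$, not finite. Near the singular $S^2$ orbit one has $V\sim \tfrac{9}{2}a^2 t^3$ and $l\sim 3/t$, so $\mathfrak{B}=V^{2/5}(20+l^2)\sim 9\bigl(\tfrac{9}{2}a^2\bigr)^{2/5} t^{-4/5}\to\infty$; the identity you wrote is thus a difference of two divergent quantities and would require a careful regularisation before it can be differentiated in $a$. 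Second, your endpoint analysis is correct and matches the paper: $V_{\max}(a)\to 1$ as $a\to 0^+$ (sine-cone limit) and $V_{\max}(a)\to\infty$ as $a\to\infty$ are both established in the paper, so global monotonicity would indeed follow from ruling out interior critical points of $a\mapsto V_{\max}(a)$ --- but that is precisely the missing step.
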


\begin{remark*}
The fact that the volume of the maximal volume orbit of $\Psi_a$ is eventually strictly increasing, \ie strictly increasing for all $a$ sufficiently large, follows from 
the rescaling argument employed in the proof of Proposition \ref{prop:Limit:large:a0}. 
A different rescaling argument based on further contemplation of the power series expansions for $\Psi_b$ might establish the same result for 
the family $\{ \Psi_b\}_{b>0}$.
\end{remark*}

An immediate consequence of the verification of this conjecture would be: the curves $\alphaW$ and $\betaW$ of Definition \ref{def:Maximal:volume:orbits} can never self-intersect and hence parts (i) and (ii) of the Matching Lemma \ref{lem:Matching} can never be applied. In particular, for any complete cohomogeneity one \nk structure on $\CP^3$, $S^2 \times S^4$ and $S^3 \times S^3$ both singular orbits must have the same volume, \ie is obtained by ``doubling'' some member 
of one of the two families $\{ \Psi_a \}_{a>0}$ and $\{ \Psi_b\}_{b>0}$.

We next consider how many complete \nk structures arise by applying the Doubling Lemma \ref{lem:Doubling}. By \eqref{eq:V:crit} the conditions of the lemma are satisfied if and only if either $u_0$ or $v_0$ has a zero on the maximal volume orbit. In Definition \ref{def:count:critical:points} we considered the number $\mathcal{C}(b)$ of zeroes of $v_0$ before the maximal volume orbit of $\Psi_b$. According to Remark \ref{rmk:a:non:doubling} it is possible to extend this definition to the family $\{ \Psi_a \} _{a>0}$ as well, and we write this count as $\mathcal{C}(a)$.
\begin{conjecture}
The count $\mathcal{C}$ of zeroes of $v_0$ before the maximal volume orbit satisfies:
\begin{enumerate}
\item $\mathcal{C}(a)$ and $\mathcal{C}(b)$ are decreasing in $a$ and $b$, respectively;
\item for $a>0$ sufficiently small $\mathcal{C}(a)=1$; for $b>0$ sufficiently small $\mathcal{C}(b)=2$;
\item $\mathcal{C}(a)=0$ for all $a \geq \sqrt{3}$ and $\mathcal{C}(b)=1$ for all $b \geq 1$.
\end{enumerate}
\end{conjecture}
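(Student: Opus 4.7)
The plan is to treat the three parts of the conjecture in turn, exploiting the non-degeneracy of critical points of $u_0$ on non-sine-cone solutions and the local behaviour of $\mathcal{C}$ described in Lemma \ref{lem:count:critical:points}. That lemma says $\mathcal{C}$ is locally constant on the open set $\{v_0(T_\epsilon) \neq 0\}$ and can jump by at most $\pm 1$ across a value where $v_0(T_\epsilon) = 0$. By \eqref{eq:V:crit} combined with \eqref{eq:Projection:transverse:unit:volume:metric} these critical values are exactly the parameters at which $\alphaW(a)$ or $\betaW(b)$ meets the boundary segment $\{\mu = \lambda\}$ of $W$; so the entire conjecture reduces to a statement about how and in which direction these curves cross this boundary segment.

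For part (ii) I would extend the Sturm comparison argument of Section \ref{sec:S3xS3}. The lower bounds $\mathcal{C}(b) \geq 2$ (for small $b$, from Proposition \ref{prop:count:critical:points:small:b0}) and $\mathcal{C}(a) \geq 1$ (for small $a$, as sketched in Remark \ref{rmk:a:non:doubling}) are already in place. The matching upper bounds would come from a second application of Lemma \ref{lem:Sturm:Liouville}, this time comparing $u_0$ against a suitably chosen Legendre-type solution designed to bound the number of extrema from \emph{above} on compact subsets of $(0,\pi)$ away from the endpoints; on such compact subsets the coefficient $\lambda$ is uniformly close to $\sin t$ by Theorem \ref{thm:Convergence:sine:cone}. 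The short interval $[0, t_1]$ near the singular orbit and a neighbourhood of the maximal volume orbit would have to be handled separately: on the former one uses the rescaled limits $\widetilde\Psi_a$, $\widetilde\Psi_b$ of Proposition \ref{prop:Bubbling:AC:CY} together with the explicit asymptotics of the two Calabi--Yau desingularisations of the conifold; on the latter one uses continuity and the proof of Proposition \ref{prop:Existence:maximal:volume:orbit}.

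For part (iii) I would start from direct substitution into the closed-form homogeneous solutions. For $\Psi_{\sqrt 3}$ (Example \ref{S6}) one has $v_0 = \tfrac{9}{4}\cos^2 t\,(4 - 5\cos^2 t)$, whose unique interior zero in $[0, \tfrac{\pi}{2}]$ lies at $t_* = \arccos(2/\sqrt 5)$; a direct check that $t_*$ does not precede the (unique by Proposition \ref{prop:Existence:maximal:volume:orbit}) maximal volume orbit yields $\mathcal{C}(\sqrt 3) = 0$. Similarly for $\Psi_1$ (Example \ref{S3xS3}), $v_0 = -\tfrac{2}{3}\cos(2\sqrt 3\,t)$ vanishes once at $t = \tfrac{\pi}{4\sqrt 3}$, strictly before the maximal volume orbit at $t = \tfrac{\pi}{2\sqrt 3}$, giving $\mathcal{C}(1) = 1$. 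Assuming part (i), the bounds in part (iii) follow: $\mathcal{C}(a) \leq \mathcal{C}(\sqrt 3) = 0$ for $a \geq \sqrt 3$ and $\mathcal{C}(b) \leq \mathcal{C}(1) = 1$ for $b \geq 1$. Independently, for $a \to \infty$ one can also use the rescaling of Proposition \ref{prop:Limit:large:a0}: the smooth limit $\widetilde\Psi_\infty$ has an explicit $(u_0, v_0)$--subsystem arising from the $\epsilon = 0$ singular IVP, and a direct analysis should yield $\mathcal{C}(a) = 0$ for $a$ sufficiently large without appeal to monotonicity.

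The main obstacle will be part (i). By the discussion above it amounts to showing that, at each crossing of $\alphaW$ (or $\betaW$) with the segment $\{\mu = \lambda\}$, $\mathcal{C}$ decreases by exactly one rather than increasing; equivalently, that a zero of $v_0$ exits the interval $(0, T_\epsilon)$ through the right endpoint as the parameter increases through the crossing value. This is a transversality/orientation statement that reduces to computing the sign of the velocity of $\alphaW$ (respectively $\betaW$) at each crossing point, which via \eqref{eq:nk:odes:t} should in turn reduce to evaluating an explicit scalar quantity at the maximal volume orbit. I do not see how to control the sign of this quantity on purely geometric grounds, and I expect that a substantive new idea---perhaps a monotonicity-type functional on the space of invariant nearly hypo structures analogous to B\"ohm's $\bohm$ but tailored to track the pair $(u_0, v_0)$---will be needed to complete the proof of part (i).
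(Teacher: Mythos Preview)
The statement you are attempting to prove is labelled a \emph{Conjecture} in the paper and appears in the final section on numerical results. The paper offers no proof: it explicitly presents the conjecture as ``motivated by a systematic numerical study of the ODE system \eqref{eq:nk:odes:t}'' and supports it only with plots of the numerically computed curves $\alphaH$ and $\betaH$ and the volume of the maximal volume orbit. There is therefore nothing in the paper to compare your proposal against; what you have written is an outline of attack on an open problem, and you are right to flag part (i) as the essential obstruction that you cannot close.

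Two specific corrections to your sketch. First, your verification of $\mathcal{C}(\sqrt{3})=0$ is garbled: the formulas in Example \ref{S6} are written with the $S^3$ orbit at $t=0$, i.e.\ in the $\Psi_b$ parametrisation with $b=\tfrac{3}{2}$. In that parametrisation the zero of $v_0$ at $t_\ast=\arccos(2/\sqrt{5})\approx 0.464$ \emph{does} precede the maximal volume orbit at $t_{\max}=\arcsin\sqrt{2/5}\approx 0.685$, giving $\mathcal{C}(\tfrac{3}{2})=1$ (consistent with (iii)). To compute $\mathcal{C}(\sqrt{3})$ you must count from the $S^2$ orbit at $t=\tfrac{\pi}{2}$; on $(t_{\max},\tfrac{\pi}{2})$ one has $4-5\cos^2 t>0$, so $v_0$ has no interior zero there and $\mathcal{C}(\sqrt{3})=0$ as claimed in Remark \ref{rmk:a:non:doubling}. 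Second, Lemma \ref{lem:Sturm:Liouville} as stated produces \emph{lower} bounds on the oscillation of $u_0$; to obtain the upper bounds you need for part (ii) you would have to run the comparison in the reverse direction (with $\lambda$ now playing the role of the larger coefficient), and you should check that the required two-sided control $\sin t-\delta\le\lambda\le\sin t+\delta$ from Theorem \ref{thm:Convergence:sine:cone} actually suffices, including on the initial layer near the singular orbit where the convergence to the sine-cone is not uniform.
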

Based on numerical evidence, we also conjecture analogous properties for the count of zeroes of $u_0$ before the maximal volume orbit.

As a result, the conditions of the Doubling Lemma \ref{lem:Doubling} are satisfied only three times: 
once in the family $\{ \Psi_a \}_{a>0}$, corresponding to the homogeneous \nk structure on $\CP^3$;  
twice in the family $\{ \Psi_b \}_{b>0}$, yielding the inhomogeneous \nk structure on $S^3 \times S^3$ of the Main Theorem and the homogeneous \nk structure of Example \ref{S3xS3}.

\begin{figure}
\label{fig:alpha:beta:disk}
\begin{center} 
\includegraphics[scale=0.64]{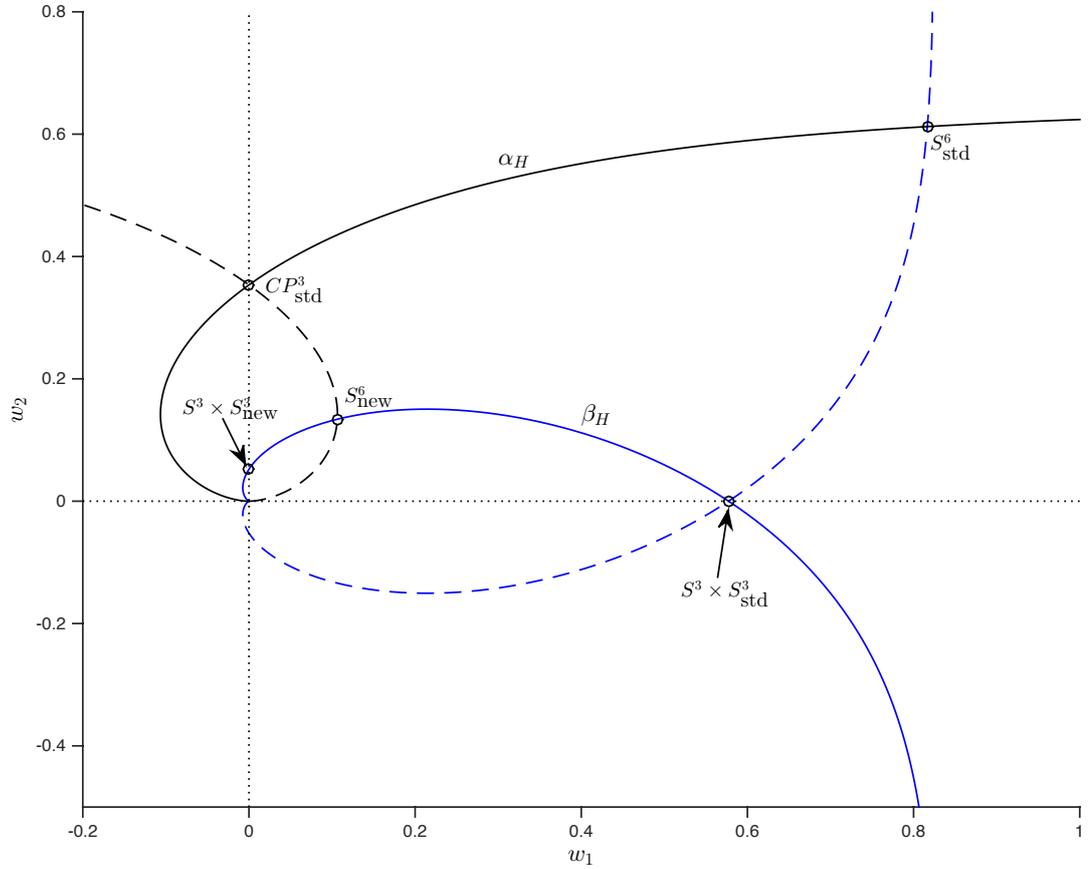}
\end{center}  
\caption{\small \sl Plots of $\alphaH$ and $\betaH$ showing locations of the 5 complete cohomogeneity 
one \nk structures}  
\end{figure}

Finally, Figure 2 plots the numerical approximations to the curves $\alphaH$ and $\betaH$ of Definition \ref{def:Maximal:volume:orbits:H}. This provides further numerical evidence that the Doubling Lemma \ref{lem:Doubling} yields exactly three cohomogeneity one \nk structures;  these correspond 
to the three points of intersection of the curves $\alphaH$ and $\betaH$ with the axes (the origin parametrises the circle of \se structures on $\Ndiag$ and must therefore be excluded). 
The plot also suggests that $\alphaH$ intersects the image of $\betaH$ under reflections in the two axes exactly twice; these two intersection points yield the inhomogeneous \nk structure on $S^6$ given by the Main Theorem and the homogeneous one of Example \ref{S6}. 

\subsection{The volumes of the cohomogeneity one \nk structures}

According to the Conjecture above, there exist exactly two new complete cohomogeneity one \nk structures.  
The numerical analysis of the previous subsection provides more concrete information about these two new solutions 
than the abstract existence proofs of Theorems \ref{thm:exotic:S3xS3} and \ref{thm:exotic:S6},
and gives a way to compare them quantitatively with the known homogeneous \nk structures and with the sine-cone.

In Table \ref{tab:Comparison:coho1:nk} we consider various quantities describing the geometry of a complete cohomogeneity one \nk $6$--manifold: the size of the two singular orbits $O_1$ and $O_2$ in terms of the parameters $a$ or $b\,$; the maximum $V_{\textup{max}}$ of the orbital volume function; and the total volume $\vol$ normalised so that $\vol (S^6_{\textup{std}})=1$.

The values of these quantities on the two new inhomogeneous solutions are numerical approximations. For a more accurate result, we cut $S^6$ and $S^3 \times S^3$ along the maximal volume orbit of their inhomogeneous \nk structures and compute numerically the 
total volume on each half separately. 
Information about the sine-cone and the homogeneous \nk structures on $S^6$, $S^3\times S^3$ and $\CP^3$ is computed analytically from the explicit solutions of Examples \ref{Sine:Cone}--\ref{CP3}. The values in the tables are all obtained directly from those expressions. Since the orbital volume function is $V=\lambda\mu^2$, the total volume is easily deduced by integration. For example, for the homogeneous \nk structure on $S^6$ we have $V(t) =  \frac{27}{2} \sin^2{t}\cos^3{t}$ and hence $\textup{Vol}(S^6_{\textup{std}}) =\frac{9}{5}V_0$, where $V_0$ is the volume of $\Ndiag$ with respect to the standard \se metric. Since the volume of the $6$--sphere with respect to the round metric of curvature $1$ is $\frac{16}{15}\pi^3$, we must have $V_0 = \frac{16}{27}\pi^3$.

\begin{table}[!h]
\centering
\begin{tabular}{ccccc}
$M$ &  $O_1$  &  $O_2$  &  $V_{\textup{max}}$  & $\vol$ \\ \hline
\\
 sine-cone & $0$ & $0$ & $1$ & $\frac{16}{27} \approx 0.5926$\\
 \\
 $S^3\times S^3_{\textup{new}}$ & $b=0.3736$ & $b=0.3736$ & $1.0041$ & $0.5929$\\
\\
 $S^6_{\textup{new}}$ & $a=0.5646$ & $b=0.5985$ & $1.0385$ & $0.5752$\\
\\
$\CP^3$ & $a=\frac{\sqrt{3}}{2}$ & $a=\frac{\sqrt{3}}{2}$ & $\frac{27\sqrt{2}}{32} \approx 1.1932$ & $\frac{5}{8}$\\
\\
$S^3 \times S^3_{\textup{std}}$ & $b=1$ & $b=1$ & $\frac{4}{3}$ & $\frac{10\pi}{27\sqrt{3}} \approx 0.6718$\\
\\
$S^6_{\textup{std}}$ & $a=\sqrt{3}$ & $b=\frac{3}{2}$ & $\frac{81\sqrt{3}}{25\sqrt{5}} \approx 2.5097$ & $1$\\
\\
\end{tabular}
\caption{Cohomogeneity one \nk manifolds}
\label{tab:Comparison:coho1:nk}
\end{table}

The quantities in Table \ref{tab:Comparison:coho1:nk} all give a measure of how both inhomogeneous \nk structures, in particular the one on $S^3 \times S^3$, are much closer to the sine-cone than the homogeneous ones. Observe that the total volume $\vol$ is greater than the volume of the sine-cone in all cases except for the inhomogeneous \nk structure on $S^6$.

\bibliographystyle{amsinitial}
\bibliography{nkrefs}
\end{document}